\documentclass[12pt, reqno]{amsart}
\usepackage{mathrsfs}
\usepackage{amssymb,amsthm,amsmath}
\usepackage[numbers,sort&compress]{natbib}
\usepackage{amssymb,amsmath}
\usepackage{amsfonts}
\usepackage{mathrsfs}
\usepackage{latexsym}
\usepackage{amssymb}
\usepackage{amsthm}
\usepackage{bm}
\usepackage{color}
\usepackage{pdfsync}
\usepackage{indentfirst}
\usepackage{appendix}
\date{today}

\usepackage{hyperref}
\hypersetup{hypertex=true,colorlinks=true,linkcolor=blue,anchorcolor=g,citecolor=red}

\usepackage{amsmath}
\usepackage{amsthm}
\allowdisplaybreaks
\newtheorem{remark}{Remark}[section]

\newtheorem{theorem}{Theorem}[section]

\newtheorem{lemma}{Lemma}[section]

\newcommand{\R}{\mathbb R}

\newcommand{\beq}{\begin{equation}}
	\newcommand{\eeq}{\end{equation}}
\newcommand{\ben}{\begin{eqnarray}}
	\newcommand{\een}{\end{eqnarray}}
\newcommand{\beno}{\begin{eqnarray*}}
	\newcommand{\eeno}{\end{eqnarray*}}

\numberwithin{equation}{section}

\usepackage[
letterpaper,                 
textheight=8.35in,           
headsep=20pt,                
footskip=36pt,               
marginparwidth=0pt,          
marginparsep=0pt,            
left=0.75in,
right=0.75in
]{geometry}
\begin{document}
	\title[Boundary layer of 2D Chemotaxis Navier-Stokes equations]{Boundary layer analysis for the 2D chemotaxis-Navier-Stokes system with logarithmic sensitivity, Part II. viscous vanishing limit}
	\author{Hui~Wang}
	\address[Hui~Wang]{School of Mathematical Sciences, Dalian University of Technology, Dalian, 116024,  China}
	\email{whd@mail.dlut.edu.cn}
	\author{Lingling~Zhao}
	\address[Lingling~Zhao]{College of Mathematics, Taiyuan University of Technology, Taiyuan, 030024, China}
	\email{zhaolingling@tyut.edu.cn}
	\date{\today}
	\maketitle
	\renewcommand{\theequation}{\arabic{section}.\arabic{equation}}
	\catcode`@=11 \@addtoreset{equation}{section} \catcode`@=12
	\begin{abstract}
        This is the second part of a two-part work concerning boundary layer solutions to the coupled Chemotaxis-Navier-Stokes system in the two-dimensional half-space. In the present work, we address the convergence of boundary layer solutions to singular chemotaxis-fluid equations under slip boundary conditions with respect to the chemical diffusion-viscosity parameter $\varepsilon$
       in the two-dimensional half-plane. More precisely, we show that the boundary layer for $\varepsilon>0$ (viscous convection coefficient) converges to the superposition of the outer layer (solution with $\varepsilon=0$) and the inner layer as $\varepsilon\rightarrow0$. The outer and inner profiles are explicitly derived as in the first part\cite{WWZ}. Furthermore, the well-posedness results of the coupled Chemotaxis-Navier-Stokes system in conormal Sobolev spaces will be presented in Appendix. They answer the question mentioned in the first part of the two-part work. This study could help the understanding of the chemotactic movement of aerobic bacteria to the water-air surface observed experimentally in fluids, and enrich the theoretical results of boundary layer in chemotactic fluid models.
		\\\ \\
		{\bf Keywords:} Chemotaxis Navier-Stokes system; Slip boundary conditions; Boundary layer; Viscous vanishing limit.
        \\
         \textbf{2020 Mathematics Subject Classification.} {Primary 35Q92; Secondary 76N05, 35B44, 35Q30, 35Q35}
	\end{abstract}
	\section{Introduction}
    In this paper, we will consider the viscosity vanishing limits for the following Chemotaxis-Navier-Stokes system with logarithmic singularity:
	\begin{eqnarray}\label{ncu equation}
		\left\{
		\begin{split}{}
			&\partial_tn-D\Delta n+u\cdot\nabla n+\chi\nabla\cdot(\frac{n}{c}\nabla c)=0,\;\;&\Omega\times(0,T),\\
			&\partial_tc-\varepsilon\Delta c+u\cdot\nabla c+nc=0,\;\;& \Omega\times(0,T),\\
			&\partial_tu-\varepsilon\Delta u+(u\cdot\nabla)u+\nabla p=n\nabla\phi,\;\;& \Omega\times(0,T),\\
			&\nabla\cdot u=0,\;\;& \Omega\times(0,T),\\
		\end{split}
		\right.
	\end{eqnarray}	
	where $n$, $c$, $u$ and $p$ denote cell density, oxygen concentration, the fluid velocity and the associated pressure respectively, and $\Omega=\R^2_+=\{(x,y)\in\R^2|y>0\}$. $D>0$ and $\varepsilon\geq0$ are cell and oxygen diffusion coefficients, and $\chi>0$ is referred to as the chemotactic coefficient measuring the strength of the chemotactic sensitivity, $\phi(x,y)$  is a sufficiently smooth function and independent of t. For the sake of simplicity, we suppose $D=\chi=1$ and $\nabla\phi=e_2=(0,1)$.
    The boundary layer problems on the system that the chemotactic term in (\ref{ncu equation}) reduce to $\nabla\cdot(n\nabla c)$ are available such as in \cite{HO,HO1,LSW}. However, there are few results of the well-posedness and the viscosity vanishing limit problem on the system (\ref{ncu equation}). Motivated by it, the well-posedness theory related to system (\ref{ncu equation}) will be established in conormal Sobolev spaces in Appendix. Our another goal is to investigate whether there exist significant differences on the dynamical behaviors between the system (\ref{ncu equation}) at \(\varepsilon=0\) and  \(\varepsilon>0\) under chemotaxis with logarithmic sensitivity. That is, it is necessary to clarify whether the solution of equations (\ref{ncu equation}) with $\varepsilon>0$ will converge to the solution with $\varepsilon=0$ as $\varepsilon\rightarrow0$. When discussing this problem, the singularity at $c=0$ is effectively resolved by applying Cole-Hopf type transformations (see \cite{LS,LW}):
	\begin{eqnarray}\label{c transform v}
		\begin{split}{}
			v^\varepsilon=-\nabla\ln c=-\frac{\nabla c}{c}.
		\end{split}
	\end{eqnarray}
    This translates the system(\ref{ncu equation}) into a non-singular conservation law system for $\varepsilon\geq0$:
	\begin{eqnarray}\label{nvu equation}
		\left\{
		\begin{split}{}
			&\partial_tn^\varepsilon-\Delta n^\varepsilon+u^\varepsilon\cdot\nabla n^\varepsilon-\nabla\cdot(n^\varepsilon v^\varepsilon)=0,\\
			&\partial_tv^\varepsilon-\varepsilon\Delta v^\varepsilon+\nabla(u^\varepsilon\cdot v^\varepsilon)+\nabla(\varepsilon|v^\varepsilon|^2-n^\varepsilon)=0,\\
			&\partial_tu^\varepsilon-\varepsilon\Delta u^\varepsilon+(u^\varepsilon\cdot\nabla)u^\varepsilon+\nabla p^\varepsilon=n^\varepsilon{e_2},\\
			&\nabla\cdot u^\varepsilon=0,\\
			&n^\varepsilon|_{t=0}=n_{\rm{in}}, v^\varepsilon|_{t=0}=v_{\rm{in}}, u^\varepsilon|_{t=0}=u_{\rm{in}},
		\end{split}
		\right.
	\end{eqnarray}
	with slip boundary conditions
	\begin{eqnarray}\label{nvu condition}
		\left\{
		\begin{split}{}
			&({\partial_{y}} n^\varepsilon+n^\varepsilon v^\varepsilon_2)|_{y=0}=0,\;\;(v^\varepsilon_2,u^\varepsilon_2)|_{y=0}=0,\;\;(\partial_yv^\varepsilon_1,\partial_yu^\varepsilon_1)|_{y=0}=0,\;\;&\varepsilon>0,\\
            &({\partial_{y}} n^0+n^0 v^0_2)|_{y=0}=0,\;\;u^0_2|_{y=0}=0,&\varepsilon=0.\\
		\end{split}
		\right.
	\end{eqnarray}
	 From the Cole-Hopf transformation (\ref{c transform v}), the curl for $v^\varepsilon$ must be intrinsically free:
	\begin{eqnarray}\label{v curl}
		\begin{split}{}
			\nabla\times v^\varepsilon=\partial_xv^\varepsilon_2-\partial_yv^\varepsilon_1=0.
		\end{split}
	\end{eqnarray}
	By taking the curl on both sides of the second equation (\ref{nvu equation}), we can obtain $\partial_t(\nabla\times v^\varepsilon)=\varepsilon\Delta(\nabla\times v^\varepsilon)$. Therefore, in order to maintain the intrinsic curl-free condition (\ref{v curl}) for $\varepsilon>0$, one can impose $\nabla\times v_0=0$. From $(\ref{nvu condition})_1$,
     the boundary condition  $\nabla\times v^\varepsilon|_{y=0}=0$ be included.

	Currently, there are relatively few studies concerning the boundary layer solution of system (\ref{nvu equation}). Researches have be concentrated on the boundary layer problems of either Keller-Segel system or Navier-Stokes system.
    If there is no influence of fluid velocity $u$, (\ref{nvu equation}) is simplified as a viscous parabolic system:
	\begin{eqnarray}\label{nv equation}
		\left\{
		\begin{split}{}
			&\partial_tn^\varepsilon-\nabla\cdot(n^\varepsilon v^\varepsilon)=\Delta n^\varepsilon,\\
			&\partial_tv^\varepsilon+\nabla(\varepsilon(v^\varepsilon)^2-n^\varepsilon)=\varepsilon\Delta v^\varepsilon,\\
			&n^\varepsilon|_{t=0}=n_{\rm{in}}, v^\varepsilon|_{t=0}=v_{\rm{in}},
		\end{split}
		\right.
	\end{eqnarray}
    For the results about the well-posedness problems $\varepsilon\geq0$ can be referred to \cite{LL,LP, RWW} and their references. The existence of boundary layer behavior for the system(\ref{nv equation}) was verified by Li-Zhao\cite{LZ}. By imposing Dirichlet boundary conditions on the one-dimensional interval (0,1), the existence of boundary layer solutions to the system (\ref{nv equation}) was established as $\varepsilon\rightarrow0$ and the formal asymptotic analysis demonstrated that the boundary layer thickness was $\varepsilon^\frac{1}{2}$ in \cite{HWZ} by Hou-Wang-Zhao.
    Hou-Liu-Wang-Wang\cite{HL} proved that for $\varepsilon>0$, the solution to equation (\ref{nv equation}) converged to the solution corresponding to $\varepsilon=0$ with the optimal convergence rate $O(\varepsilon^\frac{1}{2})$, and the relation of outer and inner solutions could be explicitly identified. Meanwhile, Peng-Wang-Zhao\cite{PW} proved the existence of boundary layer profiles of the system (\ref{nv equation}) in (0,1) in the limit $\varepsilon\rightarrow0$, and derived uniform $\varepsilon$-estimates by weighted energy estimates combining with the effective viscous flux method. Under Dirichlet boundary, Hou-Wang\cite{HW} rigorously derived the outer and inner structures of boundary layers for the system (\ref{nv equation}) in the two-dimensional half-plane and proved that as $\varepsilon\rightarrow0$, the boundary layer solutions converged to the superposition of the outer-layer solutions (i.e., the solutions for $\varepsilon=0$) and the inner-layer solutions.
   For further researches on boundary layers, one can refer to \cite{CHW,CLW,CLW1,LWY,MXW,MXW1} and the reference therein.

    The boundary layer arises from viscous effects induced by aerobic bacteria accumulating at the water surface, which further modulates flow velocity and oxygen dissolution kinetics (see \cite{DCC,TC}), hence the hydrodynamic effects are of fundamental significance. For the viscous Navier-Stokes system
	\begin{align}\label{u equation}
		\left\{
		\begin{aligned}{}
			&\partial_tu^\varepsilon-\varepsilon\Delta u^\varepsilon+(u^\varepsilon\cdot\nabla)u^\varepsilon+\nabla p^\varepsilon=0,\\
			&\nabla\cdot u^\varepsilon=0,\\
			&u^\varepsilon|_{t=0}=u_{\rm{in}},
		\end{aligned}
		\right.
	\end{align}
    the well-posedness of the system can be referred to \cite{MR}. The boundary layer problem has been a fundamental topic in fluid mechanics due to the distortion of nonviscous flow by surrounding viscous forces, as observed by Prandtl in 1904 \cite{PRA}, and has attracted extensive studies. Under the assumption of monotonic outflow velocity, Oleinik-Samokhin\cite{OS} established the local existence of smooth solutions for the boundary layer problem of the two-dimensional Prandtl equation. In subsequent work, Xiao-Xin \cite{XX1} investigated the solvability, regularity and vanishing viscosity limit of 3D incompressible Navier-Stokes system with the slip boundary conditions. When the slip length depends on the viscosity, the detailed description for asymptotic behavior in two-dimensional and three-dimensional spaces and the $L^\infty$-convergence of the solutions to the system(\ref{u equation}) were derived by Wang-Wang-Xin\cite{WWX}. On the other hand, Iftimie-Sueur\cite{IS} studied the boundary layer problem involving weak amplitude linear behavior of two-dimensional and three-dimensional incompressible Navier-Stokes equations with fixed slip length, and obtained the $L^2$-convergence of the solutions to the system (\ref{u equation}) when $\varepsilon\rightarrow0$.  Wang-Xin-Zang\cite{WXZ} extended this result and obtained the \(L^\infty\) convergence in three dimensions. Xiao-Xin {\cite{XX}} pointed out that it is impossible for $H^2$ convergence in general three dimensional domains, and the $H^1$-convergence was obtained at a rate of $O(\varepsilon)$ for the complete slip boundary conditions. Recently, Tao-Wang-Zhang \cite{twz} obtained the zero-viscosity limit for the Navier-Stokes system in the two-dimensional half-plane with the Navier friction boundary condition. For more related results, ones can refer to \cite{CLX,FGL1,FGL2,FTZ,IP,JM,SC1,SC2,TAO,KE,WX,WYZ1,WYZ2,XY} and their referees.

    Compared with the mentioned independent studies above concerning boundary layers for chemotaxis and fluid flows, this paper further considers boundary-layer effects of the coupled chemotaxis-Navier-Stokes system (\ref{nvu equation}) in the half-plane $\Omega=\R^2_+=\{(x,y)\in\R^2|y>0\}$, where $\partial\Omega=\{(x,y)\in\R^2|y=0\}$, and the outward unit normal vector on the boundary is given by $\vec{n}=(0,-1)$. According to the boundary layer theory\cite{PRA,SG}, the solutions $(n^\varepsilon,v^\varepsilon,u^\varepsilon)$ to the system(\ref{nvu equation}) with (\ref{nvu condition}) decompose into two parts for small $\varepsilon>0$:  inner boundary-layer profile and outer profile(the solution corresponding to $\varepsilon=0$). Furthermore, the boundary conditions for $n^\varepsilon$ remain consistent from $\varepsilon>0$ to $\varepsilon=0$, so no interior boundary layer arises for the $n^\varepsilon$-component. We denote the solutions to the system(\ref{nvu equation}) with (\ref{nvu condition}) corresponding to $\varepsilon>0$ and $\varepsilon=0$ by $(n^\varepsilon,v^\varepsilon,u^\varepsilon)$ and $(n^0,v^0,u^0)$, respectively. For $0<\alpha,\gamma, \beta\leq\frac{1}{2}$, one anticipates that $(n^\varepsilon,v^\varepsilon,u^\varepsilon)$ admits the following decomposition:
    \begin{align}\label{convergence}
       n^\varepsilon(t,x,y)&=n^0(t,x,y)+o(\varepsilon^\alpha),\notag\\
       v^\varepsilon(t,x,y)&=v^0(t,x,y)+(v^{B,0}_1(t,x,\frac{y}{\sqrt{\varepsilon}}),v^{B,0}_2(t,x,\frac{y}{\sqrt{\varepsilon}}))+o(\varepsilon^\beta),\\
       u^\varepsilon(t,x,y)&=u^0(t,x,y)+o(\varepsilon^\gamma),\notag\notag
    \end{align}
     where the outer profile $(n^0,v^0,u^0)=(n^0,v^0_1,v^0_2,u^0_1,u^0_2)$ is the solution to the system (\ref{nvu equation})-(\ref{nvu condition}) with $\varepsilon=0$, and the inner profile $(v^{B,0}_1,v^{B,0}_2)$ describes the sharp transition from a value away from the boundary layer to another value on the physical boundary.

     Now we state our main results. Some notations are given for convenience firstly.

     \textbf{Notations 1.1}
        $N$ represents the set of positive integers.
	In the following sections, we use $L^p_{xy}$ and $L^p_{xz}$ to denote the Lebegue spaces $L^p(\R\times\R_+)$, with respect to $(x,y)$ and $(x,z)$, respectively, with corresponding norms $\|\cdot\|_{L^p_{xy}}$ and $\|\cdot\|_{L^p_{xz}}$ for $1\leq p\leq\infty$. $H^q_{xy}$ and $H^q_{xz}$ for $q\in N$ represent the Sobolev space $W^{q,2}(\R\times\R_+)$, with respect to $(x,y)$ and $(x,z)$, respectively, with corresponding norms $\|\cdot\|_{H^q_{xy}}$ and $\|\cdot\|_{H^q_{xz}}$.
    \begin{theorem}\label{th nvu L infty convergence}
		{Assume that the initial data satisfy $(n_{\text{in}},v_{\text{in}},u_{\text{in}})\in H^{14}_{xy}\times H^{14}_{xy}\times H^{15}_{xy}$. Let $(n^0,v^0,u^0)$ be the solution obtained from Theorem 2.1 in Part I \cite{WWZ}, and let $0<T\leq T_{\text{max}}$. Then there exists $\varepsilon_0>0$ such that for every $0<\varepsilon\leq\varepsilon_0$, the problem $(\ref{nvu equation})$-$(\ref{nvu condition})_1$ admits a unique solution $(n^\varepsilon,v^\varepsilon,u^\varepsilon)$ satisfying the regularity of Theorem \ref{th nvu regularity}, and the following holds on $[0,T]$:
			\begin{align}\label{nvu varepsilon L infty}
			&\sup_{t\in[0,T]}\|n^\varepsilon(t,x,y)-n^0(t,x,y)\|_{L^\infty_{xy}}\leq C\varepsilon^{\frac{1}{4}},\notag\\
				&\sup_{t\in[0,T]}\|v^\varepsilon(t,x,y)-v^0(t,x,y)-(0,v^{B,0}_2)(t,x,\frac{y}{\sqrt{\varepsilon}})\|_{L^\infty_{xy}}\leq C\varepsilon^{\frac{1}{4}},\\
				&\sup_{t\in[0,T]}\|u^\varepsilon(t,x,y)-u^0(t,x,y)\|_{L^\infty_{xy}}\leq C\varepsilon^{\frac{1}{2}},\notag\notag
	\end{align}		
			where $C$ is a constant independent of $\varepsilon$.}				
	\end{theorem}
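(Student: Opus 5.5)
We construct an approximate solution by matched asymptotic expansion and control the remainder with uniform-in-$\varepsilon$ energy estimates. Following Part I \cite{WWZ}, we write
\begin{align*}
n^\varepsilon&=n^0+\sqrt{\varepsilon}\,n^{I,1}+\varepsilon^{\frac14}R^\varepsilon_n,\\
v^\varepsilon&=v^0+(0,v^{B,0}_2)(t,x,z)+\sqrt{\varepsilon}\,\big(v^{I,1}+v^{B,1}(t,x,z)\big)+\varepsilon^{\frac14}R^\varepsilon_v,\\
u^\varepsilon&=u^0+\sqrt{\varepsilon}\,\big(u^{I,1}+u^{B,1}(t,x,z)\big)+\varepsilon^{\frac12}R^\varepsilon_u,
\end{align*}
with $z=y/\sqrt{\varepsilon}$.

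The leading inner term $v^{B,0}_2$ corrects the mismatch $v^\varepsilon_2|_{y=0}=0$ versus $v^0_2|_{y=0}\neq0$. It solves the heat-type profile equation $\partial_t v^{B,0}_2-\partial_z^2v^{B,0}_2=0$ (coupled to the traces of $n^0,u^0$), with $v^{B,0}_2|_{z=0}=-v^0_2|_{y=0}$ and decay as $z\to\infty$. The tangential component $v^{B,0}_1$ vanishes at this order because of the curl-free constraint \eqref{v curl} together with $\partial_y v_1|_{y=0}=0$. For $u$ no leading layer is needed, since the slip conditions \eqref{nvu condition}$_1$ are compatible with $u^0_2|_{y=0}=0$. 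Only an $O(\sqrt{\varepsilon})$ layer appears, correcting $\partial_y u^0_1|_{y=0}$; this is what gives the better rate $\varepsilon^{1/2}$. The high regularity $H^{14}\times H^{14}\times H^{15}$ of the data is used to place these profiles, and the correctors of the next order, in weighted Sobolev spaces $H^k_{xz}$ with exponential or polynomial decay in $z$. We take these constructions and their estimates from Part I \cite{WWZ}, and the existence of $(n^\varepsilon,v^\varepsilon,u^\varepsilon)$ on $[0,T]$ from Theorem \ref{th nvu regularity} in the Appendix.

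Second, we substitute the ansatz into \eqref{nvu equation} to derive the remainder system. The error terms coming from the profiles are bounded in $L^2_{xy}$ by $C\varepsilon^{1/4}$ or better, since $\|f(\cdot,y/\sqrt{\varepsilon})\|_{L^2_{y}}=\varepsilon^{1/4}\|f\|_{L^2_z}$. We then perform $L^2$ and $H^1$ (with conormal derivative $\partial_x$ and $\varepsilon$-weighted $\partial_y$) energy estimates for $(R_n,R_v,R_u)$. The boundary terms vanish or are controlled using $\partial_yn+nv_2=0$, $v_2=u_2=0$ and $\partial_yv_1=\partial_yu_1=0$. Pressure terms are removed by taking the divergence-free test function $R_u$, which satisfies $R_{u,2}|_{y=0}=0$. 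The coupling $-\nabla\cdot(nv)$ in the $n$ equation and $\nabla n$ in the $v$ equation are handled by the standard cancellation: we test the $n$-remainder equation by $R_n/n$-type weights, or more simply use the symmetric pairing $\int \nabla R_n\cdot R_v$ against $\int R_n\nabla\cdot R_v$. The forcing $R_n e_2$ in the $u$ equation is lower order.

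The main obstacle is the convective and boundary-layer terms, such as $\nabla(u^\varepsilon\cdot v^\varepsilon)$ and $\nabla\cdot(n\,v^{B,0})$. These contain $\partial_y v^{B,0}=\varepsilon^{-1/2}\partial_z v^{B,0}$, which is singular. We would handle them with Hardy-type inequalities. Because $R_{u,2}$ and $R_{v,2}$ vanish at $y=0$, a term like $\int R_{u,2}\,\partial_y v^{B,0}\,R$ is bounded by $\|y\partial_yv^{B,0}\|_{L^\infty}\|R_{u,2}/y\|_{L^2}\|R\|_{L^2}\le C\|\partial_yR_{u,2}\|_{L^2}\|R\|_{L^2}$. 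The resulting factor $\|\partial_yR\|$ is absorbed by the $\varepsilon$-dissipation only if it is multiplied by $\varepsilon^{1/2}$. The scaling choice of the remainder, $\varepsilon^{1/4}$ for $(n,v)$ and $\varepsilon^{1/2}$ for $u$, is designed to supply exactly this factor. Where the $\varepsilon^{1/2}$ factor is not available, the fallback is to use the full $\Delta n$ dissipation for the $n$-component or the $L^\infty$ smallness of $R$ obtained from a bootstrap.

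Closing Gronwall on $[0,T]$ gives $\|R\|_{H^1}+\|\partial_x R\|_{H^1}\le C$ uniformly in $\varepsilon$. The $L^\infty$ bounds then follow from the anisotropic interpolation $\|f\|_{L^\infty}^2\le C\|f\|_{L^2}^{1/2}\|\partial_xf\|_{L^2}^{1/2}\|\partial_yf\|_{L^2}^{1/2}\|\partial_{xy}f\|_{L^2}^{1/2}$, tracking the powers of $\varepsilon$ in the $\partial_y$ norms. The higher-order profile terms are $O(\sqrt{\varepsilon})$ in $L^\infty$, so they are absorbed into the stated rates $\varepsilon^{1/4}$, $\varepsilon^{1/4}$ and $\varepsilon^{1/2}$.
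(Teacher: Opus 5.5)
Your overall architecture (Part I ansatz, remainder energy estimates, anisotropic interpolation for $L^\infty$) is the paper's. However, the scaling device you use to close the remainder estimates does not work. Suppose the $(n,v)$-remainders carry the prefactor $\varepsilon^{s}$ and the $u$-remainder carries $\varepsilon^{r}$. Then the singular term $\nabla(R_u\cdot V^a)$ enters the $R_v$-equation with factor $\varepsilon^{r-s}$, while the buoyancy enters the $R_u$-equation as $\varepsilon^{s-r}R_ne_2$. The two couplings pull in opposite directions, and for your choice $r-s=\tfrac14$ both steps fail.

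\emph{The $u\to v$ coupling.} Your Hardy bound itself is fine, since $\|z\partial_zv^{B,0}_2\|_{L^\infty}\le C$. But it leaves $C\varepsilon^{1/4}\|\partial_yR_{u,2}\|\,\|R_v\|$, not $\varepsilon^{1/2}$. Absorbing this into $\varepsilon\|\nabla R_u\|^2$ leaves $C\varepsilon^{-1/2}\|R_v\|^2$, i.e.\ a Gronwall factor $e^{C\varepsilon^{-1/2}T}$.

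\emph{The $n\to u$ coupling.} The term $\varepsilon^{-1/4}R_ne_2$ is not lower order. Your residual accounting ($O(\varepsilon^{1/4})$ residuals divided by $\varepsilon^{1/4}$) only gives $\|R_n\|_{L^2}\le C$. Hence $\|R_u\|_{L^2}\lesssim\varepsilon^{-1/4}$, which is incompatible with the $\varepsilon^{1/2}$ rate for $u$.

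\emph{What the paper does.} It uses the single scaling $\varepsilon^{1/2}$ for $(H^\varepsilon,V^\varepsilon,U^\varepsilon)$ and proves jointly $\|(H^\varepsilon,V^\varepsilon,U^\varepsilon)\|_{L^2_{xy}}+\|\nabla U^\varepsilon\|_{L^2_{xy}}\le C\varepsilon^{1/4}$, with $\nabla U^\varepsilon$ controlled through the vorticity. Then $H^\varepsilon e_2$ and $\partial_xH^\varepsilon$ are absorbed by the $n$-dissipation. A Hardy term like yours could then be closed via $\partial_yU^\varepsilon_2=-\partial_xU^\varepsilon_1$ rather than via the $\varepsilon$-dissipation.

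\emph{The boundary trace of $R_{u,2}$.} The condition $R_{u,2}|_{y=0}=0$ is not automatic. Incompressibility of the $\sqrt\varepsilon u^{B,1}_1$ layer forces a normal corrector $\varepsilon u^{B,2}_2$ with nonzero trace, so $U^\varepsilon_2|_{y=0}=-\varepsilon^{1/2}u^{B,2}_2-\varepsilon u^{B,3}_2$. The paper handles this with the Leray projection ($\|(I-\mathbb P)U^\varepsilon\|_{H^1_{xy}}\le C\varepsilon^{1/2}$) and a cut-off lifting $\tilde\omega$ of the vorticity boundary value.

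Second, you claim Gronwall yields $\|R\|_{H^1}+\|\partial_xR\|_{H^1}\le C$ uniformly. This is unsupported, and it conflicts with your own remark about tracking powers of $\varepsilon$ in the $\partial_y$ norms. Yet the exponents in the theorem come precisely from these normal-derivative bounds.

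\emph{Why the normal derivatives lose.} The $v$-remainder has only $\varepsilon\Delta$ dissipation, and each normal derivative of a residual or profile costs $\varepsilon^{-1/2}$. For example, $\|g^\varepsilon\|_{L^2_{xy}}\le C\varepsilon^{3/4}$ but $\|\partial_yg^\varepsilon\|_{L^2_{xy}}\le C\varepsilon^{1/4}$. So the paper obtains only $\varepsilon^{1/4}\|\partial_y(H^\varepsilon,V^\varepsilon,\nabla U^\varepsilon)\|_{L^2_{xy}}\le C$ and $\varepsilon^{3/4}\|\partial_x\partial_y(H^\varepsilon,V^\varepsilon)\|_{L^2_{xy}}\le C$, besides $\|\partial_x(H^\varepsilon,V^\varepsilon,\nabla U^\varepsilon)\|_{L^2_{xy}}\le C\varepsilon^{1/4}$. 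In your normalization this method delivers $\|\partial_x\partial_yR_v\|\lesssim\varepsilon^{-1/2}$, not $O(1)$.

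\emph{Where the rates come from.} Interpolation gives $\|(H^\varepsilon,V^\varepsilon)\|_{L^\infty_{xy}}\le C\varepsilon^{1/8}\varepsilon^{-3/8}=C\varepsilon^{-1/4}$, hence the rate $\varepsilon^{1/2}\varepsilon^{-1/4}=\varepsilon^{1/4}$ for $n$ and $v$. By contrast, $u$ keeps $\varepsilon^{1/2}$ because $\partial_yU^\varepsilon$ and $\partial_x\partial_yU^\varepsilon$ are controlled through $\tilde\omega$ and $\partial_x\tilde\omega$ without loss, giving $\|U^\varepsilon\|_{L^\infty_{xy}}\le C\varepsilon^{1/4}$.

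\emph{Your truncated ansatz makes this worse.} Part I's expansion contains the layer $\sqrt\varepsilon n^{B,1}$, not an interior $n^{I,1}$. It is needed because $\partial_yn^\varepsilon|_{y=0}=0$ while $\partial_yn^0|_{y=0}=-n^0v^0_2\neq0$. Without it, the $n$-residual contains the term $\varepsilon^{-1/2}n^0\partial_zv^{B,0}_2$, of size $\varepsilon^{-1/4}$ in $L^2_{xy}$. This contradicts your claim that all residuals are $O(\varepsilon^{1/4})$. Likewise, the second-order profiles $\varepsilon n^{B,2}$, $\varepsilon v^{B,2}_1$, $\varepsilon u^{B,2}$, $\varepsilon^{3/2}u^{B,3}_2$ are kept exactly so that the residuals and their $\partial_y$-derivatives are small enough (the estimates for $f^\varepsilon,g^\varepsilon,h^\varepsilon$).

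Finally, an $R_n/n$ weight is unavailable, since $n$ has no positive lower bound on $\mathbb R^2_+$. The paper simply absorbs the $n$--$v$ coupling into $\|\nabla H^\varepsilon\|^2_{L^2_{xy}}$.
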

    \begin{remark}
    The system $(\ref{nvu equation})$-$(\ref{nvu condition})_1$ become Keller-Segel system with logarithmic when one don't consider the influence of fluid, and compared to {\rm{\cite{HW}}} we consider the boundary layer problems under different boundary conditions, which is the slip boundary conditions. Compared to \cite{HO,HO1,LSW}, we consider the different system that is one with viscosity dissipation fluid terms and logarithmic singularity. By overcoming the difficulties from the imbalance between viscosity dissipation and non-viscous nonlinear terms, we get the $L^\infty_{xy}$-convergence results $(\ref{nvu varepsilon L infty})$ eventually. The result is new.
    \end{remark}

        Next, we turn the results obtained form Theorem $\ref{th nvu L infty convergence}$ to the original chemotaxis-Navier-Stokes system $(\ref{ncu equation})$. Note that the boundary {conditions} in $(\ref{nvu condition})$ for $v^\varepsilon$ is equivalent to $\partial_yc^\varepsilon=0$ by simple calculation. Then the corresponding initial-boundary value problem of the original chemotaxis-Navier-Stokes system $(\ref{ncu equation})$ reads as
     \begin{align}\label{ncu varepsilon equation}
		\begin{cases}
		  \partial_tn^\varepsilon=\Delta n^\varepsilon-u^\varepsilon\cdot\nabla n^\varepsilon-\chi\nabla\cdot(\frac{n^\varepsilon}{c^\varepsilon}\nabla c^\varepsilon),\\
			\partial_tc^\varepsilon=\varepsilon\Delta c^\varepsilon-u^\varepsilon\cdot\nabla c^\varepsilon-n^\varepsilon c^\varepsilon,\\
			\partial_tu^\varepsilon=\varepsilon\Delta u^\varepsilon-(u^\varepsilon\cdot\nabla)u^\varepsilon-\nabla p^\varepsilon+n^\varepsilon e_2,\\
			\nabla\cdot u^\varepsilon=0,\\
		(n^\varepsilon,c^\varepsilon,u^\varepsilon)|_{t=0}=(n_{in},c_{in},u_{in}),\\
        \partial_yn^\varepsilon+n^\varepsilon\partial_yc^\varepsilon|_{y=0}=0,\;\partial_yc^\varepsilon|_{y=0}=0,\;\partial_yu_1^\varepsilon|_{y=0}=0,\;u^\varepsilon_2|_{y=0}=0,\;\;\; if \;\; \varepsilon>0,\\
        \partial_yn^0+n^0\partial_yc^0|_{y=0}=0,\;u^0_2|_{y=0}=0,\;\;\; if \;\; \varepsilon=0,\\
		\end{cases}
	\end{align}

         By Theorem $\ref{th nvu L infty convergence}$, we get the following results for the system $(\ref{ncu varepsilon equation})$.

         \begin{theorem}\label{th ncu L infty}
		{Suppose $(n_{\rm{in}},\ln c_{\rm{in}},u_{\rm{in}})\in H^{14}_{xy}\times H^{15}_{xy}\times H^{15}_{xy}$ with $n_{\rm{in}}\geq0$, $c_{\rm{in}}>0$. Then $(\ref{ncu varepsilon equation})$ admits a unique solution $(n^\varepsilon,c^\varepsilon,u^\varepsilon)$ on $[0,T]$ such that
		\begin{equation}\label{Equ(2.22)}
		\begin{aligned}{}
        &\sup_{t\in[0,T]}\|c^\varepsilon(t,x,y)-c^0(t,x,y)\|_{L^2_{xy}}\leq C\varepsilon^{\frac{1}{2}},\\
        &\sup_{t\in[0,T]}\|c^\varepsilon(t,x,y)-c^0(t,x,y)\|_{L^\infty_{xy}}\leq C\varepsilon^{\frac{1}{4}}.\\
        \end{aligned}
	\end{equation}

        {Since} the convergence of $n^\varepsilon$ and $u^\varepsilon$ is the same as in $(\ref{nvu varepsilon L infty})$ and $(\ref{L^2-convergence})$, we omit the description here,
       {where} $C$ is a constant independent of $\varepsilon$.		
		}
	\end{theorem}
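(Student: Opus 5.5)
We plan to deduce Theorem \ref{th ncu L infty} from Theorem \ref{th nvu L infty convergence} by inverting the Cole--Hopf transformation (\ref{c transform v}). From $(\ref{ncu varepsilon equation})_2$, set $w^\varepsilon=\ln c^\varepsilon$. Then
\begin{equation*}
\partial_t w^\varepsilon=\varepsilon\Delta w^\varepsilon+\varepsilon|\nabla w^\varepsilon|^2-u^\varepsilon\cdot\nabla w^\varepsilon-n^\varepsilon,
\end{equation*}
and $\nabla w^\varepsilon=-v^\varepsilon$. Hence
\begin{equation*}
\partial_t w^\varepsilon=-\varepsilon\nabla\cdot v^\varepsilon+\varepsilon|v^\varepsilon|^2+u^\varepsilon\cdot v^\varepsilon-n^\varepsilon,
\end{equation*}
which is consistent with $(\ref{nvu equation})_2$ after taking gradients. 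At $\varepsilon=0$ this gives $\partial_t w^0=u^0\cdot v^0-n^0$. Both functions have the same initial datum $\ln c_{in}$, so they can be reconstructed by time integration:
\begin{equation*}
c^\varepsilon(t)=c_{in}\exp\Big(\int_0^t\big(-\varepsilon\nabla\cdot v^\varepsilon+\varepsilon|v^\varepsilon|^2+u^\varepsilon\cdot v^\varepsilon-n^\varepsilon\big)\,ds\Big),
\end{equation*}
and similarly for $c^0$. Existence and uniqueness of $(n^\varepsilon,c^\varepsilon,u^\varepsilon)$ follow from the corresponding statements for $(n^\varepsilon,v^\varepsilon,u^\varepsilon)$. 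Positivity of $c^\varepsilon$ is automatic. The boundary condition $\partial_yc^\varepsilon=-c^\varepsilon v_2^\varepsilon=0$ and the curl-free property (\ref{v curl}) ensure that $w^\varepsilon$ is well defined with $\nabla w^\varepsilon=-v^\varepsilon$. The assumption $\ln c_{in}\in H^{15}_{xy}$ gives $v_{in}\in H^{14}_{xy}$.

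For the estimates, write $c^\varepsilon-c^0=c^0(e^{W}-1)$ with
\begin{equation*}
W=\int_0^t\big(-\varepsilon\nabla\cdot v^\varepsilon+\varepsilon|v^\varepsilon|^2+(u^\varepsilon-u^0)\cdot v^\varepsilon+u^0\cdot(v^\varepsilon-v^0)-(n^\varepsilon-n^0)\big)\,ds.
\end{equation*}
We will show that $W$ is uniformly bounded in $L^\infty$. Then $|e^W-1|\le C|W|$, and since $c^0\le \|c_{in}\|_{L^\infty}e^{CT}$ is bounded, the estimates reduce to bounding $W$ in $L^2_{xy}$ and $L^\infty_{xy}$.

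The $L^\infty$ bound follows by inserting (\ref{nvu varepsilon L infty}), with the layer term handled separately. The terms $(u^\varepsilon-u^0)\cdot v^\varepsilon$ and $n^\varepsilon-n^0$ are $O(\varepsilon^{1/2})$ and $O(\varepsilon^{1/4})$ respectively. The term $u^0\cdot(v^\varepsilon-v^0)$ splits into $O(\varepsilon^{1/4})$ plus $u^0_2\,v^{B,0}_2(t,x,y/\sqrt\varepsilon)$. Since $u^0_2|_{y=0}=0$, we have $|u^0_2|\le Cy=C\sqrt\varepsilon\,z$. Because $zv^{B,0}_2$ is bounded (the boundary layer profile from Part I decays in $z$), this piece is $O(\sqrt\varepsilon)$. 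The terms $\varepsilon|v^\varepsilon|^2$ and $\varepsilon\nabla\cdot v^\varepsilon$ are the delicate ones. The normal derivative satisfies $\partial_y v^\varepsilon_2\sim\varepsilon^{-1/2}\partial_zv^{B,0}_2$, so $\varepsilon\nabla\cdot v^\varepsilon=O(\varepsilon^{1/2})$, provided we have uniform $L^\infty$ bounds on $\partial_x v^\varepsilon$ and $\sqrt\varepsilon\partial_yv^\varepsilon$. These come from the conormal regularity of Theorem \ref{th nvu regularity} and the layer structure. This gives the $L^\infty$ rate $\varepsilon^{1/4}$.

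The $L^2$ rate $\varepsilon^{1/2}$ needs the corresponding $L^2$ convergence (\ref{L^2-convergence}) of $n^\varepsilon-n^0$, $u^\varepsilon-u^0$, and $v^\varepsilon-v^0$ minus the layer, at order $\varepsilon^{1/2}$, which we will invoke. The layer contributes $\|v^{B,0}_2(\cdot,\cdot/\sqrt\varepsilon)\|_{L^2_{xy}}=\varepsilon^{1/4}\|v^{B,0}_2\|_{L^2_{xz}}$. This is too weak in general, but it appears in $W$ only multiplied by $u^0_2=O(y)$, which gains an extra $\varepsilon^{1/2}$. For $\varepsilon\nabla\cdot v^\varepsilon$, the $L^2$ norm of $\partial_yv^\varepsilon_2$ is $O(\varepsilon^{-1/4})$, so that term is $O(\varepsilon^{3/4})$. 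The main obstacle we expect is controlling $\varepsilon\nabla\cdot v^\varepsilon$ and $\varepsilon|v^\varepsilon|^2$ uniformly, since the conormal framework does not directly control normal derivatives. Our first attempt will be to use the explicit layer decomposition and the remainder estimates from the proof of Theorem \ref{th nvu L infty convergence}, including the $H^1$-type bound on the remainder scaled by $\sqrt\varepsilon$. Failing that, we would integrate $\varepsilon\partial_y v_2^\varepsilon$ against the equation, using $\partial_t v_2^\varepsilon$ to trade the normal derivative.
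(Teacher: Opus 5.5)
Your proposal is correct and takes a genuinely different route from the paper.

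\textbf{How the two routes differ.} The paper never inverts the Cole--Hopf transform. It subtracts the two $c$-equations and does an $L^2_{xy}$ energy estimate. There $\varepsilon\Delta c^\varepsilon$ is handled by integration by parts with $\partial_yc^\varepsilon|_{y=0}=0$, so only $\|\nabla c^\varepsilon\|_{L^2_{xy}}=\|c^\varepsilon v^\varepsilon\|_{L^2_{xy}}$ enters, and the transport term is handled by the $L^\infty$ rate $\varepsilon^{1/2}$ for $u^\varepsilon-u^0$. It closes with Gronwall. The $L^\infty_{xy}$ rate then comes from $\|f\|_{L^\infty_{xy}}\le C\|f\|^{1/2}_{L^2_{xy}}\|\nabla^2f\|^{1/2}_{L^2_{xy}}$, using a uniform bound on $\|\nabla^2c^\varepsilon\|_{L^2_{xy}}$ attributed to Theorem \ref{th nvu regularity}.

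You instead use the exact representation $c^\varepsilon-c^0=c^0(e^W-1)$ and feed in the rates of Theorems \ref{th nvu L infty convergence} and \ref{th L2 convergence} term by term. Your key observation makes this work: the a priori $O(1)$ cross term $u^0_2v^{B,0}_2$ is $O(\sqrt\varepsilon)$ in $L^\infty_{xy}$ and $O(\varepsilon^{3/4})$ in $L^2_{xy}$, because $u^0_2|_{y=0}=0$ and $zv^{B,0}_2$ is controlled by the weighted norms of Lemma \ref{lem inner layer}.

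\textbf{What each route buys.} The paper's $L^2$ step is shorter and sees the layer only through $\|v^\varepsilon\|_{L^2_{xy}}$. Your route gives the $L^\infty_{xy}$ rate with no second-derivative bound on $c^\varepsilon$, and this is a real advantage. Indeed $\partial_y^2c^\varepsilon=c^\varepsilon(v^\varepsilon_2)^2-c^\varepsilon\partial_yv^\varepsilon_2$ and $\|\partial_y[v^{B,0}_2(t,x,y/\sqrt\varepsilon)]\|_{L^2_{xy}}=\varepsilon^{-1/4}\|\partial_zv^{B,0}_2\|_{L^2_{xz}}$. So, by the expansion and the remainder bounds of Theorem \ref{th L infty convergence}, $\|\nabla^2c^\varepsilon\|_{L^2_{xy}}$ is of exact order $\varepsilon^{-1/4}$ whenever $v^{B,0}_2\not\equiv0$. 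At that size the interpolation step yields only $\varepsilon^{1/8}$. Your reconstruction also makes the existence and uniqueness claim explicit, which the paper takes for granted.

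\textbf{The step to tighten: $\varepsilon\nabla\cdot v^\varepsilon$.} A uniform-in-time $L^\infty_{xy}$ bound on $\sqrt\varepsilon\,\partial_yv^\varepsilon$ is more than you need, and the paper's estimates do not supply it, since second normal derivatives of $V^\varepsilon$ are controlled only in $L^2_T$. Because $W$ is a time integral, it suffices to show $\varepsilon\|\nabla\cdot v^\varepsilon\|_{L^1_TL^\infty_{xy}}\le C\varepsilon^{1/4}$. Split $v^\varepsilon=V^\theta+\varepsilon^{1/2}V^\varepsilon$.
\begin{itemize}
\item The piece $\varepsilon\nabla\cdot V^\theta$ is $O(\varepsilon^{1/2})$ in $L^\infty_{xy}$ directly from Lemma \ref{lem inner layer} and (\ref{z regularity}); the worst term is $\varepsilon^{1/2}\partial_zv^{B,0}_2$.
\item For $\varepsilon^{3/2}\nabla\cdot V^\varepsilon$, apply the first inequality of (\ref{G-N inequality}) to $\partial_xV^\varepsilon_1$ and $\partial_yV^\varepsilon_2$. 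Use the $L^\infty_T$ bounds of Theorem \ref{th L infty convergence}, the time-integrated bounds on $\partial_x^2V^\varepsilon$, $\partial_y^2V^\varepsilon$, $\partial_x^2\partial_yV^\varepsilon$, $\partial_x\partial_y^2V^\varepsilon$ from Lemmas \ref{lem HVUx L2}--\ref{lem HVUxy L2}, and H\"older in time. The worst contribution is $\varepsilon^{3/2}\cdot\varepsilon^{-1}=\varepsilon^{1/2}$.
\end{itemize}
The term $\varepsilon|v^\varepsilon|^2$ is not delicate: $\|v^\varepsilon\|_{L^\infty_{xy}}\le C$ already follows from (\ref{nvu varepsilon L infty}) and the boundedness of $v^0$ and $v^{B,0}_2$. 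So your fallback of trading $\partial_yv^\varepsilon_2$ through the equation is unnecessary.
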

    \begin{remark}
    By directly computation, the boundary layers be presented in the derivative of $c^\varepsilon$
    ,i.e. $$\sup\limits_{t\in[0,T]}\|\nabla c^\varepsilon(t,x,y)-\nabla c^0(t,x,y)-(0,c^0(t,x,y)v^{B,0}_2)(t,x,\frac{y}{\sqrt{\varepsilon}})\|_{L^\infty_{xy}}\leq C\varepsilon^{\frac{1}{4}},$$
    which means the chemical diffusion rate $\varepsilon$ plays an important role for the system and cannot be neglected.
 \end{remark}

	The rest of the paper is organized as follows.
    First, in Sect.\ref{part2}, we derive the boundary layer remainder equations for the system $(\ref{nvu equation})$-$(\ref{nvu condition})_1$, together with the proofs of our main theorems. In Sect.\ref{part3}, we derive several prior estimates. In Sect.\ref{part4}, the uniform bound is derived for the remainder. In Sect.\ref{5}, we establish the well-posedness of the system $(\ref{nvu equation})$-$(\ref{nvu condition})_1$ in conormal Sobolev spaces.
	
	\section{Remainder results and proof of main Theorem }\label{part2}
    \subsection{Remainder Equations}
	By the detailed derivation of boundary layer profiles in Section 3 of Part I \cite{WWZ}, we expand the solutions (\(n^\varepsilon, v^\varepsilon, u^\varepsilon\)) to the system (\ref{nvu equation}) as follows:
    \begin{align*}
        \begin{cases}
        n^\varepsilon=n^0+\sqrt{\varepsilon} n^{B,1}+\varepsilon n^{B,2}+\varepsilon^\frac{1}{2}H^\varepsilon,\\
		v^\varepsilon=v^0+(0,v^{B,0}_2)+\sqrt{\varepsilon} v^{B,1}+\varepsilon(v^{B,2}_1,0)+\varepsilon^\frac{1}{2}V^\varepsilon,\\
        u^\varepsilon=u^0+\sqrt{\varepsilon}(u^{B,1}_1,0)+\varepsilon u^{B,2}+\varepsilon^\frac{3}{2}(0, u^{B,3}_2)+\varepsilon^\frac{1}{2}U^\varepsilon,\\
        p^\varepsilon=p^0+\varepsilon p^{B,2}+\varepsilon^\frac{1}{2}K^\varepsilon,\\
        \end{cases}
    \end{align*}
    where $H^\varepsilon, V^\varepsilon$, $U^\varepsilon$ and $K^\varepsilon$ denote $\varepsilon^\frac{j-1}{2}n^{I,j}+\varepsilon^\frac{j}{2}n^{B,j+1}$, $\varepsilon^\frac{j-1}{2}v^{I,j}+(\varepsilon^\frac{j}{2}v^{B,j+1}_1,0)+(0,\varepsilon^\frac{j-1}{2}v^{B,j}_2)$, $\varepsilon^\frac{j-1}{2}u^{I,j}+(\varepsilon^\frac{j}{2}u^{B,j+1}_1,0)+(0,\varepsilon^\frac{j+1}{2}u^{B,j+2}_2)$ and $\varepsilon^\frac{j-1}{2}p^{I,j}+\varepsilon^\frac{j}{2}p^{B,j+1}$, respectively, for $j\geq2$.
    These higher-order aggregated terms \((H^\varepsilon, V^\varepsilon, U^\varepsilon)\) are referred to as boundary layer remainders. For the convenience of estimating these remainders later on, we first introduce the following approximations:
         \begin{eqnarray}\label{HVUK theta}
		\left\{
		\begin{split}{}
			&H^\theta=n^0+\sqrt{\varepsilon} n^{B,1}+\varepsilon n^{B,2},\\
			&V^\theta=v^0+(0,v^{B,0}_2)+\sqrt{\varepsilon} v^{B,1}+\varepsilon(v^{B,2}_1,0),\\
                &U^\theta=u^0+\sqrt{\varepsilon}(u^{B,1}_1,0)+\varepsilon u^{B,2}+\varepsilon^\frac{3}{2}(0, u^{B,3}_2),\\
                &K^\theta=p^0+\varepsilon p^{B,2},\\
		\end{split}
		\right.
	\end{eqnarray}
        then the remainders $(H^\varepsilon,V^\varepsilon,U^\varepsilon)$ be as follows
        \begin{eqnarray}\label{HVUK varepsilon}
		\begin{split}{}
			H^\varepsilon&=\varepsilon^{-\frac{1}{2}}(n^\varepsilon-H^\theta),\;\;
			V^\varepsilon=\varepsilon^{-\frac{1}{2}}(v^\varepsilon-V^\theta),\\
                U^\varepsilon&=\varepsilon^{-\frac{1}{2}}(u^\varepsilon-U^\theta),\;\;
                K^\varepsilon=\varepsilon^{-\frac{1}{2}}(p^\varepsilon-K^\theta).\\
		\end{split}
	\end{eqnarray}
        The remaining part is directly derived from $(\ref{nvu equation})$-$(\ref{nvu condition})_1$ as follows
	\begin{align}\label{HVU equation}
		\begin{cases}
			\partial_tH^\varepsilon=\Delta H^\varepsilon-\varepsilon^\frac{1}{2} U^\varepsilon\cdot\nabla H^\varepsilon-U^\varepsilon\cdot\nabla H^\theta-U^\theta\cdot\nabla H^\varepsilon+\varepsilon^\frac{1}{2}\nabla\cdot(H^\varepsilon V^\varepsilon)\\
            \quad\quad\quad+\nabla\cdot(H^\varepsilon V^\theta)+\nabla\cdot(H^\theta V^\varepsilon)+\varepsilon^{-\frac{1}{2}}f^\varepsilon,\\
			\partial_tV^\varepsilon=\varepsilon\Delta V^\varepsilon-\varepsilon^\frac{1}{2}\nabla(U^\varepsilon\cdot V^\varepsilon)-\nabla(U^\varepsilon\cdot V^\theta)-\nabla(U^\theta\cdot V^\varepsilon)-\varepsilon^\frac{3}{2}\nabla(|V^\varepsilon|^2)\\
            \quad\quad\quad-2\varepsilon\nabla(V^\varepsilon\cdot V^\theta)+\nabla H^\varepsilon+\varepsilon^{-\frac{1}{2}}g^\varepsilon,\\
            \nabla\times V^\varepsilon=0,\\
                \partial_tU^\varepsilon=\varepsilon\Delta U^\varepsilon-\varepsilon^\frac{1}{2} U^\varepsilon\cdot\nabla U^\varepsilon-U^\varepsilon\cdot\nabla U^\theta-U^\theta\cdot\nabla U^\varepsilon-\nabla K^\varepsilon+H^\varepsilon e_2+\varepsilon^{-\frac{1}{2}}h^\varepsilon,\\
                \nabla\cdot U^\varepsilon=0,\\
                (H^\varepsilon,V^\varepsilon,U^\varepsilon)(0,x,y)=(0,0,0),\\
            \partial_y H^\varepsilon(t,x,0)=0,\;\; (\partial_yV^\varepsilon_1, \partial_yU^\varepsilon_1)(t,x,0)=(0,0),\;\; V^\varepsilon_2(t,x,0)=0,\\
            U^\varepsilon_2(t,x,0)=-\varepsilon^\frac{1}{2}u^{B,2}_2(t,x,0)-\varepsilon u^{B,3}_2(t,x,0),\\
		\end{cases}
	\end{align}
	where
	\begin{align}\label{fgh varepsilon}
        &f^\varepsilon=-\partial_tH^\theta+\Delta H^\theta-U^\theta\cdot\nabla H^\theta+\nabla\cdot(H^\theta V^\theta),\notag\\
		&g^\varepsilon=-\partial_tV^\theta+\varepsilon\Delta V^\theta-\nabla(U^\theta\cdot V^\theta)-\varepsilon\nabla(|V^\theta|^2)+\nabla H^\theta,\\
        &h^\varepsilon=-\partial_tU^\theta+\varepsilon\Delta U^\theta-U^\theta\cdot\nabla U^\theta-\nabla K^\theta+H^\theta{e_2}.\notag\notag
	\end{align}
    \subsection{Prior results}
    \begin{lemma}\label{lem n0v0u0}
		{{\rm(see Theorem 2.1 in \cite{WWZ})} Assume that the initial data satisfy
        $$(n_{in},v_{in},u_{in})\in H^{14}_{xy}\times H^{14}_{xy}\times H^{15}_{xy},\;\;n_{in}\geq0,\;\;\nabla\times v_{in}=0,$$ then there exists the time $T>0$ such that the system $(\ref{nvu equation})$-$(\ref{nvu condition})_2$ has the unique solutions $(n^0,v^0,u^0)$ on $[0,T]$ satisfying
        \begin{align*}\label{n^0v^0u^0 Hm}
           \|n^0\|_{H^{13}_{xy}} +\|v^0\|_{H^{13}_{xy}}+\|u^0\|_{H^{14}_{xy}}+ \int_0^t \|n^0(\tau)\|^2_{H^{13}_{xy}}d\tau\leq C,
        \end{align*}
        and
        \begin{align*}
          \|\partial^j_tn^0\|_{H^{14-2j}_{xy}}+\|\partial^j_tv^0\|_{H^{14-2j}_{xy}}&\leq C,\;0<2j\leq 14,\notag\\
           \|\partial^j_tu^0\|_{H^{15-2j}_{xy}}&\leq C,\;0<2j\leq 15,
        \end{align*}
          }
       where $C$ is a constant independent of the viscosity.
	\end{lemma}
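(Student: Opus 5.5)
The statement is Theorem~2.1 of Part~I \cite{WWZ}, so here it is simply invoked. The argument we would use to establish it is a local well-posedness proof for the limiting system. That system is $(\ref{nvu equation})$ with $\varepsilon=0$: a parabolic equation for $n^0$, a hyperbolic equation for $v^0$, and the incompressible Euler equations for $u^0$ forced by $n^0e_2$, subject to $(\ref{nvu condition})_2$. The plan has four steps: linearize, iterate, derive uniform high-order energy estimates on a short time interval, and pass to the limit by a contraction in a low norm.

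\textbf{Step 1: rewrite the $v^0$-equation as a transport equation.} Using $\nabla\times v^0=0$ (propagated from $\nabla\times v_{in}=0$), write
$$\nabla(u^0\cdot v^0)=(\nabla u^0)^{T}v^0+u^0\cdot\nabla v^0 .$$
The $v^0$-equation then becomes
$$\partial_t v^0+u^0\cdot\nabla v^0=-(\nabla u^0)^Tv^0+\nabla n^0 ,$$
a transport equation along $u^0$. Because $u^0_2|_{y=0}=0$, the characteristics are tangent to the boundary, so no boundary condition on $v^0$ is needed and $H^m$ energy estimates close without boundary terms. The Euler part is treated in the standard way. We estimate $u^0$ in $H^{m+1}$ via the vorticity $\omega=\partial_xu_2-\partial_yu_1$, which satisfies
$$\partial_t\omega+u^0\cdot\nabla\omega=\partial_x n^0 ,$$
together with div–curl elliptic estimates on the half-plane under $u_2|_{y=0}=0$.

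\textbf{Step 2: parabolic estimates for $n^0$.} The $n^0$-equation is parabolic with the Robin-type condition $\partial_yn^0+n^0v^0_2=0$. We take $H^m$ estimates of tangential derivatives and time derivatives directly. Normal derivatives are recovered from the equation, $\partial_y^2n^0=\partial_tn^0-\partial_x^2n^0+\dots$. This yields the dissipative bound $\int_0^t\|n^0\|^2_{H^{m+1}}$. Nonnegativity $n^0\ge0$ follows from the maximum principle for the linear equation.

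\textbf{Step 3: close the derivative count (main obstacle).} The difficulty is the apparent loss of one derivative. The $v^0$-estimate at level $H^m$ needs $\nabla n^0\in L^1_tH^m$. Conversely, the flux $\nabla\cdot(n^0v^0)$ in the $n^0$-estimate needs $\nabla\cdot v^0\in H^{m-1}$ while $n^0$ gains only one derivative. The gain from parabolic smoothing exactly compensates: $n^0\in L^2_tH^{m+1}$ gives
$$\|\nabla n^0\|_{L^1_tH^m}\le t^{1/2}\|n^0\|_{L^2_tH^{m+1}} ,$$
and in the $n^0$ energy identity the top-order term $\int\partial^m\nabla\cdot(n^0v^0)\,\partial^mn^0$ is integrated by parts onto $\partial^m\nabla n^0$ and absorbed by dissipation, using the Robin condition on the boundary. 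The boundary integrals arising from $\partial_y$-derivatives at $y=0$ are the delicate part. We would handle them by differentiating the boundary condition tangentially and using trace inequalities absorbed into the dissipation. The small factor $t^{1/2}$ lets a Grönlund-type argument close on a short time $T$ with $m=13$ for $(n^0,v^0)$ and $14$ for $u^0$.

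\textbf{Step 4: time derivatives, contraction, uniqueness.} The bounds on $\partial_t^j$ follow by repeatedly substituting the equations, each time derivative costing two spatial derivatives for the parabolic component; the hyperbolic components cost only one, which the stated indices $14-2j$ and $15-2j$ accommodate. Existence follows from an iteration scheme (solve linear problems with coefficients from the previous iterate) bounded uniformly by Steps 1–3, with contraction in $L^2$. Uniqueness follows from the same $L^2$ difference estimate. Since $\varepsilon$ never enters, all constants are independent of the viscosity.
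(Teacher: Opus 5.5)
Your outline is the standard one and is sound as far as it goes. Curl-freeness turns the $v^0$-equation into a transport equation along the tangential field $u^0$. The Euler part is controlled through $\partial_t\omega+u^0\cdot\nabla\omega=\partial_xn^0$. The parabolic gain $n^0\in L^2_tH^{m+1}$ pays for the $\nabla n^0$ forcing on a short interval. The paper itself offers nothing beyond the citation of Part I to compare with.

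There is, however, a genuine gap in Steps 2 and 4. Uniform-in-time bounds for $\partial_t^jn^0$, and hence (after recovering normal derivatives from the equation) for $\sup_t\|n^0(t)\|_{H^{13}}$, require compatibility conditions at the corner $\{t=0,\,y=0\}$. Neither the statement as reproduced here nor your argument imposes them. Your energy estimate for $\partial_t^jn^0$ starts from the value $\partial_t^jn^0|_{t=0}$ obtained by substituting the equations into the data. It is only legitimate if that value satisfies the differentiated boundary condition $\partial_t^j(\partial_yn^0+n^0v^0_2)|_{y=0}=0$.

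If, say, $(\partial_yn_{in}+n_{in}v_{in,2})|_{y=0}\neq0$, the solution develops an initial boundary layer of width $\sqrt t$. Already for the heat equation with Neumann condition, $\|\partial_tn(t)\|_{L^2}$ and $\|n(t)\|_{H^2}$ blow up like $t^{-1/4}$ as $t\to0^+$. Conversely, a uniform $H^2$ bound on $n^0$ (with $v^0$ bounded in $H^{13}$) would force the condition at $t=0$ by passing to the limit in the traces.

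So you must add the conditions $\partial_t^j(\partial_yn^0+n^0v^0_2)|_{t=0,\,y=0}=0$ for every $j$ allowed by the trace theorem. Here $\partial_t^jn^0|_{t=0}$ and $\partial_t^jv^0|_{t=0}$ are expressed through the data. For the Euler part you also need $\nabla\cdot u_{in}=0$ and $u_{in,2}|_{y=0}=0$. Part I presumably assumes all of these.

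Two smaller remarks. First, the boundary integrals you call delicate do not actually occur. Write the $n^0$-equation as $\partial_tn^0+u^0\cdot\nabla n^0=\nabla\cdot(\nabla n^0+n^0v^0)$. Then $\partial_yn^0+n^0v^0_2=0$ is exactly the no-flux condition, and it commutes with $\partial_x$ and $\partial_t$. Tangential and time energy estimates in this conservative form therefore produce no boundary terms, and no trace inequality is needed.

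Second, the contraction and uniqueness estimate does not close in pure $L^2$. The difference of the $v$-equations contains $\nabla(\delta u\cdot v)$, which involves $\nabla\delta u$. It cannot be moved onto $\delta v$ without losing a derivative on $\delta v$. Run the difference estimate in $L^2\times L^2\times H^1$ for $(\delta n,\delta v,\delta u)$ instead. Control $\nabla\delta u$ through $\delta\omega$ in $L^2$; its forcing $\partial_x\delta n$ is absorbed by the $L^2_tH^1$ dissipation of $\delta n$.
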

    Throughout this paper, $C$ denotes a generic constant which may vary from line to line and is independent of the viscosity.

    \begin{lemma}\label{lem inner layer}
		{{\rm(see Theorem 2.2 in \cite{WWZ})} Let $(n_{in},v_{in},u_{in})$ satisfies the assumptions in Lemma \ref{lem n0v0u0}. Then there exists the time $T>0$ such that inner boundary layer profiles $$v^{B,0}_2,n^{B,1},v^{B,1}_1,v^{B,1}_2,u^{B,1}_1,n^{B,2},v^{B,2}_1,u^{B,2}_1,u^{B,2}_2,u^{B,3}_2,p^{B,2}$$ satisfy
            \begin{align*}
                 &\|\partial^j_tv^{B,0}_2\|^2_{L^\infty(0,T;\mathcal{H}^{k,10-j,0})\cap L^2(0,T;\mathcal{H}^{k,10-j,1})}\leq C,&\|\partial^j_tn^{B,1}\|^2_{L^\infty(0,T;\mathcal{H}^{k,10-j,1})\cap L^2(0,T;\mathcal{H}^{k,10-j,2})}\leq C,\\
                 &\;\;\|\partial^j_tv^{B,1}_1\|^2_{L^\infty(0,T;\mathcal{H}^{k,9-j,0})\cap L^2(0,T;\mathcal{H}^{k,9-j,1})}\leq C,&\|\partial^j_tv^{B,1}_2\|^2_{L^\infty(0,T;\mathcal{H}^{k,8-j,0})\cap L^2(0,T;\mathcal{H}^{k,8-j,1})}\leq C,\\
                 &\|\partial^j_tu^{B,1}_1\|^2_{L^\infty(0,T;\mathcal{H}^{k,10-j,0})\cap L^2(0,T;\mathcal{H}^{k,10-j,1})}\leq C,&\|\partial^j_tn^{B,2}\|^2_{L^\infty(0,T;\mathcal{H}^{k,8-j,1})\cap L^2(0,T;\mathcal{H}^{k,8-j,2})}\leq C,\\
                &\;\;\|\partial^j_tv^{B,2}_1\|^2_{L^\infty(0,T;\mathcal{H}^{k,7-j,0})\cap L^2(0,T;\mathcal{H}^{k,7-j,1})}\leq C,
                &\|\partial^j_tu^{B,2}_1\|^2_{L^\infty(0,T;\mathcal{H}^{k,8-j,0})\cap L^2(0,T;\mathcal{H}^{k,8-j,1})}\leq C,\\
                &\;\;\|\partial^j_tu^{B,2}_2\|^2_{L^\infty(0,T;\mathcal{H}^{k,9-j,1})\cap L^2(0,T;\mathcal{H}^{k,9-j,2})}\leq C,&\|\partial^j_tu^{B,3}_2\|^2_{L^\infty(0,T;\mathcal{H}^{k,7-j,1})\cap L^2(0,T;\mathcal{H}^{k,7-j,2})}\leq C,\\
                &\|\partial^j_tp^{B,2}\|^2_{L^\infty(0,T;\mathcal{H}^{k,10-j,2})\cap L^2(0,T;\mathcal{H}^{k,10-j,3})}\leq C,
			\end{align*}
            {\rm for $j=0,1,2$ and}
             \begin{align*}
               &\|\partial_tv^{B,0}_2\|^2_{L^\infty(0,T; \mathcal{H}^{k,8-r,r+1})}\leq C,&\|\partial_tn^{B,1}\|^2_{L^\infty(0,T;\mathcal{H}^{k,8-r,r+2})}\leq C,\quad&\|\partial_tv^{B,1}_1\|^2_{L^\infty(0,T;\mathcal{H}^{k,7-r,r+1})}\leq C,\\
               &\|\partial_tv^{B,1}_2\|^2_{L^\infty(0,T;\mathcal{H}^{k,6-r,r+1})}\leq C,&\|\partial_tu^{B,1}_1)\|^2_{L^\infty(0,T; \mathcal{H}^{k,8-r,r+1})}\leq C,\quad&\|\partial_tn^{B,2}\|^2_{L^\infty(0,T;\mathcal{H}^{k,6-r,r+2})}\leq C,\\
               &\|\partial_tv^{B,2}_1\|^2_{L^\infty(0,T;\mathcal{H}^{k,5-r,r+1})}\leq C,&\|\partial_tu^{B,2}_1\|^2_{L^\infty(0,T;\mathcal{H}^{k,6-r,r+1})}\leq C,\quad&\|\partial_tu^{B,2}_2\|^2_{L^\infty(0,T;\mathcal{H}^{k,7-r,r+2})}\leq C,\\
               &\|\partial_tu^{B,3}_2\|^2_{L^\infty(0,T;\mathcal{H}^{k,5-r,r+2})}\leq C,&\|\partial_tp^{B,2}\|^2_{L^\infty(0,T;\mathcal{H}^{k,8-r,r+3})}\leq C,\quad&
             \end{align*}
              {\rm for $r=0,1$ as well as}
			\begin{align*}
                &\;\;\|v^{B,0}_2\|^2_{L^\infty(0,T;\mathcal{H}^{k,10-i,i})}\leq C,\;\;&\|n^{B,1}\|^2_{L^\infty(0,T;\mathcal{H}^{k,10-i,i+1})}\leq C,\quad&\;\;\;\|v^{B,1}_1\|^2_{L^\infty(0,T;\mathcal{H}^{k,9-i,i})}\leq C,\\
                &\;\;\;\|v^{B,1}_2\|^2_{L^\infty(0,T;\mathcal{H}^{k,8-i,i})}\leq C,&\|u^{B,1}_1\|^2_{L^\infty(0,T;\mathcal{H}^{k,10-i,i})}\leq C,\quad&\|n^{B,2}\|^2_{L^\infty(0,T;\mathcal{H}^{k,8-i,i+1})}\leq C,\\
                &\;\;\;\|v^{B,2}_1\|^2_{L^\infty(0,T;\mathcal{H}^{k,7-i,i})}\leq C,&\|u^{B,2}_1\|^2_{L^\infty(0,T;\mathcal{H}^{k,8-i,i})}\leq C,\quad&\|u^{B,2}_2\|^2_{L^\infty(0,T;\mathcal{H}^{k,9-i,i+1})}\leq C,\\
                &\|u^{B,3}_2\|^2_{L^\infty(0,T;\mathcal{H}^{k,7-i,i+1})}\leq C,&\|p^{B,2}\|^2_{L^\infty(0,T;\mathcal{H}^{k,10-i,i+2})}\leq C,\quad&
			\end{align*}}
	\end{lemma}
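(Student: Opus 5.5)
Since the statement is quoted from Theorem 2.2 of Part I \cite{WWZ}, in the present paper it suffices to invoke that theorem. If one wants to reprove it, I would proceed as follows. Introduce the fast variable $z=y/\sqrt{\varepsilon}$ and substitute the ansatz $(H^\theta,V^\theta,U^\theta,K^\theta)$ of (\ref{HVUK theta}) into (\ref{nvu equation})--(\ref{nvu condition})$_1$. Taylor expand every outer coefficient about $y=0$, writing e.g. $u^0_2(t,x,\sqrt{\varepsilon}z)=\sqrt{\varepsilon}z\,\partial_yu^0_2(t,x,0)+\dots$ and using $u^0_2|_{y=0}=0$. Then collect powers of $\sqrt{\varepsilon}$, which yields a triangular hierarchy of linear problems on $\R\times\R_+$ in $(x,z)$.

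At leading order, $v^{B,0}_2$ solves a linear parabolic equation in $z$. Its principal part is $\partial_t-\partial_z^2$, it contains transport and zero-order terms whose coefficients are traces of $(n^0,v^0,u^0)$ at $y=0$, and it carries the boundary datum $v^{B,0}_2(t,x,0)=-v^0_2(t,x,0)$ (the outer solution does not satisfy $v_2=0$). Its initial datum is zero, and it is required to decay as $z\to\infty$. The variable $x$ enters only as a parameter through the coefficients. At each subsequent order one solves, in turn, for $n^{B,1},v^{B,1},u^{B,1}_1,n^{B,2},v^{B,2}_1,u^{B,2},p^{B,2},u^{B,3}_2$ from analogous linear problems. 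The right-hand sides involve the previously constructed profiles and outer traces. The boundary data are fixed by the Neumann conditions $\partial_yn+nv_2=0$, $\partial_yv_1=\partial_yu_1=0$ (a $z$-derivative of an order-$j$ profile matches $y$-derivatives at order $j-1$). The normal components are recovered through the constraints: $u^{B,j+1}_2$ comes from $\nabla\cdot u=0$, i.e. $\partial_z u^{B,j+1}_2=-\partial_x u^{B,j}_1$ integrated from $\infty$, and $v^{B,j}$ from $\nabla\times v=0$, i.e. $\partial_z v^{B,j+1}_1=\partial_x v^{B,j}_2$. The pressure $p^{B,2}$ is determined from the normal momentum balance.

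For the estimates, I would work in the weighted spaces $\mathcal{H}^{k,m,l}$, which carry a polynomial weight $\langle z\rangle^k$, $m$ tangential/time derivatives and $l$ normal derivatives. For each linear problem I would first lift the boundary datum with a cutoff $\chi(z)$, so that the unknown has homogeneous boundary conditions. I would then perform weighted $L^2$ energy estimates after applying $\partial_x^\alpha\partial_t^j$. Normal derivatives are recovered from the equation itself ($\partial_z^2 w=\partial_tw-\dots$), which explains the trade $\mathcal{H}^{k,10-i,i}$ between tangential and normal regularity. The $L^2(0,T;\mathcal{H}^{k,\cdot,l+1})$ parts come from the dissipation $\|\partial_z\cdot\|^2$. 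Time derivatives are handled by differentiating the equations, and Lemma \ref{lem n0v0u0} supplies all the outer traces with the needed regularity via the trace theorem.

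The main obstacle I expect is the loss of tangential derivatives along the hierarchy. Each step, through the divergence and curl constraints and the $\partial_x$ in the forcing, costs one $x$-derivative; this is why the indices descend $10,9,8,7$ and why high regularity ($H^{14}$ data) is needed. A related difficulty is keeping the polynomial $z$-weights uniform: the Taylor remainders produce factors $z^m$, which must be absorbed by the decay of the earlier profiles. Finally, the compatibility conditions at $t=0$, $z=0$ must be checked; they are needed for the time-derivative bounds $j=1,2$. I would check these first, since they are where the construction could fail without additional assumptions on the initial data.
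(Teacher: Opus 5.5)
Your proposal matches the paper, which gives no proof of this lemma and simply quotes Theorem 2.2 of Part I \cite{WWZ}, exactly as you do. One correction to your optional reconstruction: the leading-order profile equation recalled in the proof of Lemma \ref{lem g L2} contains $-\partial_z n^{B,1}$, $\partial_z u^{B,1}_1\,\overline{v^{I,0}_1}$ and the tangential transport term $\overline{u^{I,0}_1}\,\partial_x v^{B,0}_2$, so $v^{B,0}_2$ cannot be solved first in isolation with $x$ as a mere parameter; $u^{B,1}_1$ (whose equation is closed) must be constructed beforehand, and $n^{B,1}$ must be determined together with $v^{B,0}_2$.
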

    for $i=1,2,3,4$, where we denote $\mathcal{H}^{k,m,l}$ by the anisotropic Sobolev space as follows: for $k,m,l\in\mathbb{N}$,
	\begin{align*}
			&\mathcal{H}^{k,m,l}:=\{g(x,z)\in L^2(\mathbb{R}\times\mathbb{R}_+):(1+z^{2k})^\frac{1}{2}\partial^\alpha_x\partial^\gamma_zg(x,z)\in L^2(\mathbb{R}\times\mathbb{R}_+),|\alpha|\leq m,\gamma\in N,|\gamma|\leq l\}.
	\end{align*}

        For the proof of the main results, the local well-posedness results of $(\ref{nvu equation})$-$(\ref{nvu condition})_1$ with $\varepsilon>0$ be required. As in \cite{MR,MW1}, the conormal derivative $\partial^\alpha=\partial^{\alpha_1}_x\psi^{\alpha_2}(y)\partial^{\alpha_2}_y$ for $\alpha=(\alpha_1,\alpha_2)$ be introduced, where $\psi(y)$ is a smooth function defined by
        \begin{equation}\label{psi}
        \psi(y)=\left\{
        \begin{split}{}
        &\delta y\quad\quad\;\;{\rm for}\;y\leq\frac{1}{2},\\
        &\frac{\delta y}{1+y}\quad{\rm for}\;y\geq1,
        \end{split}
        \right.
		\end{equation}
        where $\delta>0$ is to be decided later. For $m\in N$, the conormal sobolev spaces are defined as follows:
        \begin{equation*}
        \begin{split}{}
        \widetilde{H}^m(\R^2_+)=\Big{\{}u\Big{|}\|u\|^2_{\widetilde{H}^m}=\sum_{|\alpha|\leq m}\|\partial^\alpha u\|^2_2<\infty\Big{\}}.
        \end{split}
		\end{equation*}
        Moreover, we say that $\beta\le\alpha$, provided that $\alpha=(\alpha_1,\alpha_2)$ and $\beta=(\beta_1,\beta_2)$ satisfy $\beta_1\le\alpha_1$ and $\beta_2\le\alpha_2$.

        \begin{theorem}\label{th nvu regularity}
		{Let $(n_{\rm{in}},v_{\rm{in}},u_{\rm{in}})\in H^m_{xy}\times H^m_{xy}\times H^{m+1}_{xy}$, then there exists the time $0<T< T^*$ $(T^*<\infty)$ such that $(\ref{nvu equation})$-$(\ref{nvu condition})_1$ admits the unique solution $(n^\varepsilon,v^\varepsilon,u^\varepsilon)$ on $[0,T]$ for $m\geq5$,
			\begin{equation*}
				\begin{split}{}
            &\|n^\varepsilon\|^2_{L^\infty_TL^2_{xy}}+\|v^\varepsilon\|^2_{L^\infty_TL^2_{xy}}+\|u^\varepsilon\|^2_{L^\infty_TL^2_{xy}}+\|\nabla n^\varepsilon\|^2_{L^\infty_T\widetilde{H}^m_{xy}}+\|\nabla v^\varepsilon\|^2_{L^\infty_T\widetilde{H}^m_{xy}}+\|\nabla u^\varepsilon\|^2_{L^\infty_T\widetilde{H}^{m+1}_{xy}}\\
             &+\|\nabla^2 n^\varepsilon\|^2_{L^2_T\widetilde{H}^m_{xy}}+\varepsilon\|\nabla^2 v^\varepsilon\|^2_{L^2_T\widetilde{H}^m_{xy}}+\varepsilon\|\nabla^2 u^\varepsilon\|^2_{L^2_T\widetilde{H}^{m+1}_{xy}}+\|\omega^\varepsilon\|^2_{L^\infty_TL^\infty_{xy}}+\|\omega^\varepsilon\|^2_{L^\infty_TW^{1,\infty}_{xy}}\leq C,\\
				\end{split}
			\end{equation*}
		}
	\end{theorem}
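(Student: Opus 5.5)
The plan is to combine a standard local existence argument for a parabolic–hyperbolic system with conormal a priori estimates in the style of Masmoudi--Rousset \cite{MR}. The goal is to control the energy
\[
E_m(t)=\|(n^\varepsilon,v^\varepsilon,u^\varepsilon)\|^2_{L^2_{xy}}+\|\nabla n^\varepsilon\|^2_{\widetilde H^m}+\|\nabla v^\varepsilon\|^2_{\widetilde H^m}+\|\nabla u^\varepsilon\|^2_{\widetilde H^{m+1}}+\|\omega^\varepsilon\|^2_{W^{1,\infty}_{xy}}
\]
together with the dissipation terms appearing in the statement. The dissipation is $\|\nabla^2 n^\varepsilon\|^2_{\widetilde H^m}$ with an $O(1)$ coefficient, but only $\varepsilon\|\nabla^2 v^\varepsilon\|^2$ and $\varepsilon\|\nabla^2 u^\varepsilon\|^2$ for the other two components. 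Existence for fixed $\varepsilon>0$ then follows from a Galerkin or iteration scheme, since the system is uniformly parabolic when $\varepsilon$ is fixed. The real content is an $\varepsilon$-independent inequality
\[
E_m(t)\le C(E_m(0))+C\int_0^t P(E_m(s))\,ds,
\]
with $P$ a polynomial. A continuity argument then gives a uniform time $T$. Uniqueness follows from an $L^2$ energy estimate on the difference of two solutions, using the Lipschitz bounds just obtained.

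\textbf{Step 1: basic estimates.} First I derive the $L^2$ estimates. For $n$, test with $n$; the flux boundary condition $\partial_y n+nv_2=0$ combined with $v_2=0$ gives $\partial_y n=0$ on $y=0$, so there are no boundary terms. The term $\nabla\cdot(nv)$ is bounded by $\|\nabla n\|\,\|n\|\,\|v\|_{L^\infty}$. For $v$, the curl-free structure lets me write the equation as a gradient system. For $u$, the slip conditions $u_2=0$, $\partial_y u_1=0$ kill the boundary terms from $\varepsilon\Delta u$. Next I apply the conormal derivatives $\partial^\alpha=\partial_x^{\alpha_1}(\psi\partial_y)^{\alpha_2}$ to the equations for $\nabla n$ and $\nabla v$, and to the vorticity/gradient of $u$. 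These fields commute with the boundary trace, so the Neumann or slip conditions are preserved up to commutators of lower order. The transport commutators $[\partial^\alpha,u\cdot\nabla]$ are handled as in \cite{MR}: write $u_2\partial_y=\frac{u_2}{\psi}\,\psi\partial_y$ and use $u_2|_{y=0}=0$, so $u_2/\psi$ is bounded by $\|\partial_y u_2\|_{L^\infty}$.

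\textbf{Step 2: normal derivatives.} To recover full normal derivatives for $v$ and $u$, I do not use the diffusion (which is $\varepsilon$-weak). For $v$, I use $\partial_y v_1=\partial_x v_2$, which follows from the curl-free condition \eqref{v curl}, together with the $v_2$ equation. For $u$, I use $\nabla\cdot u=0$ to get $\partial_y u_2=-\partial_x u_1$, and the vorticity $\omega=\partial_x u_2-\partial_y u_1$, which satisfies $\omega|_{y=0}=0$ because $u_2|_{y=0}=0$ and $\partial_y u_1|_{y=0}=0$. Since $\omega$ solves a transport–diffusion equation with Dirichlet data and forcing $\partial_x n$, the maximum principle gives the $L^\infty$ and $W^{1,\infty}$ bounds on $\omega$ uniformly in $\varepsilon$. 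The pressure is estimated by the Neumann problem $\Delta p=-\nabla\cdot(u\cdot\nabla u)+\partial_y n$.

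\textbf{Main obstacle.} I expect the hardest step to be the coupling term $\nabla n$ in the $v$-equation together with $\varepsilon\nabla|v|^2$, at top conormal order. The field $v$ has only $\varepsilon$-weak dissipation, so its top derivative cannot absorb a loss of derivative. My plan is to exploit the symmetric structure: in the combined estimate for $\partial^\alpha\nabla n$ and $\partial^\alpha v$, the cross terms $\int \partial^\alpha\nabla n\cdot\partial^\alpha v$ coming from $\nabla\cdot(nv)$ and $\nabla n$ cancel after integration by parts, up to weight $n$. The remaining factor $n\,\nabla\partial^\alpha v$ is absorbed using the $O(1)$ dissipation $\|\nabla^2 n\|$ of $n$, and the remainders are controlled by Gagliardo–Nirenberg and the $L^\infty$ bounds from Step 2.
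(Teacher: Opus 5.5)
\textbf{The central step fails.} Your plan follows the paper's architecture: an $L^2$ energy estimate, conormal estimates for $\nabla n$ and $\nabla v$, conormal estimates for $\omega$ up to order $m+1$, and a Masmoudi--Rousset type bound on $\omega$. But the step you call the real content, an $\varepsilon$-independent inequality for $E_m$, cannot hold as formulated.

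$E_m$ contains the full normal derivative $\|\partial_y v_2^\varepsilon\|_{L^2}^2$, which is the $\alpha=0$ term of $\|\nabla v^\varepsilon\|^2_{\widetilde H^m}$. But $v_2^\varepsilon$ carries an $O(1)$ boundary layer of width $\sqrt\varepsilon$:
\begin{itemize}
\item For $\varepsilon>0$ one imposes $v_2^\varepsilon|_{y=0}=0$, whereas the $\varepsilon=0$ problem imposes nothing on $v^0$.
\item On $y=0$ one computes $\partial_t v_2^0+u_1^0\partial_x v_2^0+(\partial_y u_2^0+n^0)v_2^0=-\partial_y u_1^0\,v_1^0$, so in general $v_2^0|_{y=0}\neq 0$ for $t>0$.
\end{itemize}
This is exactly the profile $v_2^{B,0}(t,x,y/\sqrt\varepsilon)$ of Theorem \ref{th nvu L infty convergence}. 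Since $v_2^\varepsilon$ vanishes on $y=0$ yet lies within $C\varepsilon^{1/4}$ of $v_2^0+v_2^{B,0}$ in $L^\infty$, it must change by about $|v_2^0(t,x,0)|$ over a distance $O(\sqrt\varepsilon)$. Cauchy--Schwarz across the layer then gives $\|\partial_y v_2^\varepsilon\|_{L^2}\gtrsim\varepsilon^{-1/4}$, consistent with the paper's own $\|\partial_y V^\theta\|_{L^2}\le C\varepsilon^{-1/4}$ in (\ref{V theta estimates}).

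The same layer also defeats your absorption mechanism. Through the zero-flux condition $\partial_y n+nv_2=0$, it forces an $O(1)$ change of $\partial_y n^\varepsilon$ across the layer. Hence, in general, $\int_0^T\|\partial_y^2 n^\varepsilon\|_{L^2}^2\,dt\gtrsim\varepsilon^{-1/2}$. So the $O(1)$ dissipation of $n$ that you use to absorb the $n$--$v$ coupling is itself not uniformly bounded.

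\textbf{Where your argument breaks.} The failure is in your Step 2 for $v$.
\begin{itemize}
\item The curl-free condition gives $\partial_y v_1=\partial_x v_2$, but nothing gives $\partial_y v_2$: $v$ satisfies no divergence constraint.
\item In the energy estimate for $\partial_y v_2$, the term $\int \partial_y^2 u_1^\varepsilon\, v_1^\varepsilon\,\partial_y v_2^\varepsilon$ has $\partial_y^2u_1^\varepsilon=O(\varepsilon^{-1/2})$ inside the $u$-layer and $v_1^\varepsilon|_{y=0}\neq0$. So the $u_2/\psi$ device does not apply.
\item This is precisely the term that generates the layer: in Part I, $v_2^{B,0}$ is forced by $\partial_z u_1^{B,1}\,\overline{v_1^{I,0}}$.
\end{itemize}
Following the paper does not repair this. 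In Lemma \ref{lem4.3}, case $\alpha=(0,0)$, the same term is bounded by $\|\psi\nabla\partial_y u^\varepsilon\|_{L^\infty}\|v^\varepsilon/\psi\|_{L^2}$. The Hardy inequality behind that bound needs a vanishing trace, which $v_1^\varepsilon$ lacks.

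A uniform statement would have to drop full normal derivatives of $v_2$ and $\partial_y^2 n$ from the controlled norm. For instance, you could control $v$ only conormally, use $\partial_y v_1=\partial_x v_2$, and weight $\partial_y v_2$ by $\varepsilon^{1/4}$. Otherwise you must accept $\varepsilon$-dependent constants.

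\textbf{A smaller gap: the Lipschitz bound on $\omega$.} This does not follow from a plain maximum principle; only $\|\omega^\varepsilon\|_{L^\infty}$ does. For $\psi\partial_y\omega$, the commutator $[\psi\partial_y,\varepsilon\partial_y^2]=-2\varepsilon\psi'\partial_y^2-\varepsilon\psi''\partial_y$ is not uniformly controlled. The paper localizes with a cutoff $\chi$ and invokes Lemma 14 of \cite{MR}; see Lemma \ref{lem4.5}. Moreover, only the conormal Lipschitz norm can be uniform, since $\omega^\varepsilon|_{y=0}=0$ while in general $\omega^0|_{y=0}\neq0$.
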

        where $\omega^\varepsilon=\partial_xu^\varepsilon_2-\partial_yu^\varepsilon_1$ is the vorticity. The detailed proof of Theorem \ref{th nvu regularity} is included in Appendix of Section \ref{5}.

        \begin{remark}\label{re n v u varepsilon regularity}
		 {We expect the system $(\ref{nvu equation})$ to own the uniform higher-order regularity in general Sobolev space when $\varepsilon>0$, while it fails because of the intricate relationship between viscous dissipation term and non-viscous nonlinear term $\nabla(u^\varepsilon\cdot v^\varepsilon)$. To overcome the difficulties, we turn to establish the poor regularity estimate of the solution in the conormal Sobolev spaces. Owing to the $H^1_{xy}$ regularity of $(n^\varepsilon,v^\varepsilon,u^\varepsilon)$ be required during the proof, we establish the regularity of $(\|\nabla n^\varepsilon\|_{\widetilde{H}^m_{xy}}, \|\nabla v^\varepsilon\|_{\widetilde{H}^m_{xy}}, \|\nabla u^\varepsilon\|_{\widetilde{H}^{m+1}_{xy}})$ and the difficult term $\|\psi(y)\partial_y\omega^\varepsilon\|_{L^\infty_{xy}}$ be dealed with by the maximum principle referred to \cite{MR}.}
	\end{remark}

   \subsection{Remainder and $L^2$-convergence results} We first establish the $L^2_{xy}$ estimate for the remainders.
	\begin{theorem}\label{th L2 convergence}
		{Let \((n_{\rm{in}}, v_{\rm{in}}, u_{\rm{in}}) \in H^{14}_{xy} \times H^{14}_{xy} \times H^{15}_{xy}\) , and \((n^\varepsilon, v^\varepsilon, u^\varepsilon)\) be the solution to the system $(\ref{nvu equation})$-$(\ref{nvu condition})_1$. Then the system {\rm(\ref{HVU equation})} admits a unique solution \((H^\varepsilon, V^\varepsilon, U^\varepsilon)\) on \([0,T]\) satisfying the following:
				\begin{equation}\label{HVU L^2xy estimate}
                \begin{split}{}
					\sup_{t\in[0,T]}(\|H^\varepsilon(t,x,y)\|_{L^2_{xy}}+\|V^\varepsilon(t,x,y)\|_{L^2_{xy}}+\|U^\varepsilon(t,x,y)\|_{L^2_{xy}}+\|U^\varepsilon(t,x,y)\|_{H^1_{xy}})&\leq C\varepsilon^{\frac{1}{4}}.\\
				\end{split}
			\end{equation}
             {Furthermore}, there exists $\varepsilon_0>0$ such that for each $0<\varepsilon\leq\varepsilon_0$, the following holds:
			\begin{equation}\label{L^2-convergence}
		\begin{split}{}
			&\sup_{t\in[0,T]}\|n^\varepsilon(t,x,y)-n^0(t,x,y)\|_{L^2_{xy}}\leq C\varepsilon^{\frac{1}{2}},\\
				&\sup_{t\in[0,T]}\|v^\varepsilon(t,x,y)-v^0(t,x,y)-(0,v^{B,0}_2)(t,x,\frac{y}{\sqrt{\varepsilon}})\|_{L^2_{xy}}\leq C\varepsilon^{\frac{1}{2}},\\
				&\sup_{t\in[0,T]}\|u^\varepsilon(t,x,y)-u^0(t,x,y)\|_{L^2_{xy}}\leq C\varepsilon^{\frac{1}{2}},\\
        \end{split}
	\end{equation}			
		}
	\end{theorem}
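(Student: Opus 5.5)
Existence and uniqueness of $(H^\varepsilon,V^\varepsilon,U^\varepsilon)$ are immediate. Theorem \ref{th nvu regularity} (with $m\geq5$) gives the solution $(n^\varepsilon,v^\varepsilon,u^\varepsilon)$ of $(\ref{nvu equation})$-$(\ref{nvu condition})_1$ on $[0,T]$. Lemmas \ref{lem n0v0u0} and \ref{lem inner layer} give the approximation $(H^\theta,V^\theta,U^\theta,K^\theta)$ in $(\ref{HVUK theta})$. The remainders are then defined by $(\ref{HVUK varepsilon})$ and solve $(\ref{HVU equation})$. So the real content is the uniform bound $(\ref{HVU L^2xy estimate})$, which I would obtain by an $L^2$ energy estimate on $(\ref{HVU equation})$ closed by Gronwall.

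\textbf{Step 1: error terms.} First I bound the errors $f^\varepsilon,g^\varepsilon,h^\varepsilon$ in $(\ref{fgh varepsilon})$. The profiles were built in Part I \cite{WWZ} to cancel all terms up to a certain order. Changing variables $z=y/\sqrt\varepsilon$ gives $\|F(\cdot,y/\sqrt\varepsilon)\|_{L^2_{xy}}=\varepsilon^{1/4}\|F\|_{L^2_{xz}}$. Using the weighted bounds in $\mathcal H^{k,m,l}$ from Lemma \ref{lem inner layer}, I expect
\[
\|\varepsilon^{-1/2}(f^\varepsilon,g^\varepsilon,h^\varepsilon)\|_{L^2_{xy}}\le C\varepsilon^{1/4}.
\]
The weights $z^k$ absorb the terms where a profile multiplies $y$-Taylor remainders of the outer solution. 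I also need $L^\infty$ bounds, uniform in $\varepsilon$, on $H^\theta$, $V^\theta$, $U^\theta$, $\nabla H^\theta$, $\nabla U^\theta$. The only bad one is $\partial_y V^\theta_2\sim\varepsilon^{-1/2}\partial_z v^{B,0}_2$, and in the equations it appears only through $\nabla\cdot V$-type combinations.

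\textbf{Step 2: energy estimates.} I test the $H$-equation with $H^\varepsilon$, the $V$-equation with $V^\varepsilon$, and the $U$-equation with $U^\varepsilon$ and also with $-\Delta U^\varepsilon$ (or with the vorticity), to get the $H^1$ bound on $U$.
\begin{itemize}
\item The Neumann condition on $H$ makes $\Delta H$ dissipative.
\item The coupling terms $\nabla\cdot(H^\theta V^\varepsilon)$ in the $H$-equation and $\nabla H^\varepsilon$ in the $V$-equation are handled by integrating by parts onto $\nabla H^\varepsilon$, which is absorbed by $\|\nabla H^\varepsilon\|^2$. The boundary term vanishes since $V^\varepsilon_2|_{y=0}=0$.
\item The term $\nabla(U^\theta\cdot V^\varepsilon)\cdot V^\varepsilon$ is rewritten with $\nabla\times V^\varepsilon=0$ as $(U^\theta\cdot\nabla)V^\varepsilon\cdot V^\varepsilon$ plus terms involving $\nabla U^\theta$. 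After integrating by parts, the boundary term vanishes because $U^\theta_2|_{y=0}=0$ up to $O(\varepsilon^{3/2})$, and $\nabla\cdot U^\theta$ is small.
\item The term $\nabla(U^\varepsilon\cdot V^\theta)$ is paired as $-\int U^\varepsilon\cdot V^\theta\,\nabla\cdot V^\varepsilon$, which I would control by $\varepsilon\|\nabla V^\varepsilon\|^2+\varepsilon^{-1}\|U^\varepsilon\|^2$. That is too lossy.
\end{itemize}
Instead, I would write $\nabla(U^\varepsilon\cdot V^\theta)\cdot V^\varepsilon$ out and use that only $\partial_y V^\theta_2$ is large. It multiplies $U^\varepsilon_2 V^\varepsilon_2$, and both of these vanish, up to $O(\sqrt\varepsilon)$, at $y=0$. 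A Hardy inequality $\|U_2/y\|\le\|\partial_yU_2\|$ together with $\|y\,\partial_yV^\theta_2\|_{L^\infty}\le C$ then controls it by $\|\nabla U^\varepsilon\|\,\|V^\varepsilon\|$. This is why the $H^1$ norm of $U^\varepsilon$ enters the estimate.

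\textbf{Step 3: the $U$-equation and closing.} For the $U$-equation, the slip condition $\partial_yU_1=0$ combined with the inhomogeneous condition on $U_2$ requires subtracting a lift of $-\varepsilon^{1/2}u^{B,2}_2-\varepsilon u^{B,3}_2$, whose cost is $O(\varepsilon^{1/4})$. The vorticity of $U$ vanishes on the boundary up to lifted terms, and $H^\varepsilon e_2$ is controlled using $\|\nabla H\|$. The nonlinear terms carry a factor $\varepsilon^{1/2}$. I would handle them with a bootstrap assumption $\|(H,V,U)\|_{L^\infty}\le \varepsilon^{-1/4}$, closed by using the regularity of Theorem \ref{th nvu regularity} to bound $\varepsilon^{1/2}\|V^\varepsilon\|_{L^\infty}$ and $\varepsilon^{1/2}\|\nabla U^\varepsilon\|_{L^\infty}$. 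Gronwall with zero initial data then gives $E(t)\le C\varepsilon^{1/2}$, which is $(\ref{HVU L^2xy estimate})$.

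\textbf{Step 4: the convergence $(\ref{L^2-convergence})$.} I write $n^\varepsilon-n^0=\sqrt\varepsilon n^{B,1}+\varepsilon n^{B,2}+\sqrt\varepsilon H^\varepsilon$. By the scaling from Step 1 the profile terms are $O(\varepsilon^{3/4})$ in $L^2_{xy}$, and the remainder term is $O(\varepsilon^{3/4})$ by Step 3. The same argument applies to $v$ and $u$.

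The main obstacle is Step 2's large term $\varepsilon^{-1/2}\partial_z v^{B,0}_2$ in $\nabla V^\theta$, coupled with the weak $\varepsilon$-dissipation on $V$. The Hardy-inequality and boundary-vanishing argument there is the key point, and I would try it first.
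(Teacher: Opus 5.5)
Your plan is correct in outline and shares the paper's skeleton. Both use the source bounds of Lemmas \ref{lem f L2}--\ref{lem h L2}, $L^2$ energy estimates for $H^\varepsilon$ and $V^\varepsilon$, and a lift of $U^\varepsilon_2|_{y=0}$ for the $L^2$ estimate of $U^\varepsilon$. That lift must be divergence free, e.g.\ $(I-\mathbb{P})U^\varepsilon$ as in the paper, so that $\nabla K^\varepsilon$ drops out. Both then use a lifted vorticity $\tilde\omega$ with zero Dirichlet data for $\|\nabla U^\varepsilon\|_{L^2_{xy}}$, and Gronwall from zero data.

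Where you genuinely differ is the treatment of the one large coefficient. In $M_2$ the paper substitutes $V^\theta=v^\varepsilon-\varepsilon^{1/2}V^\varepsilon$ and bounds $U^\varepsilon_2\,\partial_yv^\varepsilon_2\,V^\varepsilon_2$ using an $\varepsilon$-uniform bound on $\|\partial_yv^\varepsilon_2\|_{L^\infty_{xy}}$ taken from Theorem \ref{th nvu regularity}. In $E_{22}$ it similarly writes $U^\theta=u^\varepsilon-\varepsilon^{1/2}U^\varepsilon$. It then uses a $\psi$-weighted Hardy inequality with conormal $W^{1,\infty}$ control of $\partial_yu^\varepsilon_1$, and has to absorb the extra cubic term $\varepsilon^{1/2}U^\varepsilon_2\partial^2_yU^\varepsilon_1\tilde\omega$. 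Its $\varepsilon^{1/2}$-nonlinearities are closed by the $L^2$ bootstrap (\ref{HVU assumption}) plus Gagliardo--Nirenberg.

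You instead exploit the explicit layer structure. $U^\varepsilon_2$ vanishes at $y=0$ up to $O(\varepsilon^{1/2})$, so Hardy's inequality and the weighted bound $\|z\partial_zv^{B,0}_2\|_{L^\infty_{xz}}\le C$ from Lemma \ref{lem inner layer} control the layer part of that term by $C\|\partial_yU^\varepsilon_2\|_{L^2_{xy}}\|V^\varepsilon\|_{L^2_{xy}}+C\varepsilon^{1/4}\|V^\varepsilon\|_{L^2_{xy}}$.

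The paper's route is shorter and reuses its viscous regularity theory. Yours is more robust. An $\varepsilon$-uniform $L^\infty$ bound on $\partial_yv^\varepsilon_2$ is incompatible with an $O(1)$ layer $v^{B,0}_2(t,x,y/\sqrt{\varepsilon})$ whenever $v^{B,0}_2\not\equiv0$, since its normal derivative has size $\varepsilon^{-1/2}$. You need from the viscous solution at most $\|v^\varepsilon\|_{L^\infty_{xy}}$ and $\|\nabla u^\varepsilon\|_{L^\infty_{xy}}$, and not even these if you treat the nonlinear terms as the paper does.

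Three things to tighten.

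First, the vorticity estimate involves second derivatives of $U^\theta$. There $\partial^2_yU^\theta_1\approx\varepsilon^{-1/2}\partial^2_zu^{B,1}_1$ is as large as $\partial_yV^\theta_2$, so your claim that $\partial_yV^\theta_2$ is the only bad coefficient is incomplete. This term enters $\nabla\times(U^\varepsilon\cdot\nabla U^\theta)$ only through $U^\varepsilon_2\partial^2_yU^\theta_1$, so the same Hardy device with $\|z\partial^2_zu^{B,1}_1\|_{L^\infty_{xz}}\le C$ handles it. You also need $\varepsilon^{-1/2}\|\nabla\times h^\varepsilon\|_{L^2_{xy}}\le C\varepsilon^{1/4}$ from Lemma \ref{lem h L2}.

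Second, apply Hardy only to the layer parts. $y\,\partial_yv^0_2$ is unbounded as $y\to\infty$, so $\|y\,\partial_yV^\theta_2\|_{L^\infty_{xy}}\le C$ is false as written. The outer parts have bounded coefficients and need no Hardy step.

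Third, an $L^\infty$ bootstrap at level $\varepsilon^{-1/4}$ cannot be recovered from $L^2$-level estimates, since Theorem \ref{th nvu regularity} only yields $\|V^\varepsilon\|_{L^\infty_{xy}}\le C\varepsilon^{-1/2}$. It is also unnecessary. You have $\varepsilon^{1/2}\|V^\varepsilon\|_{L^\infty_{xy}}\le\|v^\varepsilon\|_{L^\infty_{xy}}+\|V^\theta\|_{L^\infty_{xy}}\le C$ and the analogous bound for $\varepsilon^{1/2}\nabla U^\varepsilon$. Together with $\nabla\times V^\varepsilon=0$, $V^\varepsilon_2|_{y=0}=0$ and $U^\varepsilon_2|_{y=0}=O(\varepsilon^{1/2})$, these make every $\varepsilon^{1/2}$-term linear, so plain Gronwall closes.

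Also, $U^\theta_2|_{y=0}=O(\varepsilon)$ rather than $O(\varepsilon^{3/2})$, which still suffices.
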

    The detailed proof of Theorem \ref{th L2 convergence} be given in Section \ref{L2 convergence}.

    \begin{remark}
     Theorem \ref{th L2 convergence} establishes $L^2_{xy}$-convergence  rate $O(\varepsilon^\frac{1}{2})$, which is the sharp result. The key issues are to handle the terms involving $\varepsilon^\frac{1}{2}\nabla(U^\varepsilon\cdot V^\varepsilon)$ and $\nabla(U^\varepsilon\cdot V^\theta)$. For the term involving $\varepsilon^\frac{1}{2}\nabla(U^\varepsilon\cdot V^\varepsilon)$, since the viscosity be need to assign for $V^\varepsilon$, which lead to make the $H^1_{xy}$ regularity of $U^\varepsilon$ required. To establish the $H^1_{xy}$ estimations of $U^\varepsilon$, the lower $L^2_{xy}$ regularity be required, while the unknown pressure makes the proof difficult, hence we introduce the vorticity estimation and the Leray orthogonal projection to deal with it. Besides, a cut-off function \(\varphi\) be applied to overcome the higher regularity requirement involving the boundary term. For the terms involving $\nabla(U^\varepsilon\cdot V^\theta)$, they are very difficult to be controlled because of the absence of viscosity. Hence we appeal to the conormal Sobolev spaces to get a weak regularity result of the viscous solutions to serve for the reminders and $L^2_{xy}$-convergence estimations.
    \end{remark}
    Secondly, the bounds for tangential and normal derivatives of the remainder are established.
    \begin{theorem}\label{th L infty convergence}
		{Assume that the initial data satisfy \((n_{\rm{in}}, v_{\rm{in}}, u_{\rm{in}}) \in H^{14}_{xy} \times H^{14}_{xy} \times H^{15}_{xy}\), and \((n^\varepsilon, v^\varepsilon, u^\varepsilon)\) be the solution to the system $(\ref{nvu equation})$-$(\ref{nvu condition})_1$. Then the remainders \((H^\varepsilon, V^\varepsilon, U^\varepsilon)\) satisfy the following:
				\begin{equation}\label{HVU varepsilon L infty}
                \begin{split}{}
                    \sup_{t\in[0,T]}(\|\partial_xH^\varepsilon(t,x,y)\|_{L^2_{xy}}+\|\partial_xV^\varepsilon(t,x,y)\|_{L^2_{xy}}+\|\partial_x\nabla U^\varepsilon(t,x,y)\|_{L^2_{xy}})&\leq C\varepsilon^{\frac{1}{4}},\\
                    \sup_{t\in[0,T]}(\varepsilon^{\frac{1}{4}}\|\partial_yH^\varepsilon(t,x,y)\|_{L^2_{xy}}+\varepsilon^{\frac{1}{4}}\|\partial_yV^\varepsilon(t,x,y)\|_{L^2_{xy}}+\varepsilon^{\frac{1}{4}}\|\partial_y\nabla U^\varepsilon(t,x,y)\|_{L^2_{xy}})&\leq C,\\
                    \sup_{t\in[0,T]}(\varepsilon^{\frac{3}{4}}\|\partial_x\partial_yH^\varepsilon(t,x,y)\|_{L^2_{xy}}+\varepsilon^{\frac{3}{4}}\|\partial_x\partial_yV^\varepsilon(t,x,y)\|_{L^2_{xy}})&\leq C.\\
				\end{split}
			\end{equation}
            }
	\end{theorem}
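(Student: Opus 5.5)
We prove Theorem \ref{th L infty convergence} by energy estimates on the remainder system (\ref{HVU equation}), applied to the tangential derivative $\partial_x$ and then to the normal derivative $\partial_y$ with $\varepsilon$-dependent weights. Theorem \ref{th L2 convergence} supplies the $L^2_{xy}$ and $H^1_{xy}$ bounds, Theorem \ref{th nvu regularity} supplies uniform conormal and $W^{1,\infty}$ bounds on the viscous solution, and Lemmas \ref{lem n0v0u0}--\ref{lem inner layer} give bounds on the approximation $(H^\theta,V^\theta,U^\theta)$.

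\emph{Step 1: source terms.} We first estimate $f^\varepsilon,g^\varepsilon,h^\varepsilon$ from (\ref{fgh varepsilon}). By construction of the profiles in Part I, every term of order below the truncation cancels. What remains is of size $\varepsilon^{1/2}$ times a boundary-layer function of $z=y/\sqrt\varepsilon$. Since $\|F(y/\sqrt\varepsilon)\|_{L^2_{xy}}=\varepsilon^{1/4}\|F\|_{L^2_{xz}}$ and each $\partial_y$ costs $\varepsilon^{-1/2}$, we obtain
\[
\varepsilon^{-1/2}\|\partial_x(f^\varepsilon,g^\varepsilon,h^\varepsilon)\|_{L^2_{xy}}\le C\varepsilon^{1/4},\qquad \varepsilon^{-1/2}\|\partial_y(f^\varepsilon,g^\varepsilon,h^\varepsilon)\|_{L^2_{xy}}\le C\varepsilon^{-1/4}.
\]
These two bounds already explain the scalings $\varepsilon^{1/4}$ and $\varepsilon^{1/4}\cdot\varepsilon^{-1/2}$ in (\ref{HVU varepsilon L infty}). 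The approximate solution satisfies $\|\nabla^k(H^\theta,V^\theta,U^\theta)\|_{L^\infty}\le C\varepsilon^{-k/2}$, while its tangential derivatives are $O(1)$.

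\emph{Step 2: tangential estimate.} We apply $\partial_x$ to (\ref{HVU equation}) and take the $L^2$ inner product with $(\partial_xH^\varepsilon,\partial_xV^\varepsilon)$ and with $\partial_x\omega$ or $-\partial_x\Delta U^\varepsilon$, where $\omega=\nabla\times U^\varepsilon$. This gives $\partial_x\nabla U^\varepsilon$ and avoids the pressure. Since $\partial_x$ is tangential, the Neumann conditions and $V_2^\varepsilon|_{y=0}=0$ make the boundary integrals vanish. The inhomogeneous trace of $U_2^\varepsilon$ is removed with the cut-off $\varphi$ used in Section \ref{L2 convergence}.

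The two linear couplings need care.
\begin{itemize}
\item The pair $\nabla\cdot(H^\theta V^\varepsilon)$ and $\nabla H^\varepsilon$ forms an antisymmetric structure. The remaining part is bounded by the dissipation $\|\nabla\partial_xH^\varepsilon\|^2$ plus $\|\partial_x V^\varepsilon\|^2$.
\item The non-dissipative term $\nabla(U^\varepsilon\cdot V^\theta)$ in the $V$-equation is handled by integrating by parts onto $\partial_x V^\varepsilon$. Since $\nabla\times V^\varepsilon=0$, we trade $\nabla\cdot\partial_xV^\varepsilon$ through the $V$-equation. Alternatively, we bound $\|\partial_x\nabla(U^\varepsilon\cdot V^\theta)\|$ by $\|\partial_x\nabla U^\varepsilon\|+\|\nabla U^\varepsilon\|\,\|\partial_x\nabla V^\theta\|_{L^\infty}$ plus terms involving $\partial_yV^{B,0}_2$.
\end{itemize}
The factor $\varepsilon^{-1/2}$ from $\partial_yV^{B,0}_2$ multiplies $U^\varepsilon_2$. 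Because $U_2^\varepsilon$ almost vanishes at the boundary, Hardy's inequality $\|U_2^\varepsilon/y\|\le C\|\partial_yU_2^\varepsilon\|$ together with $z\partial_zv^{B,0}_2\in L^\infty$ converts this into $\|\nabla U^\varepsilon\|$. The nonlinear terms $\varepsilon^{1/2}(\cdot)$ are controlled using the bounds of Theorem \ref{th nvu regularity} on $n^\varepsilon,v^\varepsilon,u^\varepsilon$, written back through (\ref{HVUK varepsilon}). Gronwall's inequality with zero initial data then yields the first line of (\ref{HVU varepsilon L infty}).

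\emph{Step 3: normal derivatives.} We repeat the argument for $\partial_y$, weighted by $\varepsilon^{1/2}$, and then for $\partial_x\partial_y$, weighted by $\varepsilon^{3/2}$. Boundary terms now appear. For $H^\varepsilon$ we avoid them by using the equation: $\partial_yH^\varepsilon|_{y=0}=0$, so testing with $-\partial_y^2H^\varepsilon$ is allowed. For $V^\varepsilon$, curl-freeness gives $\partial_yV_1^\varepsilon=\partial_xV_2^\varepsilon$, so only $\partial_yV_2^\varepsilon=\nabla\cdot V^\varepsilon-\partial_xV_1^\varepsilon$ is needed. Taking the divergence of the $V$-equation yields a transport-type equation for $\nabla\cdot V^\varepsilon$ driven by $\Delta H^\varepsilon$, which is absorbed by the parabolic dissipation of $H^\varepsilon$. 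For $U^\varepsilon$ we estimate $\varepsilon^{1/4}\|\partial_y\omega\|$, using that $\omega$ satisfies a Dirichlet-type condition at $y=0$ up to $O(\varepsilon^{1/2})$ profile terms. The mixed estimate interpolates the previous two.

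\textbf{Main obstacle.} I expect the hardest part to be the non-viscous coupling $\nabla(U^\varepsilon\cdot V^\theta)$ in the $\partial_y$ estimate, where $\partial_y^2V^{B,0}_2\sim\varepsilon^{-1}$ appears. My first attempt is the Hardy/weight argument with $z^k$-moments from $\mathcal{H}^{k,m,l}$. If that falls short, I would work in the conormal derivatives $\psi(y)\partial_y$ instead.
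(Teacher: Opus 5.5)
Your architecture matches the paper's. You have source bounds as in Lemmas \ref{lem f L2}--\ref{lem h L2}, a $\partial_x$-version of Lemma \ref{lem HVU L2} with the cut-off vorticity $\tilde{\omega}$ (Lemma \ref{lem HVUx L2}), and then energy estimates at the $\partial_y$- and $\partial_x\partial_y$-levels (Lemmas \ref{lem HVUy L2}, \ref{lem HVUxy L2}). Your Hardy treatment of $U^\varepsilon_2\partial_yV^\theta_2$ in Step~2 is fine. The paper instead substitutes $V^\theta=v^\varepsilon-\varepsilon^{1/2}V^\varepsilon$ and invokes an $L^\infty$ bound on $\partial_yv^\varepsilon_2$ from Theorem \ref{th nvu regularity}; that bound is delicate, since $\partial_yv^\varepsilon_2$ contains $\varepsilon^{-1/2}\partial_zv^{B,0}_2$.

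\textbf{The gap is in Step~3}, where the decisive estimate is left open. The divergence route does not remove the obstacle you name. The term $\nabla\cdot\nabla(U^\varepsilon\cdot V^\theta)=\Delta(U^\varepsilon\cdot V^\theta)$ contains exactly $U^\varepsilon_2\partial_y^2V^\theta_2\sim\varepsilon^{-1}U^\varepsilon_2$ and $V^\theta\cdot\Delta U^\varepsilon$. So the equation for $\nabla\cdot V^\varepsilon$ is not a transport equation driven only by $\Delta H^\varepsilon$. Also, $\nabla\cdot V^\varepsilon$ has no boundary condition, so $\varepsilon\Delta(\nabla\cdot V^\varepsilon)$ leaves a trace term.

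The missing idea is that the target at this level is weak, $\varepsilon^{1/2}\|\partial_yV^\varepsilon\|^2_{L^2_{xy}}\le C$, so a whole factor $\varepsilon^{-1}$ can be sacrificed. The paper tests the $V$-equation with $-\partial_y^2V^\varepsilon$. The boundary terms from $\partial_t$ and from $\nabla H^\varepsilon$ vanish because $\partial_yV^\varepsilon_1=V^\varepsilon_2=\partial_yH^\varepsilon=0$ at $y=0$. Only one derivative ever falls on $U^\varepsilon\cdot V^\theta$, and the paper absorbs it into the weak dissipation:
$|\langle\nabla(U^\varepsilon\cdot V^\theta),\partial_y^2V^\varepsilon\rangle|\le\frac{\varepsilon}{12}\|\partial_y^2V^\varepsilon\|^2_{L^2_{xy}}+C\varepsilon^{-1}\|\nabla(U^\varepsilon\cdot V^\theta)\|^2_{L^2_{xy}}$.
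The last norm is $O(\varepsilon^{1/4})$ by $\|U^\varepsilon\|_{H^1_{xy}}\le C\varepsilon^{1/4}$, with Hardy for $U^\varepsilon_2\partial_yV^\theta_2$. Likewise, $\varepsilon^{-1/2}|\langle g^\varepsilon,\partial_y^2V^\varepsilon\rangle|\le\frac{\varepsilon}{12}\|\partial_y^2V^\varepsilon\|^2_{L^2_{xy}}+C\varepsilon^{-2}\|g^\varepsilon\|^2_{L^2_{xy}}$ with $\|g^\varepsilon\|_{L^2_{xy}}\le C\varepsilon^{3/4}$. The same is done for $H^\varepsilon$ (test with $-\partial_y^2H^\varepsilon$) and for $\tilde{\omega}$ (test with $-\partial_y^2\tilde{\omega}$, using $\tilde{\omega}|_{y=0}=0$). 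All forcings are then $O(\varepsilon^{-1/2})$, Gronwall yields the second line, and $\partial_y^2V^{B,0}_2$ never appears. So you need neither $z^k$-moments nor conormal derivatives.

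Second, the mixed bound cannot come from interpolation, as the last sentence of your Step~3 says. Interpolation controls intermediate derivatives, not higher ones, and $\varepsilon^{-3/4}$ does not even lie between $\varepsilon^{1/4}$ and $\varepsilon^{-1/4}$. The mixed bound needs its own second-order energy estimate, which is the content of Lemma \ref{lem HVUxy L2}. Apply $\partial_x$ to the $H$- and $V$-equations and test with $\partial_x\partial_y^2H^\varepsilon$ and $\partial_x\partial_y^2V^\varepsilon$. Then absorb into $\|\partial_x\partial_y^2H^\varepsilon\|^2_{L^2_{xy}}$ and $\varepsilon\|\partial_x\partial_y^2V^\varepsilon\|^2_{L^2_{xy}}$, using the time-integrated bounds from the two previous levels.

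At this level, second normal derivatives of the approximation do enter, for example $\|\partial_x\partial_y^2V^\theta_2\|_{L^\infty_{xy}}\le C\varepsilon^{-1}$. So does the trace term $\varepsilon^{-1/2}\int_{-\infty}^{\infty}\partial_xg^\varepsilon_2(t,x,0)\,\partial_x\partial_yV^\varepsilon_2(t,x,0)\,dx$, along with the $\varepsilon^{-1}$ absorption losses. Together they produce an $O(\varepsilon^{-3/2})$ forcing, which is exactly where the weight $\varepsilon^{3/4}$ in the third line comes from.
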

     The detailed proof of Theorem \ref{th L infty convergence} be given in Section \ref{de hvu}.

    \begin{remark}
    During the proof, to overcome difficulty from the high order normal derivative, we take advantage of the $L^2$ estimates of \((H^\varepsilon, V^\varepsilon, U^\varepsilon)\), the $L^\infty$ estimates of the higher derivative of \((H^\theta, V^\theta, U^\theta)\) and anisotropic Sobolev embedding inequalities.
    \end{remark}

    \subsection{Proof of Theorem \ref{th nvu L infty convergence} and Theorem $\ref{th ncu L infty}$} We now proceed to prove Theorem $\ref{th nvu L infty convergence}$ by applying the prior estimates from Lemma \ref{lem n0v0u0}-Lemma \ref{lem inner layer} and Theorem \ref{th L2 convergence}-Theorem \ref{th L infty convergence}.\\

    \textbf{Proof of Theorem $\ref{th nvu L infty convergence}$.} Firstly, by the fact that $(H^\varepsilon,V^\varepsilon,U^\varepsilon)$ uniquely solves problem $(\ref{HVU equation})$, one {deduces that $n^\varepsilon= \varepsilon^\frac{1}{2}H^\varepsilon+H^\theta$, $v^\varepsilon =\varepsilon^\frac{1}{2}V^\varepsilon+V^\theta$} and $u^\varepsilon =\varepsilon^\frac{1}{2}U^\varepsilon+U^\theta$ {be} the unique solution of $(\ref{nvu equation})$-$(\ref{nvu condition})$ with $\varepsilon\in(0, \varepsilon_0]$ for $0<\varepsilon_0<1$. Thus
the regularity $(n^\varepsilon,v^\varepsilon,u^\varepsilon)\in L^\infty([0, T];L^\infty_{xy})$  follows from the fact that $(H^\varepsilon,V^\varepsilon,U^\varepsilon),\;(H^\theta,V^\theta,U^\theta)\in L^\infty([0, T];L^\infty_{xy})$. Then by Theorem $\ref{th L2 convergence}$-Theorem $\ref{th L infty convergence}$, we get
  \begin{equation}\label{HV L infty}
		\begin{split}{}
             \|H^\varepsilon\|_{L^\infty_TL^\infty_{xy}}&\leq C(\|H^\varepsilon\|^\frac{1}{2}_{L^\infty_TL^2_{xy}}\|\partial_yH^\varepsilon\|^\frac{1}{2}_{L^\infty_TL^2_{xy}}+\|\partial_xH^\varepsilon\|^\frac{1}{2}_{L^\infty_TL^2_{xy}}\|\partial_x\partial_yH^\varepsilon\|^\frac{1}{2}_{L^\infty_TL^2_{xy}})\\
             &\leq C\varepsilon^{\frac{1}{8}}\cdot\varepsilon^{-\frac{3}{8}}\\
             &\leq C\varepsilon^{-\frac{1}{4}},\\
             \|V^\varepsilon\|_{L^\infty_TL^\infty_{xy}}&\leq C(\|V^\varepsilon\|^\frac{1}{2}_{L^\infty_TL^2_{xy}}\|\partial_yV^\varepsilon\|^\frac{1}{2}_{L^\infty_TL^2_{xy}}+\|\partial_xV^\varepsilon\|^\frac{1}{2}_{L^\infty_TL^2_{xy}}\|\partial_x\partial_yV^\varepsilon\|^\frac{1}{2}_{L^\infty_TL^2_{xy}})\\
             &\leq C\varepsilon^{\frac{1}{8}}\cdot\varepsilon^{-\frac{3}{8}}\\
             &\leq C\varepsilon^{-\frac{1}{4}}.\\	
		\end{split}
	\end{equation}
    Similarly, it follows that
    \begin{equation}\label{U L infty}
		\begin{split}{}
        \|U^\varepsilon\|_{L^\infty_TL^\infty_{xy}}&\leq C(\|U^\varepsilon\|^\frac{1}{2}_{L^\infty_TL^2_{xy}}\|\partial_yU^\varepsilon\|^\frac{1}{2}_{L^\infty_TL^2_{xy}}+\|\partial_xU^\varepsilon\|^\frac{1}{2}_{L^\infty_TL^2_{xy}}\|\partial_x\partial_yU^\varepsilon\|^\frac{1}{2}_{L^\infty_TL^2_{xy}})\\
        &\leq C\varepsilon^{\frac{1}{8}}\cdot\varepsilon^\frac{1}{8}\\
        &\leq C\varepsilon^\frac{1}{4}.\\
		\end{split}
	\end{equation}
    Hence, the definitions of $H^\varepsilon,V^\varepsilon$, the Sobolev embedding inequality and $(\ref{HV L infty})$ lead to
    \begin{align}\label{nv varepsilon L infty}
			&\|n^\varepsilon(t,x,y)-n^0(t,x,y)\|_{L^\infty_TL^\infty_{xy}}\notag\\
            &\leq C\varepsilon^{\frac{1}{2}}(\|n^{B,1}\|_{L^\infty_TH^2_{xz}}+\varepsilon^{\frac{1}{2}}\|n^{B,2}\|_{L^\infty_TH^2_{xz}}+\|H^\varepsilon\|_{L^\infty_TL^\infty_{xy}})\notag\\
            &\leq C\varepsilon^{\frac{1}{4}},\notag\\
		&\|v^\varepsilon(t,x,y)-v^0(t,x,y)-(0,v^{B,0}_2(t,x,\frac{y}{\sqrt{\varepsilon}}))\|_{L^\infty_TL^\infty_{xy}}\\
                &\leq C\varepsilon^{\frac{1}{2}}(\|v^{B,1}_1\|_{L^\infty_TH^2_{xz}}+\|v^{B,1}_2\|_{L^\infty_TH^2_{xz}}+\|v^{B,2}_1\|_{L^\infty_TH^2_{xz}}+\|V^\varepsilon\|_{L^\infty_TL^\infty_{xy}})\notag\\
                &\leq C\varepsilon^{\frac{1}{4}}.\notag\notag
	\end{align}
    Similarly, we have
    \begin{align}\label{u varepsilon L infty}
            &\|u^\varepsilon(t,x,y)-u^0(t,x,y)\|_{L^\infty_TL^\infty_{xy}}\notag\\
                &\leq C\varepsilon^{\frac{1}{2}}(\|u^{B,1}_1\|_{L^\infty_TH^2_{xz}}+\varepsilon^{\frac{1}{2}}\|u^{B,2}_1\|_{L^\infty_TH^2_{xz}}+\varepsilon^{\frac{1}{2}}\|u^{B,2}_2\|_{L^\infty_TH^2_{xz}}+\varepsilon\|u^{B,3}_2\|_{L^\infty_TH^2_{xz}}+\|U^\varepsilon\|_{L^\infty_TL^\infty_{xy}})\\
                &\leq C\varepsilon^{\frac{1}{2}}.\notag\notag
	\end{align}
    {Combining} $(\ref{nv varepsilon L infty})$ and $(\ref{u varepsilon L infty})$, the proof is {complete}.\\

    From Theorem \ref{th nvu L infty convergence} and Theorem \ref{th L2 convergence}, the proof of Theorem $\ref{th ncu L infty}$ proceeds as follows.\\

    \textbf{Proof of Theorem $\ref{th ncu L infty}$.}
        The convergence rates of $n^\varepsilon-n^0$ and {$u^\varepsilon-u^0$ are direct consequences} of Theorem \ref{th nvu L infty convergence} and Theorem \ref{th L2 convergence}. We are left to prove the convergence for $c^\varepsilon$ in $(\ref{ncu varepsilon equation})$.
        First, distinguishing the case $\varepsilon>0$ from $\varepsilon=0$ in $(\ref{ncu varepsilon equation})$, we arrive at the following two equations:
        \begin{align*}
           \partial_tc^\varepsilon&=\varepsilon\Delta c^\varepsilon-u^\varepsilon\cdot\nabla c^\varepsilon-n^\varepsilon c^\varepsilon,\\
           \partial_tc^0&=-u^0\cdot\nabla c^0-n^0 c^0.\\
        \end{align*}
        Subtracting them gives
       \begin{align*}
           \partial_t(c^\varepsilon-c^0)&=\varepsilon\Delta c^\varepsilon-(u^\varepsilon-u^0)\cdot\nabla c^\varepsilon-u^0\cdot\nabla (c^\varepsilon-c^0)-(n^\varepsilon-n^0) c^\varepsilon+n^0 (c^\varepsilon-c^0).
        \end{align*}
       Taking the inner product of this equation with $(c^\varepsilon-c^0)$ yields
       \begin{align*}
           \frac{1}{2}\frac{d}{dt}\|c^\varepsilon-c^0\|^2_{L^2_{xy}}&=\varepsilon\int^\infty_0\int^\infty_{-\infty}\Delta c^\varepsilon(c^\varepsilon-c^0) dxdy-\int^\infty_0\int^\infty_{-\infty}(u^\varepsilon-u^0)\cdot\nabla c^\varepsilon(c^\varepsilon-c^0) dxdy\\
           &\quad-\int^\infty_0\int^\infty_{-\infty}u^0\cdot\nabla (c^\varepsilon-c^0)(c^\varepsilon-c^0) dxdy-\int^\infty_0\int^\infty_{-\infty}(n^\varepsilon-n^0) c^\varepsilon(c^\varepsilon-c^0) dxdy\\
           &\quad+\int^\infty_0\int^\infty_{-\infty}n^0 (c^\varepsilon-c^0)(c^\varepsilon-c^0) dxdy\\
           &=\mathcal{C}_1+\mathcal{C}_2+\mathcal{C}_3+\mathcal{C}_4+\mathcal{C}_5.
        \end{align*}
        From the boundary condition $\partial_yc^\varepsilon|_{y=0}=0$ in $(\ref{ncu varepsilon equation})_6$, we see that
        \begin{align*}
            \mathcal{C}_1&=-\varepsilon\int^\infty_0\int^\infty_{-\infty}\nabla c^\varepsilon\cdot\nabla(c^\varepsilon-c^0) dxdy\geq-\frac{1}{2}\varepsilon\|\nabla c^\varepsilon\|^2_{L^2_{xy}}-\frac{1}{2}\varepsilon\|\nabla (c^\varepsilon-c^0)\|^2_{L^2_{xy}}.
        \end{align*}
        From the result in $(\ref{nvu varepsilon L infty})_3$, we obtain
        \begin{align*}
            \mathcal{C}_2&\leq \|u^\varepsilon-u^0\|_{L^\infty_{xy}}\|\nabla c^\varepsilon\|_{L^2_{xy}}\|(c^\varepsilon-c^0)\|_{L^2_{xy}}\leq \frac{1}{2}\varepsilon\|\nabla c^\varepsilon\|^2_{L^2_{xy}}+C\|(c^\varepsilon-c^0)\|^2_{L^2_{xy}}.
        \end{align*}
        By the boundary condition $u^0_2=0$ and the divergence-free condition, we have $\mathcal{C}_3=0$. In addition, using $(\ref{nvu varepsilon L infty})_1$ and $(\ref{L^2-convergence})_1$, $\mathcal{C}_4$ is estimated below:
        \begin{align*}
           \mathcal{C}_4&=-\int^\infty_0\int^\infty_{-\infty}(n^\varepsilon-n^0) (c^\varepsilon-c^0)^2 dxdy-\int^\infty_0\int^\infty_{-\infty}(n^\varepsilon-n^0)c^0 (c^\varepsilon-c^0) dxdy\\
           &\leq\|n^\varepsilon-n^0\|_{L^\infty_{xy}}\|(c^\varepsilon-c^0)\|^2_{L^2_{xy}}+\|n^\varepsilon-n^0\|_{L^2_{xy}}\|c^0\|_{L^\infty_{xy}}\|(c^\varepsilon-c^0)\|_{L^2_{xy}}\\
           &\leq C\|(c^\varepsilon-c^0)\|^2_{L^2_{xy}}+C\varepsilon.
        \end{align*}
        Similarly, we obtain $\mathcal{C}_5\leq\|n^0\|_{L^\infty_{xy}}\|(c^\varepsilon-c^0)\|^2_{L^2_{xy}}\leq C\|(c^\varepsilon-c^0)\|^2_{L^2_{xy}}$.
        Combining $\mathcal{C}_1$-$\mathcal{C}_5$ and applying Gronwall's inequality, we obtain
        \begin{align*}
           \|c^\varepsilon-c^0\|^2_{L^2_{xy}}+\varepsilon\int^T_0\|\nabla (c^\varepsilon-c^0)\|^2_{L^2_{xy}}d\tau\leq C\varepsilon.
        \end{align*}
        Therefore, we clearly get the following convergence result.
        \begin{align*}
           \|c^\varepsilon-c^0\|_{L^\infty_{T}L^2_{xy}}\leq C\varepsilon^\frac{1}{2}.
        \end{align*}
        Using the Sobolev embedding, we further obtain
        \begin{align*}
           \|c^\varepsilon-c^0\|_{L^\infty_{T}L^\infty_{xy}}&\leq C\|c^\varepsilon-c^0\|^\frac{1}{2}_{L^\infty_{T}L^2_{xy}}\|\nabla^2(c^\varepsilon-c^0)\|^\frac{1}{2}_{L^\infty_{T}L^2_{xy}}\\
           &\leq C\|c^\varepsilon-c^0\|^\frac{1}{2}_{L^\infty_{T}L^2_{xy}}(\|\nabla^2c^\varepsilon\|^\frac{1}{2}_{L^\infty_{T}L^2_{xy}}+\|\nabla^2c^0\|^\frac{1}{2}_{L^\infty_{T}L^2_{xy}})\\
           &\leq C\varepsilon^\frac{1}{4},
        \end{align*}
        where because $v^\varepsilon=-\nabla(\ln c^\varepsilon)=-\frac{\nabla c^\varepsilon}{c^\varepsilon}$, the regularity of $c^\varepsilon$ is obtained via Theorem \ref{th nvu regularity}. Similarly, owing to $v^0=-\frac{\nabla c^0}{c^0}$, the regularity of $c^0$ follows from Lemma \ref{lem n0v0u0}.
        Thus, the proof of Theorem $\ref{th ncu L infty}$ is complete.

        \section{The priori estimates for $f^\varepsilon$, $g^\varepsilon$ and $h^\varepsilon$}\label{part3}
        Before carrying out the estimates for the remainder terms, we need to handle the source terms $f^\varepsilon$, $g^\varepsilon$ and $h^\varepsilon$ in equation $(\ref{HVU equation})$. Thus, we first derive the relevant prior estimates.


         \begin{lemma}\label{lem f L2}
         {Let $(n^0,v^0,u^0)$ be the solution obtained in Lemma $\ref{lem n0v0u0}$. Then there exists a constant $C>0$ independent of $\varepsilon$ on $[0,T]$ such that
			\begin{align}\label{f L2}
            &\|f^\varepsilon\|_{L^\infty_TL^2_{xy}}\leq C\varepsilon^\frac{3}{4},\;\|\partial_xf^\varepsilon\|_{L^\infty_TL^{2}_{xy}}\leq C\varepsilon^\frac{3}{4},\notag\\
            &\|\partial_yf^\varepsilon\|_{L^\infty_TL^{2}_{xy}}\leq C\varepsilon^\frac{1}{4},\;\|\partial_x\partial_yf^\varepsilon\|_{L^\infty_TL^{2}_{xy}}\leq C\varepsilon^\frac{1}{4}.
			\end{align}
        }
	    \end{lemma}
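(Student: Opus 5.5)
The plan is to substitute the ansatz $(\ref{HVUK theta})$ for $H^\theta,V^\theta,U^\theta$ into the definition of $f^\varepsilon$ in $(\ref{fgh varepsilon})$. Then we write every boundary-layer profile as a function of $(t,x,z)$ with $z=y/\sqrt{\varepsilon}$. Each $\partial_y$ falling on a profile contributes $\varepsilon^{-1/2}\partial_z$, and Taylor expansion of the outer quantities near $y=0$ produces factors $y=\sqrt{\varepsilon}z$. We then collect the result by powers of $\varepsilon^{1/2}$. The profiles $n^{B,1},n^{B,2},v^{B,0}_2,v^{B,1},v^{B,2}_1,u^{B,1}_1,u^{B,2},u^{B,3}_2$ were built in Section 3 of Part I \cite{WWZ} exactly so that the outer equation (order $\varepsilon^0$ away from the layer) and the layer equations at orders $\varepsilon^{-1},\varepsilon^{-1/2},\varepsilon^0,\varepsilon^{1/2}$ cancel. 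The outer terms vanish since $(n^0,v^0,u^0)$ solves $(\ref{nvu equation})$ with $\varepsilon=0$. What is left is a finite sum of the following two types of terms.

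The first type consists of genuinely higher-order products, of size $\varepsilon^{1}$ or smaller, such as $\varepsilon\partial_t n^{B,2}$, $\varepsilon\partial_x^2 n^{B,2}$, $\varepsilon^{3/2}u^{B,3}_2\partial_y n^{B,2}$ and $\varepsilon\,\partial_x(n^{B,2}v^{B,1}_1)$. The second type are Taylor remainders, for instance $n^0(t,x,y)v^{B,0}_2-n^0(t,x,0)v^{B,0}_2 = y\,\partial_yn^0(t,x,\xi)\,v^{B,0}_2$ and its second-order analogues. These appear because the layer equations use traces of the outer solution at $y=0$, and they are rewritten as $\sqrt{\varepsilon}\,z\,\partial_y n^0\,v^{B,0}_2$ (or $\varepsilon z^2\partial_y^2n^0\,v^{B,0}_2$, and so on).

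Next we estimate each term in $L^\infty_TL^2_{xy}$. For a profile $g(t,x,y/\sqrt{\varepsilon})$ we have $\|g\|_{L^2_{xy}}=\varepsilon^{1/4}\|g\|_{L^2_{xz}}$. Products of one outer factor with layer factors are bounded by $\|\text{outer}\|_{L^\infty_{xy}}\,\varepsilon^{1/4}\|(1+z^{k})\partial^\alpha_x\partial^\gamma_z(\text{profile})\|_{L^2_{xz}}$. We control the outer factor by Lemma \ref{lem n0v0u0} together with Sobolev embedding. The weighted anisotropic norms are finite by Lemma \ref{lem inner layer}, which includes the weights $z^k$ needed to absorb the Taylor factors; the time-derivative bounds there handle $\partial_t n^{B,j}$. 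For products of two layer factors we put one in $L^\infty_{xz}$, using $H^2_{xz}$-type embedding and the $\mathcal{H}^{k,m,l}$ bounds. In every case the residual carries at least a factor $\varepsilon^{1/2}$ beyond the cancelled orders, and combined with the $\varepsilon^{1/4}$ from the change of variables this gives $\|f^\varepsilon\|_{L^\infty_TL^2_{xy}}\le C\varepsilon^{3/4}$. Since $\partial_x$ commutes with the scaling and Lemma \ref{lem inner layer} provides one more tangential derivative (and Lemma \ref{lem n0v0u0} more outer regularity), the same bound holds for $\partial_xf^\varepsilon$. Applying $\partial_y$ costs $\varepsilon^{-1/2}$ on layer terms, which gives $\varepsilon^{1/4}$ for $\partial_yf^\varepsilon$ and likewise for $\partial_x\partial_yf^\varepsilon$. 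Here we use the normal-derivative bounds in $\mathcal{H}^{k,m-i,i}$ with $i\le 4$, which are enough because $f^\varepsilon$ involves at most two $z$-derivatives of the profiles before the extra $\partial_y$.

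The main obstacle is the bookkeeping in the first step. We must verify that every term of order $\varepsilon^{0}$ and $\varepsilon^{1/2}$ in the inner variable cancels exactly, including the nonlinear coupling $\nabla\cdot(H^\theta V^\theta)$ with $v^{B,0}_2$ of order one, the convective term $U^\theta\cdot\nabla H^\theta$ with $\varepsilon^{1/2}u^{B,2}_2\cdot\varepsilon^{-1/2}\partial_z$, and the Taylor expansion of $u^0_2=y\partial_yu^0_2(t,x,0)+\dots$ multiplying $\varepsilon^{-1/2}\partial_zn^{B,j}$. This cancellation relies on the explicit layer equations of Part I. We would organize the computation order by order, matching each coefficient with the corresponding profile equation and boundary condition there. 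Only after that do we bound the genuinely remainder terms, which are the ones that survive at order $\varepsilon^{1/2}$ (in $y$-scaling) times profiles.
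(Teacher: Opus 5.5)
Your plan is correct and is essentially the paper's proof. You expand $f^\varepsilon$ with the ansatz, use the outer equation and the Part~I layer equations to remove the singular orders, and bound the residual (the paper's $A_1$--$A_9$) term by term via $\|g(\cdot,y/\sqrt{\varepsilon})\|_{L^2_{xy}}=\varepsilon^{1/4}\|g\|_{L^2_{xz}}$, Taylor remainders absorbed by the $z$-weights, and $\partial_y=\varepsilon^{-1/2}\partial_z$ for the normal derivatives.

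One correction to your bookkeeping: for $f^\varepsilon$ the cancellation only holds through order $\varepsilon^{0}$ in the inner variable. Terms such as $-\varepsilon^{1/2}\partial_tn^{B,1}$, $\varepsilon^{1/2}\partial_x^2n^{B,1}$, $\varepsilon^{1/2}u^0_1\partial_xn^{B,1}$ and $u^0_2\partial_zn^{B,1}$ do not cancel; they survive at pointwise order $\varepsilon^{1/2}$, so your ``first type'' is not all of size $\varepsilon$. These terms, together with the Taylor remainders, are exactly what produce the $\varepsilon^{3/4}$ rate, so you should estimate them rather than try to cancel them.
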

         \noindent{\bf{Proof.}} {Firstly}, it follows from the {definitions} of $H^\theta,V^\theta,U^\theta$ and $f^\varepsilon$ that
         \begin{align*}
			f^\varepsilon&=\varepsilon^\frac{1}{2}\Big[\partial_x^2n^{B,1}+\varepsilon^\frac{1}{2}\partial_x^2n^{B,2}\Big]+\Big[-\varepsilon^\frac{1}{2}\partial_tn^{B,1}\Big]+\Big[-\varepsilon\partial_tn^{B,2}\Big]\\
            &\quad-\varepsilon^\frac{1}{2}\left[(u^0_1+\varepsilon^\frac{1}{2}u^{B,1}_1+\varepsilon u^{B,2}_1)\partial_x(n^{B,1}+\varepsilon^\frac{1}{2}n^{B,2})+(u^{B,1}_1+\varepsilon^\frac{1}{2}u^{B,2}_1)\partial_xn^0\right]\\
            &\quad-\varepsilon\Big[(\varepsilon^\frac{1}{2} u^{B,2}_2+\varepsilon u^{B,3}_2)\partial_y(n^{B,1}+\varepsilon^\frac{1}{2}n^{B,2})+(u^{B,2}_2+\varepsilon^\frac{1}{2} u^{B,3}_2)\partial_yn^0\Big]\\
            &\quad+\varepsilon^\frac{1}{2}\Big[(n^{B,1}+\varepsilon^\frac{1}{2}n^{B,2})\partial_xv^0_1+(n^0+\varepsilon^\frac{1}{2}n^{B,1}+\varepsilon n^{B,2})\partial_x(v^{B,1}_1+\varepsilon^\frac{1}{2}v^{B,2}_1)\\
            &\quad+(v^0_1+\varepsilon^\frac{1}{2}v^{B,1}_1+\varepsilon v^{B,2}_1)\partial_x(n^{B,1}+\varepsilon^\frac{1}{2}n^{B,2})+(v^{B,1}_1+\varepsilon^\frac{1}{2}v^{B,2}_1)\partial_xn^0\\
            &\quad+(n^{B,1}+\varepsilon^\frac{1}{2}n^{B,2})\partial_yv^0_2+\varepsilon^\frac{1}{2}v^{B,1}_2\partial_yn^{B,1}+v^{B,1}_2\partial_yn^0\Big]\\
            &\quad+\varepsilon\Big[(n^{B,1}+\varepsilon^\frac{1}{2}n^{B,2})\partial_yv^{B,1}_2+n^{B,2}\partial_yv^{B,0}_2+(v^0_2+v^{B,0}_2+\varepsilon^\frac{1}{2}v^{B,1}_2)\partial_yn^{B,2}\Big]\\
            &\quad+\varepsilon^\frac{1}{2}\Big[(n^0-\overline{n^{0}})\partial_yv^{B,1}_2+(v^0_2-\overline{v^0_2})\partial_yn^{B,1}+u^0_2\partial_y(n^{B,1}+\varepsilon^\frac{1}{2}n^{B,2})\Big]\\
            &\quad+\Big[(n^0-\overline{n^{0}}-y\overline{\partial_yn^{0}})\partial_yv^{B,0}_2+(\partial_yn^0-\overline{\partial_yn^{0}})v^{B,0}_2\Big]=\sum^{9}_{i=1}A_i,
	\end{align*}
    where $A_i$ denotes the entire content inside the $i$-th bracket $[\cdot]$ together  with the coefficient of $\varepsilon$ preceding this bracket in the above expression. Using Lemma \ref{lem inner layer}, we can derive that
         \begin{equation*}
		\begin{split}{}
        \|{A}_1\|_{L^\infty_TL^2_{xy}}&\leq\varepsilon^\frac{3}{4}\left(\|\partial^2_xn^{B,1}\|_{L^\infty_TL^2_{xz}}+\varepsilon^\frac{1}{2}\|\partial^2_xn^{B,2}\|_{L^\infty_TL^2_{xz}}\right)\leq C\varepsilon^\frac{3}{4}.\\	
        \end{split}
	\end{equation*}
        Similarly,
        \begin{align*}
        \|{A}_2\|_{L^\infty_TL^2_{xy}}&\leq\varepsilon^\frac{3}{4}\|\partial_tn^{B,1}\|_{L^\infty_TL^2_{xz}}\leq C\varepsilon^\frac{3}{4},\\
        \|{A}_3\|_{L^\infty_TL^2_{xy}}&\leq\varepsilon^\frac{5}{4}\|\partial_tn^{B,2}\|_{L^\infty_TL^2_{xz}}\leq C\varepsilon^\frac{5}{4}.	
		\end{align*}
        {By the} Sobolev embedding inequality and Lemma \ref{lem inner layer}, we have
        \begin{align*}
        &\|{A}_4\|_{L^\infty_TL^2_{xy}}\\
        &\leq\varepsilon^\frac{3}{4}\Big[\left(\|u^0_1\|_{L^\infty_TL^\infty_{xy}}+\varepsilon^\frac{1}{2}\|u^{B,1}_1\|_{L^\infty_TL^\infty_{xy}}+\varepsilon\|u^{B,2}_1\|_{L^\infty_TL^\infty_{xy}}\right)\left(\|\partial_xn^{B,1}\|_{L^\infty_TL^2_{xz}}+\varepsilon^\frac{1}{2}\|\partial_xn^{B,2}\|_{L^\infty_TL^2_{xz}}\right)\\
        &\quad+\left(\|u^{B,1}_1\|_{L^\infty_TL^2_{xz}}+\varepsilon^\frac{1}{2}\|u^{B,2}_1\|_{L^\infty_TL^2_{xz}}\right)\|\partial_xn^0\|_{L^\infty_TL^\infty_{xy}}\Big]\\
        &\leq C\varepsilon^\frac{3}{4}.	
	\end{align*}
        Similar arguments further give the estimates for $A_5$, $A_6$ and $A_7$ as follows:
        \begin{equation*}
		\begin{split}{}	
        ||{A}_5||_{L^2_{xy}}&\leq\varepsilon^\frac{3}{4}\Big[\left(\varepsilon^\frac{1}{2}||u^{B,2}_2||_{L^2_{xz}}+\varepsilon||u^{B,3}_2||_{L^2_{xz}}\right)\left(||\partial_zn^{B,1}||_{L^2_{xz}}+\varepsilon^\frac{1}{2}||\partial_zn^{B,2}||_{L^2_{xz}}\right)\\
        &\quad+\left(||u^{B,2}_2||_{L^2_{xz}}+\varepsilon^\frac{1}{2}||u^{B,3}_2||_{L^2_{xz}}\right)||\partial_yn^0||_{L^\infty_{xy}}\Big]\\
        &\leq C\varepsilon^\frac{3}{4},\\		
		\end{split}
	\end{equation*}
       and
        \begin{align*}	
        ||{A}_6||_{L^2_{xy}}
        &\leq C\varepsilon^\frac{3}{4}\Big[\left(||n^{B,1}||_{L^2_{xz}}+\varepsilon^\frac{1}{2}||n^{B,2}||_{L^2_{xz}}\right)||v^{0}_1||_{H^3_{xy}}\\
        &\quad+\left(||n^{0}||_{H^3_{xy}}+\varepsilon^\frac{1}{2}||n^{B,1}||_{H^3_{xz}}+\varepsilon||n^{B,2}||_{H^3_{xz}}\right)\left(||\partial_xv^{B,1}_1||_{L^2_{xz}}+\varepsilon^\frac{1}{2}||\partial_xv^{B,2}_1||_{L^2_{xz}}\right)\\
        &\quad+\left(||v^{0}_1||_{H^2_{xy}}+\varepsilon^\frac{1}{2}||v^{B,1}_1||_{H^2_{xz}}+\varepsilon||v^{B,2}_1||_{H^2_{xz}}\right)\left(||\partial_xn^{B,1}||_{L^2_{xz}}+\varepsilon^\frac{1}{2}||\partial_xn^{B,2}||_{L^2_{xz}}\right)\\
        &\quad+\left(||v^{B,1}_1||_{L^2_{xz}}+\varepsilon^\frac{1}{2}||v^{B,2}_1||_{L^2_{xz}}\right)||n^{0}||_{H^3_{xy}}+\left(||n^{B,1}||_{L^2_{xz}}+\varepsilon^\frac{1}{2}||n^{B,2}||_{L^2_{xz}}\right)||v^{0}_2||_{H^3_{xy}}\\
        &\quad+||v^{B,1}_2||_{H^2_{xz}}||\partial_zn^{B,1}||_{L^2_{xz}}+||v^{B,1}_2||_{L^2_{xz}}||n^{0}||_{H^3_{xy}}\Big]\\
        &\leq C\varepsilon^\frac{3}{4},
	\end{align*}
         and
        \begin{align*}
        ||{A}_7||_{L^2_{xy}}&\leq\varepsilon^\frac{3}{4}\Big[
       \left(||n^{B,1}||_{H^2_{xz}}+\varepsilon^\frac{1}{2}||n^{B,2}||_{H^2_{xz}}\right)||\partial_zv^{B,1}_2||_{L^2_{xz}}+||n^{B,2}||_{H^2_{xz}}||\partial_zv^{B,0}_2||_{L^2_{xz}}\\
        &\quad+\left(||v^{0}_2||_{H^2_{xy}}+||v^{B,0}_2||_{H^2_{xz}}+\varepsilon^\frac{1}{2}||v^{B,1}_2||_{H^2_{xz}}\right)||\partial_zn^{B,2}||_{L^2_{xz}}\Big]\\
        &\leq C\varepsilon^\frac{3}{4}.
	\end{align*}
        By applying the change of variables $y=\varepsilon^\frac{1}{2}z$, Taylor¡¯s formula,  Lemma \ref{lem n0v0u0} and Lemma \ref{lem inner layer}, we can obtain
        \begin{align*}		
        \|{A}_{8}\|_{L^\infty_TL^2_{xy}}&=\varepsilon\Big[\|\frac{n^{0}-\overline{n^{0}}}{y}z\partial_yv^{B,1}_2\|_{L^\infty_TL^2_{xy}}+\|\frac{v^0_2- \overline{v^0_2}}{y}z\partial_yn^{B,1}\|_{L^\infty_TL^2_{xy}}\\
        &\quad+\|\frac{u^0_2-\overline{u^0_2}}{y}z\partial_y(n^{B,1}+\varepsilon^\frac{1}{2}n^{B,2})\|_{L^\infty_TL^2_{xy}}\Big]\\
        &\leq \varepsilon\Big[\|\partial_yn^{0}\|_{L^\infty_TL^\infty_{xy}}\|z\partial_yv^{B,1}_2\|_{L^\infty_TL^2_{xy}}+\|\partial_yv^{0}_2\|_{L^\infty_TL^\infty_{xy}}\|z\partial_yn^{B,1}\|_{L^\infty_TL^2_{xy}}\\
      &\quad+\|\partial_yu^{0}_2\|_{L^\infty_TL^\infty_{xy}}\|z\partial_y(n^{B,1}+\varepsilon^\frac{1}{2}n^{B,2})\|_{L^\infty_TL^2_{xy}}\Big]\\
        &\leq \varepsilon^\frac{3}{4}\Big[\|n^{0}\|_{L^\infty_TH^3_{xy}}\|(1+z^{2k})\partial_zv^{B,1}_2\|_{L^\infty_TL^2_{xz}}+\|v^{0}_2\|_{L^\infty_TH^3_{xy}}\|(1+z^{2k})\partial_zn^{B,1}\|_{L^\infty_TL^2_{xz}}\\
        &\quad+\|u^{0}_2\|_{L^\infty_TH^3_{xy}}\|(1+z^{2k})\partial_z(n^{B,1}+\varepsilon^\frac{1}{2}n^{B,2})\|_{L^\infty_TL^2_{xz}}\Big]\\
        &\leq C\varepsilon^\frac{3}{4}.
	\end{align*}
        A similar argument as estimating ${A}_{8}$ leads to
         \begin{align*}	 		
        \|{A}_{9}\|_{L^\infty_TL^2_{xy}}&=\varepsilon\|\frac{n^{0}-\overline{n^0}-y\overline{\partial_yn^0}}{y^2}z^2\partial_yv^{B,0}_2\|_{L^\infty_TL^2_{xy}}+\varepsilon^\frac{1}{2}\|\frac{\partial_yn^{0}-\overline{\partial_yn^0}}{y}zv^{B,0}_2\|_{L^\infty_TL^2_{xy}}\\
        &\leq C\varepsilon^\frac{3}{4}\left(\|n^0\|_{L^\infty_TH^4_{xy}}\|(1+z^{2k})^2\partial_zv^{B,0}_2\|_{L^\infty_TL^2_{xz}}+\|n^0\|_{L^\infty_TH^4_{xy}}\|(1+z^{2k})v^{B,0}_2\|_{L^\infty_TL^2_{xz}}\right)\\
        &\leq C\varepsilon^\frac{3}{4}.
	\end{align*}
        Substituting the above estimates for $A_1$ to $A_{9}$, we conclude that $\|f^\varepsilon\|_{L^\infty_TL^2_{xy}}\leq C\varepsilon^\frac{3}{4}$.

        Applying $\partial_x$ to {the} equations $(\ref{fgh varepsilon})_1$, by Lemma \ref{lem n0v0u0}, Lemma \ref{lem inner layer}, and repeating the above proof process for the estimate of  $\|f^\varepsilon\|_{L^\infty_TL^{2}_{xy}}$, then $\|\partial_xf^\varepsilon\|_{L^\infty_TL^{2}_{xy}}\leq C\varepsilon^\frac{3}{4}$ can be obtained. Moreover, applying $\partial_y$ to the equations $(\ref{fgh varepsilon})_1$, {then} $\|\partial_y f^\varepsilon\|_{L^\infty_TL^2_{xy}}=\sum^9_{i=1}\|\partial_y A_i\|_{L^\infty_TL^2_{xy}}$. Since $\partial_y = \varepsilon^{-\frac{1}{2}}\partial_z$, the following norm estimate is given:
        \begin{align}\label{z derivative}					
                    \|\partial^l_yf(t,x,\frac{y}{\sqrt{\varepsilon}})\|_{H^m_xL^2_y}&=\varepsilon^{\frac{1}{4}-\frac{l}{2}} \|\partial^l_zf(t,x,z)\|_{H^m_xL^2_z}.
			\end{align}
        For example, the term $\partial_y{A}_4$ has the following estimate
        \begin{align*}
        &\|\partial_y{A}_4\|_{L^\infty_TL^2_{xy}}\\
        &\leq\varepsilon^\frac{1}{2}\Big[\|\partial_y\left(u^0_1+\varepsilon^\frac{1}{2}u^{B,1}_1+\varepsilon u^{B,2}_1\right)\|_{L^\infty_TL^2_{xy}}\|\partial_xn^{B,1}+\varepsilon^\frac{1}{2}\partial_xn^{B,2}\|_{L^\infty_TL^\infty_{xy}}\\
        &\quad+\|\partial_yu^{B,1}_1+\varepsilon^\frac{1}{2}\partial_yu^{B,2}_1\|_{L^\infty_TL^2_{xy}}\|\partial_xn^0\|_{L^\infty_TL^\infty_{xy}}\\
        &\quad+\|u^0_1+\varepsilon^\frac{1}{2}u^{B,1}_1+\varepsilon u^{B,2}_1\|_{L^\infty_TL^\infty_{xy}}\|\partial_y\left(\partial_xn^{B,1}+\varepsilon^\frac{1}{2}\partial_xn^{B,2}\right)\|_{L^\infty_TL^2_{xy}}\\
        &\quad+\|u^{B,1}_1+\varepsilon^\frac{1}{2}u^{B,2}_1\|_{L^\infty_TL^2_{xy}}\|\partial_x\partial_yn^0\|_{L^\infty_TL^\infty_{xy}}\Big]\\
        &\leq C\varepsilon^\frac{1}{2}\Big[\Big(\|u^0_1\|_{L^\infty_TH^1_{xy}}+\varepsilon^\frac{1}{4}\|u^{B,1}_1\|_{L^\infty_TH^1_{xz}}+\varepsilon^\frac{3}{4}\|u^{B,2}_1\|_{L^\infty_TH^1_{xz}}\Big)\Big(\|n^{B,1}\|_{L^\infty_TH^3_{x}H^2_{z}}+\varepsilon^\frac{1}{2}\|n^{B,2}\|_{L^\infty_TH^3_{x}H^2_{z}}\Big)\\
        &\quad+\Big(\|u^{B,1}_1\|_{L^\infty_TL^2_{xz}}+\varepsilon^\frac{1}{2}\|u^{B,2}_1\|_{L^\infty_TL^2_{xz}}\Big)\|n^0\|_{L^\infty_TH^3_{xy}}\Big]\\
        &\quad+C\varepsilon^\frac{1}{4}\Big[\Big(\|u^0_1\|_{H^2_{xy}}+\varepsilon^\frac{1}{2}\|u^{B,1}_1\|_{L^\infty_TH^2_{xz}}+\varepsilon\|u^{B,2}_1\|_{L^\infty_TH^2_{xz}}\Big)\Big(\|n^{B,1}\|_{L^\infty_TH^2_{x}H^1_{z}}+\varepsilon^\frac{1}{2}\|n^{B,2}\|_{L^\infty_TH^2_{x}H^1_{z}}\Big)\\
        &\quad+\Big(\|u^{B,1}_1\|_{L^\infty_TH^1_{xz}}+\varepsilon^\frac{1}{2}\|u^{B,2}_1\|_{L^\infty_TH^1_{xz}}\Big)\|n^0\|_{L^\infty_TH^3_{xy}}\Big]\\
        &\leq C\varepsilon^\frac{1}{4}.
	\end{align*}
         By providing other terms $(A_1-A_3,A_5-A_9)$ {with similar estimates}, it can be concluded that $\|\partial_y f^\varepsilon\|_{L^\infty_TL^2_{xy}}\leq C\varepsilon^\frac{1}{4}$. {Similarly, applying $\partial_x\partial_y$ to the equations $(\ref{fgh varepsilon})_1$ we can get}
        \begin{equation*}
		\begin{split}{}			
        \|\partial_x\partial_y f^\varepsilon\|_{L^\infty_TL^2_{xy}}\leq C\varepsilon^\frac{1}{4}.
		\end{split}
	\end{equation*}

        \begin{lemma}\label{lem g L2}
         {Let $(n^0,v^0,u^0)$ be the solution obtained in Lemma $\ref{lem n0v0u0}$. Then there exists a constant $C>0$ independent of $\varepsilon$ on $[0,T]$ such that
			\begin{align}\label{g L2}
            &\|g^\varepsilon\|_{L^\infty_TL^2_{xy}}\leq C\varepsilon^\frac{3}{4},\;\|\partial_xg^\varepsilon\|_{L^\infty_TL^{2}_{xy}}
            \leq C\varepsilon^\frac{3}{4},\notag\\
            &\|\partial_yg^\varepsilon\|_{L^\infty_TL^{2}_{xy}}
            \leq C\varepsilon^\frac{1}{4},\;\|\partial_x\partial_yg^\varepsilon\|_{L^\infty_TL^{2}_{xy}}
            \leq C\varepsilon^\frac{1}{4}.
			\end{align}
        }
	    \end{lemma}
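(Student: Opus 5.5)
We follow the same scheme as in the proof of Lemma \ref{lem f L2}. We substitute the expansions (\ref{HVUK theta}) of $H^\theta,V^\theta,U^\theta$ into $(\ref{fgh varepsilon})_2$. Then we use the equations solved by the outer profile $(n^0,v^0,u^0)$ (the system with $\varepsilon=0$) and by the inner profiles $v^{B,0}_2,v^{B,1},v^{B,2}_1$, as derived in Part I \cite{WWZ}, to cancel every term of order $\varepsilon^0$ and $\varepsilon^{1/2}$. In particular, $\partial_tv^0+\nabla(u^0\cdot v^0)-\nabla n^0=0$ removes the pure outer part. The leading boundary-layer equation for $v^{B,0}_2$, namely $\partial_tv^{B,0}_2-\partial_z^2v^{B,0}_2+\dots=0$ with coefficients frozen at $y=0$ (quantities $\overline{\,\cdot\,}$), removes the $O(1)$ inner part. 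The equations for $v^{B,1}$ and $v^{B,2}_1$ remove the next orders, including $\partial_yn^{B,1}$ coming from $\nabla H^\theta$. What remains is $g^\varepsilon=\sum_i B_i$. Each $B_i$ is either a product carrying an explicit factor $\varepsilon^{1/2}$ or higher, or a Taylor-remainder term of the form $(a^0-\overline{a^0})\partial_z(\cdot)$ or $(a^0-\overline{a^0}-y\overline{\partial_ya^0})\partial_z(\cdot)$ with $a^0\in\{u^0_1,u^0_2,v^0_1,v^0_2\}$, exactly like $A_8,A_9$ above.

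We then estimate each $B_i$ in $L^\infty_TL^2_{xy}$. Terms with an explicit factor $\varepsilon^{1/2}$ times a boundary-layer profile gain a further $\varepsilon^{1/4}$ from the scaling $\|F(x,y/\sqrt\varepsilon)\|_{L^2_{xy}}=\varepsilon^{1/4}\|F\|_{L^2_{xz}}$. Their $L^\infty$ factors are bounded by Sobolev embedding using Lemma \ref{lem n0v0u0} and Lemma \ref{lem inner layer}, which gives $C\varepsilon^{3/4}$. For the $\varepsilon\Delta V^\theta$ and $\varepsilon\nabla|V^\theta|^2$ pieces, note that $\varepsilon\partial_y^2$ acting on an inner profile is $\partial_z^2$. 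These are therefore $O(1)$ in $z$ and are absorbed by the profile equations, so only $\varepsilon\partial_x^2$ and the cross terms survive, and these are $O(\varepsilon^{3/4})$. For the Taylor-remainder terms we write $a^0-\overline{a^0}=y\,\frac{a^0-\overline{a^0}}{y}=\varepsilon^{1/2}z\,\frac{a^0-\overline{a^0}}{y}$, bound $\frac{a^0-\overline{a^0}}{y}$ by $\|\partial_ya^0\|_{L^\infty}$, and absorb $z$ or $z^2$ into the weight $(1+z^{2k})$ of $\mathcal H^{k,m,l}$. As for $A_8,A_9$, this yields $\varepsilon^{1/2}\cdot\varepsilon^{1/4}$.

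The derivative bounds follow by differentiating. Applying $\partial_x$ does not change the scaling, and the tangential regularity in Lemma \ref{lem inner layer} and Lemma \ref{lem n0v0u0} has enough room (indices up to $10-j$ and $H^{13}$), so we get $\|\partial_xg^\varepsilon\|\le C\varepsilon^{3/4}$. Applying $\partial_y$ costs at most $\varepsilon^{-1/2}$ by (\ref{z derivative}) when it falls on an inner profile. When it falls on an outer factor there is no loss, and on a Taylor remainder it produces $\partial_ya^0$ times a profile. This gives $C\varepsilon^{1/4}$, and $\partial_x\partial_y$ is handled identically.

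The main obstacle is the bookkeeping in the cancellation step, namely verifying that every $O(1)$ and $O(\varepsilon^{1/2})$ contribution is indeed removed by the Part I profile equations. This concerns especially the nonlinear term $\nabla(U^\theta\cdot V^\theta)$, whose normal component contains $\partial_y(u^0_2 v^{B,0}_2)$ and $\partial_y(u^{B,1}_1 v^{B,1}_1)$-type products with an apparent $\varepsilon^{-1/2}$ loss. We expect to handle these with $u^0_2|_{y=0}=0$, so that $u^0_2=\varepsilon^{1/2}z\,u^0_2/y$, together with the divergence-free structure. We also need the orders chosen in $U^\theta$, which include $\varepsilon^{3/2}u^{B,3}_2$, to make the residual start at $\varepsilon^{1/2}$ in $z$-variables, hence at $\varepsilon^{3/4}$ in $L^2_{xy}$.
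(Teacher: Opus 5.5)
Your proposal is correct and follows essentially the same route as the paper. The paper splits $g^\varepsilon_1$ and $g^\varepsilon_2$ into pieces $B_i$ and substitutes the Part I equations for $v^{B,1}_1$, $v^{B,0}_2$, $v^{B,1}_2$, which turns the coefficients into Taylor remainders $(a^0-\overline{a^0})$ and $(a^0-\overline{a^0}-y\overline{\partial_ya^0})$. It then estimates everything through the $\varepsilon^{1/4}$ scaling, the $(1+z^{2k})$ weights, and the $\varepsilon^{-1/2}$ cost of $\partial_y$ on inner profiles. One small overshoot: cancellation is only needed at order $\varepsilon^0$ in the $z$-variables, since anything carrying $\varepsilon^{1/2}$ is already $O(\varepsilon^{3/4})$ in $L^2_{xy}$, so the $v^{B,2}_1$ equation is never needed; the paper estimates the $\varepsilon\partial_tv^{B,2}_1$ and $\varepsilon^2\partial_y^2v^{B,2}_1$ terms directly in $B_2$.
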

         \noindent{\bf{Proof.}}
         By the definition of $g^\varepsilon$ in $(\ref{fgh varepsilon})_2$, we write its first component $g^\varepsilon_1$ as follows:
         \begin{align}\label{g1}
			g^\varepsilon_1&=-\varepsilon^\frac{1}{2}\Big[\partial_{t}v^{B,1}_{1}-\partial^{2}_{z}v_{1}^{B,1}-\partial_{x}n^{B,1}+\partial_{x}(u^{0}_1v^{B,1}_{1})+\partial_{x}(v^{0}_1u^{B,1}_{1})\Big]\notag\\
            &\quad+\varepsilon\Big[-\partial_{t}v^{B,2}_{1}+\Delta v^0_1+\varepsilon^{\frac{1}{2}}\partial^{2}_{x}v^{B,1}_{1}+\varepsilon\partial^{2}_{x}v^{B,2}_{1}+\varepsilon\partial^{2}_{y}v^{B,2}_{1}\Big]\notag\\
            &\quad-\varepsilon^\frac{1}{2}\partial_{x}\Big[u^{0}_2v_{2}^{B,1}+((u_{1}^{B,1},0)+\varepsilon^\frac{1}{2} u^{B,2}+\varepsilon(0,u^{B,3}_2))\cdot((0,v_{2}^{B,0})+\sqrt{\varepsilon}v^{B,1}+\varepsilon(v^{B,2}_1,0))\Big]\\
            &\quad-\varepsilon\partial_{x}\Big[(v^{0}+(0,v_{2}^{B,0})+\varepsilon^\frac{1}{2}v^{B,1}+\varepsilon(v^{B,2}_1,0))(v^{0}+(0,v_{2}^{B,0})\notag\\
            &\quad+\varepsilon^\frac{1}{2}v^{B,1}+\varepsilon(v^{B,2}_1,0))+u^{0}_1v_{1}^{B,2}+(u^{B,2}+\varepsilon^\frac{1}{2}(0,u^{B,3}_2))v^0+n^{B,2}\Big]\notag\\
            &\quad-\Big[\partial_{x}(u^{0}_2v_{2}^{B,0})\Big]=\sum^{5}_{i=1}B_i.\notag\notag
	\end{align}
    Substituting equation
    \begin{align*}\label{eq:vB11}	     &\partial_tv^{B,1}_1+\partial_x(z\overline{\partial_yu^{I,0}_2})v^{B,0}_2+\overline{\partial_xu^{I,0}_1}v^{B,1}_1  +\partial_xu^{B,1}_1\overline{v^{I,0}_1}    +\overline{\partial_xv^{I,0}_1}u^{B,1}_1+\partial_xv^{B,0}_2z\overline{\partial_yu^{I,0}_2}\\
     &+\partial_xv^{B,1}_1\overline{u^{I,0}_1}-\partial_xn^{B,1}=\partial^2_zv^{B,1}_1,
     \end{align*}
     derived from (2.7) in Part I \cite{WWZ} into $B_{1}$, we reduce $B_{1}$ to
        \begin{equation*}
		\begin{split}{}			
        {B}_{1}&=-\varepsilon^\frac{1}{2}(u^0_1-\overline{u^0_1})\partial_xv_{1}^{B,1}-\varepsilon^\frac{1}{2}(\partial_xu^0_1-\overline{\partial_xu^0_1})v_{1}^{B,1}-\varepsilon^\frac{1}{2}(v^0_1-\overline{v^0_1})\partial_xu_{1}^{B,1}\\
        &\quad-\varepsilon^\frac{1}{2}(\partial_xv^0_1-\overline{\partial_xv^0_1})u_{1}^{B,1}+\varepsilon^\frac{1}{2}z\overline{\partial_x\partial_yu^0_2}v_{2}^{B,0}+\varepsilon^\frac{1}{2}z\overline{\partial_yu^0_2}\partial_xv_{2}^{B,0}.
		\end{split}
	\end{equation*}
    Then we have the following \(L^2_{xy}\) estimation for $B_{1}$
          \begin{equation*}
		\begin{split}{}	 		
        \|{B}_{1}\|_{L^\infty_TL^2_{xy}}&=\varepsilon^\frac{3}{4}\Big[\|(u^0_1-\overline{u^0_1})\|_{L^\infty_TL^\infty_{xy}}\|\partial_xv^{B,1}_1\|_{L^\infty_TL^2_{xz}}+\|(\partial_xu^0_1-\overline{\partial_xu^0_1})\|_{L^\infty_TL^\infty_{xy}}\|v^{B,1}_1\|_{L^\infty_TL^2_{xz}}\\
        &\quad+\|(v^0_1-\overline{v^0_1})\|_{L^\infty_TL^\infty_{xy}}\|\partial_xu^{B,1}_1\|_{L^\infty_TL^2_{xz}}+\|(\partial_xv^0_1-\overline{\partial_xv^0_1})\|_{L^\infty_TL^\infty_{xy}}\|u^{B,1}_1\|_{L^\infty_TL^2_{xz}}\\ &\quad+\|\overline{\partial_x\partial_yu^0_2}\|_{L^\infty_TL^\infty_{xy}}\|zv^{B,0}_2\|_{L^\infty_TL^2_{xz}}+\|\overline{\partial_yu^0_2}\|_{L^\infty_TL^\infty_{xy}}\|(1+z^{2k})\partial_xv^{B,0}_2\|_{L^\infty_TL^2_{xz}}\Big]\\
        &\leq C\varepsilon^\frac{3}{4}.		
		\end{split}
	\end{equation*}
        By Lemma \ref{lem n0v0u0} and Lemma \ref{lem inner layer}, we have
        \begin{equation*}
		\begin{split}{}
        \|{B}_2\|_{L^\infty_TL^2_{xy}}&=\varepsilon\Big[\|\partial_t v^{B,2}_1\|_{L^\infty_TL^2_{xz}}+\|\Delta v^0_1\|_{L^\infty_TL^2_{xy}}+\varepsilon^\frac{1}{2}\|\partial^2_x v^{B,1}_1\|_{L^\infty_TL^2_{xz}}\\
        &\quad+\varepsilon\|\partial^2_x v^{B,2}_1\|_{L^\infty_TL^2_{xz}}+\|\partial^2_z v^{B,2}_1\|_{L^\infty_TL^2_{xz}}\Big]\\
        &\leq C\varepsilon.\\
		\end{split}
	\end{equation*}
        Similarly, by H{\"o}lder's inequality and the Sobolev embedding inequality, we have
        \begin{equation*}
		\begin{split}{}	
        \|{B}_3\|_{L^\infty_TL^2_{xy}}\leq C\varepsilon^\frac{3}{4}.\\		
		\end{split}
	\end{equation*}
        {For the term} ${B}_4$, we first estimate {$\|V^\theta\|_{L^\infty_TL^\infty_{xy}}$} by the Sobolev embedding inequality as follows
        \begin{equation}\label{V theta L infty}
		\begin{split}{}	 		
        \|V^\theta\|_{L^\infty_TL^\infty_{xy}}&\leq C\Big[\|v^{0}\|_{L^\infty_TH^2_{xy}}+\|v_{2}^{B,0}\|_{L^\infty_TH^2_{xz}}+\varepsilon^\frac{1}{2}\|v^{B,1}_1\|_{L^\infty_TH^2_{xz}}\\
        &\quad+\varepsilon^\frac{1}{2}\|v^{B,1}_2\|_{L^\infty_TH^2_{xz}}+\varepsilon\|v^{B,2}_1\|_{L^\infty_TH^2_{xz}}\Big]\\
        &\leq C.\\		
		\end{split}
	\end{equation}
        Similar {arguments} further yield
        \begin{align}\label{V theta estimates}
           \|\partial_xV^\theta\|_{L^\infty_TL^2_{xy}}\leq C,\;\|\partial_yV^\theta\|_{L^\infty_TL^2_{xy}}\leq C\varepsilon^{-\frac{1}{4}},\;\|\partial_x\partial_yV^\theta\|_{L^\infty_TL^2_{xy}}\leq C\varepsilon^{-\frac{1}{4}}.
        \end{align}
        Thus
        \begin{align*} 		
            \|{B}_{4}\|_{L^\infty_TL^2_{xy}}&\leq C\varepsilon\Big[2\|V^\theta\|_{L^\infty_TL^\infty_{xy}}\|\partial_xV^\theta\|_{L^\infty_TL^2_{xy}}+\Big(\|\partial_xu^{B,2}\|_{L^\infty_TL^2_{xz}}+\varepsilon^\frac{1}{2}\|\partial_xu^{B,3}_2\|_{L^\infty_TL^2_{xz}}\Big)\|v^{0}\|_{L^\infty_TH^2_{xy}}\\
            &\quad+\Big(\|u^{B,2}\|_{L^\infty_TL^2_{xz}}+\varepsilon^\frac{1}{2}\|u^{B,3}_2\|_{L^\infty_TL^2_{xz}}\Big)\|v^{0}\|_{L^\infty_TH^3_{xy}}+\|\partial_xn^{B,2}\|_{L^\infty_TL^2_{xz}}\Big]\\
            &\leq C\varepsilon.
	\end{align*}
        A similar argument as estimating ${A}_{8}$ leads to
         \begin{equation*}
		\begin{split}{}	 		
        \|{B}_{5}\|_{L^\infty_TL^2_{xy}}&=\varepsilon^\frac{1}{2}\Big(\|\partial_x\frac{u^{0}_2-\overline{u^0_2}}{y}zv^{B,0}_2\|_{L^\infty_TL^2_{xy}}+\varepsilon^\frac{1}{2}\|\frac{u^{0}_2-\overline{u^0_2}}{y}z\partial_xv^{B,0}_2\|_{L^\infty_TL^2_{xy}}\Big)\\
        &\leq C\varepsilon^\frac{3}{4}\Big(\|u^{0}_2\|_{L^\infty_TH^4_{xy}}\|v^{B,0}_2\|_{L^\infty_TL^2_{xz}}+\|u^{0}_2\|_{L^\infty_TH^3_{xy}}\|(1+z^{2k})\partial_xv^{B,0}_2\|_{L^\infty_TL^2_{xz}}\Big)\\
        &\leq C\varepsilon^\frac{3}{4}.\\		
		\end{split}
	\end{equation*}
        Hence collecting the above estimates from $B_1$ to $B_{5}$, one derives $\|g^\varepsilon_1\|_{L^\infty_TL^2_{xy}}\leq C\varepsilon^\frac{3}{4}$. By applying $\partial_x$ to the equation$(\ref{g1})$, Lemma \ref{lem n0v0u0}, Lemma \ref{lem inner layer} and repeating the above proof process for the estimate of $\|g^\varepsilon_1\|_{L^\infty_TL^{2}_{xy}}$, then $\|\partial_xg^\varepsilon_1\|_{L^\infty_TL^{2}_{xy}}\leq C\varepsilon^\frac{3}{4}$ can be obtained. By (\ref{z derivative}), we can estimate the normal derivative of $B_1$ as
        \begin{align*} 		
        \|\partial_y{B}_{1}\|_{L^\infty_TL^2_{xy}}&=\varepsilon^\frac{1}{2}\|-\partial_y((u^0_1-\overline{u^0_1})\partial_xv_{1}^{B,1})-\partial_y((\partial_xu^0_1-\overline{\partial_xu^0_1})v_{1}^{B,1})-\partial_y((v^0_1-\overline{v^0_1})\partial_xu_{1}^{B,1})\|_{L^\infty_TL^2_{xy}}\\
        &\quad+\varepsilon^\frac{1}{2}\|\partial_y((\partial_xv^0_1-\overline{\partial_xv^0_1})u_{1}^{B,1})+\overline{\partial_x\partial_yu^0_2}\partial_y(zv_{2}^{B,0})+\overline{\partial_yu^0_2}\partial_y(z\partial_xv_{2}^{B,0})\|_{L^\infty_TL^2_{xy}}\\
        &\leq C\varepsilon^\frac{1}{4}\Big[\|\partial^2_yu^0_1\|_{L^\infty_TL^\infty_{xy}}\|\partial_xv^{B,1}_1\|_{L^\infty_TL^2_{xz}}+\|\partial_yu^0_1\|_{L^\infty_TL^\infty_{xy}}\|\partial_x\partial_zv^{B,1}_1\|_{L^\infty_TL^2_{xz}}\\
        &\quad+\|\partial_x\partial^2_yu^0_1\|_{L^\infty_TL^\infty_{xy}}\|v^{B,1}_1\|_{L^\infty_TL^2_{xz}}+\|\partial_x\partial_yu^0_1\|_{L^\infty_TL^\infty_{xy}}\|\partial_zv^{B,1}_1\|_{L^\infty_TL^2_{xz}}\\
        &\quad+\|\partial^2_yv^0_1\|_{L^\infty_TL^\infty_{xy}}\|\partial_xu^{B,1}_1\|_{L^\infty_TL^2_{xz}}+\|\partial_yv^0_1\|_{L^\infty_TL^\infty_{xy}}\|\partial_x\partial_zu^{B,1}_1\|_{L^\infty_TL^2_{xz}}\\
        &\quad+\|\partial_x\partial^2_yv^0_1\|_{L^\infty_TL^\infty_{xy}}\|u^{B,1}_1\|_{L^\infty_TL^2_{xz}}+\|\partial_x\partial_yv^0_1\|_{L^\infty_TL^\infty_{xy}}\|\partial_zu^{B,1}_1\|_{L^\infty_TL^2_{xz}}\\ &\quad+\|\overline{\partial_x\partial_yu^0_2}\|_{L^\infty_TL^\infty_{xy}}\|v^{B,0}_2\|_{L^\infty_TL^2_{xz}}+\|\overline{\partial_x\partial_yu^0_2}\|_{L^\infty_TL^\infty_{xy}}\|(1+z^{2k})\partial_zv^{B,0}_2\|_{L^\infty_TL^2_{xz}}\\
        &\quad+\|\overline{\partial_yu^0_2}\|_{L^\infty_TL^\infty_{xy}}\|\partial_xv^{B,0}_2\|_{L^\infty_TL^2_{xz}}+\|\overline{\partial_yu^0_2}\|_{L^\infty_TL^\infty_{xy}}\|(1+z^{2k})\partial_x\partial_zv^{B,0}_2\|_{L^\infty_TL^2_{xz}}\Big]\\
        &\leq C\varepsilon^\frac{1}{4}.	
	\end{align*}
        Using (\ref{V theta estimates}), similar to the above estimate for $B_1$, we further conclude that         $$\|\partial_y{B}_{2}\|_{L^\infty_TL^{2}_{xy}}+\|\partial_y{B}_{3}\|_{L^\infty_TL^{2}_{xy}}+\|\partial_y{B}_{4}\|_{L^\infty_TL^{2}_{xy}}+\|\partial_y{B}_{5}\|_{L^\infty_TL^{2}_{xy}}\leq C\varepsilon^\frac{1}{4}.$$ This implies that both $\|\partial_yg^\varepsilon_1\|_{L^\infty_TL^{2}_{xy}}\leq C\varepsilon^\frac{1}{4}$ and $\|\partial_x\partial_yg^\varepsilon_1\|_{L^\infty_TL^{2}_{xy}}\leq C\varepsilon^\frac{1}{4}$ hold.
        By the definition of $g^\varepsilon$ in $(\ref{fgh varepsilon})_2$, we write its second component $g^\varepsilon_2$ as follows:
         \begin{align}\label{g2}
			g^\varepsilon_2&=-\Big[\partial_{t}v_{2}^{B,0}-\partial_{z}^{2}v_{2}^{B,0}-\partial_{z}n^{B,1}+\partial_y(u^{0}_2v_{2}^{B,0})+\varepsilon^\frac{1}{2}(\partial_yu^{B,1}_1v^0_1+\partial_yv^{B,1}_1u^0_1)\Big]\notag\\
            &\quad-\varepsilon^\frac{1}{2}\Big[\partial_{t}v_{2}^{B,1}
            -\partial_{z}^{2}v_{2}^{B,1}-\partial_{z}n^{B,2}+\partial_{y}u^{I,1}_2v_{2}^{B,0}+\varepsilon^\frac{1}{2}\partial_{y}(u_{1}^{B,1}v^{B,1}_1)+\varepsilon^\frac{1}{2}\partial_{y}(u^{B,2}_2v_{2}^{B,0})+\varepsilon^\frac{1}{2}v^{0}_2\partial_{y}u^{B,2}_2\notag\\
            &\quad+\varepsilon^\frac{1}{2}\partial_{y}(v^{B,0}_2v^{B,0}_2)+2\varepsilon^\frac{1}{2}\partial_{y}v^{B,0}_2v^{0}_2+u^{B,1}_1\partial_{y}v^0_1+v^{B,1}_1\partial_{y}u^0_1+\partial_{y}u^{0}_2v^{B,1}_2+\varepsilon^\frac{1}{2}v^{0}_1\partial_{y}u^{B,2}_1+\varepsilon^\frac{1}{2}u^{0}_1\partial_{y}v^{B,2}_1\Big]\notag\\
            &\quad-\varepsilon^\frac{1}{2}\Big[u^{0}_2\partial_{y}v^{B,1}_2+\varepsilon^\frac{1}{2}\partial_{y}u^0_1v^{B,2}_1\Big]\notag\\
            &\quad+\varepsilon\Big[\Delta v_{2}^{0}+\partial^{2}_{x}v_{2}^{B,0}+\varepsilon^\frac{1}{2}\partial^{2}_{x} v_{2}^{B,1}\Big]\\
            &\quad-\varepsilon\partial_{y}\Big[u^{B,2}\cdot(\varepsilon^\frac{1}{2} v^{B,1}+\varepsilon(v^{B,2}_1,0))\Big]\notag\\
            &\quad-\varepsilon\partial_{y}\Big[(v^{0}+(0,v_{2}^{B,0})+\varepsilon^\frac{1}{2} v^{B,1}+\varepsilon(v^{B,2}_1,0))\cdot(\varepsilon^\frac{1}{2} v^{B,1}+\varepsilon(v^{B,2}_1,0))+\varepsilon^\frac{1}{2} v^{B,1}\cdot(v^0+(0,v_{2}^{B,0}))\notag\\
            &\quad+{(v^{0}+\varepsilon(v^{B,2}_1,0))\cdot v^{0}}+\varepsilon^\frac{1}{2}u^{B,1}_1v^{B,2}_1+\varepsilon^\frac{1}{2}u^{B,3}_2(v^{0}_2+v_{2}^{B,0}+\sqrt{\varepsilon}v^{B,1}_2)\Big]\notag\\
            &\quad-\varepsilon\Big[\partial_{y}v^{0}_1u^{B,2}_1+\partial_{y}v^{0}_2u^{B,2}_2+\partial_{y}v^{0}_2v^{B,0}_2\Big]\notag\\
            &=\sum^{12}_{i=6}B_i.\notag\notag
	\end{align}
       Substituting the equation
       \begin{align*}          \partial_tv^{B,0}_2+\overline{\partial_yu^{I,0}_2}v^{B,0}_2+\partial_zu^{B,1}_1\overline{v^{I,0}_1}+\partial_{x}v^{B,0}_2\overline{u^{I,0}_1}+\partial_zv^{B,0}_2z\overline{\partial_yu^{I,0}_2}-\partial_zn^{B,1}=\partial^2_zv^{B,0}_2
       \end{align*}
       derived from (2.5) in Part I \cite{WWZ} into $B_{6}$, we can obtain
        \begin{equation*}
		\begin{split}{}			
        {B}_{6}&=-(u^0_2-\overline{u^0_2}-y\overline{\partial_yu^0_2})\partial_yv_{2}^{B,0}-(\partial_yu^0_2-\overline{\partial_yu^0_2})v_{2}^{B,0}-\varepsilon^\frac{1}{2}(u^0_1-\overline{u^0_1})\partial_yv_{1}^{B,1}\\
        &\quad-\varepsilon^\frac{1}{2}(v^0_1-\overline{v^0_1})\partial_yu_{1}^{B,1}.
		\end{split}
	\end{equation*}
        Using the same arguments to $A_{8}$ by (\ref{z regularity}), Lemma \ref{lem n0v0u0} and Lemma \ref{lem inner layer}, we get
        \begin{align*}		
        \|{B}_{6}\|_{L^\infty_TL^2_{xy}}&\leq \varepsilon^\frac{3}{4}\Big[\|\partial^2_yu^{0}_2\|_{L^\infty_TL^\infty_{xy}}\|z^2\partial_zv^{B,0}_2\|_{L^\infty_TL^2_{xz}}+\|\partial^2_yu^{0}_2\|_{L^\infty_TL^\infty_{xy}}\|zv^{B,0}_2\|_{L^\infty_TL^2_{xz}}\\
        &\quad\;+\|\partial_yu^{0}_1\|_{L^\infty_TL^\infty_{xy}}\|z\partial_zv^{B,1}_1\|_{L^\infty_TL^2_{xz}}+\|\partial_yv^{0}_1\|_{L^\infty_TL^\infty_{xy}}\|z\partial_zu^{B,1}_1\|_{L^\infty_TL^2_{xz}}\Big]\\
        &\leq C\varepsilon^\frac{3}{4}\Big[\|u^{0}_2\|_{L^\infty_TH^4_{xy}}\|(1+z^{2k})^2\partial_zv^{B,0}_2\|_{L^\infty_TL^2_{xz}}+\|u^{0}_2\|_{L^\infty_TH^4_{xy}}\|(1+z^{2k})v^{B,0}_2\|_{L^\infty_TL^2_{xz}}\\
        &\quad\;+\|u^{0}_1\|_{L^\infty_TH^3_{xy}}\|(1+z^{2k})\partial_zv^{B,1}_1\|_{L^\infty_TL^2_{xz}}+\|v^{0}_1\|_{L^\infty_TH^3_{xy}}\|(1+z^{2k})\partial_zu^{B,1}_1\|_{L^\infty_TL^2_{xz}}\Big]\\
        &\leq C\varepsilon^\frac{3}{4}.
	\end{align*}
    Substituting the equation
    \begin{align*}\label{eq:vB12}
    &\partial_tv^{B,1}_2+z\overline{\partial^2_yu^{I,0}_2}v^{B,0}_2+\overline{\partial_yu^{I,0}_1}v^{B,1}_1+\overline{\partial_yu^{I,0}_2}v^{B,1}_2+\partial_zu^{B,1}_1(z\overline{\partial_yv^{I,0}_1}+v^{B,1}_1)+\partial_zu^{B,2}_1\overline{v^{I,0}_1}\\
    &+\partial_zu^{B,2}_2(\overline{v^{I,0}_2}+v^{B,0}_2)+\overline{\partial_yv^{I,0}_1}u^{B,1}_1+\partial_zv^{B,0}_2(\frac{1}{2}z^2\overline{\partial^2_yu^{I,0}_2}+\overline{u^{I,2}_2}+u^{B,2}_2)+\partial_zv^{B,1}_1(z\overline{\partial_yu^{I,0}_1}+u^{B,1}_1)\\
    &+\partial_zv^{B,1}_2z\overline{\partial_yu^{I,0}_2}+\partial_{x}v^{B,1}_2\overline{u^{I,0}_1}+2\partial_zv^{B,0}_2(\overline{v^{I,0}_2}+v^{B,0}_2)-\partial_zn^{B,2}=\partial^2_zv^{B,1}_2,
    \end{align*}
    derived from (2.8) in Part I \cite{WWZ} into $B_{7}$, we obtain
          \begin{align*}		
        {B}_{7}&=-\varepsilon(v^0_2-\overline{v^0_2})\partial_yu_{2}^{B,2}-2\varepsilon(v^0_2-\overline{v^0_2})\partial_yv_{2}^{B,0}-\varepsilon^\frac{1}{2}(\partial_yv^0_1-\overline{\partial_yv^0_1})u_{1}^{B,1}-\varepsilon^\frac{1}{2}(\partial_yu^0_1-\overline{\partial_yu^0_1})v_{1}^{B,1}\\
        &\quad-\varepsilon^\frac{1}{2}(\partial_yu^0_2-\overline{\partial_yu^0_2})v_{2}^{B,1}-\varepsilon(v^0_1-\overline{v^0_1})\partial_yu_{1}^{B,2}-\varepsilon(u^0_1-\overline{u^0_1})\partial_yv_{1}^{B,2}+\varepsilon^\frac{1}{2}z\overline{\partial^2_yu^0_2}v^{B,0}_2+\varepsilon\partial_yu^{B,1}_1z\overline{\partial_yv^0_1}\\
        &\quad+\frac{1}{2}\varepsilon\partial_yv^{B,0}_2z^2\overline{\partial^2_yu^0_2}+\varepsilon\partial_yv^{B,0}_2\overline{u^{I,2}_2}+\varepsilon\partial_yv^{B,1}_1z\overline{\partial_yu^0_1}+\varepsilon\partial_yv^{B,1}_2z\overline{\partial_yu^0_2}.
	\end{align*}
         By Lemma \ref{lem inner layer}, {we get}
        \begin{align*} 		
        \|{B}_{7}\|_{L^\infty_TL^2_{xy}}
        &\leq C\varepsilon^\frac{3}{4}\Big[\|v^{B,0}_2\|_{L^\infty_TL^2_{xz}}+\|\partial_zu^{B,2}_2\|_{L^\infty_TL^2_{xz}}+\|(1+z^{2k})^2\partial_zv^{B,0}_2\|_{L^\infty_TL^2_{xz}}+\|u^{B,1}_1\|_{L^\infty_TL^2_{xz}}\\
        &\quad+\|v^{B,1}_1\|_{L^\infty_TL^2_{xz}}+\|v^{B,1}_2\|_{L^\infty_TL^2_{xz}}+\|\partial_zv^{B,1}_1\|_{L^\infty_TL^2_{xz}}+\|\partial_zu^{B,2}_1\|_{L^\infty_TL^2_{xz}}\\
        &\quad+\|\partial_zv^{B,2}_1\|_{L^\infty_TL^2_{xz}}+\|(1+z^{2k})v^{B,0}_2\|_{L^\infty_TL^2_{xz}}+\|(1+z^{2k})\partial_zu^{B,1}_1\|_{L^\infty_TL^2_{xz}}\\
        &\quad+\|(1+z^{2k})\partial_zv^{B,1}_1\|_{L^\infty_TL^2_{xz}}+\|\partial_zv^{B,1}_2\|_{L^\infty_TL^2_{xz}}\Big]\\
        &\leq C\varepsilon^\frac{3}{4}.
	\end{align*}
        Using the Taylor¡¯s formula, {it then indicates} that
        \begin{equation*}
		\begin{split}{}			
        \|{B}_{8}\|_{L^\infty_TL^2_{xy}}&\leq C\varepsilon^\frac{3}{4}\Big(\|\partial_yu^{0}_2\|_{L^\infty_TL^\infty_{xy}}\|(1+z^{2k})\partial_zv^{B,1}_2\|_{L^\infty_TL^2_{xz}}+\|\partial_yu^0_1\|_{L^\infty_TL^\infty_{xy}}\|v^{B,2}_1\|_{L^\infty_TL^2_{xz}}\Big)\\
        &\leq C\varepsilon^\frac{3}{4}.
		\end{split}
	\end{equation*}
        By Lemma \ref{lem n0v0u0} and Lemma \ref{lem inner layer}, we obtain
        \begin{equation*}
		\begin{split}{}			
        \|{B}_{9}\|_{L^\infty_TL^2_{xy}}&\leq\varepsilon\Big(\|\Delta v^{0}_2\|_{L^\infty_TL^2_{xy}}+\|\partial^2_x v^{B,0}_2\|_{L^\infty_TL^2_{xz}}+\varepsilon^\frac{1}{2}\|\partial^2_x v^{B,1}_2\|_{L^\infty_TL^2_{xz}}\Big)\\
        &\leq C\varepsilon.
		\end{split}
	\end{equation*}
         By the Sobolev embedding inequality, {it then indicates} that
        \begin{equation*}
		\begin{split}{}			
        \|{B}_{10}\|_{L^\infty_TL^2_{xy}}+\|{B}_{11}\|_{L^\infty_TL^2_{xy}}+\|{B}_{12}\|_{L^\infty_TL^2_{xy}}\leq C\varepsilon^\frac{3}{4}.
		\end{split}
	\end{equation*}
        Similarly, with the estimates from $B_{6}$ to $B_{12}$ mentioned above, it can be inferred that $\|g^\varepsilon_2\|_{L^\infty_TL^2_{xy}}\leq C\varepsilon^\frac{3}{4}$. A similar argument yields $\|\partial_xg^\varepsilon_2\|_{L^\infty_TL^2_{xy}}\leq C\varepsilon^\frac{3}{4}$. By (\ref{z derivative}), applying $\partial_y$, $\partial_x\partial_y$ for $B_6$ to $B_{12}$, we further derive that $\|\partial_yg^\varepsilon_2\|_{L^\infty_TL^{2}_{xy}}\leq C\varepsilon^\frac{1}{4}$ and $\|\partial_x\partial_yg^\varepsilon_2\|_{L^\infty_TL^{2}_{xy}}\leq C\varepsilon^\frac{1}{4}$.

        \begin{lemma}\label{lem h L2}
         {Let $(n^0,v^0,u^0)$ be the solution obtained in Lemma $\ref{lem n0v0u0}$. Then there exists a constant $C>0$ independent of $\varepsilon$ on $[0,T]$ such that
			\begin{align}\label{h L2}
            &\|h^\varepsilon\|_{L^\infty_TL^2_{xy}}\leq C\varepsilon,\;\|\partial_xh^\varepsilon\|_{L^\infty_TL^2_{xy}}
            \leq C\varepsilon,\\
            &\|\partial_yh^\varepsilon\|_{L^\infty_TL^{2}_{xy}}
            \leq C\varepsilon^\frac{3}{4},\;\|\partial_x\partial_yh^\varepsilon\|_{L^\infty_TL^{2}_{xy}}
            \leq C\varepsilon^\frac{3}{4},\;\;\|\partial^2_yh^\varepsilon\|_{L^\infty_TL^{2}_{xy}}
            \leq C\varepsilon^\frac{1}{4}.
			\end{align}
        }
	    \end{lemma}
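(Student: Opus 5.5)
We will follow the same scheme as in Lemma \ref{lem f L2} and Lemma \ref{lem g L2}. First we insert the expansions $U^\theta=u^0+\sqrt{\varepsilon}(u^{B,1}_1,0)+\varepsilon u^{B,2}+\varepsilon^{\frac32}(0,u^{B,3}_2)$, $K^\theta=p^0+\varepsilon p^{B,2}$ and $H^\theta$ into $(\ref{fgh varepsilon})_3$. We then use that $(u^0,p^0,n^0)$ solves the Euler-type system at $\varepsilon=0$, so every $O(1)$ outer term cancels. Next we substitute the profile equations for $u^{B,1}_1$, $u^{B,2}_1$, $u^{B,2}_2$, $u^{B,3}_2$ and $p^{B,2}$ derived in Part I \cite{WWZ}. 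These are the analogues of (2.5)--(2.8) there, obtained by matching powers of $\sqrt{\varepsilon}$ in the horizontal and vertical momentum equations, together with the boundary-layer divergence relations $\partial_x u^{B,1}_1+\partial_z u^{B,2}_2=0$, etc.

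After these substitutions, $h^\varepsilon$ splits into a finite sum of two kinds of term. The first kind is a pure higher-order term carrying an explicit factor $\varepsilon$ or more, for example $\varepsilon\Delta u^0$, $\varepsilon\partial_t u^{B,2}$, $\varepsilon^{\frac32}\partial_x^2 u^{B,1}_1$, products $\varepsilon\, u^{B,2}\cdot\nabla u^{B,1}$, and $\varepsilon\partial_x p^{B,2}$. The second kind is a \emph{Taylor commutator} term such as $\sqrt{\varepsilon}(u^0_1-\overline{u^0_1})\partial_x u^{B,1}_1$ or $(u^0_2-\overline{u^0_2}-y\overline{\partial_yu^0_2})\partial_y u^{B,1}_1$.

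\textbf{Gaining the factor $\varepsilon$.} Compared with $f^\varepsilon,g^\varepsilon$, the expansion of $U^\theta$ has no $O(1)$ layer. The leading layer is $\sqrt{\varepsilon}u^{B,1}_1$, so every commutator carries one extra $\sqrt{\varepsilon}$. Writing $u^0-\overline{u^0}=y\,(\cdot)=\sqrt{\varepsilon}z\,(\cdot)$ as in the estimate of $A_8$, each such term becomes $\varepsilon\, z\,\partial_yu^0\,\partial_x u^{B,1}_1$ or similar. Its $L^2_{xy}$ norm is bounded by $C\varepsilon\cdot\varepsilon^{\frac14}\|(1+z^{2k})\partial u^{B,1}_1\|_{L^2_{xz}}$, using $\|f(\cdot/\sqrt{\varepsilon})\|_{L^2_y}=\varepsilon^{\frac14}\|f\|_{L^2_z}$. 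The explicit terms give $C\varepsilon\|\cdot\|_{L^2_{xy}}$ or better, via Lemma \ref{lem n0v0u0}, Lemma \ref{lem inner layer} and Sobolev embedding for the $L^\infty$ factors. Together these yield $\|h^\varepsilon\|_{L^\infty_TL^2_{xy}}\le C\varepsilon$. Applying $\partial_x$ commutes with the scaling, and the tangential regularity in Lemma \ref{lem inner layer} ($m$ up to $10-i$) leaves room for this, so $\|\partial_x h^\varepsilon\|\le C\varepsilon$ follows identically.

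\textbf{Normal derivatives.} We use (\ref{z derivative}). Each $\partial_y$ falling on a layer profile costs $\varepsilon^{-\frac12}$, which gives $\|\partial_y h^\varepsilon\|,\|\partial_x\partial_y h^\varepsilon\|\le C\varepsilon^{\frac34}$ and $\|\partial_y^2 h^\varepsilon\|\le C\varepsilon^{\frac14}$ after two derivatives. This requires $z$-regularity up to $\mathcal{H}^{k,\cdot,3}$ for $u^{B,1}_1$, $u^{B,2}$ and $u^{B,3}_2$, and $\mathcal{H}^{k,\cdot,4}$ for $p^{B,2}$. Lemma \ref{lem inner layer} supplies this for $i\le 4$. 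When $\partial_y$ instead hits a Taylor remainder factor such as $(u^0-\overline{u^0})/y$, we keep the $\varepsilon$ gain by bounding the derivative with higher $H^s_{xy}$ norms of $u^0$.

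\textbf{Main obstacle.} The delicate point is the pressure/vertical-momentum balance. The $e_2$-component contains $-\partial_y(\varepsilon p^{B,2})$ and $H^\theta e_2=(n^0+\sqrt{\varepsilon}n^{B,1}+\varepsilon n^{B,2})e_2$, which must be cancelled by the normal momentum profile equations. In particular, the term $\sqrt{\varepsilon}n^{B,1}$ is only $O(\varepsilon^{\frac34})$ in $L^2$. We must check from Part I that $p^{B,2}$ is constructed so that $\partial_z p^{B,2}$ absorbs $n^{B,1}$ (and the $\varepsilon^{\frac12}$ terms from $\partial_t u^{B,2}_2$ and the convection of $u^{B,2}_2$). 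Otherwise the bound $C\varepsilon$ would fail. If an uncancelled $\sqrt{\varepsilon}\,n^{B,1}$ remained, we would instead verify that it is exactly the defining relation $\partial_z p^{B,2}=n^{B,1}$ (up to Taylor commutators) and estimate the remaining Taylor terms as above.
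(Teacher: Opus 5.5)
Your proposal is correct and follows essentially the same route as the paper: expand $h^\varepsilon$ componentwise, and remove the outer terms via the $\varepsilon=0$ system. Then substitute the Part I profile equations (the paper does this explicitly for $u^{B,1}_1$ and $u^{B,2}_1$), so that only Taylor-commutator terms and explicitly $O(\varepsilon)$ terms remain; these are bounded with the $\varepsilon^{1/4}$ scaling and the weighted norms of Lemma \ref{lem inner layer}, and the $\partial_y$, $\partial_x\partial_y$, $\partial_y^2$ bounds then follow from (\ref{z derivative}). Your ``main obstacle'' is resolved exactly as you anticipate: the paper's expansion of $h^\varepsilon_2$ contains neither $\sqrt{\varepsilon}\,n^{B,1}$ nor $\varepsilon\partial_y p^{B,2}$, i.e.\ it tacitly uses the layer relation $\partial_z p^{B,2}=n^{B,1}$ from Part I, while the $\varepsilon\partial_t u^{B,2}_2$ and $\sqrt{\varepsilon}\,u^0_2\partial_z u^{B,2}_2$ terms you mention need no absorption, since they are already $O(\varepsilon)$ (the latter because $u^0_2|_{y=0}=0$).
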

         \noindent{\bf{Proof.}}
        By the definition of $h^\varepsilon$ in $(\ref{fgh varepsilon})_3$, we write its first component $h^\varepsilon_1$ as follows:
         \begin{align*}
			h^\varepsilon_1&=\varepsilon^\frac{1}{2}\Big[-\partial_tu^{B,1}_1+\partial_z^2u^{B,1}_1-u^{0}_1\partial_{x}u_{1}^{B,1}-u^{B,1}_1\partial_{x}u^{0}_{1}-u^{0}_{2}\partial_{y}u_{1}^{B,1}\Big]\\
            &\quad
            -\varepsilon\Big[\partial_tu^{B,2}_1-\partial_z^2u^{B,2}_1+u_{1}^{B,1}\partial_{x}u_{1}^{B,1}
            +\partial_{x}p^{B,2}+u^{B,2}_1\partial_{x}u^{0}_{1}+u^{B,2}_2\partial_{y}u^{0}_{1}\\
            &\quad+u^{0}_1\partial_{x} u_{1}^{B,2}+{u^{0}_{2}\partial_{y} u_{1}^{B,2}}+\varepsilon^\frac{1}{2}u^{B,2}_2\partial_{y}u_{1}^{B,1}\Big]\\
            &\quad+\varepsilon\Big[\Delta u^0_1+\varepsilon^\frac{1}{2}\partial_x^2u^{B,1}_1+\varepsilon\partial_x^2u^{B,2}_1+\varepsilon u^{B,2}_2\partial_{y} u_{1}^{B,2}\Big]\\
            &\quad-\varepsilon^\frac{3}{2}\Big[u^{B,2}_1\partial_{x}(u_{1}^{B,1}+\varepsilon^\frac{1}{2}u_{1}^{B,2})+ u^{B,1}_1\partial_{x}u_{1}^{B,2}+u^{B,3}_2\partial_y(u^{0}_1+\varepsilon^\frac{1}{2}u^{B,1}_1+\varepsilon u^{B,2}_1)\Big]\\
            &=G_1+G_2+G_3+G_4.
	\end{align*}
    Substituting the equation
    \begin{align*}        \partial_tu^{B,1}_1+u^{B,1}_1\overline{\partial_xu^{I,0}_1}+\overline{u^{I,0}_1}\partial_xu^{B,1}_1+z\overline{\partial_yu^{I,0}_2}\partial_zu^{B,1}_1=\partial^2_zu^{B,1}_1
    \end{align*}
    from the formula (2.9) in Part I\cite{WWZ} into $G_{1}$, we can obtain
        \begin{equation*}
		\begin{split}{}			
        {G}_{1}&=\varepsilon^\frac{1}{2}\Big[-(u^0_1-\overline{u^0_1})\partial_xu_{1}^{B,1}-(\partial_xu^0_1-\overline{\partial_xu^0_1})u_{1}^{B,1}-(u^0_2-\overline{u^0_2}-y\overline{\partial_yu^0_2})\partial_yu_{1}^{B,1}\Big]\\
        &\leq C\varepsilon^\frac{5}{4}\Big[\|u^{0}_1\|_{L^\infty_TH^3_{xy}}\|\partial_xu^{B,1}_1\|_{L^\infty_TL^2_{xz}}+\|u^{0}_1\|_{L^\infty_TH^4_{xy}}\|u^{B,1}_1\|_{L^\infty_TL^2_{xz}}\\
        &\quad+\|u^{0}_2\|_{L^\infty_TH^4_{xy}}\|(1+z^{2k})^2\partial_zu^{B,1}_1\|_{L^\infty_TL^2_{xz}}\Big]\\
        &\leq C\varepsilon^\frac{5}{4}
		\end{split}
	\end{equation*}
    Substituting the equation
    \begin{align*}\label{eq:uB21}
    &\partial_tu^{B,2}_1+u^{B,1}_1z\overline{\partial_x\partial_yu^{I,0}_1}+u^{B,2}_1\overline{\partial_xu^{I,0}_1}+u^{B,2}_2\overline{\partial_yu^{I,0}_1}+\overline{u^{I,0}_1}\partial_xu^{B,2}_1\\
    &+z\overline{\partial_yu^{I,0}_1}\partial_xu^{B,1}_1+u^{B,1}_1\partial_xu^{B,1}_1+z\overline{\partial_yu^{I,0}_2}\partial_zu^{B,2}_1+(\frac{1}{2}z^2\overline{\partial^2_yu^{I,0}_2}+\overline{u^{I,2}_2})\partial_zu^{B,1}_1\\
    &+u^{B,2}_2\partial_zu^{B,1}_1+\partial_xp^{B,2}=\partial^2_zu^{B,2}_1,
\end{align*}
    from the formula (2.11) in Part I\cite{WWZ} into $G_{2}$, we obtain
          \begin{equation*}
		\begin{split}{}	 		
        {G}_{2}&=\varepsilon\Big[-(\partial_xu^0_1-\overline{\partial_xu^0_1})u_{1}^{B,2}-(\partial_yu^0_1-\overline{\partial_yu^0_1})u_{2}^{B,2}-(u^0_1-\overline{u^0_1})\partial_xu_{1}^{B,2}-(u^0_2-\overline{u^0_2}-y\overline{\partial_yu^0_2})\partial_yu_{1}^{B,2}\\
        &\quad+ u_{1}^{B,1}z\overline{\partial_x\partial_yu^0_1}+\partial_xu^{B,1}_1z\overline{\partial_yu^0_1}+\frac{1}{2}\varepsilon^\frac{1}{2}\partial_yu^{B,1}_1z^2\overline{\partial^2_yu^0_2}+\varepsilon^\frac{1}{2}\overline{u^{I,2}_2}\partial_yu_{1}^{B,1}\Big].\\
		\end{split}
	\end{equation*}
         Using the Sobolev embedding inequality, {we get}
        \begin{align*}		
        \|{G}_{2}\|_{L^\infty_TL^2_{xy}}
        &\leq C\varepsilon\Big[\|(1+z^{2k})u^{B,1}_1\|_{L^\infty_TL^2_{xz}}+\|u^{B,2}_1\|_{L^\infty_TL^2_{xz}}+\|u^{B,2}_2\|_{L^\infty_TL^2_{xz}}+\|\partial_xu^{B,2}_1\|_{L^\infty_TL^2_{xz}}\\
        &\quad+\|\partial_xu^{B,1}_1\|_{L^\infty_TL^2_{xz}}+\|(1+z^{2k})\partial_zu^{B,2}_1\|_{L^\infty_TL^2_{xz}}
        +\|(1+z^{2k})\partial_xu^{B,1}_1\|_{L^\infty_TL^2_{xz}}\\
        &\quad+\|(1+z^{2k})^2\partial_zu^{B,2}_1\|_{L^\infty_TL^2_{xz}}+\frac{1}{2}\|(1+z^{2k})^2\partial_zu^{B,1}_1\|_{L^\infty_TL^2_{xz}}\\
        &\quad+\|(1+z^{2k})\partial_zu^{B,1}_1\|_{L^\infty_TL^2_{xz}}\Big]\\
        &\leq C\varepsilon.
	\end{align*}
        By Lemma \ref{lem n0v0u0} and Lemma \ref{lem inner layer}, we obtain
        \begin{align*}		
        \|{G}_3\|_{L^\infty_TL^2_{xy}}\leq C\varepsilon.
	\end{align*}
        Similar, we deduce that
        \begin{align*}		
        \|{G}_{4}\|_{L^\infty_TL^2_{xy}}&\leq C\varepsilon^\frac{5}{4}\Big[\|u^{B,2}_1\|_{L^\infty_TH^2_{xz}}\Big(+\|\partial_xu^{B,1}_1\|_{L^\infty_TL^2_{xz}}+\varepsilon^\frac{1}{2}\|\partial_xu^{B,2}_1\|_{L^\infty_TL^2_{xz}}\Big)+\|u^{B,1}_1\|_{L^\infty_TH^2_{xz}}\|\partial_xu^{B,2}_1\|_{L^\infty_TL^2_{xz}}\\
        &\quad+\|u^{B,3}_2\|_{L^\infty_TL^2_{xz}}\Big(\|\partial_yu^{0}_1\|_{L^\infty_TH^2_{xy}}+\varepsilon^\frac{1}{2}\|\partial_zu^{B,1}_1\|_{L^\infty_TH^2_{xz}}+\varepsilon\|\partial_zu^{B,2}_1\|_{L^\infty_TH^2_{xz}}\Big)\Big]\\
        &\leq C\varepsilon^\frac{5}{4}.
	\end{align*}
         Now collecting the above estimates from $G_{1}$ to $G_{4}$, we conclude that $\|h^\varepsilon_1\|_{L^\infty_TL^2_{xy}}\leq C\varepsilon$. Clearly, we can also obtain $\|\partial_xh^\varepsilon_1\|_{L^\infty_TL^2_{xy}}\leq C\varepsilon$.
         By (\ref{z derivative}), we can estimate $\partial_yG_1$ as
        \begin{equation*}
		\begin{split}{}	 		
        &\|\partial_y{G}_{1}\|_{L^\infty_TL^2_{xy}}\\
        &=\varepsilon^\frac{1}{2}\|\partial_y\left[-(u^0_1-\overline{u^0_1})\partial_xu_{1}^{B,1}-(\partial_xu^0_1-\overline{\partial_xu^0_1})u_{1}^{B,1}-(u^0_2-\overline{u^0_2}-y\overline{\partial_yu^0_2})u_{1}^{B,1}\right]\|_{L^\infty_TL^2_{xy}}\\
        &\leq C\varepsilon^\frac{3}{4}\Big[\|\partial_yu^0_1\|_{L^\infty_TL^\infty_{xy}}\|\partial_x\partial_zu^{B,1}_1\|_{L^\infty_TL^2_{xz}}+\|\partial^2_yu^0_1\|_{L^\infty_TL^\infty_{xy}}\|\partial_xu^{B,1}_1\|_{L^\infty_TL^2_{xz}}+\|\partial_x\partial_yu^0_1\|_{L^\infty_TL^\infty_{xy}}\|\partial_zu^{B,1}_1\|_{L^\infty_TL^2_{xz}}\\&\quad+\|\partial_x\partial^2_yu^0_1\|_{L^\infty_TL^\infty_{xy}}\|u^{B,1}_1\|_{L^\infty_TL^2_{xz}}+\|\partial^2_yu^0_2\|_{L^\infty_TL^\infty_{xy}}\|\partial_zu^{B,1}_1\|_{L^\infty_TL^2_{xz}}+\|\partial^3_yu^0_2\|_{L^\infty_TL^\infty_{xy}}\|u^{B,1}_1\|_{L^\infty_TL^2_{xz}}\Big]\\
        &\leq C\varepsilon^\frac{3}{4}.		
		\end{split}
	\end{equation*}
        Similar to the above estimate for $G_1$, we conclude that         $$\|\partial_y{G}_{2}\|_{L^\infty_TL^{2}_{xy}}+\|\partial_y{G}_{3}\|_{L^\infty_TL^{2}_{xy}}+\|\partial_y{G}_{4}\|_{L^\infty_TL^{2}_{xy}}\leq C\varepsilon^\frac{3}{4}.$$ This implies that both $\|\partial_yh^\varepsilon_1\|_{L^\infty_TL^{2}_{xy}}\leq C\varepsilon^\frac{3}{4}$ and $\|\partial_x\partial_yh^\varepsilon_1\|_{L^\infty_TL^{2}_{xy}}\leq C\varepsilon^\frac{3}{4}$ hold. Moreover, applying $\partial^2_y$ for $G_1$, we have
        \begin{equation*}
		\begin{split}{}	 		
        &\|\partial^2_y{G}_{1}\|_{L^\infty_TL^2_{xy}}\\
        &=\varepsilon^\frac{1}{2}\|\partial^2_y[-(u^0_1-\overline{u^0_1})\partial_xu_{1}^{B,1}-(\partial_xu^0_1-\overline{\partial_xu^0_1})u_{1}^{B,1}-(u^0_2-\overline{u^0_2}-y\overline{\partial_yu^0_2})\partial_yu_{1}^{B,1})]\|_{L^\infty_TL^2_{xy}}\\
        &\leq C\varepsilon^\frac{1}{4}\Big[\|\partial_yu^0_1\|_{L^\infty_TL^\infty_{xy}}\|\partial_x\partial^2_zu^{B,1}_1\|_{L^\infty_TL^2_{xz}}+\|\partial^2_yu^0_1\|_{L^\infty_TL^\infty_{xy}}\|\partial_x\partial_zu^{B,1}_1\|_{L^\infty_TL^2_{xz}}+\|\partial^3_yu^0_1\|_{L^\infty_TL^\infty_{xy}}\|\partial_xu^{B,1}_1\|_{L^\infty_TL^2_{xz}}\\&\quad+\|\partial_x\partial_yu^0_1\|_{L^\infty_TL^\infty_{xy}}\|\partial^2_zu^{B,1}_1\|_{L^\infty_TL^2_{xz}}+\|\partial_x\partial^2_yu^0_1\|_{L^\infty_TL^\infty_{xy}}\|\partial_zu^{B,1}_1\|_{L^\infty_TL^2_{xz}}+\|\partial_x\partial^3_yu^0_1\|_{L^\infty_TL^\infty_{xy}}\|u^{B,1}_1\|_{L^\infty_TL^2_{xz}}\\&\quad+\|\partial^2_yu^0_2\|_{L^\infty_TL^\infty_{xy}}\|\partial^3_zu^{B,1}_1\|_{L^\infty_TL^2_{xz}}+\|\partial^3_yu^0_2\|_{L^\infty_TL^\infty_{xy}}\|\partial^2_zu^{B,1}_1\|_{L^\infty_TL^2_{xz}}+\|\partial^4_yu^0_2\|_{L^\infty_TL^\infty_{xy}}\|\partial_zu^{B,1}_1\|_{L^\infty_TL^2_{xz}}
        \Big]\\
        &\leq C\varepsilon^\frac{1}{4}.		
		\end{split}
	\end{equation*}
         By (\ref{z derivative}), applying $\partial^2_y$ for $G_2$ to $G_{4}$ respectively, we derive that $\|\partial^2_yh^\varepsilon_1\|_{L^\infty_TL^{2}_{xy}}\leq C\varepsilon^\frac{1}{4}$.
         By the definition of $h^\varepsilon$ in $(\ref{fgh varepsilon})_3$ we write its second component $h^\varepsilon_2$ as follows:
         \begin{align*}
	     h^\varepsilon_2&=-\varepsilon\Big[\partial_{t}u_{2}^{B,2}+\sqrt{\varepsilon}\partial_{t}u_{2}^{B,3}\Big]+\varepsilon\Big[\Delta u_{2}^0+\varepsilon\Delta u_{2}^{B,2}+\varepsilon^\frac{3}{2}\Delta u_{2}^{B,3}\Big]\\
         &\quad-\varepsilon^\frac{1}{2}\Big[(u_{1}^{B,1}+\varepsilon^\frac{1}{2} u_{1}^{B,2})\partial_{x}(u_{2}^{0}+\varepsilon u_{2}^{B,2}+\varepsilon^\frac{3}{2} u_{2}^{B,3})\Big]\\
         &\quad-\varepsilon\Big[u_{1}^{0}(\partial_{x}u_{2}^{B,2}+\varepsilon^\frac{1}{2}\partial_{x}u_{2}^{B,3})+u_{2}^{0}(\partial_{y}u_{2}^{B,2}+\varepsilon^\frac{1}{2}\partial_{y}u_{2}^{B,3})+u_{2}^{B,2}\partial_{y}(u_{2}^{0}+\varepsilon u_{2}^{B,2}+\varepsilon^\frac{3}{2} u_{2}^{B,3})-n^{B,2}\Big]\\
         &\quad-\varepsilon^\frac{3}{2}\Big[(\partial_{y}u_{2}^{B,2}+\varepsilon^\frac{1}{2} \partial_yu_{2}^{B,3})+u^{B,3}_2\partial_y(u^{0}_2+\varepsilon u^{B,2}_2+\varepsilon^\frac{3}{2} u^{B,3}_2)\Big]\\
         &={G}_5+{G}_6+{G}_7+{G}_8+{G}_{9}.
	\end{align*}
         By Lemma \ref{lem inner layer}, we can obtain
         \begin{equation*}
		\begin{split}{}			
        \|{G}_5\|_{L^\infty_TL^2_{xy}}&\leq\varepsilon\Big(\|\partial_tu^{B,2}_2\|_{L^\infty_TL^2_{xz}}+\varepsilon^\frac{1}{2}\|\partial_tu^{B,3}_2\|_{L^\infty_TL^2_{xz}}\Big)\leq C\varepsilon^\frac{5}{4}.
		\end{split}
	\end{equation*}
        {By} the {Sobolev inequalities}, we can obtain
        \begin{align*}
        \|{G}_7\|_{L^\infty_TL^2_{xy}}&=-\varepsilon^\frac{1}{2}\Big[\|\left(u_{1}^{B,1}+\varepsilon^\frac{1}{2} u_{1}^{B,2}\right)\partial_{x}\left((u_{2}^{0}-\overline{u_{2}^{0}})+\varepsilon u_{2}^{B,2}+\varepsilon^\frac{3}{2} u_{2}^{B,3}\right)\|_{L^\infty_TL^2_{xy}}\Big{]}\\
        &\leq \varepsilon^\frac{5}{4}\Big{[}\Big(\|(1+z^{2k})u_{1}^{B,1}\|_{L^\infty_TL^2_{xy}}+\varepsilon^\frac{1}{2} \|(1+z^{2k})u_{1}^{B,2}\|_{L^\infty_TL^2_{xy}}\Big)\|\partial_{x}\partial_{y}u_{2}^{0}\|_{L^\infty_TL^\infty_{xy}}\\
        &\quad+\Big(\|u_{1}^{B,1}\|_{L^\infty_TL^\infty_{xy}}+\varepsilon^\frac{1}{2}\|u_{1}^{B,2}\|_{L^\infty_TL^\infty_{xy}}\Big)\Big(\varepsilon^\frac{3}{4} \|\partial_{x}u_{2}^{B,2}\|_{L^\infty_TL^2_{xz}}+\varepsilon^\frac{5}{4} \|\partial_{x}u_{2}^{B,3}\|_{L^\infty_TL^2_{xz}}\Big)\Big{]}\\
        &\leq C\varepsilon^\frac{5}{4}.
	\end{align*}
         Similarly, it can be inferred that
        \begin{equation*}
		\begin{split}{}			
        \|{G}_{6}\|_{L^\infty_TL^2_{xy}}+\|{G}_{8}\|_{L^\infty_TL^2_{xy}}+\|{G}_{9}\|_{L^\infty_TL^2_{xy}}\leq C\varepsilon.
		\end{split}
	\end{equation*}
         Similarly, collecting the above estimates from $G_{5}$ to $G_{9}$, we conclude that $\|h^\varepsilon_2\|_{L^\infty_TL^2_{xy}}\leq C\varepsilon$. Evidently, $\|\partial_xh^\varepsilon_2\|_{L^\infty_TL^2_{xy}}\leq C\varepsilon$ holds. Finally, from (\ref{z derivative}) and above similar estimates for $G_{5}$ to $G_{9}$ by applying $\partial_y,\partial_x\partial_y,\partial^2_y$ respectively, one deduces that $\|\partial_yh^\varepsilon_2\|_{L^\infty_TL^{2}_{xy}}\leq C\varepsilon^\frac{3}{4}$, $\|\partial_x\partial_yh^\varepsilon_2\|_{L^\infty_TL^{2}_{xy}}\leq C\varepsilon^\frac{3}{4}$ and $\|\partial^2_yh^\varepsilon_2\|_{L^\infty_TL^{2}_{xy}}\leq C\varepsilon^\frac{1}{4}$. The proof is completed.

    \section{Estimations of the remainders and $L^2$-convergence}\label{part4}

 This section is devoted to establishing the well-posedness of the system $(\ref{HVU equation})$. Firstly, several preliminary facts be prepared for subsequent applications. For $f(t,x,z)\in H^{m}_{x}H^{l}_{z}$, $m,l\in N$ with fixed $t>0$, we have that
			\begin{align}\label{z regularity}					
                    \|f(t,x,\frac{y}{\sqrt{\varepsilon}})\|_{L^\infty_{xy}}&=\|f(t,x,z)\|_{L^\infty_{xz}}\leq C\|f(t,x,z)\|_{H^2_{xz}},
			\end{align}
        and
        \begin{align}\label{z=0 regularity}					
                    \|f(t,x,0)\|_{L^\infty_x}&\leq C\|f(t,x,z)\|_{L^\infty_{xz}}\leq C\|f(t,x,z)\|_{H^2_{xy}},\notag\\
                    \|\partial_xf(t,x,0)\|_{L^\infty_{x}}&\leq C\|f(t,x,z)\|_{H^3_{x}H^2_{z}}.
			\end{align}
        It can be seen from the trace theorem
            \begin{align}\label{trace theorem}
	 				\|f(t,x,0)\|_{L^2_{x}}\leq C\|f\|^{\frac{1}{2}}_{L^2_{xy}}\|\partial_yf\|^{\frac{1}{2}}_{L^2_{xy}}.
	 		\end{align}
        Moreover, it follows from the Gagliardo-Nirenberg interpolation inequality that
        \begin{align}\label{G-N inequality}
        \|f\|_{L^\infty_{xy}}&\leq C\left(\|f\|^\frac{1}{2}_{L^2_{xy}}\|\partial_yf\|^\frac{1}{2}_{L^2_{xy}}+\|\partial_xf\|^\frac{1}{2}_{L^2_{xy}}\|\partial_x\partial_yf\|^\frac{1}{2}_{L^2_{xy}}\right),\notag\\
        \|f\|_{L^2_xL^\infty_{y}}&\leq C\|f\|^\frac{1}{2}_{L^2_{xy}}\|\partial_yf\|^\frac{1}{2}_{L^2_{xy}},\\
        \|f\|_{L^\infty_{x}L^2_y}&\leq C\|f\|^\frac{1}{2}_{L^2_{xy}}\|\partial_xf\|^\frac{1}{2}_{L^2_{xy}}.\notag\notag
	\end{align}

    Next, we derive the $L^2_{xy}$ estimates for $H^\varepsilon$, $V^\varepsilon$ and $U^\varepsilon$.
     \subsection{$L^2$ {estimations} of the remainders}
	\begin{lemma}\label{lem HVU L2}
         {Suppose the assumptions in Theorem $\ref{th L2 convergence}$ hold. Assume further that the solution $(H^\varepsilon,V^\varepsilon,U^\varepsilon)$ of the system $(\ref{HVU equation})$ on $[0,T]$ satisfying
			\begin{equation}\label{HVU assumption}				\|H^\varepsilon\|^2_{L^\infty_TL^2_{xy}}+\|V^\varepsilon\|^2_{L^\infty_TL^2_{xy}}+\|U^\varepsilon\|^2_{L^\infty_TL^2_{xy}}\leq 1.\\
			\end{equation}	
		Then there exists a constant $\varepsilon_0>0$ such that for each $\varepsilon\in(0,\varepsilon_0]$,
		\begin{equation}\label{HVUL2 varepsilon}
			\|H^\varepsilon\|^2_{L^\infty_TL^2_{xy}}+\|V^\varepsilon\|^2_{L^\infty_TL^2_{xy}}+\|U^\varepsilon\|^2_{L^\infty_TL^2_{xy}}+\|\nabla U^\varepsilon\|^2_{L^\infty_TL^2_{xy}}\leq C\varepsilon^\frac{1}{2}.\\
		\end{equation}	
		Furthermore, there exists a constant $C$ indepenfent of $\varepsilon$ such that
		\begin{equation}\label{HVUL2T varepsilon}
        \begin{split}{}
			&\|\partial_x H^\varepsilon\|^2_{L^2_TL^2_{xy}}+\|\partial_y H^\varepsilon\|^2_{L^2_TL^2_{xy}}+\varepsilon\|\partial_x V^\varepsilon\|^2_{L^2_TL^2_{xy}}+\varepsilon\|\partial_y V^\varepsilon\|^2_{L^2_TL^2_{xy}}\\
            &+\varepsilon\|\partial_x\nabla U^\varepsilon\|^2_{L^2_TL^2_{xy}}+\varepsilon\|\partial_y\nabla U^\varepsilon\|^2_{L^2_TL^2_{xy}}\leq C\varepsilon^\frac{1}{2}.\\
            \end{split}
		\end{equation}	
		}
	\end{lemma}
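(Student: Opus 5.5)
We will prove the bounds (\ref{HVUL2 varepsilon})--(\ref{HVUL2T varepsilon}) by energy estimates on the remainder system (\ref{HVU equation}), closed by Gronwall under the a priori assumption (\ref{HVU assumption}); a continuity argument then removes the assumption for small $\varepsilon$.

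\textbf{Step 1: $L^2$ energy for $(H^\varepsilon,V^\varepsilon,U^\varepsilon)$.} Multiply $(\ref{HVU equation})_1$ by $H^\varepsilon$, $(\ref{HVU equation})_2$ by $V^\varepsilon$ and $(\ref{HVU equation})_4$ by $U^\varepsilon$, integrate over $\R^2_+$ and add. The boundary conditions $\partial_yH^\varepsilon=0$, $V^\varepsilon_2=0$, $\partial_yV_1^\varepsilon=0$, $\partial_yU^\varepsilon_1=0$ kill the diffusion boundary terms. For $V^\varepsilon$ we use $\nabla\times V^\varepsilon=0$, so $\Delta V^\varepsilon=\nabla(\nabla\cdot V^\varepsilon)$ and the dissipation is $\varepsilon\|\nabla\cdot V^\varepsilon\|^2$, equivalent to $\varepsilon\|\nabla V^\varepsilon\|^2$. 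The coupling terms $\nabla\cdot(H^\theta V^\varepsilon)H^\varepsilon$ and $\nabla H^\varepsilon\cdot V^\varepsilon$ are integrated by parts; with $V_2^\varepsilon|_{y=0}=0$ the leading parts combine into $\int (1-H^\theta)\nabla H^\varepsilon\cdot V^\varepsilon$. This is bounded by $\frac14\|\nabla H^\varepsilon\|^2+C\|V^\varepsilon\|^2$, using $\|H^\theta\|_{L^\infty}\le C$ from Lemma \ref{lem n0v0u0}, Lemma \ref{lem inner layer} and (\ref{z regularity}). Terms with $\nabla V^\theta$, which is only $O(\varepsilon^{-1/2})$ in $L^\infty$ near the boundary, are always placed so that the derivative falls on the remainder, e.g. $\int H^\varepsilon V^\theta\cdot\nabla H^\varepsilon$. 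For $U^\varepsilon$, the nonhomogeneous condition $U^\varepsilon_2|_{y=0}=-\varepsilon^{1/2}u^{B,2}_2-\varepsilon u^{B,3}_2$ generates pressure and convection boundary terms. I would subtract a divergence-free lift $\tilde U$ of this trace, of size $O(\varepsilon^{1/2})$ (localized by a cut-off $\varphi$ as the remark after Theorem \ref{th L2 convergence} indicates). The sources contribute $\varepsilon^{-1/2}(\|f^\varepsilon\|+\|g^\varepsilon\|+\|h^\varepsilon\|)\|(H,V,U)\|\le C\varepsilon^{1/4}\|(H,V,U)\|$ by Lemmas \ref{lem f L2}--\ref{lem h L2}, which after Young gives the $C\varepsilon^{1/2}$ forcing.

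\textbf{Step 2: the dangerous $V$-equation terms.} The term $\int\nabla(U^\varepsilon\cdot V^\theta)\cdot V^\varepsilon$ has no viscosity to absorb it. I integrate by parts to $-\int (U^\varepsilon\cdot V^\theta)\nabla\cdot V^\varepsilon$, which would cost $\varepsilon^{-1}$. So instead I would split $V^\theta=v^0+(0,v_2^{B,0})+O(\sqrt\varepsilon)$ and write $\nabla(U^\varepsilon\cdot V^\theta)=(\nabla U^\varepsilon)^TV^\theta+(\nabla V^\theta)^TU^\varepsilon$. The first piece needs $\|\nabla U^\varepsilon\|_{L^2}$, which is why $\nabla U^\varepsilon$ appears in (\ref{HVUL2 varepsilon}). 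For the second piece, the boundary-layer gradient $\partial_y v^{B,0}_2$ is paired with $U^\varepsilon_2$, which is small near $y=0$; Hardy's inequality $\|U_2^\varepsilon/y\|\le C\|\partial_yU_2^\varepsilon\|+$(trace) gives $\|\partial_yv^{B,0}_2U^\varepsilon_2\|\le C\|zv_z^{B,0}\|_{L^\infty}\|\nabla U^\varepsilon\|+O(\varepsilon^{1/2})$. The cubic term $\varepsilon^{1/2}\nabla(U^\varepsilon\cdot V^\varepsilon)$ is handled by integrating by parts onto $\varepsilon\|\nabla V^\varepsilon\|^2$. This leaves $\|U^\varepsilon\|_{L^\infty}^2\|V^\varepsilon\|^2$, which is controlled by $\|U^\varepsilon\|_{H^1}$ and the Theorem \ref{th nvu regularity} bounds (through $U^\varepsilon=\varepsilon^{-1/2}(u^\varepsilon-U^\theta)$ together with the bootstrap bound). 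The transport terms $U^\theta\cdot\nabla H^\varepsilon$ and $\nabla(U^\theta\cdot V^\varepsilon)$ are treated similarly, using $U^\theta_2|_{y=0}=0$ up to $O(\varepsilon)$.

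\textbf{Step 3: $H^1$ bound of $U^\varepsilon$.} I would take the curl of $(\ref{HVU equation})_4$ to eliminate $K^\varepsilon$ and obtain a transport–diffusion equation for $\Omega^\varepsilon=\nabla\times U^\varepsilon$ with forcing $\partial_xH^\varepsilon$, $\varepsilon^{-1/2}\nabla\times h^\varepsilon$ and $U^\varepsilon\cdot\nabla\Omega^\theta$. The slip condition gives the boundary value $\Omega^\varepsilon|_{y=0}=\partial_xU^\varepsilon_2|_{y=0}=O(\varepsilon^{1/2})$, which we lift by a cut-off. The forcing $\partial_xH^\varepsilon$ is absorbed by integrating $\partial_x$ onto $\Omega^\varepsilon$ only when multiplied by viscosity, which is too weak. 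Instead I would keep $\int_0^t\|\nabla H^\varepsilon\|^2$ from Step 1 on the right-hand side and use Gronwall jointly. The $\nabla\Omega^\theta$ term contains $\varepsilon^{-1/2}\partial_z^2u^{B,1}_1\cdot\sqrt\varepsilon=O(\varepsilon^{-1/2})$ in sup norm only for $\partial_yu^\theta_1$ at order zero; since $u^{B,1}_1$ enters with $\sqrt\varepsilon$, $\nabla\Omega^\theta=O(\varepsilon^{-1/2})$ multiplies $U^\varepsilon_2$, and Hardy again gives a bounded contribution. Then $\|\nabla U^\varepsilon\|\simeq\|\Omega^\varepsilon\|+$boundary, by the div–curl estimate with the Leray projection.

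\textbf{Step 4: closing.} Summing the estimates gives
\begin{align*}
\frac{d}{dt}E(t)+D(t)\le CE(t)+C\varepsilon^{1/2},
\end{align*}
with $E=\|(H,V,U)\|^2+\|\nabla U\|^2$ and $D=\|\nabla H\|^2+\varepsilon\|\nabla V\|^2+\varepsilon\|\nabla^2U\|^2$. Gronwall with zero data yields (\ref{HVUL2 varepsilon}) and (\ref{HVUL2T varepsilon}); for $\varepsilon\le\varepsilon_0$ this also improves (\ref{HVU assumption}). \textbf{The main obstacle} is Step 2: controlling $\nabla(U^\varepsilon\cdot V^\theta)$ without viscosity. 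This rests on the Hardy/vanishing-trace argument for $U_2^\varepsilon$ against the boundary layer $v^{B,0}_2$, and on the $H^1$ bound for $U^\varepsilon$.
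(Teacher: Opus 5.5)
Your overall architecture is the paper's. Both arguments do an $L^2$ energy estimate for $H^\varepsilon$ and $V^\varepsilon$, moving the $\nabla V^\theta$ terms onto the remainder by parts. Both get an $L^2$ bound for $U^\varepsilon$ that removes the pressure and the nonzero normal trace; the paper multiplies by $\mathbb{P}U^\varepsilon$ and uses $\|(I-\mathbb{P})U^\varepsilon\|_{H^1}\le C\varepsilon^{1/2}$ from \cite{IS}, which is equivalent to your divergence-free lift. Both then estimate the lifted vorticity $\tilde\omega$, absorb $\|\partial_xH^\varepsilon\|^2$ into the $H$-dissipation, and close with a joint Gronwall.

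The genuine difference is in the two terms carrying an $O(\varepsilon^{-1/2})$ boundary-layer gradient: $U^\varepsilon_2\partial_yV^\theta_2$ in the $V$-equation and $U^\varepsilon_2\partial_y^2U^\theta_1$ in the vorticity equation.
\begin{itemize}
\item The paper substitutes $V^\theta_2=v^\varepsilon_2-\varepsilon^{1/2}V^\varepsilon_2$ and $U^\theta_1=u^\varepsilon_1-\varepsilon^{1/2}U^\varepsilon_1$, and invokes Theorem \ref{th nvu regularity} for sup bounds on $\partial_yv^\varepsilon_2$ and on conormal derivatives of $\partial_yu^\varepsilon_1$. It then has to absorb new cubic pieces such as $\varepsilon^{1/2}U^\varepsilon_2\partial_y^2U^\varepsilon_1\tilde\omega$, which is where its $\|\tilde\omega\|^4$ term comes from.
\item You subtract $U^\varepsilon_2(t,x,0)\chi(y)$, apply Hardy in $y$, and use the bounds $\|z\partial_zv^{B,0}_2\|_{L^\infty}$, $\|z\partial_z^2u^{B,1}_1\|_{L^\infty}\le C$ from the weighted estimates of Lemma \ref{lem inner layer}.
\end{itemize}
Your route stays linear in the remainder and uses only the explicit profiles. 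In particular it does not need $\|\partial_yv^\varepsilon_2\|_{L^\infty}$ bounded uniformly, which is hard to square with $\partial_yv^\varepsilon_2\approx\varepsilon^{-1/2}\partial_zv^{B,0}_2$ near $y=0$. One correction: the trace piece $U^\varepsilon_2(t,x,0)\chi\,\partial_yv^{B,0}_2$ is $O(\varepsilon^{1/2}\cdot\varepsilon^{-1/4})=O(\varepsilon^{1/4})$ in $L^2$, not $O(\varepsilon^{1/2})$. Paired with $V^\varepsilon$ it still only contributes $C\varepsilon^{1/2}+C\|V^\varepsilon\|^2$, so nothing breaks.

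The step that fails as written is the cubic term $-\varepsilon^{1/2}\int\nabla(U^\varepsilon\cdot V^\varepsilon)\cdot V^\varepsilon$. After moving the derivative onto $V^\varepsilon$ you need $\|U^\varepsilon\|_{L^\infty}\le C$, and you do not have it. $H^1$ does not embed into $L^\infty$ in two dimensions. Writing $U^\varepsilon=\varepsilon^{-1/2}(u^\varepsilon-U^\theta)$ and using Theorem \ref{th nvu regularity} only gives $\|U^\varepsilon\|_{L^\infty}\le C\varepsilon^{-1/2}$. The term then becomes $C\varepsilon^{-1}\|V^\varepsilon\|^2$ and Gronwall returns $e^{CT/\varepsilon}$. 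An $L^4$--$L^4$ split gives $C\varepsilon^{-1}\|U^\varepsilon\|^2\|U^\varepsilon\|_{H^1}^2\|V^\varepsilon\|^2$ instead, which closes only under an $\varepsilon$-dependent bootstrap, not under (\ref{HVU assumption}).

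The fix is to keep the derivative on $U^\varepsilon$, as in the paper's $M_1$. Since $\nabla\times V^\varepsilon=0$,
\[
\nabla(U^\varepsilon\cdot V^\varepsilon)\cdot V^\varepsilon=\sum_{i,j}V^\varepsilon_iV^\varepsilon_j\partial_iU^\varepsilon_j+\tfrac12U^\varepsilon\cdot\nabla|V^\varepsilon|^2 .
\]
\begin{itemize}
\item The second term integrates by parts, using $\nabla\cdot U^\varepsilon=0$, to the boundary term $\tfrac12\varepsilon^{1/2}\int_{y=0}U^\varepsilon_2|V^\varepsilon_1|^2dx$. By (\ref{trace theorem}) this is at most $C\varepsilon\|V^\varepsilon\|\|\partial_yV^\varepsilon\|$.
\item The first term is at most $C\varepsilon^{1/2}\|\nabla U^\varepsilon\|\|V^\varepsilon\|(\|V^\varepsilon\|+\|\nabla V^\varepsilon\|)$. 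By Ladyzhenskaya and (\ref{HVU assumption}) this is $\le\frac{\varepsilon}{10}\|\nabla V^\varepsilon\|^2+C\|\nabla U^\varepsilon\|^2+C\|V^\varepsilon\|^2$.
\end{itemize}
The same curl-free identity is what turns $\nabla(U^\theta\cdot V^\varepsilon)\cdot V^\varepsilon$ into the transport term you describe, so invoke it explicitly there too. With this change the rest of your scheme closes as you outline.
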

	{\noindent\bf{Proof.}} Firstly, we estimate $\|(H^\theta,V^\theta,U^\theta)\|_{L^\infty}$. Similar to $(\ref{V theta L infty})$, by the Sobolev embedding inequality, Lemma \ref{lem n0v0u0} and Lemma \ref{lem inner layer}, we have the following estimates
	\begin{equation}\label{HVU theta L2}
		\begin{split}{}
			&\|H^\theta\|_{L^\infty_TL^\infty_{xy}}+
            \|V^\theta\|_{L^\infty_TL^\infty_{xy}}+
            \|U^\theta\|_{L^\infty_TL^\infty_{xy}}\leq C,\\
            &\|\partial_x H^\theta\|_{L^\infty_TL^\infty_{xy}}+
            \|\partial_x V^\theta\|_{L^\infty_TL^\infty_{xy}}+\|\partial_x U^\theta\|_{L^\infty_TL^\infty_{xy}}\leq C,\\
            &\|\partial_y H^\theta\|_{L^\infty_TL^\infty_{xy}}+
            \|\partial_y V^\theta_1\|_{L^\infty_TL^\infty_{xy}}+
            \|\partial_y U^\theta\|_{L^\infty_TL^\infty_{xy}}\leq C.\\
		\end{split}
	\end{equation}
    \textbf{Step 1. The \(L^2\) estimate of $H^\varepsilon$.}
    Multiplying $(\ref{HVU equation})_1$ by $H^\varepsilon$, we obtain
	\begin{align*}
			&\frac{1}{2}\frac{d}{dt}\|H^\varepsilon\|^2_{L^2_{xy}}+\|\partial_x H^\varepsilon\|^2_{L^2_{xy}}+\|\partial_y H^\varepsilon\|^2_{L^2_{xy}}\\
            &=\varepsilon^\frac{1}{2}\Big{[}-\int^\infty_0\int^\infty_{-\infty}U^\varepsilon\cdot\nabla H^\varepsilon H^\varepsilon dxdy+\int^\infty_0\int^\infty_{-\infty}(V^\varepsilon \cdot\nabla H^\varepsilon +H^\varepsilon\nabla\cdot V^\varepsilon)H^\varepsilon dxdy\Big{]}\\
            &\quad+\Big{[}-\int^\infty_0\int^\infty_{-\infty}U^\varepsilon\cdot\nabla H^\theta H^\varepsilon dxdy-\int^\infty_0\int^\infty_{-\infty}U^\theta\cdot\nabla H^\varepsilon H^\varepsilon dxdy\\
            &\quad+\int^\infty_0\int^\infty_{-\infty} (H^\varepsilon\nabla\cdot V^\theta+V^\theta\cdot\nabla H^\varepsilon)H^\varepsilon dxdy+\int^\infty_0\int^\infty_{-\infty}(V^\varepsilon\cdot\nabla H^\theta+H^\theta\nabla\cdot V^\varepsilon) H^\varepsilon dxdy\Big{]}\\
            &\quad+\varepsilon^{-\frac{1}{2}}\int^\infty_0\int^\infty_{-\infty}f^\varepsilon H^\varepsilon dxdy\\
            &=D_1+D_2+D_3.
	\end{align*}
	Using the Gagliardo-Nirenberg interpolation inequality, we can {get} that
	\begin{align*}
        D_1&=\varepsilon^\frac{1}{2}\int^\infty_0\int^\infty_{-\infty}(-U^\varepsilon_1\partial_x H^\varepsilon-U^\varepsilon_2\partial_y H^\varepsilon+V^\varepsilon_1\partial_x H^\varepsilon+V^\varepsilon_2\partial_y H^\varepsilon+ H^\varepsilon\partial_xV^\varepsilon_1+H^\varepsilon\partial_yV^\varepsilon_2)H^\varepsilon dxdy\\
        &\leq\varepsilon^\frac{1}{2}(\frac{1}{2}\|\partial_xU^\varepsilon_1\|_{L^2_{xy}}\|H^\varepsilon\|^2_{L^4_{xy}}+\|U^\varepsilon_2\|_{L^2_{x}L^\infty_{y}}\|\partial_y H^\varepsilon\|_{L^2_{xy}}\|H^\varepsilon\|_{L^\infty_{x}L^2_{y}}\\
        &\quad+\frac{1}{2}\|\partial_xV^\varepsilon_1\|_{L^2_{xy}}\| H^\varepsilon\|^2_{L^4_{xy}}+\frac{1}{2}\|\partial_yV^\varepsilon_2\|_{L^2_{xy}}\| H^\varepsilon\|^2_{L^4_{xy}})\\
        &\leq C\varepsilon^\frac{1}{2}[\|\partial_xU^\varepsilon_1\|_{L^2_{xy}}(\|H^\varepsilon\|^\frac{1}{2}_{L^2_{xy}}\|\nabla H^\varepsilon\|^\frac{1}{2}_{L^2_{xy}}+\|H^\varepsilon\|_{L^2_{xy}})\\
        &\quad+\|U^\varepsilon_2\|^\frac{1}{2}_{L^2_{xy}}\|\partial_yU^\varepsilon_2\|^\frac{1}{2}_{L^2_{xy}}\|\partial_y H^\varepsilon\|_{L^2_{xy}}\|H^\varepsilon\|^\frac{1}{2}_{L^2_{xy}}\|\partial_xH^\varepsilon\|^\frac{1}{2}_{L^2_{xy}}\\
        &\quad+\|\partial_xV^\varepsilon_1\|_{L^2_{xy}}(\|H^\varepsilon\|^\frac{1}{2}_{L^2_{xy}}\|\nabla H^\varepsilon\|^\frac{1}{2}_{L^2_{xy}}+\|H^\varepsilon\|_{L^2_{xy}})\\
        &\quad+\|\partial_yV^\varepsilon_2\|_{L^2_{xy}}(\|H^\varepsilon\|^\frac{1}{2}_{L^2_{xy}}\|\nabla H^\varepsilon\|^\frac{1}{2}_{L^2_{xy}}+\|H^\varepsilon\|_{L^2_{xy}})],
	\end{align*}
         by $\|\nabla H^\varepsilon\|_{L^2_{xy}}\leq\|\partial_x H^\varepsilon\|_{L^2_{xy}}+\|\partial_y H^\varepsilon\|_{L^2_{xy}}$ and $(\ref{HVU assumption})$,
        \begin{equation*}
		\begin{split}{}
			{D}_1
			&\leq \frac{1}{4}\|\partial_x H^\varepsilon\|^2_{L^2_{xy}}+\frac{1}{4}\|\partial_y H^\varepsilon\|^2_{L^2_{xy}}+\frac{1}{4}\varepsilon\|\partial_x V^\varepsilon\|^2_{L^2_{xy}}+\frac{1}{4}\varepsilon\|\partial_y V^\varepsilon\|^2_{L^2_{xy}}\\
            &\quad+C\|\partial_xU^\varepsilon\|^2_{L^2_{xy}}+C\|\partial_yU^\varepsilon\|^2_{L^2_{xy}}+C\|H^\varepsilon\|^2_{L^2_{xy}}+C\varepsilon^2.\\
		\end{split}
	\end{equation*}
         Using ($\ref{HVU theta L2}$), we obtain
	\begin{align*}
        D_2&=\int^\infty_0\int^\infty_{-\infty}(-U^\varepsilon_1\partial_x H^\theta-U^\varepsilon_2\partial_y H^\theta-U^\theta_1\partial_x H^\varepsilon-U^\theta_2\partial_y H^\varepsilon+ H^\varepsilon\partial_xV^\theta_1+V^\theta_1\partial_x H^\varepsilon+V^\theta_2\partial_y H^\varepsilon\\
        &\quad+V^\varepsilon_1\partial_x H^\theta+V^\varepsilon_2\partial_y H^\theta)H^\varepsilon dxdy+\int^\infty_0\int^\infty_{-\infty}(H^\varepsilon\partial_yV^\theta_2+ H^\theta\partial_xV^\varepsilon_1+H^\theta\partial_yV^\varepsilon_2)H^\varepsilon dxdy\\
        &=D_{21}+D_{22}.
	\end{align*}
   Using the embedding inequalities for $D_{21}$,
    \begin{align*}
        D_{21}&\leq(\|U^\varepsilon_1\|_{L^2_{xy}}\|\partial_x H^\theta\|_{L^\infty_{xy}}+\|U^\varepsilon_2\|_{L^2_{xy}}\|\partial_y H^\theta\|_{L^\infty_{xy}}+\|U^\theta_1\|_{L^\infty_{xy}}\|\partial_x H^\varepsilon\|_{L^2_{xy}}\\
        &\quad+\|U^\theta_2\|_{L^\infty_{xy}}\|\partial_y H^\varepsilon\|_{L^2_{xy}}+ \|H^\varepsilon\|_{L^2_{xy}}\|\partial_xV^\theta_1\|_{L^\infty_{xy}}+\|V^\theta_1\|_{L^\infty_{xy}}\|\partial_x H^\varepsilon\|_{L^2_{xy}}\\
        &\quad+\|V^\theta_2\|_{L^\infty_{xy}}\|\partial_y H^\varepsilon\|_{L^2_{xy}}+\|V^\varepsilon_1\|_{L^2_{xy}}\|\partial_x H^\theta\|_{L^\infty_{xy}}+\|V^\varepsilon_2\|_{L^2_{xy}}\|\partial_y H^\theta\|_{L^\infty_{xy}})\|H^\varepsilon\|_{L^2_{xy}}\\
        &\leq \frac{1}{8}\|\partial_x H^\varepsilon\|^2_{L^2_{xy}}+\frac{1}{8}\|\partial_y H^\varepsilon\|^2_{L^2_{xy}}+C\|H^\varepsilon\|^2_{L^2_{xy}}+C\|V^\varepsilon\|^2_{L^2_{xy}}+C\|U^\varepsilon\|^2_{L^2_{xy}}.
	\end{align*}
     By integrating by parts, we get
     \begin{equation*}
		\begin{split}{}
        D_{22}&\leq2\|V^\theta_2\|_{L^\infty_{xy}}\|\partial_y H^\varepsilon\|_{L^2_{xy}}\|H^\varepsilon\|_{L^2_{xy}}+\|\partial_x H^\theta\|_{L^\infty_{xy}}\|V^\varepsilon_1\|_{L^2_{xy}}\|H^\varepsilon\|_{L^2_{xy}}\\
        &\quad+\|H^\theta\|_{L^\infty_{xy}}\|V^\varepsilon_1\|_{L^2_{xy}}\|\partial_x H^\varepsilon\|_{L^2_{xy}}+\|\partial_y H^\theta\|_{L^\infty_{xy}}\|V^\varepsilon_2\|_{L^2_{xy}}\|H^\varepsilon\|_{L^2_{xy}}\\
        &\quad+ \|H^\theta\|_{L^\infty_{xy}}\|V^\varepsilon_2\|_{L^2_{xy}}\|\partial_yH^\varepsilon\|_{L^2_{xy}}\\
        &\leq \frac{1}{8}\|\partial_x H^\varepsilon\|^2_{L^2_{xy}}+\frac{1}{8}\|\partial_y H^\varepsilon\|^2_{L^2_{xy}}+C\|H^\varepsilon\|^2_{L^2_{xy}}+C\|V^\varepsilon\|^2_{L^2_{xy}}.\\
		\end{split}
	\end{equation*}
    Then
    \begin{equation*}
		\begin{split}{}
        D_{2}&\leq \frac{1}{4}\|\partial_x H^\varepsilon\|^2_{L^2_{xy}}+\frac{1}{4}\|\partial_y H^\varepsilon\|^2_{L^2_{xy}}+C\|H^\varepsilon\|^2_{L^2_{xy}}+C\|V^\varepsilon\|^2_{L^2_{xy}}+C\|U^\varepsilon\|^2_{L^2_{xy}}.\\
		\end{split}
	\end{equation*}
	 Combining Lemma $\ref{lem f L2}$, we can collect items with similar estimates
	\begin{equation*}
		\begin{split}{}
        D_3&\leq\varepsilon^{-\frac{1}{2}}\|f^\varepsilon\|_{L^2_{xy}}\|H^\varepsilon\|_{L^2_{xy}}\leq C\|H^\varepsilon\|^2_{L^2_{xy}}+C\varepsilon^\frac{1}{2}.\\
		\end{split}
	\end{equation*}
    Obviously,
    \begin{equation*}
		\begin{split}{}
			&\frac{1}{2}\frac{d}{dt}\|H^\varepsilon\|^2_{L^2_{xy}}+\|\partial_x H^\varepsilon\|^2_{L^2_{xy}}+\|\partial_y H^\varepsilon\|^2_{L^2_{xy}}\\
            &\leq \frac{1}{2}\|\partial_x H^\varepsilon\|^2_{L^2_{xy}}+\frac{1}{2}\|\partial_y H^\varepsilon\|^2_{L^2_{xy}}+\frac{1}{4}\varepsilon\|\partial_x V^\varepsilon\|^2_{L^2_{xy}}+\frac{1}{4}\varepsilon\|\partial_y V^\varepsilon\|^2_{L^2_{xy}}+C\|\partial_x U^\varepsilon\|^2_{L^2_{xy}}\\
            &\quad+C\|\partial_y U^\varepsilon\|^2_{L^2_{xy}}+C\|H^\varepsilon\|^2_{L^2_{xy}}+C\|U^\varepsilon\|^2_{L^2_{xy}}+C\|V^\varepsilon\|^2_{L^2_{xy}}+C\varepsilon^\frac{1}{2}.
        \end{split}
	\end{equation*}
    \textbf{Step 2. The \(L^2\) estimate of $V^\varepsilon$.}
         Multiplying $(\ref{HVU equation})_2$ by $V^\varepsilon$, we can obtain
	\begin{align*}
			&\frac{1}{2}\frac{d}{dt}\|V^\varepsilon\|^2_{L^2_{xy}}+\varepsilon\|\partial_x V^\varepsilon\|^2_{L^2_{xy}}+\varepsilon\|\partial_y V^\varepsilon\|^2_{L^2_{xy}}\\
            &=\Big{[}-\varepsilon^\frac{1}{2}\int^\infty_0\int^\infty_{-\infty}\partial_x(U^\varepsilon\cdot V^\varepsilon) V^\varepsilon_1 dxdy-\varepsilon^\frac{1}{2}\int^\infty_0\int^\infty_{-\infty}\partial_y(U^\varepsilon\cdot V^\varepsilon) V^\varepsilon_2 dxdy\Big{]}\\
            &\quad+\Big{[}-\int^\infty_0\int^\infty_{-\infty}\partial_x(U^\varepsilon\cdot V^\theta) V^\varepsilon_1 dxdy-\int^\infty_0\int^\infty_{-\infty}\partial_y(U^\varepsilon\cdot V^\theta)V^\varepsilon_2 dxdy\Big{]}\\
            &\quad+\Big{[}-\int^\infty_0\int^\infty_{-\infty}\partial_x(U^\theta\cdot V^\varepsilon) V^\varepsilon_1 dxdy-\int^\infty_0\int^\infty_{-\infty}\partial_y(U^\theta\cdot V^\varepsilon)V^\varepsilon_2 dxdy\Big{]}\\
            &\quad-\Big{[}2\varepsilon^\frac{3}{2}\int^\infty_0\int^\infty_{-\infty}(\partial_xV^\varepsilon\cdot V^\varepsilon) V^\varepsilon_1 dxdy+2\varepsilon^\frac{3}{2}\int^\infty_0\int^\infty_{-\infty}(\partial_yV^\varepsilon\cdot V^\varepsilon) V^\varepsilon_2 dxdy\Big{]}\\
            &\quad+\Big{[}-2\varepsilon\int^\infty_0\int^\infty_{-\infty}\partial_x(V^\varepsilon\cdot V^\theta) V^\varepsilon_1 dxdy-2\varepsilon\int^\infty_0\int^\infty_{-\infty}\partial_y(V^\varepsilon\cdot V^\theta)V^\varepsilon_2 dxdy\Big{]}\\
            &\quad+\Big{[}\int^\infty_0\int^\infty_{-\infty}\partial_x H^\varepsilon V^\varepsilon_1 dxdy+\int^\infty_0\int^\infty_{-\infty}\partial_y H^\varepsilon V^\varepsilon_2 dxdy\Big{]}+\varepsilon^{-\frac{1}{2}}\int^\infty_0\int^\infty_{-\infty}g^\varepsilon\cdot V^\varepsilon dxdy\\
            &=M_1+M_2+M_3+M_4+M_5+M_6+M_7.
	\end{align*}
        Using the Gagliardo-Nirenberg interpolation inequality, $(\ref{HVU equation})_3$ and $(\ref{HVU assumption})$, we can get that
	\begin{align*}
       M_1&=-\varepsilon^\frac{1}{2}\int^\infty_0\int^\infty_{-\infty}[\partial_x(U^\varepsilon_1V^\varepsilon_1+U^\varepsilon_2V^\varepsilon_2)V^\varepsilon_1+\partial_y(U^\varepsilon_1V^\varepsilon_1+U^\varepsilon_2V^\varepsilon_2)V^\varepsilon_2]dxdy\\
       &\leq\varepsilon^\frac{1}{2}(\frac{1}{2}\|\partial_x U^\varepsilon_1\|_{L^2_{xy}}\|V^\varepsilon_1\|^2_{L^4_{xy}}+\|\partial_x U^\varepsilon_2\|_{L^2_{xy}}\|V^\varepsilon_2\|_{L^4_{xy}}\|V^\varepsilon_1\|_{L^4_{xy}}\\
       &\quad+\| U^\varepsilon_2(t,x,0)\|_{L^\infty_{xy}}\|V^\varepsilon_1(t,x,0)\|^2_{L^2_{xy}}+\frac{1}{2}\|\partial_y U^\varepsilon_2\|_{L^2_{xy}}\|V^\varepsilon_1\|^2_{L^4_{xy}}+\|\partial_y U^\varepsilon_1\|_{L^2_{xy}}\|V^\varepsilon_1\|_{L^4_{xy}}\|V^\varepsilon_2\|_{L^4_{xy}}\\
       &\quad+\frac{1}{2}\|\partial_xU^\varepsilon_1\|_{L^2_{xy}}\|V^\varepsilon_2\|^2_{L^4_{xy}}+\frac{1}{2}\|\partial_y U^\varepsilon_2\|_{L^2_{xy}}\|V^\varepsilon_2\|^2_{L^4_{xy}})\\
        &\leq\frac{1}{10}\varepsilon\|\partial_x V^\varepsilon\|^2_{L^2_{xy}}+\frac{1}{10}\varepsilon\|\partial_y V^\varepsilon\|^2_{L^2_{xy}}+C\|\partial_x U^\varepsilon\|^2_{L^2_{xy}}+C\|\partial_y U^\varepsilon\|^2_{L^2_{xy}}+C\|V^\varepsilon\|^2_{L^2_{xy}}.
	\end{align*}
         Combining with the results of Theorem \ref{th nvu regularity} and $V^\theta=v^\varepsilon-\varepsilon^\frac{1}{2}V^\varepsilon$ in $(\ref{HVUK varepsilon})$, we obtain
	\begin{align*}
        M_2&=-\int^\infty_0\int^\infty_{-\infty}\Big[\partial_x(U^\varepsilon_1V^\theta_1+U^\varepsilon_2V^\theta_2)V^\varepsilon_1+\partial_y(U^\varepsilon_1V^\theta_1+U^\varepsilon_2V^\theta_2)V^\varepsilon_2\Big]dxdy\\
        &=\int^\infty_0\int^\infty_{-\infty}\Big[\partial_xU^\varepsilon_1V^\theta_1+U^\varepsilon_1\partial_xV^\theta_1+\partial_xU^\varepsilon_2V^\theta_2+U^\varepsilon_2\partial_xV^\theta_2\Big]V^\varepsilon_1\\
        &\quad+\Big[\partial_yU^\varepsilon_1V^\theta_1+U^\varepsilon_1\partial_yV^\theta_1+\partial_yU^\varepsilon_2V^\theta_2+U^\varepsilon_2\partial_y(v^\varepsilon_2-\varepsilon^\frac{1}{2}V^\varepsilon_2)\Big]V^\varepsilon_2dxdy\\
       &\leq(\|\partial_x U^\varepsilon_1\|_{L^2_{xy}}\|V^\theta_1\|_{L^\infty_{xy}}+\| U^\varepsilon_1\|_{L^2_{xy}}\|\partial_xV^\theta_1\|_{L^\infty_{xy}}\\
       &\quad+\|\partial_x U^\varepsilon_2\|_{L^2_{xy}}\|V^\theta_2\|_{L^\infty_{xy}}+\| U^\varepsilon_2\|_{L^2_{xy}}\|\partial_xV^\theta_2\|_{L^\infty_{xy}})\|V^\varepsilon_1\|^2_{L^2_{xy}}\\
       &\quad+(\|\partial_y U^\varepsilon_1\|_{L^2_{xy}}\|V^\theta_1\|_{L^\infty_{xy}}+\|\partial_y U^\varepsilon_2\|_{L^2_{xy}}\|V^\theta_2\|_{L^\infty_{xy}})\|V^\varepsilon_2\|^2_{L^2_{xy}}+\| U^\varepsilon_1\|_{L^2_{xy}}\|\partial_yV^\theta_1\|_{L^\infty_{xy}}\|V^\varepsilon_2\|_{L^2_{xy}}\\
       &\quad+\| U^\varepsilon_2\|_{L^2_{xy}}\|\partial_yv^\varepsilon_2\|_{L^\infty_{xy}}\|V^\varepsilon_2\|_{L^2_{xy}}+\frac{1}{2}\varepsilon^\frac{1}{2}\|\partial_yU^\varepsilon_2\|_{L^2_{xy}}\| V^\varepsilon_2\|^2_{L^4_{xy}}\\
        &\leq\frac{1}{10}\varepsilon\|\partial_x V^\varepsilon\|^2_{L^2_{xy}}+\frac{1}{10}\varepsilon\|\partial_y V^\varepsilon\|^2_{L^2_{xy}}+C\|\partial_x U^\varepsilon\|^2_{L^2_{xy}}+C\|\partial_y U^\varepsilon\|^2_{L^2_{xy}}+C\|V^\varepsilon\|^2_{L^2_{xy}}+C\|U^\varepsilon\|^2_{L^2_{xy}}.
	\end{align*}
    By $(\ref{HVU theta L2})$, $(\ref{HVU equation})_3$ and $U^\theta_2(t,x,0)=-(\varepsilon u^{B,2}_2(t,x,0)+\varepsilon^\frac{3}{2} u^{B,3}_2(t,x,0))$, we have
    \begin{align*}
        M_3&=-\int^\infty_0\int^\infty_{-\infty}\partial_x(U^\theta_1V^\varepsilon_1+U^\theta_2V^\varepsilon_2)V^\varepsilon_1+\partial_y(U^\theta_1V^\varepsilon_1+U^\theta_2V^\varepsilon_2)V^\varepsilon_2dxdy\\
       &\leq\frac{1}{2}\|\partial_x U^\theta_1\|_{L^\infty_{xy}}\|V^\varepsilon_1\|^2_{L^2_{xy}}+\|\partial_x U^\theta_2\|_{L^\infty_{xy}}\|V^\varepsilon_2\|_{L^2_{xy}}\|V^\varepsilon_1\|_{L^2_{xy}}\\
       &\quad+\|U^\theta_2(t,x,0)\|_{L^2_{xy}}\|V^\varepsilon_1(t,x,0)\|^2_{L^2_{xy}}+\|\partial_y U^\theta_2\|_{L^\infty_{xy}}\|V^\varepsilon_1\|^2_{L^2_{xy}}\\
       &\quad+\|\partial_y U^\theta_1\|_{L^\infty_{xy}}\|V^\varepsilon_1\|_{L^2_{xy}}\|V^\varepsilon_2\|_{L^2_{xy}}+\frac{1}{2}\|\partial_x U^\theta_1\|_{L^\infty_{xy}}\|V^\varepsilon_2\|^2_{L^2_{xy}}+\frac{1}{2}\|\partial_y U^\theta_2\|_{L^\infty_{xy}}\|V^\varepsilon_2\|^2_{L^2_{xy}}\\
        &\leq \frac{1}{10}\varepsilon\|\partial_y V^\varepsilon\|^2_{L^2_{xy}}+C\| V^\varepsilon\|^2_{L^2_{xy}}.\\
		\end{align*}
	 Directly using the Sobolev embedding inequality, we can see
	\begin{equation*}
		\begin{split}{}
        M_4&=\int^\infty_0\int^\infty_{-\infty}2\varepsilon^\frac{3}{2}\Big[(\partial_xV^\varepsilon_1V^\varepsilon_1+\partial_xV^\varepsilon_2V^\varepsilon_2)V^\varepsilon_1+(\partial_yV^\varepsilon_1V^\varepsilon_1+\partial_yV^\varepsilon_2V^\varepsilon_2)V^\varepsilon_2\Big]dxdy\\
      &\leq\frac{1}{10}\varepsilon\|\partial_x V^\varepsilon\|^2_{L^2_{xy}}+\frac{1}{10}\varepsilon\|\partial_y V^\varepsilon\|^2_{L^2_{xy}}+C\|V^\varepsilon\|^2_{L^2_{xy}}+C_0\varepsilon^2\|V^\varepsilon\|^2_{L^2_{xy}}\|\nabla V^\varepsilon\|^2_{L^2_{xy}}\\
      &\leq\frac{1}{5}\varepsilon\|\partial_x V^\varepsilon\|^2_{L^2_{xy}}+\frac{1}{5}\varepsilon\|\partial_y V^\varepsilon\|^2_{L^2_{xy}}+C\|V^\varepsilon\|^2_{L^2_{xy}}.\\
		\end{split}
	\end{equation*}
     Similarly, by $(\ref{HVU equation})_3$ we get
    \begin{align*}
        M_5&=-\int^\infty_0\int^\infty_{-\infty}2\varepsilon\Big[\partial_x(V^\varepsilon_1V^\theta_1+V^\varepsilon_2V^\theta_2)V^\varepsilon_1+\partial_y(V^\varepsilon_1V^\theta_1+V^\varepsilon_2V^\theta_2)V^\varepsilon_2\Big]dxdy\\
        &\leq2\varepsilon(\frac{1}{2}\| V^\varepsilon_1\|^2_{L^2_{xy}}\|\partial_xV^\theta_1\|_{L^\infty_{xy}}+\| \partial_xV^\varepsilon_2\|_{L^2_{xy}}\|V^\theta_2\|_{L^\infty_{xy}}\|V^\varepsilon_2\|_{L^2_{xy}}+\|V^\varepsilon_2\|_{L^2_{xy}}\| \partial_xV^\theta_2\|_{L^\infty_{xy}}\|V^\varepsilon_1\|_{L^2_{xy}}\\
       &\quad+\frac{1}{2}\| V^\varepsilon_2\|^2_{L^2_{xy}}\|\partial_xV^\theta_1\|_{L^\infty_{xy}}+\|V^\varepsilon_1\|_{L^2_{xy}}\| \partial_yV^\theta_1\|_{L^\infty_{xy}}\|V^\varepsilon_2\|_{L^2_{xy}}+\frac{1}{2}\| V^\varepsilon_2\|^2_{L^2_{xy}}\|\partial_yV^\theta_2\|_{L^\infty_{xy}})\\
        &\leq\frac{1}{10}\varepsilon\|\partial_x V^\varepsilon\|^2_{L^2_{xy}}+C\|V^\varepsilon\|^2_{L^2_{xy}}.
	\end{align*}
    And
    \begin{equation*}
		\begin{split}{}
        M_6&\leq\|\partial_x H^\varepsilon\|_{L^2_{xy}}\| V^\varepsilon_1\|_{L^2_{xy}}+\|\partial_y H^\varepsilon\|_{L^2_{xy}}\| V^\varepsilon_2\|_{L^2_{xy}}\\
        &\leq\frac{1}{8}\|\partial_x H^\varepsilon\|^2_{L^2_{xy}}+\frac{1}{8}\|\partial_y H^\varepsilon\|^2_{L^2_{xy}}+C\|V^\varepsilon\|^2_{L^2_{xy}}.
		\end{split}
	\end{equation*}
    By Lemma $\ref{lem g L2}$, we have
    \begin{equation*}
		\begin{split}{}
        M_7&\leq\varepsilon^{-\frac{1}{2}}\|g^\varepsilon\|_{L^2_{xy}}\| V^\varepsilon\|_{L^2_{xy}}\leq C\|V^\varepsilon\|^2_{L^2_{xy}}+C\varepsilon^\frac{1}{2}.
		\end{split}
	\end{equation*}
    Obviously,
    \begin{equation*}
		\begin{split}{}
			&\frac{1}{2}\frac{d}{dt}\|V^\varepsilon\|^2_{L^2_{xy}}+\varepsilon\|\partial_x V^\varepsilon\|^2_{L^2_{xy}}+\varepsilon\|\partial_y V^\varepsilon\|^2_{L^2_{xy}}\\
            &\leq \frac{1}{4}\|\partial_x H^\varepsilon\|_{L^2_{xy}}+\frac{1}{8}\|\partial_y H^\varepsilon\|^2_{L^2_{xy}}+\frac{1}{2}\varepsilon\|\partial_x V^\varepsilon\|^2_{L^2_{xy}}+\frac{1}{2}\varepsilon\|\partial_y V^\varepsilon\|^2_{L^2_{xy}}\\
            &\quad+C\|\partial_x U^\varepsilon\|^2_{L^2_{xy}}+C\|\partial_y U^\varepsilon\|^2_{L^2_{xy}}+C\|U^\varepsilon\|^2_{L^2_{xy}}+C\|V^\varepsilon\|^2_{L^2_{xy}}+C\varepsilon^\frac{1}{2}.
        \end{split}
	\end{equation*}
    \textbf{Step 3. The \(L^2\) estimate of $U^\varepsilon$.}
     To avoid estimating the unknown pressure term $\nabla K^\varepsilon$ and obtain the estimate of $\|U^\varepsilon\|^2_{L^2_{xy}}$, we apply the Leray projector $\mathbb {P}$, which is the $L^2_{xy}$ orthogonal projection on the space of divergence free vector fields
tangent to the boundary, and decompose $U^\varepsilon=\mathbb {P}U^\varepsilon+(I-\mathbb {P})U^\varepsilon$. Next, multiplying the $U^\varepsilon$ equation given in $(\ref{HVU equation})_4$ by $\mathbb {P}U^\varepsilon$ and integrating it, we obtain
	\begin{align*}
			\frac{1}{2}\frac{d}{dt}\|\mathbb {P}U^\varepsilon\|^2_{L^2_{xy}}&=\varepsilon\int^\infty_0\int^\infty_{-\infty}\Delta U^\varepsilon\cdot\mathbb {P} U^\varepsilon dxdy-\varepsilon^\frac{1}{2}\int^\infty_0\int^\infty_{-\infty}U^\varepsilon\cdot\nabla U^\varepsilon\cdot \mathbb {P}U^\varepsilon dxdy\\
            &\quad-\Big{[}\int^\infty_0\int^\infty_{-\infty}U^\varepsilon\cdot\nabla U^\theta\cdot \mathbb {P}U^\varepsilon dxdy+\int_{\Omega}U^\theta\cdot\nabla U^\varepsilon\cdot \mathbb {P}U^\varepsilon dxdy\Big{]}\\
            &\quad+\Big{[}\int^\infty_0\int^\infty_{-\infty}H^\varepsilon e_2\cdot \mathbb {P}U^\varepsilon dxdy+\varepsilon^{-\frac{1}{2}}\int^\infty_0\int^\infty_{-\infty}h^\varepsilon\cdot \mathbb {P}U^\varepsilon dxdy\Big{]}\\
            &=N_1+N_2+N_3+N_4.
	\end{align*}
    Using the Lemma 6 in \cite{IS}, we can get that
    \begin{align}\label{(I-mathbb P)U varepsilon}
       \|(I-\mathbb {P})U^\varepsilon\|_{H^1_{xy}} \leq C\varepsilon^\frac{1}{2}.
    \end{align}

    By the trace theorem, it can be verified that
        \begin{equation*}
		\begin{split}{}
		    N_1&=\varepsilon\int^\infty_{-\infty}\partial_y U^\varepsilon(t,x,0)\cdot\mathbb {P} U^\varepsilon(t,x,0)dx-\varepsilon\int^\infty_0\int^\infty_{-\infty}\nabla (I-\mathbb {P})U^\varepsilon\cdot\nabla\mathbb {P} U^\varepsilon dxdy-\varepsilon\|\nabla\mathbb {P} U^\varepsilon\|^2_{L^2_{xy}}\\
            &\leq\varepsilon\|\partial_yU^\varepsilon\|^\frac{1}{2}_{L^2_{xy}}\|\partial^2_yU^\varepsilon\|^\frac{1}{2}_{L^2_{xy}}\|\mathbb{P}U^\varepsilon\|^\frac{1}{2}_{L^2_{xy}}\|\nabla\mathbb{P}U^\varepsilon\|^\frac{1}{2}_{L^2_{xy}}+\varepsilon\|\nabla (I-\mathbb {P})U^\varepsilon\|_{L^2_{xy}}||\nabla\mathbb{P}U^\varepsilon\|_{L^2_{xy}}-\varepsilon\|\nabla\mathbb {P} U^\varepsilon\|^2_{L^2_{xy}}\\
            &\leq \frac{1}{4}\varepsilon\|\partial^2_yU^\varepsilon\|^2_{L^2_{xy}}+\frac{1}{4}\varepsilon\|\partial_yU^\varepsilon\|^2_{L^2_{xy}}+C\|\mathbb{P}U^\varepsilon\|^2_{L^2_{xy}}+C\varepsilon-\frac{1}{2}\varepsilon\|\nabla\mathbb {P} U^\varepsilon\|^2_{L^2_{xy}}.
		\end{split}
	\end{equation*}
    By $\nabla\cdot U^\varepsilon=0$ and integrating by parts, we obtain
	\begin{equation*}
		\begin{split}{}
       N_2&=-\varepsilon^\frac{1}{2}\int^\infty_0\int^\infty_{-\infty}(I-\mathbb {P})U^\varepsilon\cdot\nabla \mathbb {P}U^\varepsilon\cdot \mathbb {P}U^\varepsilon dxdy-\varepsilon^\frac{1}{2}\int^\infty_0\int^\infty_{-\infty}\mathbb {P}U^\varepsilon\cdot\nabla (I-\mathbb {P})U^\varepsilon\cdot \mathbb {P}U^\varepsilon dxdy\\
       &\quad-\varepsilon^\frac{1}{2}\int^\infty_0\int^\infty_{-\infty}(I-\mathbb {P})U^\varepsilon\cdot\nabla (I-\mathbb {P})U^\varepsilon\cdot \mathbb {P}U^\varepsilon dxdy\\
       &\leq \frac{1}{4}\varepsilon||\nabla\mathbb {P} U^\varepsilon||^2_{L^2}+C||\mathbb {P}U^\varepsilon||^2_{L^2}+C\varepsilon.
		\end{split}
	\end{equation*}
        Similarly, by the free divergence condition, combining with $(\ref{HVU theta L2})$-$(\ref{(I-mathbb P)U varepsilon})$, we get
	\begin{equation*}
		\begin{split}{}
        N_3&\leq||(I-\mathbb {P})U^\varepsilon||_{L^2_{xy}}||\nabla U^\theta||_{L^\infty_{xy}}||\mathbb {P}U^\varepsilon||_{L^2_{xy}}+||\mathbb {P}U^\varepsilon||^2_{L^2_{xy}}||\nabla U^\theta||_{L^\infty_{xy}}\\
        &\;\;\;\;+||U^\theta||_{L^\infty_{xy}}||\nabla(I-\mathbb {P})U^\varepsilon||_{L^2_{xy}}||\mathbb {P}U^\varepsilon||_{L^2_{xy}}\\
        &\leq C||\mathbb {P}U^\varepsilon||^2_{L^2_{xy}}+C\varepsilon.\\
		\end{split}
	\end{equation*}
	 Combining Lemma $\ref{lem h L2}$, we can obtain
	\begin{equation*}
		\begin{split}{}
        N_4&\leq||H^\varepsilon||_{L^2_{xy}}||\mathbb {P}U^\varepsilon||_{L^2_{xy}}+\varepsilon^{-\frac{1}{2}}||h^\varepsilon||_{L^2_{xy}}||\mathbb {P}U^\varepsilon||_{L^2_{xy}}\leq C||\mathbb {P}U^\varepsilon||^2_{L^2_{xy}}+C||H^\varepsilon||^2_{L^2_{xy}}+C\varepsilon.\\
		\end{split}
	\end{equation*}
         Then it yields
    \begin{equation*}
		\begin{split}{}
			\frac{d}{dt}||\mathbb {P}U^\varepsilon||^2_{L^2}
            &\leq \frac{1}{2}\varepsilon\|\partial_y\omega^{\varepsilon*}\|^2_{L^2_{xy}}+C||H^\varepsilon||^2_{L^2_{xy}}+C||\mathbb {P}U^\varepsilon||^2_{L^2_{xy}}+C\varepsilon.
        \end{split}
	\end{equation*}
     \textbf{Step 4. Estimate of $\nabla U^\varepsilon$.}
     We note that
     \begin{equation*}
		\begin{split}{}
        \|\nabla U^\varepsilon\|_{L^2_{xy}}\leq\|\nabla\times U^\varepsilon\|_{L^2_{xy}}+\|\nabla\cdot U^\varepsilon\|_{L^2_{xy}}=\|\nabla\times U^\varepsilon\|_{L^2_{xy}}.
		\end{split}
	\end{equation*}
      Therefore, it suffices to estimate $\|\nabla\times U^\varepsilon\|_{L^2_{xy}}$. Set $\omega^{\varepsilon*}=\nabla\times U^\varepsilon$, then $(\ref{HVU equation})_4$ implies that
      \begin{equation}\label{omega varepsilon}
		\begin{split}{}
        \partial_t\omega^{\varepsilon*}&=\varepsilon\Delta \omega^{\varepsilon*}-\varepsilon^\frac{1}{2}\nabla\times(U^\varepsilon\cdot\nabla U^\varepsilon)-\nabla\times(U^\varepsilon\cdot\nabla U^\theta)-\nabla\times(U^\theta\cdot\nabla U^\varepsilon)\\
        &\quad+\nabla\times(H^\varepsilon e_2)+\varepsilon^{-\frac{1}{2}}\nabla\times h^\varepsilon,
		\end{split}
	\end{equation}
    with the boundary conditions as
    \begin{equation}\label{omega varepsilon y=0}
		\begin{split}{}
        \omega^{\varepsilon*}|_{y=0}=-(\varepsilon^\frac{1}{2}\partial_xu^{B,2}_2(t,x,0)+\varepsilon\partial_xu^{B,3}_2(t,x,0)).
		\end{split}
	\end{equation}
    We introduce a smooth cut-off function \(\varphi\) defined on \((0,\infty)\) as follows:
    $$\varphi(0)=1, \varphi(y)=1,\;{\rm{for}}\;y>1.$$
    Denote $\tilde{\omega}=\omega^{\varepsilon*}+\varphi(y)(\varepsilon^\frac{1}{2}\partial_xu^{B,2}_2(t,x,0)+\varepsilon\partial_xu^{B,3}_2(t,x,0))$. Then
    one deduces from $(\ref{omega varepsilon})$ that
    \begin{equation}\label{tilde omega}
    \left\{
		\begin{split}{}
        \partial_t\tilde{\omega}&=\varepsilon\partial^2_x\tilde{\omega}+\varepsilon\partial^2_y\tilde{\omega}-\varepsilon^\frac{1}{2}U^\varepsilon\cdot\nabla \tilde{\omega}-\nabla\times(U^\varepsilon\cdot\nabla U^\theta)-U^\theta\cdot\nabla \tilde{\omega}\\
        &\quad-(\partial_xU^\theta_1\partial_xU^\varepsilon_2+\partial_xU^\theta_2\partial_yU^\varepsilon_2-\partial_yU^\theta_1\partial_xU^\varepsilon_1-\partial_yU^\theta_2\partial_yU^\varepsilon_1)\\
        &\quad+\varepsilon^\frac{1}{2}U^\varepsilon_1\partial_x[(\varepsilon^\frac{1}{2}\partial_xu^{B,2}_2+\varepsilon\partial_xu^{B,3}_2)(t,x,0)]\varphi(y)\\
        &\quad+\varepsilon^\frac{1}{2}U^\varepsilon_2[(\varepsilon^\frac{1}{2}\partial_xu^{B,2}_2+\varepsilon\partial_xu^{B,3}_2)(t,x,0)]\partial_y\varphi(y)+\partial_xH^\varepsilon+\varepsilon^{-\frac{1}{2}}\nabla\times h^\varepsilon+\rho^\varepsilon,\\
        \tilde{\omega}(t&,x,0)=0,\\
        \tilde{\omega}(0&,x,y)=0,
		\end{split}
        \right.
	\end{equation}
    where
    \begin{align}\label{rho varepsilon}
        \rho^\varepsilon&=\partial_t[(\varepsilon^\frac{1}{2}\partial_xu^{B,2}_2+\varepsilon\partial_xu^{B,3}_2)(t,x,0)]\varphi(y)-\varepsilon\partial^2_x[(\varepsilon^\frac{1}{2}\partial_xu^{B,2}_2+\varepsilon\partial_xu^{B,3}_2)(t,x,0)]\varphi(y)\notag\\
        &\quad-\varepsilon[(\varepsilon^\frac{1}{2}\partial_xu^{B,2}_2+\varepsilon\partial_xu^{B,3}_2)(t,x,0)]\partial^2_y\varphi(y)+U^\theta_1\partial_x[(\varepsilon^\frac{1}{2}\partial_xu^{B,2}_2+\varepsilon\partial_xu^{B,3}_2)(t,x,0)]\varphi(y)\\
        &\quad+U^\theta_2[(\varepsilon^\frac{1}{2}\partial_xu^{B,2}_2+\varepsilon\partial_xu^{B,3}_2)(t,x,0)]\partial_y\varphi(y).\notag\\
        \rho^\varepsilon|_{y=0}&=\partial_t[(\varepsilon^\frac{1}{2}\partial_xu^{B,2}_2+\varepsilon\partial_xu^{B,3}_2)(t,x,0)]-\varepsilon\partial^2_x[(\varepsilon^\frac{1}{2}\partial_xu^{B,2}_2+\varepsilon\partial_xu^{B,3}_2)(t,x,0)]\notag\\
        &\quad+U^\theta_1(t,x,0)\partial_x[(\varepsilon^\frac{1}{2}\partial_xu^{B,2}_2+\varepsilon\partial_xu^{B,3}_2)(t,x,0)].\notag\notag
    \end{align}
    By Lemma \ref{lem inner layer} and (\ref{z=0 regularity}), we can obtain
    \begin{align}\label{partialx uB22uB32}
        \|(\varepsilon^\frac{1}{2}\partial_xu^{B,2}_2+\varepsilon\partial_xu^{B,3}_2)(t,x,0)\|_{L^2_{x}}&\leq\varepsilon^\frac{1}{2}\|(u^{B,2}_2+u^{B,3}_2)(t,x,z)\|_{H^1_{x}H^1_z}\leq C\varepsilon^\frac{1}{2}\notag\\
        \|(\varepsilon^\frac{1}{2}\partial_xu^{B,2}_2+\varepsilon\partial_xu^{B,3}_2)(t,x,0)\|_{L^\infty_{x}}&\leq\|(\varepsilon^\frac{1}{2}\partial_xu^{B,2}_2+\varepsilon\partial_xu^{B,3}_2)(t,x,z)\|_{L^\infty_{xz}}\\
        &\leq\varepsilon^\frac{1}{2}\|(u^{B,2}_2+u^{B,3}_2)(t,x,z)\|_{H^3_{x}H^2_z}\notag\\
        &\leq C\varepsilon^\frac{1}{2}.\notag\notag
    \end{align}

    Thus, combining the definition of $U^\theta$ and (\ref{partialx uB22uB32}), we obtain
    \begin{align}\label{rho varepsilon frac12}
      \|\rho^\varepsilon\|_{L^2_{xy}}&\leq C\varepsilon^\frac{1}{2}(\|U^\theta_1\|_{L^\infty_{xy}}+\|U^\theta_2\|_{L^\infty_{xy}})+C\varepsilon^\frac{1}{2}\leq C\varepsilon^\frac{1}{2}\notag,\\
      \|\rho^\varepsilon(t,x,0)\|_{L^2_{xy}}&\leq C\varepsilon^\frac{1}{2}(\|U^\theta_1\|_{H^1_{xy}}+1)\leq C\varepsilon^\frac{1}{2}.
    \end{align}
      Multiplying $(\ref{tilde omega})_1$ by $\tilde{\omega}$, we can obtain
       \begin{align*}
        &\frac{1}{2}\frac{d}{dt}\|\tilde{\omega}\|^2_{L^2_{xy}}+\varepsilon\|\partial_x \tilde{\omega}\|^2_{L^2_{xy}}+\varepsilon\|\partial_y \tilde{\omega}\|^2_{L^2_{xy}}\\
        &=-\varepsilon^\frac{1}{2}\int^\infty_0\int^\infty_{-\infty}U^\varepsilon\cdot\nabla \tilde{\omega}\tilde{\omega} dxdy\\
        &\quad-\int^\infty_0\int^\infty_{-\infty}\nabla\times(U^\varepsilon\cdot\nabla U^\theta)\tilde{\omega} dxdy-\int^\infty_0\int^\infty_{-\infty}U^\theta\cdot\nabla \tilde{\omega}\tilde{\omega} dxdy\\
        &\quad-\int^\infty_0\int^\infty_{-\infty}(\partial_xU^\theta_1\partial_xU^\varepsilon_2+\partial_xU^\theta_2\partial_yU^\varepsilon_2-\partial_yU^\theta_1\partial_xU^\varepsilon_1-\partial_yU^\theta_2\partial_yU^\varepsilon_1)\tilde{\omega} dxdy\\
        &\quad +\varepsilon^\frac{1}{2}\int^\infty_0\int^\infty_{-\infty}U^\varepsilon_1\partial_x[(\varepsilon^\frac{1}{2}\partial_xu^{B,2}_2+\varepsilon\partial_xu^{B,3}_2)(t,x,0)]\varphi(y)\tilde{\omega}dxdy\\
        &\quad+\varepsilon^\frac{1}{2}\int^\infty_0\int^\infty_{-\infty}U^\varepsilon_2[(\varepsilon^\frac{1}{2}\partial_xu^{B,2}_2+\varepsilon\partial_xu^{B,3}_2)(t,x,0)]\partial_y\varphi(y)\tilde{\omega}dxdy\\
        &\quad+\int^\infty_0\int^\infty_{-\infty}\partial_xH^\varepsilon\tilde{\omega} dxdy+\varepsilon^{-\frac{1}{2}}\int^\infty_0\int^\infty_{-\infty}(\nabla\times h^\varepsilon)\tilde{\omega}dxdy+\int^\infty_0\int^\infty_{-\infty}\rho^\varepsilon\tilde{\omega} dxdy\\
        &=\sum^{9}_{i=1}E_i.
	\end{align*}
    Since $U^\varepsilon_2(t,x,0)=-(\varepsilon^\frac{1}{2}u^{B,2}_2(t,x,0)+\varepsilon u^{B,3}_2(t,x,0))$, {with} the divergence condition and $(\ref{partialx uB22uB32})$, it can be verified that
\begin{equation*}
		\begin{split}{}
       E_1&=-\frac{1}{2}\varepsilon^\frac{1}{2}\int^\infty_{-\infty}U^\varepsilon_2(t,x,0)|\tilde{\omega}(t,x,0)|^2dx\\
            &\leq \varepsilon(\|u^{B,2}_2(t,x,0)\|_{L^\infty_{xy}}+\varepsilon^\frac{1}{2}\|u^{B,3}_2(t,x,0)\|_{L^\infty_{xy}})\|\tilde{\omega}\|^2_{L^2,y=0}\\
            &\leq \frac{1}{6}\varepsilon\|\partial_y \tilde{\omega}\|^2_{L^2_{xy}}+C\| \tilde{\omega}\|^2_{L^2_{xy}}.
		\end{split}
	\end{equation*}

    For the term $E_2$,
	\begin{equation*}
		\begin{split}{}
       E_2&=\int^\infty_0\int^\infty_{-\infty}(\partial_y U^\varepsilon_1\partial_x U^\theta_1+ U^\varepsilon_1\partial_x\partial_y U^\theta_1+\partial_y U^\varepsilon_2\partial_y U^\theta_1-\partial_x U^\varepsilon_1\partial_x U^\theta_2\\
       &\quad-U^\varepsilon_1\partial^2_x U^\theta_2-\partial_x U^\varepsilon_2\partial_y U^\theta_2-U^\varepsilon_2\partial_x\partial_y U^\theta_2)\tilde{\omega}dxdy+\int^\infty_0\int^\infty_{-\infty}U^\varepsilon_2\partial^2_yU^\theta_1\tilde{\omega}dxdy\\
       &=E_{21}+E_{22}.\\
		\end{split}
	\end{equation*}
    By the definition of $\tilde{\omega}$ and the estimates in $(\ref{HVU theta L2})$, we can get that
    \begin{align*}
       E_{21}&\leq(\|\partial_y U^\varepsilon_1\|_{L^2_{xy}}\|\partial_x U^\theta_1\|_{L^\infty_{xy}}+\| U^\varepsilon_1\|_{L^2_{xy}}\|\partial_x \partial_yU^\theta_1\|_{L^\infty_{xy}}+\|\partial_y U^\varepsilon_2\|_{L^2_{xy}}\|\partial_y U^\theta_1\|_{L^\infty_{xy}}\\
            &\quad+\|\partial_x U^\varepsilon_1\|_{L^2_{xy}}\|\partial_x U^\theta_2\|_{L^\infty_{xy}}+\| U^\varepsilon_1\|_{L^2_{xy}}\|\partial^2_x U^\theta_1\|_{L^\infty_{xy}}+\|\partial_x U^\varepsilon_2\|_{L^2_{xy}}\|\partial_y U^\theta_2\|_{L^\infty_{xy}}\\
            &\quad+\|U^\varepsilon_2\|_{L^2_{xy}}\|\partial_x\partial_y U^\theta_2\|_{L^\infty_{xy}})\|\tilde{\omega}\|_{L^2_{xy}}\\
            &\leq C\|\tilde{\omega}\|^2_{L^2_{xy}}+C\|\omega^\varepsilon\|^2_{L^2_{xy}}+C\| U^\varepsilon\|^2_{L^2_{xy}}\\
            &\leq C\|\tilde{\omega}\|^2_{L^2_{xy}}+C\| U^\varepsilon\|^2_{L^2_{xy}}+C\varepsilon.
	\end{align*}
    It can be obtained by the equation $U^\theta$ in $(\ref{HVUK varepsilon})$ and integration by parts,
    \begin{equation*}
		\begin{split}{}
       E_{22}&=\int^\infty_0\int^\infty_{-\infty}U^\varepsilon_2(\partial^2_y u^\varepsilon_1-\varepsilon^\frac{1}{2}\partial^2_y U^\varepsilon_1)\tilde{\omega}dxdy\\
       &=\int^\infty_0\int^\infty_{-\infty}U^\varepsilon_2\partial^2_y u^\varepsilon_1\tilde{\omega}dxdy-\varepsilon^\frac{1}{2}\int^\infty_0\int^\infty_{-\infty}U^\varepsilon_2\partial^2_y U^\varepsilon_1\tilde{\omega}dxdy\\
       &=E_{22_1}+E_{22_2}.
		\end{split}
	\end{equation*}
    By Theorem $\ref{th nvu regularity}$ and Hardy inequality, we can get that
    \begin{equation*}
		\begin{split}{}
       E_{22_1}&=\int^\infty_0\int^\infty_{-\infty}\frac{U^\varepsilon_2}{\psi} \psi\partial^2_y u^\varepsilon_1\tilde{\omega}dxdy\leq C(\|\partial_y U^\varepsilon_2\|_{L^2_{xy}}+\| U^\varepsilon_2\|_{L^2_{xy}})\|\partial_y u^\varepsilon_1\|_{W^{1,\infty}_{xy}}\|\tilde{\omega}\|_{L^2_{xy}}\\&\leq C\|\tilde{\omega}\|^2_{L^2_{xy}}+C\| U^\varepsilon\|^2_{L^2_{xy}}+C\varepsilon.\\
		\end{split}
	\end{equation*}
    By the divergence condition, substituting the definition of $\tilde{\omega}$ into $E_{22_2}$, using $\omega^{\varepsilon*}=\partial_xU^\varepsilon_2-\partial_yU^\varepsilon_1$ and (\ref{G-N inequality}), we obtain
    \begin{equation*}
		\begin{split}{}
       E_{22_2}&=\varepsilon^\frac{1}{2}\int^\infty_0\int^\infty_{-\infty}\partial_yU^\varepsilon_2\partial_y U^\varepsilon_1\tilde{\omega}dxdy+\varepsilon^\frac{1}{2}\int^\infty_0\int^\infty_{-\infty}U^\varepsilon_2\partial_y U^\varepsilon_1\partial_y\tilde{\omega}dxdy\\
       &=-\varepsilon^\frac{1}{2}\int^\infty_0\int^\infty_{-\infty}\partial_xU^\varepsilon_1\partial_y U^\varepsilon_1\tilde{\omega}dxdy\\
       &\quad+\varepsilon^\frac{1}{2}\int^\infty_0\int^\infty_{-\infty}U^\varepsilon_2\partial_y U^\varepsilon_1(\partial_y\omega^\varepsilon+[(\varepsilon^\frac{1}{2}\partial_xu^{B,2}_2+\varepsilon\partial_xu^{B,3}_2)(t,x,0)]\partial_y\varphi(y))dxdy\\
       &\leq\varepsilon^\frac{1}{2}\|\partial_x U^\varepsilon_1\|_{ L^2_{x}L^\infty_y}\|\partial_y U^\varepsilon_1\|_{L^\infty_{x}L^2_y}\|\tilde{\omega}\|_{L^2_{xy}}+C\varepsilon^\frac{1}{2}\|U^\varepsilon_2\|_{L^2_{xy}}\|\partial_y U^\varepsilon_1\|_{L^2_{xy}}+\varepsilon^\frac{1}{2}\int^\infty_0\int^\infty_{-\infty}U^\varepsilon_2\partial_y U^\varepsilon_1\partial_y\omega^{\varepsilon*} dxdy\\
       &\leq \frac{1}{4}\varepsilon\|\partial_x\tilde{\omega}\|^2_{L^2_{xy}}+C\|\tilde{\omega}\|^2_{L^2_{xy}}\|\tilde{\omega}\|^2_{L^2_{xy}}+C\| U^\varepsilon\|^2_{L^2_{xy}}+C\|\tilde{\omega}\|^2_{L^2_{xy}}\\&
       \quad+\varepsilon^\frac{1}{2}\int^\infty_0\int^\infty_{-\infty}U^\varepsilon_2\partial_y U^\varepsilon_1(\partial_x\partial_yU^\varepsilon_2-\partial^2_yU^\varepsilon_1)dxdy+C\varepsilon
		\end{split}
	\end{equation*}
    where
    \begin{align*}
    &\varepsilon^\frac{1}{2}\int^\infty_0\int^\infty_{-\infty}U^\varepsilon_2\partial_y U^\varepsilon_1\partial_x\partial_yU^\varepsilon_2dxdy-\varepsilon^\frac{1}{2}\int^\infty_0\int^\infty_{-\infty}U^\varepsilon_2\partial_y U^\varepsilon_1\partial^2_yU^\varepsilon_1dxdy\\
       &=-\varepsilon^\frac{1}{2}\int^\infty_0\int^\infty_{-\infty}U^\varepsilon_2\partial_y U^\varepsilon_1\partial^2_xU^\varepsilon_1dxdy-\varepsilon^\frac{1}{2}\int^\infty_0\int^\infty_{-\infty}U^\varepsilon_2\partial_y U^\varepsilon_1\partial^2_yU^\varepsilon_1dxdy\\
       &=\varepsilon^\frac{1}{2}\int^\infty_0\int^\infty_{-\infty}\partial_xU^\varepsilon_2\partial_y U^\varepsilon_1\partial_xU^\varepsilon_1dxdy-\frac{1}{2}\varepsilon^\frac{1}{2}\int^\infty_0\int^\infty_{-\infty}\partial_yU^\varepsilon_2|\partial_xU^\varepsilon_1|^2dxdy\\
       &\quad+\frac{1}{2}\varepsilon^\frac{1}{2}\int^\infty_{-\infty}U^\varepsilon_2(t,x,0)|\partial_xU^\varepsilon_1(t,x,0)|^2dx+\frac{1}{2}\varepsilon^\frac{1}{2}\int^\infty_0\int^\infty_{-\infty}\partial_yU^\varepsilon_2|\partial_y U^\varepsilon_1|^2dxdy\\
       &\leq \varepsilon^\frac{1}{2}\|\partial_xU^\varepsilon_2\|_{L^2_{xy}}\|\partial_y U^\varepsilon_1\|_{L^\infty_{x}L^2_y}\|\partial_x U^\varepsilon_1\|_{ L^2_{x}L^\infty_y}\\&\quad+\frac{1}{2}\varepsilon^\frac{1}{2}\|\partial_yU^\varepsilon_2\|_{L^2_{xy}}\|\partial_x U^\varepsilon_1\|_{L^\infty_{x}L^2_y}\|\partial_x U^\varepsilon_1\|_{ L^2_{x}L^\infty_y}\\
       &\quad+C\varepsilon(\|u^{B,2}_2(t,x,0)\|_{L^\infty_{xy}}+\varepsilon^\frac{1}{2}\|u^{B,3}_2(t,x,0)\|_{L^\infty_{xy}})\|\partial_xU^\varepsilon_1\|_{L^2_{xy}}\|\partial_x\partial_yU^\varepsilon_1\|_{L^2_{xy}}\\
       &\quad+\frac{1}{2}\varepsilon^\frac{1}{2}\|\partial_yU^\varepsilon_1\|_{L^2_{xy}}\|\partial_y U^\varepsilon_1\|_{L^\infty_{x}L^2_y}\|\partial_x U^\varepsilon_1\|_{L^2_{x}L^\infty_y}\\
       &\leq \frac{1}{4}\varepsilon\|\partial_x\tilde{\omega}\|^2_{L^2_{xy}}+\frac{1}{6}\varepsilon\|\partial_y\tilde{\omega}\|^2_{L^2_{xy}}+C\|\tilde{\omega}\|^2_{L^2_{xy}}\|\tilde{\omega}\|^2_{L^2_{xy}}+C\|\tilde{\omega}\|^2_{L^2_{xy}}+C\varepsilon.
	\end{align*}
    Then combining the estimates of $E_{21}$ and $E_{22}$, we can get
    \begin{equation*}
		\begin{split}{}
       E_{2}&\leq \frac{1}{2}\varepsilon\|\partial_x\tilde{\omega}\|^2_{L^2_{xy}}+\frac{1}{6}\varepsilon\|\partial_y\tilde{\omega}\|^2_{L^2_{xy}}+C\|\tilde{\omega}\|^2_{L^2_{xy}}\|\tilde{\omega}\|^2_{L^2_{xy}}+C\|\tilde{\omega}\|^2_{L^2_{xy}}+C\| U^\varepsilon\|^2_{L^2_{xy}}+C\varepsilon.
		\end{split}
	\end{equation*}
    Since $U^\theta_2(t,x,0)=\varepsilon u^{B,2}_2(t,x,0)+\varepsilon^\frac{3}{2} u^{B,3}_2(t,x,0)$, {with} the divergence condition $\nabla\cdot U^\theta=0$ and (\ref{partialx uB22uB32}), it can be verified that
\begin{equation*}
		\begin{split}{}
       E_3&=-\int^\infty_{-\infty}U^\theta_2(t,x,0)|\tilde{\omega}(t,x,0)|^2dx\\
            &\leq \varepsilon(\|u^{B,2}_2(t,x,0)\|_{L^\infty_{xy}}+\varepsilon^\frac{1}{2}\|u^{B,3}_2(t,x,0)\|_{L^\infty_{xy}})\|\tilde{\omega}\|^2_{L^2,y=0}\\
            &\leq \frac{1}{6}\varepsilon\|\partial_y \tilde{\omega}\|^2_{L^2_{xy}}+C\|\tilde{\omega}\|^2_{L^2_{xy}}.
		\end{split}
	\end{equation*}
     We use (\ref{HVU theta L2}) to get
     \begin{equation*}
		\begin{split}{}
       E_{4}&\leq(\|\partial_x U^\theta_1\|_{L^\infty_{xy}}\|\partial_x U^\varepsilon_2\|_{L^2_{xy}}+\|\partial_x U^\theta_2\|_{L^\infty_{xy}}\|\partial_y U^\varepsilon_2\|_{L^2_{xy}}\\
            &\quad+\|\partial_y U^\theta_1\|_{L^\infty_{xy}}\|\partial_x U^\varepsilon_1\|_{L^2_{xy}}+\|\partial_y U^\theta_2\|_{L^\infty_{xy}}\|\partial_y U^\varepsilon_1\|_{L^2_{xy}})\|\tilde{\omega}\|^2_{L^2_{xy}}\\
            &\leq C\|\tilde{\omega}\|^2_{L^2_{xy}}+C\|\omega^\varepsilon\|^2_{L^2_{xy}}\\
            &\leq C\|\tilde{\omega}\|^2_{L^2_{xy}}+C\varepsilon.
		\end{split}
	\end{equation*}
      Applying the estimates of (\ref{partialx uB22uB32}), it follows that
      \begin{equation*}
		\begin{split}{}
        E_{5}+E_{6}&\leq C\varepsilon^\frac{1}{2}(\|U^\varepsilon_1\|_{L^2_{xy}}\|\tilde{\omega}\|_{L^2_{xy}}+\|U^\varepsilon_2\|_{L^2_{xy}}\|\tilde{\omega}\|_{L^2_{xy}})\leq  C\|\tilde{\omega}\|^2_{L^2_{xy}}+C\|U^\varepsilon\|^2_{L^2_{xy}}.
		\end{split}
	\end{equation*}
    By H\"{o}lder's inequality, we obtain
	\begin{equation*}
		\begin{split}{}        E_{7}&\leq\|\partial_xH^\varepsilon\|_{L^2_{xy}}\|\tilde{\omega}\|_{L^2_{xy}}\leq \frac{1}{8}\|\partial_x H^\varepsilon\|^2_{L^2_{xy}}+C\|\tilde{\omega}\|^2_{L^2_{xy}},\\        E_{9}&\leq\|\rho^\varepsilon\|_{L^2_{xy}}\|\tilde{\omega}\|_{L^2_{xy}}\leq C\|\rho^\varepsilon\|^2_{L^2_{xy}}+C\|\tilde{\omega}\|^2_{L^2_{xy}}.\\
		\end{split}
	\end{equation*}
        By Lemma $\ref{lem h L2}$, we have
        \begin{equation*}
		\begin{split}{}
        E_{8}&\leq\varepsilon^{-\frac{1}{2}}\|\partial_x h^\varepsilon_2-\partial_y h^\varepsilon_1\|_{L^2_{xy}}\|\tilde{\omega}\|_{L^2_{xy}}\leq C\|\tilde{\omega}\|^2_{L^2_{xy}}+C\varepsilon^\frac{1}{2}.\\
		\end{split}
	\end{equation*}
        {It then yields}
       \begin{align*}
			&\frac{d}{dt}\|\tilde{\omega}\|^2_{L^2_{xy}}+\varepsilon\|\partial_x\tilde{\omega}\|^2_{L^2_{xy}}+\varepsilon\|\partial_y\tilde{\omega}\|^2_{L^2_{xy}}\\
            &\leq \frac{1}{4}\|\partial_x H^\varepsilon\|^2_{L^2_{xy}}+C\|\tilde{\omega}\|^2_{L^2_{xy}}\|\tilde{\omega}\|^2_{L^2_{xy}}+C\|\tilde{\omega}\|^2_{L^2_{xy}}+C\| U^\varepsilon\|^2_{L^2_{xy}}+C\varepsilon^\frac{1}{2}.
	\end{align*}
	Therefore, combining Step 1-Step 4, we can obtain
	\begin{equation}\label{fracddt HVU varepsilon frac12}
		\begin{split}{}
			&\frac{d}{dt}(\|H^\varepsilon\|^2_{L^2_{xy}}+\|V^\varepsilon\|^2_{L^2_{xy}}+\|\mathbb {P}U^\varepsilon\|^2_{L^2_{xy}}+\|\tilde{\omega}\|^2_{L^2_{xy}})+\|\partial_xH^\varepsilon\|^2_{L^2_{xy}}+\|\partial_yH^\varepsilon\|^2_{L^2_{xy}}\\
            &\quad+\varepsilon\|\partial_xV^\varepsilon\|^2_{L^2_{xy}}+\varepsilon\|\partial_yV^\varepsilon\|^2_{L^2_{xy}}+\varepsilon\|\partial_x\tilde{\omega}\|^2_{L^2_{xy}}+\varepsilon\|\partial_y\tilde{\omega}\|^2_{L^2_{xy}}\\
			&\leq C\|H^\varepsilon\|^2_{L^2_{xy}}+C\|V^\varepsilon\|^2_{L^2_{xy}}+C\|\mathbb {P}U^\varepsilon\|^2_{L^2_{xy}}+C\|\tilde{\omega}\|^2_{L^2_{xy}}+C\|\tilde{\omega}\|^2_{L^2_{xy}}\|\tilde{\omega}\|^2_{L^2_{xy}}+C\varepsilon^\frac{1}{2},\\
		\end{split}
	\end{equation}
    along with Gronwall's inequality for $[0,T]$, which yields
	 \begin{equation*}
	 \begin{split}{}
	 	&\|H^\varepsilon\|^2_{L^2_{xy}}+\|V^\varepsilon\|^2_{L^2_{xy}}+\| U^\varepsilon\|^2_{L^2_{xy}}+\|\tilde{\omega}\|^2_{L^2_{xy}}\leq C\varepsilon^\frac{1}{2}.\\			
	 \end{split}
	 \end{equation*}
	 Hence by the definition of $\tilde{\omega}$ and integrating $(\ref{fracddt HVU varepsilon frac12})$ over the time, we obtain $(\ref{HVUL2 varepsilon})$-$(\ref{HVUL2T varepsilon})$. The proof is complete. \\

     \subsection{Proof of Theorem $\ref{th L2 convergence}$}\label{L2 convergence}

    Next, we prove Theorem $\ref{th L2 convergence}$ by the Lemma $\ref{lem HVU L2}$.\\

     \textbf{Proof of $(\ref{HVU L^2xy estimate})$.} Because of the conclusion {$(\ref{HVUL2 varepsilon})$} being stronger than hypothesis {$(\ref{HVU assumption})$} in Lemma $\ref{lem HVU L2}$, it can be inferred that all conclusions in Lemma $\ref{lem HVU L2}$ hold true.\\

     \textbf{Proof of $(\ref{L^2-convergence})$.} By the fact that $(H^\varepsilon,V^\varepsilon,U^\varepsilon)$ uniquely solves problem $(\ref{HVU equation})$. Then for any $\varepsilon\in(0, \varepsilon_0]$ by $(\ref{HVUL2 varepsilon})$, we get
    \begin{align*}
        &\|n^\varepsilon(t,x,y)-n^0(t,x,y)\|_{L^\infty_TL^2_{xy}}\\
        &\leq C\varepsilon^{\frac{1}{2}}(\|n^{B,1}\|_{L^\infty_TL^2_{xz}}+\varepsilon^{\frac{1}{2}}\|n^{B,2}\|_{L^\infty_TL^2_{xz}}+\|H^\varepsilon\|_{L^\infty_TL^2_{xy}})\\
            &\leq C\varepsilon^{\frac{1}{2}},\\
				&\|v^\varepsilon(t,x,y)-v^0(t,x,y)-(0,v^{B,0}_2(t,x,\frac{y}{\sqrt{\varepsilon}}))\|_{L^\infty_TL^2_{xy}}\\
                &\leq C\varepsilon^{\frac{1}{2}}(\|v^{B,1}_1\|_{L^\infty_TL^2_{xz}}+\|v^{B,1}_2\|_{L^\infty_TL^2_{xz}}+\varepsilon^{\frac{1}{2}}\|v^{B,2}_1\|_{L^\infty_TL^2_{xz}}+\|V^\varepsilon\|_{L^\infty_TL^2_{xy}})\\
                &\leq C\varepsilon^{\frac{1}{2}},\\
				&\|u^\varepsilon(t,x,y)-u^0(t,x,y)\|_{L^\infty_TL^2_{xy}}\\
                &\leq C\varepsilon^{\frac{1}{2}}(\|u^{B,1}_1\|_{L^\infty_TL^2_{xz}}+\varepsilon^{\frac{1}{2}}\|u^{B,2}_1\|_{L^\infty_TL^2_{xz}}+\varepsilon^{\frac{1}{2}}\|u^{B,2}_2\|_{L^\infty_TL^2_{xz}}+\varepsilon\|u^{B,3}_2\|_{L^\infty_TL^2_{xz}}+\|U^\varepsilon\|_{L^\infty_TL^2_{xy}})\\
                &\leq C\varepsilon^{\frac{1}{2}}.
	\end{align*}

\subsection{Tangential and normal derivatives estimations of remainders}\label{de hvu}

We first make the following estimate of $(H^\varepsilon,V^\varepsilon,U^\varepsilon)$.

\begin{lemma}\label{lem HVUx L2}
         {Let the assumptions in Lemma $\ref{lem HVU L2}$ hold. Then there exists $\varepsilon_0>0$ such that for any $\varepsilon\in(0,\varepsilon_0]$, the following estimates hold:
		\begin{equation}\label{partialx HVU varepsilon frac12}
			\|\partial_xH^\varepsilon\|^2_{L^\infty_TL^2_{xy}}+\|\partial_xV^\varepsilon\|^2_{L^\infty_TL^2_{xy}}+\|\partial_x\nabla U^\varepsilon\|^2_{L^\infty_TL^2_{xy}}\leq C\varepsilon^\frac{1}{2}.\\
		\end{equation}
        Furthermore, there exists a constant $C$ indepenfent of $\varepsilon$ such that
            \begin{equation}\label{partialx L2T HVU varepsilon frac12}
			\begin{split}{}&\|\partial^2_xH^\varepsilon\|^2_{L^2_TL^2_{xy}}+\|\partial_x\partial_yH^\varepsilon\|^2_{L^2_TL^2_{xy}}+\varepsilon\|\partial^2_xV^\varepsilon\|^2_{L^2_TL^2_{xy}}+\varepsilon\|\partial^2_x\nabla U^\varepsilon\|^2_{L^2_TL^2_{xy}}\leq C\varepsilon^\frac{1}{2}.\\
            \end{split}
		\end{equation}
		}
	\end{lemma}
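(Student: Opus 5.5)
The plan is to repeat the energy method of Lemma \ref{lem HVU L2} one tangential derivative higher. I apply $\partial_x$ to the remainder system (\ref{HVU equation}). Since $\partial_x$ is tangential, it commutes with the boundary conditions. One gets $\partial_y\partial_xH^\varepsilon|_{y=0}=0$, $\partial_y\partial_xV^\varepsilon_1|_{y=0}=0$ and $\partial_xV^\varepsilon_2|_{y=0}=0$. The boundary value of $\partial_xU^\varepsilon_2$ equals $-\varepsilon^{1/2}\partial_xu^{B,2}_2(t,x,0)-\varepsilon\partial_xu^{B,3}_2(t,x,0)$, which is $O(\varepsilon^{1/2})$ in $L^2_x\cap L^\infty_x$ by (\ref{z=0 regularity}). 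The curl-free condition $\nabla\times\partial_xV^\varepsilon=0$ also persists. I then test the $\partial_xH^\varepsilon$ equation with $\partial_xH^\varepsilon$ and the $\partial_xV^\varepsilon$ equation with $\partial_xV^\varepsilon$. For the velocity I work with $\partial_x\tilde\omega$, where $\tilde\omega$ is the corrected vorticity from (\ref{tilde omega}). This avoids the pressure, and $\partial_x\tilde\omega|_{y=0}=0$. Since $\|\partial_x\nabla U^\varepsilon\|_{L^2}\le\|\partial_x\omega^{\varepsilon*}\|_{L^2}$, this controls $\partial_x\nabla U^\varepsilon$ up to the $O(\varepsilon^{1/2})$ corrector. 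The source terms are handled by $\varepsilon^{-1/2}\|\partial_xf^\varepsilon\|+\varepsilon^{-1/2}\|\partial_xg^\varepsilon\|\le C\varepsilon^{1/4}$ from Lemmas \ref{lem f L2}--\ref{lem g L2}. For the vorticity source, $\varepsilon^{-1/2}\|\partial_x\nabla\times h^\varepsilon\|\le C\varepsilon^{1/4}$ from Lemma \ref{lem h L2}. Together these produce the $C\varepsilon^{1/2}$ right-hand side.

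The commutator terms come in two kinds. In the first, $\partial_x$ falls on the approximate profiles $H^\theta,V^\theta,U^\theta$. These are bounded using (\ref{HVU theta L2}) together with $\|\partial_x^2(\cdot)^\theta\|_{L^\infty}\le C$, which follows from Lemmas \ref{lem n0v0u0}--\ref{lem inner layer} and (\ref{z regularity}). Wherever only $\partial_yV^\theta_2$ or $\partial_y\partial_xV^\theta_2$ appears, which is of size $\varepsilon^{-1/2}$, I integrate by parts in $y$. When that is not possible, I write $V^\theta=v^\varepsilon-\varepsilon^{1/2}V^\varepsilon$ as in $M_2$ and use the conormal bounds of Theorem \ref{th nvu regularity}; Hardy's inequality with the weight $\psi$ turns $U^\varepsilon_2$ into $\partial_yU^\varepsilon_2$. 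In the second kind, the terms are lower order and involve $(H^\varepsilon,V^\varepsilon,U^\varepsilon)$ themselves. These are absorbed using the already proven bounds (\ref{HVUL2 varepsilon})--(\ref{HVUL2T varepsilon}). The nonlinear terms carry a factor $\varepsilon^{1/2}$. For them I use the anisotropic inequalities (\ref{G-N inequality}) and split, for example, $\|\partial_xH^\varepsilon\|_{L^4}^2\lesssim\|\partial_xH^\varepsilon\|\,\|\nabla\partial_xH^\varepsilon\|$. This leaves terms absorbed by the dissipations $\|\nabla\partial_xH^\varepsilon\|^2$, $\varepsilon\|\nabla\partial_xV^\varepsilon\|^2$ and $\varepsilon\|\nabla\partial_x\tilde\omega\|^2$, times factors that are integrable in time.

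The step I expect to be the main obstacle is the coupling term $\nabla(U^\varepsilon\cdot V^\theta)$ in the $\partial_xV^\varepsilon$ equation, which has no viscosity to absorb it. After $\partial_x$ it produces $\partial_x^2U^\varepsilon\,V^\theta$ and $\partial_x\partial_yU^\varepsilon\,V^\theta$ paired with $\partial_xV^\varepsilon$. My plan is to keep these at the level $\|\partial_x\nabla U^\varepsilon\|\,\|\partial_xV^\varepsilon\|$, which needs no dissipation for $V$, and to control $\|\partial_x\nabla U^\varepsilon\|$ through the coupled vorticity estimate. The dangerous piece is $\partial_xU^\varepsilon_2\,\partial_y\partial_xV^\theta_2$. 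Here $\partial_yV^\theta_2$ contains $\varepsilon^{-1/2}\partial_zv^{B,0}_2$, and I would use the Hardy trick again: $U^\varepsilon_2$ vanishes to order $\varepsilon^{1/2}$ at $y=0$, so $\partial_xU^\varepsilon_2/y$ is controlled by $\partial_y\partial_xU^\varepsilon_2$, while $y\partial_yV^\theta_2=z\partial_zv^{B,0}_2$ is bounded. The boundary terms from integration by parts are handled as $E_1$ and $E_3$ were, using the trace inequality (\ref{trace theorem}).

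Summing the three estimates, I obtain
\[
\frac{d}{dt}Y+\|\nabla\partial_xH^\varepsilon\|^2+\varepsilon\|\nabla\partial_xV^\varepsilon\|^2+\varepsilon\|\nabla\partial_x\tilde\omega\|^2\le CY+CY^2+C\varepsilon^{1/2}+C\|\nabla H^\varepsilon\|^2,
\]
where $Y=\|\partial_xH^\varepsilon\|^2+\|\partial_xV^\varepsilon\|^2+\|\partial_x\tilde\omega\|^2$ and $Y(0)=0$. By (\ref{HVUL2T varepsilon}), $\int_0^T\|\nabla H^\varepsilon\|^2\le C\varepsilon^{1/2}$, so this last term also contributes at order $\varepsilon^{1/2}$. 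A continuity argument under the bootstrap assumption $Y\le1$, followed by Gronwall, gives $Y\le C\varepsilon^{1/2}$ for $\varepsilon\le\varepsilon_0$. Integrating in time then yields (\ref{partialx L2T HVU varepsilon frac12}). Finally, $\partial_x\nabla U^\varepsilon$ is recovered from $\partial_x\tilde\omega$ through the divergence-free condition, up to the $O(\varepsilon^{1/2})$ boundary corrector, which gives (\ref{partialx HVU varepsilon frac12}).
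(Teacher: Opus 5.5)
Your route is the one the paper indicates: its proof only says the argument is ``similar'' to Lemma~\ref{lem HVU L2} and omits it. As written, however, your plan has a gap that this ``similarity'' hides. In Lemma~\ref{lem HVU L2} no derivative is applied, so no commutator with the $O(1)$ transport field $U^\theta$ appears. After $\partial_x$, you get commutators whose partner factor is a \emph{normal} derivative of the remainder, with no small prefactor.

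\textbf{The two uncontrolled terms.}
\begin{itemize}
\item In the $V$ estimate, $-\partial_x\partial_y(U^\theta\cdot V^\varepsilon)$ tested against $\partial_xV^\varepsilon_2$ contains $\int\partial_xU^\theta_2\,\partial_yV^\varepsilon_2\,\partial_xV^\varepsilon_2\,dxdy$. Every other piece of $\partial_x\nabla(U^\theta\cdot V^\varepsilon)\cdot\partial_xV^\varepsilon$ reduces, via $\partial_yV^\varepsilon_1=\partial_xV^\varepsilon_2$, to $\tfrac12U^\theta\cdot\nabla|\partial_xV^\varepsilon|^2$ plus harmless terms.
\item In the vorticity estimate, $\partial_x(U^\theta\cdot\nabla\tilde\omega)$ tested against $\partial_x\tilde\omega$ contains $\int\partial_xU^\theta_2\,\partial_y\tilde\omega\,\partial_x\tilde\omega\,dxdy$.
\end{itemize}
The profile factor $\partial_xU^\theta_2=\partial_xu^0_2+O(\varepsilon)$ is bounded, but it is $O(1)$ in the interior. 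The only information you have on $\partial_yV^\varepsilon_2$ and $\partial_y\tilde\omega$ is (\ref{HVUL2T varepsilon}), which gives $\int_0^T(\|\partial_yV^\varepsilon\|^2_{L^2_{xy}}+\|\partial_y\nabla U^\varepsilon\|^2_{L^2_{xy}})\,dt\le C\varepsilon^{-1/2}$.

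\textbf{Why neither term can be absorbed.}
\begin{itemize}
\item A Young splitting $\lambda\|\partial_yV^\varepsilon_2\|^2+\lambda^{-1}Y$ leaves either $C\lambda\varepsilon^{-1/2}$ on the right-hand side or a Gronwall factor $e^{CT/\lambda}$.
\item Integrating by parts in $x$ or in $y$ only converts the term into $\int\partial_xU^\theta_2\,V^\varepsilon_2\,\partial_x\partial_yV^\varepsilon_2$, or the analogue with $\tilde\omega$. This is seen only by the $\varepsilon$-weighted dissipation, at the cost $C\varepsilon^{-1}\|V^\varepsilon\|^2_{L^2_{xy}}\sim\varepsilon^{-1/2}$.
\end{itemize}
So the scheme closes at best with $Y\lesssim\varepsilon^{-1/2}$, not $C\varepsilon^{1/2}$. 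Your final differential inequality has $C\|\nabla H^\varepsilon\|^2$ as its only extra forcing. That term correctly covers the analogous, harmless commutator in the $H$ equation, but the inequality simply drops the two terms above.

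\textbf{Why your Hardy device does not help here.} It works for $U^\varepsilon_2\,\partial_x\partial_yV^\theta_2$ and $\partial_xU^\varepsilon_2\,\partial_yV^\theta_2$ because there the \emph{unknown} vanishes at $y=0$. Even then you must first subtract its nonzero $O(\varepsilon^{1/2})$ trace with a cut-off, as in (\ref{tilde omega}), since $\partial_xU^\varepsilon_2|_{y=0}\neq0$. In the two terms above, the vanishing factor is the \emph{profile} $\partial_xu^0_2=O(y)$. Moving the weight onto the remainder produces the conormal derivative $y\partial_yV^\varepsilon_2$ (or $y\partial_y\tilde\omega$), which is not in your energy. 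Away from the boundary there is no vanishing at all.

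\textbf{What is missing.} To close at order $\varepsilon^{1/2}$ you need additional bounds of that order on $\|\psi(y)\partial_yV^\varepsilon_2\|$ and $\|\psi(y)\partial_y\tilde\omega\|$. One possible route is to run the energy estimate with the conormal field $\psi(y)\partial_y$ alongside $\partial_x$, as in Masmoudi--Rousset. Lemma~\ref{lem HVUy L2} does not supply this: it gives only $\|\partial_yV^\varepsilon\|^2\lesssim\varepsilon^{-1/2}$, and it is proved afterwards using the present lemma.
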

	\noindent{\bf{Proof.}} Owing to $x\in(-\infty,\infty)$, it is evident that the proof process of Lemma \ref{lem HVUx L2} is similar to that of Lemma \ref{lem HVU L2}, and we omit it.

    \begin{lemma}\label{lem HVUy L2}
         {Let the assumptions in Lemma $\ref{lem HVU L2}$ and Lemma $\ref{lem HVUx L2}$ hold. Then there exists $\varepsilon_0>0$ such that for any $\varepsilon\in(0,\varepsilon_0]$, the following estimates hold:
		\begin{equation}\label{partialy HVU varepsilon frac12}
        \varepsilon^{\frac{1}{2}}\|\partial_yH^\varepsilon\|^2_{L^\infty_TL^2_{xy}}+\varepsilon^{\frac{1}{2}}\|\partial_yV^\varepsilon\|^2_{L^\infty_TL^2_{xy}}+\varepsilon^{\frac{1}{2}}\|\partial_y\nabla U^\varepsilon\|^2_{L^\infty_TL^2_{xy}}\leq C.\\
		\end{equation}
        Furthermore, there exists a constant $C$ indepenfent of $\varepsilon$ such that
            \begin{equation}\label{partialx L2T HVU varepsilon frac12}
			\begin{split}
            &\varepsilon^{\frac{1}{2}}\|\partial^2_yH^\varepsilon\|^2_{L^2_TL^2_{xy}}+\varepsilon^{\frac{3}{2}}\|\partial^2_yV^\varepsilon\|^2_{L^2_TL^2_{xy}}+\varepsilon^{\frac{3}{2}}\|\partial^2_y\nabla U^\varepsilon\|^2_{L^2_TL^2_{xy}}\leq C.\\
            \end{split}
		\end{equation}
		}
	\end{lemma}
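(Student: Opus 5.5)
We apply $\partial_y$ to the remainder system (\ref{HVU equation}) and run weighted $L^2$ energy estimates, mirroring Lemma \ref{lem HVU L2} and Lemma \ref{lem HVUx L2}. The weight $\varepsilon^{\frac12}$ is dictated by the sources: by Lemmas \ref{lem f L2}--\ref{lem h L2}, $\varepsilon^{-\frac12}\|\partial_y(f^\varepsilon,g^\varepsilon)\|_{L^2_{xy}}\le C\varepsilon^{-\frac14}$, so after Cauchy--Schwarz these terms contribute $C\varepsilon^{-\frac12}$. Moreover $\|\partial_y (H^\theta,V^\theta)\|$-type quantities cost $\varepsilon^{-\frac14}$ in $L^2$ (see (\ref{V theta estimates})) and $\varepsilon^{-\frac12}$ in $L^\infty$ for $\partial_yV^\theta_2$ and $\partial^2_y$ of the profiles. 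We therefore multiply each differentiated equation by $\varepsilon^{\frac12}$ times the differentiated unknown and sum.

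\textbf{Step 1 ($H^\varepsilon$).} Because of the Neumann condition $\partial_yH^\varepsilon|_{y=0}=0$, we use the equation itself instead of differentiating. Testing $(\ref{HVU equation})_1$ against $-\varepsilon^{\frac12}\Delta H^\varepsilon$ gives $\frac{d}{dt}\varepsilon^{\frac12}\|\nabla H^\varepsilon\|^2+\varepsilon^{\frac12}\|\Delta H^\varepsilon\|^2$ without boundary terms. The $\partial_x$-part is already controlled by Lemma \ref{lem HVUx L2}, and elliptic regularity for the Neumann problem yields $\|\partial^2_yH^\varepsilon\|$. The terms $\nabla\cdot(H^\theta V^\varepsilon)$ and $\nabla\cdot(H^\varepsilon V^\theta)$ are bounded using $\|V^\theta\|_{L^\infty}\le C$ and, for $\partial_yV^\theta_2$, the time-integrated bound $\|H^\varepsilon\|_{L^4}$ via Gagliardo--Nirenberg. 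The nonlinear $\varepsilon^{\frac12}$ terms are handled by (\ref{G-N inequality}) together with (\ref{HVUL2 varepsilon}).

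\textbf{Step 2 ($V^\varepsilon$).} Since $\nabla\times V^\varepsilon=0$, we have $\partial_yV^\varepsilon_1=\partial_xV^\varepsilon_2$, so only $\partial_yV^\varepsilon_2$ is genuinely new. We apply $\partial_y$ to $(\ref{HVU equation})_2$ and test with $\varepsilon^{\frac12}\partial_yV^\varepsilon$. The boundary term from $\varepsilon\Delta$ is handled through $\partial_yV^\varepsilon_1|_{y=0}=0$, and, for the second component, by rewriting $\partial_y^2V^\varepsilon_2$ through the equation at $y=0$ (where $V^\varepsilon_2=0$, so $\partial_tV^\varepsilon_2=0$) combined with the trace inequality (\ref{trace theorem}). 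The coupling term $\partial_y\nabla H^\varepsilon$ is absorbed by $\varepsilon^{\frac12}\|\Delta H^\varepsilon\|^2$ from Step 1.

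\textbf{Step 3 ($\nabla U^\varepsilon$).} We estimate $\varepsilon^{\frac12}\|\partial_y\tilde\omega\|^2$ by applying $\partial_y$ to (\ref{tilde omega}), or equivalently testing (\ref{tilde omega}) with $-\varepsilon^{\frac12}\partial_y^2\tilde\omega$, using $\tilde\omega|_{y=0}=0$ so that no boundary term arises from the time derivative. The identity $\|\partial_y\nabla U^\varepsilon\|\lesssim\|\nabla\omega^{\varepsilon*}\|+$ lower order then recovers $\partial_y\nabla U^\varepsilon$. The forcing $\varepsilon^{-\frac12}\partial_y^2h^\varepsilon$ has size $\varepsilon^{-\frac14}$ by Lemma \ref{lem h L2}, and $\partial_y\rho^\varepsilon$ is $O(\varepsilon^{\frac12})$ by (\ref{partialx uB22uB32}).

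\textbf{Step 4 (closing).} Summing Steps 1--3 yields $\frac{d}{dt}Y+D\le CY+C$, where $Y=\varepsilon^{\frac12}(\|\partial_yH^\varepsilon\|^2+\|\partial_yV^\varepsilon\|^2+\|\partial_y\tilde\omega\|^2)$ and $D$ collects the dissipative terms of (\ref{partialx L2T HVU varepsilon frac12}). Gronwall's inequality on $[0,T]$ then gives both estimates.

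\textbf{Main obstacle.} The hardest terms are the nonviscous $\partial_y\nabla(U^\varepsilon\cdot V^\theta)$ and $\partial_y\nabla(U^\theta\cdot V^\varepsilon)$ in the $V^\varepsilon$ equation. They contain $\partial^2_yV^\theta\sim\varepsilon^{-1}$ boundary-layer derivatives, and $V^\varepsilon$ has only $\varepsilon$-weak viscosity to absorb them. Our first attempt is to integrate by parts and use $\partial_yV^\varepsilon_1=\partial_xV^\varepsilon_2$, moving the derivative onto $U^\varepsilon$; this is affordable because $\partial_y\nabla U^\varepsilon$ sits in $Y$. The heavy factor is then treated as in $E_{22_1}$: replace $V^\theta$ by $v^\varepsilon-\varepsilon^{\frac12}V^\varepsilon$, invoke the conormal bounds of Theorem \ref{th nvu regularity}, and apply Hardy's inequality with $\psi$, exploiting that $U^\varepsilon_2$ vanishes at $y=0$ up to $O(\varepsilon^{\frac12})$.
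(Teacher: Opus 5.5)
Your proposal is correct in outline and is essentially the paper's proof. It runs energy estimates for $\partial_yH^\varepsilon$, $\partial_yV^\varepsilon$ and $\partial_y\tilde\omega$, in which the sources from Lemmas \ref{lem f L2}--\ref{lem h L2} and the boundary-layer coefficients cost $O(\varepsilon^{-\frac12})$, and closes by Gronwall; your weight $\varepsilon^{\frac12}$ is only a rescaling of the paper's bound $C\varepsilon^{-\frac12}$.

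The differences are bookkeeping. The paper tests $(\ref{HVU equation})_1$, $(\ref{HVU equation})_2$ and $(\ref{tilde omega})$ directly against $-\partial_y^2(\cdot)$. This avoids the boundary term from $\varepsilon\Delta V^\varepsilon$ that you handle through the equation at $y=0$, and it leaves only $\partial_yV^\theta$ in the heavy term, with the resulting $\partial_y^2V^\varepsilon$ absorbed by $\varepsilon\|\partial_y^2V^\varepsilon\|^2_{L^2_{xy}}$. In your differentiated version, it is cleaner to bound $\|\psi\partial_y^2V^\theta_2\|_{L^\infty_{xy}}\le C\varepsilon^{-\frac12}$ directly from Lemma \ref{lem inner layer} and combine it with Hardy's inequality. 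Do not pass through $v^\varepsilon$ and Theorem \ref{th nvu regularity}: that theorem gives no $L^\infty$ control of $\psi\nabla^2 v^\varepsilon$.
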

	\noindent{\bf{Proof.}}
    \textbf{Step 1. The \(L^2\) estimate of $\partial_yH^\varepsilon$.}
    Multiplying $(\ref{HVU equation})_1$ by $-\partial^2_yH^\varepsilon$, we obtain

        \begin{align*}
			&\frac{1}{2}\frac{d}{dt}\|\partial_yH^\varepsilon\|^2_{L^2_{xy}}+\|\partial_x\partial_y H^\varepsilon\|^2_{L^2_{xy}}+\|\partial^2_y H^\varepsilon\|^2_{L^2_{xy}}\\
            &=\varepsilon^\frac{1}{2}\Big{[}\int^\infty_0\int^\infty_{-\infty}U^\varepsilon\cdot\nabla H^\varepsilon\partial^2_y H^\varepsilon dxdy-\int^\infty_0\int^\infty_{-\infty}(V^\varepsilon \cdot\nabla H^\varepsilon +H^\varepsilon\nabla\cdot V^\varepsilon)\partial^2_yH^\varepsilon dxdy\Big{]}\\
            &\quad+\Big{[}\int^\infty_0\int^\infty_{-\infty}U^\varepsilon\cdot\nabla H^\theta\partial^2_y H^\varepsilon dxdy+\int^\infty_0\int^\infty_{-\infty}U^\theta\cdot\nabla H^\varepsilon\partial^2_y H^\varepsilon dxdy\Big{]}\\
            &\quad+\Big{[}-\int^\infty_0\int^\infty_{-\infty} (H^\varepsilon\nabla\cdot V^\theta+V^\theta\cdot\nabla H^\varepsilon)\partial^2_yH^\varepsilon dxdy\\
            &\quad-\int^\infty_0\int^\infty_{-\infty}(V^\varepsilon\cdot\nabla H^\theta+H^\theta\nabla\cdot V^\varepsilon)\partial^2_y H^\varepsilon dxdy\Big{]}-\varepsilon^{-\frac{1}{2}}\int^\infty_0\int^\infty_{-\infty}f^\varepsilon\partial^2_y H^\varepsilon dxdy\\
            &=\mathbb{D}_1+\mathbb{D}_2+\mathbb{D}_3+\mathbb{D}_4.
	\end{align*}
	Using the Gagliardo-Nirenberg interpolation inequality, we can get that
	\begin{align*}
        \mathbb{D}_1&=\varepsilon^\frac{1}{2}\int^\infty_0\int^\infty_{-\infty}(U^\varepsilon_1\partial_x H^\varepsilon+U^\varepsilon_2\partial_y H^\varepsilon-V^\varepsilon_1\partial_x H^\varepsilon-V^\varepsilon_2\partial_y H^\varepsilon- H^\varepsilon\partial_xV^\varepsilon_1-H^\varepsilon\partial_yV^\varepsilon_2)\partial^2_yH^\varepsilon dxdy\\
        &\leq\varepsilon^\frac{1}{2}(\|U^\varepsilon_1\|_{L^\infty_{x}L^2_{y}}\|\partial_x H^\varepsilon\|_{L^2_{x}L^\infty_{y}}+\|U^\varepsilon_2\|_{L^2_{x}L^\infty_{y}}\|\partial_y H^\varepsilon\|_{L^\infty_{x}L^2_{y}}+\|V^\varepsilon_1\|_{L^\infty_{x}L^2_{y}}\|\partial_x H^\varepsilon\|_{L^2_{x}L^\infty_{y}}\\
        &\quad+\|V^\varepsilon_2\|_{L^2_{x}L^\infty_{y}}\|\partial_y H^\varepsilon\|_{L^\infty_{x}L^2_{y}}+\|H^\varepsilon\|_{L^\infty_{x}L^2_{y}}\|\partial_x V^\varepsilon_1\|_{L^2_{x}L^\infty_{y}}+\|H^\varepsilon\|_{L^2_{x}L^\infty_{y}}\|\partial_y V^\varepsilon_2\|_{L^\infty_{x}L^2_{y}})\|\partial^2_yH^\varepsilon\|_{L^2_{xy}}\\
        &\leq \frac{1}{4}\|\partial_x\partial_y H^\varepsilon\|^2_{L^2_{xy}}+\frac{1}{8}\|\partial^2_y H^\varepsilon\|^2_{L^2_{xy}}+\frac{1}{4}\varepsilon\|\partial_x\partial_y V^\varepsilon\|^2_{L^2_{xy}}+C(1+\|\partial_y V^\varepsilon\|^2_{L^2_{xy}})\|\partial_yH^\varepsilon\|^2_{L^2_{xy}}+C\varepsilon^\frac{3}{2}.
	\end{align*}
         Using $(\ref{HVU theta L2})$, we obtain
	\begin{align*}
        \mathbb{D}_2&=\int^\infty_0\int^\infty_{-\infty}(U^\varepsilon_1\partial_x H^\theta+U^\varepsilon_2\partial_y H^\theta+U^\theta_1\partial_x H^\varepsilon+U^\theta_2\partial_y H^\varepsilon)\partial^2_yH^\varepsilon dxdy\\
        &\leq(\|U^\varepsilon_1\|_{L^2_{xy}}\|\partial_x H^\theta\|_{L^\infty_{xy}}+\|U^\varepsilon_2\|_{L^2_{xy}}\|\partial_y H^\theta\|_{L^\infty_{xy}}+\|U^\theta_1\|_{L^\infty_{xy}}\|\partial_x H^\varepsilon\|_{L^2_{xy}}\\
        &\quad+\|U^\theta_2\|_{L^\infty_{xy}}\|\partial_y H^\varepsilon\|_{L^2_{xy}})\|\partial^2_yH^\varepsilon\|_{L^2_{xy}}\\
        &\leq \frac{1}{8}\|\partial^2_y H^\varepsilon\|^2_{L^2_{xy}}+C\|\partial_yH^\varepsilon\|^2_{L^2_{xy}}+C\varepsilon^\frac{1}{2}.
	\end{align*}
   Using embedding inequalities for $\mathbb{D}_{3}$,
    \begin{align*}
        \mathbb{D}_{3}&=-\int^\infty_0\int^\infty_{-\infty}H^\varepsilon\partial_xV^\theta_1+H^\varepsilon\partial_yV^\theta_2+V^\theta_1\partial_x H^\varepsilon+V^\theta_2\partial_y H^\varepsilon\\
        &\quad+V^\varepsilon_1\partial_x H^\theta+V^\varepsilon_2\partial_y H^\theta+ H^\theta\partial_xV^\varepsilon_1+H^\theta\partial_yV^\varepsilon_2)\partial^2_yH^\varepsilon dxdy\\
        &\leq( \|H^\varepsilon\|_{L^2_{xy}}\|\partial_xV^\theta_1\|_{L^\infty_{xy}}+\|H^\varepsilon\|_{L^2_{xy}}\|\partial_yV^\theta_2\|_{L^\infty_{xy}}+\|V^\theta_1\|_{L^\infty_{xy}}\|\partial_x H^\varepsilon\|_{L^2_{xy}}\\
        &\quad+\|V^\theta_2\|_{L^\infty_{xy}}\|\partial_y H^\varepsilon\|_{L^2_{xy}}+\|V^\varepsilon_1\|_{L^2_{xy}}\|\partial_x H^\theta\|_{L^\infty_{xy}}+\|V^\varepsilon_2\|_{L^2_{xy}}\|\partial_y H^\theta\|_{L^\infty_{xy}}\\
        &\quad+\|H^\theta\|_{L^\infty_{xy}}\|\partial_xV^\varepsilon_1\|_{L^2_{xy}}+\|H^\theta\|_{L^\infty_{xy}}\|\partial_yV^\varepsilon_2\|_{L^2_{xy}})\|\partial^2_yH^\varepsilon\|_{L^2_{xy}}\\
        &\leq \frac{1}{8}\|\partial^2_y H^\varepsilon\|^2_{L^2_{xy}}+C\|\partial_yH^\varepsilon\|^2_{L^2_{xy}}+C\|\partial_yV^\varepsilon\|^2_{L^2_{xy}}+C\varepsilon^{-\frac{1}{2}}.
	\end{align*}
	 Combining Lemma $\ref{lem f L2}$, we can collect items with similar estimates
	\begin{align*}
        \mathbb{D}_4&\leq\varepsilon^{-\frac{1}{2}}\|f^\varepsilon\|_{L^2_{xy}}\|\partial^2_yH^\varepsilon\|_{L^2_{xy}}\leq \frac{1}{8}\|\partial^2_yH^\varepsilon\|^2_{L^2_{xy}}+C\varepsilon^\frac{1}{2}.
	\end{align*}
    {Obviously,}
    \begin{align*}
			&\frac{d}{dt}\|\partial_yH^\varepsilon\|^2_{L^2_{xy}}+\|\partial_x\partial_y H^\varepsilon\|^2_{L^2_{xy}}+\|\partial^2_y H^\varepsilon\|^2_{L^2_{xy}}\\
            &\leq \frac{1}{4}\|\partial_x\partial_y H^\varepsilon\|^2_{L^2_{xy}}+\frac{1}{2}\|\partial^2_y H^\varepsilon\|^2_{L^2_{xy}}+\frac{1}{4}\varepsilon\|\partial_x\partial_y V^\varepsilon\|^2_{L^2_{xy}}\\
            &\quad+C(1+\|\partial_y H^\varepsilon\|^2_{L^2_{xy}})(1+\|\partial_y V^\varepsilon\|^2_{L^2_{xy}})+C\varepsilon^{-\frac{1}{2}}.
	\end{align*}
    \textbf{Step 2. The \(L^2\) estimate of $\partial_yV^\varepsilon$.}
      Multiplying $(\ref{HVU equation})_2$ by $-\partial^2_yV^\varepsilon$, we can obtain
	\begin{align*}
			&\frac{1}{2}\frac{d}{dt}\|\partial_yV^\varepsilon\|^2_{L^2_{xy}}+\varepsilon\|\partial_x\partial_y V^\varepsilon\|^2_{L^2_{xy}}+\varepsilon\|\partial^2_y V^\varepsilon\|^2_{L^2_{xy}}\\
            &=\Big{[}\varepsilon^\frac{1}{2}\int^\infty_0\int^\infty_{-\infty}\partial_x(U^\varepsilon\cdot V^\varepsilon)\partial^2_y V^\varepsilon_1 dxdy+\varepsilon^\frac{1}{2}\int^\infty_0\int^\infty_{-\infty}\partial_y(U^\varepsilon\cdot V^\varepsilon)\partial^2_y V^\varepsilon_2 dxdy\Big{]}\\
            &\quad+\Big{[}\int^\infty_0\int^\infty_{-\infty}\partial_x(U^\varepsilon\cdot V^\theta)\partial^2_yV^\varepsilon_1 dxdy+\int^\infty_0\int^\infty_{-\infty}\partial_y(U^\varepsilon\cdot V^\theta)\partial^2_yV^\varepsilon_2 dxdy\Big{]}\\
            &\quad+\Big{[}\int^\infty_0\int^\infty_{-\infty}\partial_x(U^\theta\cdot V^\varepsilon)\partial^2_yV^\varepsilon_1 dxdy+\int^\infty_0\int^\infty_{-\infty}\partial_y(U^\theta\cdot V^\varepsilon)\partial^2_yV^\varepsilon_2 dxdy\Big{]}\\
            &\quad+\Big{[}2\varepsilon^\frac{3}{2}\int^\infty_0\int^\infty_{-\infty}(\partial_xV^\varepsilon\cdot V^\varepsilon)\partial^2_yV^\varepsilon_1 dxdy+2\varepsilon^\frac{3}{2}\int^\infty_0\int^\infty_{-\infty}(\partial_yV^\varepsilon\cdot V^\varepsilon)\partial^2_yV^\varepsilon_2 dxdy\Big{]}\\
            &\quad+\Big{[}2\varepsilon\int^\infty_0\int^\infty_{-\infty}\partial_x(V^\varepsilon\cdot V^\theta)\partial^2_yV^\varepsilon_1 dxdy+2\varepsilon\int^\infty_0\int^\infty_{-\infty}\partial_y(V^\varepsilon\cdot V^\theta)\partial^2_yV^\varepsilon_2 dxdy\Big{]}\\
            &\quad-\Big{[}\int^\infty_0\int^\infty_{-\infty}\partial_x H^\varepsilon \partial^2_yV^\varepsilon_1 dxdy+\int^\infty_0\int^\infty_{-\infty}\partial_y H^\varepsilon\partial^2_yV^\varepsilon_2 dxdy\Big{]}-\varepsilon^{-\frac{1}{2}}\int^\infty_0\int^\infty_{-\infty}g^\varepsilon\cdot\partial^2_yV^\varepsilon dxdy\\
            &=\mathbb{M}_1+\mathbb{M}_2+\mathbb{M}_3+\mathbb{M}_4+\mathbb{M}_5+\mathbb{M}_6+\mathbb{M}_7.
	\end{align*}
        Using the Gagliardo-Nirenberg interpolation inequality, we can {get} that
	\begin{align*}
       \mathbb{M}_1&=\varepsilon^\frac{1}{2}\int^\infty_0\int^\infty_{-\infty}[\partial_x(U^\varepsilon_1V^\varepsilon_1+U^\varepsilon_2V^\varepsilon_2)\partial^2_yV^\varepsilon_1+\partial_y(U^\varepsilon_1V^\varepsilon_1+U^\varepsilon_2V^\varepsilon_2)\partial^2_yV^\varepsilon_2]dxdy\\
       &\leq\varepsilon^{\frac{1}{2}}(\|\partial_x U^\varepsilon_1\|_{L^2_{xy}}\|V^\varepsilon_1\|_{L^\infty_{xy}}+\|U^\varepsilon_1\|_{L^\infty
       _{x}L^2_{y}}\|\partial_x V^\varepsilon_1\|_{L^2_{x}L^\infty_{y}}+\|\partial_x U^\varepsilon_2\|_{L^2_{xy}}\|V^\varepsilon_2\|_{L^\infty_{xy}}\\
       &\quad+\|U^\varepsilon_2\|_{L^\infty
       _{x}L^2_{y}}\|\partial_x V^\varepsilon_2\|_{L^2_{x}L^\infty_{y}})\|\partial^2_y V^\varepsilon_1\|_{L^2_{xy}}+\varepsilon^\frac{1}{2}(\|\partial_y U^\varepsilon_1\|_{L^2_{xy}}\|V^\varepsilon_1\|_{L^\infty_{xy}}+\|U^\varepsilon_1\|_{L^2_{x}L^\infty_{y}}\|\partial_y V^\varepsilon_1\|_{L^\infty
       _{x}L^2_{y}}\\
       &\quad+\|\partial_y U^\varepsilon_2\|_{L^2_{xy}}\|V^\varepsilon_2\|_{L^\infty_{xy}}+\|U^\varepsilon_2\|_{L^2_{x}L^\infty_{y}}\|\partial_y V^\varepsilon_2\|_{L^\infty
       _{x}L^2_{y}})\|\partial^2_y V^\varepsilon_2\|_{L^2_{xy}}\\
        &\leq\frac{1}{6}\varepsilon\|\partial_x\partial_y V^\varepsilon\|^2_{L^2_{xy}}+\frac{1}{12}\varepsilon\|\partial^2_y V^\varepsilon\|^2_{L^2_{xy}}+C\|\partial_y V^\varepsilon\|^2_{L^2_{xy}}+C\varepsilon^{\frac{1}{2}}.
	\end{align*}
         Combining Theorem \ref{th nvu regularity}, $V^\theta=v^\varepsilon-\varepsilon^\frac{1}{2}V^\varepsilon$ in $(\ref{HVUK varepsilon})$, $(\ref{HVU theta L2})$ and Lemma \ref{lem HVU L2}, we obtain
	\begin{align*}
        \mathbb{M}_2&=\int^\infty_0\int^\infty_{-\infty}[\partial_x(U^\varepsilon_1V^\theta_1+U^\varepsilon_2V^\theta_2)\partial^2_yV^\varepsilon_1+\partial_y(U^\varepsilon_1V^\theta_1+U^\varepsilon_2V^\theta_2)\partial^2_yV^\varepsilon_2]dxdy\\
        &=\int^\infty_0\int^\infty_{-\infty}(\partial_xU^\varepsilon_1V^\theta_1+U^\varepsilon_1\partial_xV^\theta_1+\partial_xU^\varepsilon_2V^\theta_2+U^\varepsilon_2\partial_xV^\theta_2)\partial^2_yV^\varepsilon_1\\
        &\quad+[\partial_yU^\varepsilon_1V^\theta_1+U^\varepsilon_1\partial_yV^\theta_1+\partial_yU^\varepsilon_2V^\theta_2+U^\varepsilon_2\partial_y(v^\varepsilon_2-\varepsilon^\frac{1}{2}V^\varepsilon_2)]\partial^2_yV^\varepsilon_2dxdy\\
       &\leq\varepsilon^{-\frac{1}{2}}(\|\partial_x U^\varepsilon_1\|_{L^2_{xy}}\|V^\theta_1\|_{L^\infty_{xy}}+\| U^\varepsilon_1\|_{L^2_{xy}}\|\partial_xV^\theta_1\|_{L^\infty_{xy}}\\
       &\quad+\|\partial_x U^\varepsilon_2\|_{L^2_{xy}}\|V^\theta_2\|_{L^\infty_{xy}}+\| U^\varepsilon_2\|_{L^2_{xy}}\|\partial_xV^\theta_2\|_{L^\infty_{xy}})\varepsilon^{\frac{1}{2}}\|\partial^2_yV^\varepsilon_1\|^2_{L^2_{xy}}\\
       &\quad+\varepsilon^{-\frac{1}{2}}(\|\partial_y U^\varepsilon_1\|_{L^2_{xy}}\|V^\theta_1\|_{L^\infty_{xy}}+\| U^\varepsilon_1\|_{L^2_{xy}}\|\partial_yV^\theta_1\|_{L^\infty_{xy}}+\|\partial_y U^\varepsilon_2\|_{L^2_{xy}}\|V^\theta_2\|_{L^\infty_{xy}})\varepsilon^{\frac{1}{2}}\|\partial^2_yV^\varepsilon_2\|^2_{L^2_{xy}}\\
       &\quad+\varepsilon^{-\frac{1}{2}}\| U^\varepsilon_2\|_{L^2_{xy}}\|\partial_yv^\varepsilon_2\|_{L^\infty_{xy}}\varepsilon^{\frac{1}{2}}\|\partial^2_yV^\varepsilon_2\|_{L^2_{xy}}+\varepsilon^\frac{1}{2}\|U^\varepsilon_2\|_{L^2_{x}L^\infty_{y}}\|\partial_y V^\varepsilon_2\|_{L^\infty
       _{x}L^2_{y}}\|\partial^2_y V^\varepsilon_2\|_{L^2_{xy}}\\
        &\leq\frac{1}{6}\varepsilon\|\partial_x\partial_y V^\varepsilon\|^2_{L^2_{xy}}+\frac{1}{12}\varepsilon\|\partial^2_y V^\varepsilon\|^2_{L^2_{xy}}+C\|\partial_y V^\varepsilon\|^2_{L^2_{xy}}+C\varepsilon^{-\frac{1}{2}}.
	\end{align*}
    By $(\ref{HVU theta L2})$ and $U^\theta_2(t,x,0)=\varepsilon u^{B,2}_2(t,x,0)+\varepsilon^\frac{3}{2} u^{B,3}_2(t,x,0)$ we have
    \begin{align*}
        \mathbb{M}_3&=\int^\infty_0\int^\infty_{-\infty}[\partial_x(U^\theta_1V^\varepsilon_1+U^\theta_2V^\varepsilon_2)\partial^2_yV^\varepsilon_1+\partial_y(U^\theta_1V^\varepsilon_1+U^\theta_2V^\varepsilon_2)\partial^2_yV^\varepsilon_2]dxdy\\
       &\leq\varepsilon^{-\frac{1}{2}}(\|\partial_x U^\theta_1\|_{L^\infty_{xy}}\|V^\varepsilon_1\|_{L^2_{xy}}+\| U^\theta_1\|_{L^\infty_{xy}}\|\partial_xV^\varepsilon_1\|_{L^2_{xy}}\\
       &\quad+\|\partial_x U^\theta_2\|_{L^\infty_{xy}}\|V^\varepsilon_2\|_{L^2_{xy}}+\| U^\theta_2\|_{L^\infty_{xy}}\|\partial_xV^\varepsilon_2\|_{L^2_{xy}})\varepsilon^\frac{1}{2}\|\partial^2_yV^\varepsilon_1\|_{L^2_{xy}}\\
       &\quad+\varepsilon^{-\frac{1}{2}}(\|\partial_y U^\theta_1\|_{L^\infty_{xy}}\|V^\varepsilon_1\|_{L^2_{xy}}+\| U^\theta_1\|_{L^\infty_{xy}}\|\partial_xV^\varepsilon_2\|_{L^2_{xy}}+\|\partial_y U^\theta_2\|_{L^\infty_{xy}}\|V^\varepsilon_2\|_{L^2_{xy}})\varepsilon^\frac{1}{2}\|\partial^2_yV^\varepsilon_2\|_{L^2_{xy}}\\
       &\quad+\frac{1}{2}\|U^\theta_2(t,x,0)\|_{L^\infty_{xy}}\|\partial_yV^\varepsilon_2(t,x,0)\|^2_{L^2_{xy}}+\frac{1}{2}\|\partial_y U^\theta_2\|_{L^\infty_{xy}}\|\partial_yV^\varepsilon_2\|^2_{L^2_{xy}}\\
        &\leq\frac{1}{12}\varepsilon\|\partial^2_y V^\varepsilon\|^2_{L^2_{xy}}+C\|\partial_y V^\varepsilon\|^2_{L^2_{xy}}+C\varepsilon^{-\frac{1}{2}}.
		\end{align*}
	 Using the Sobolev embedding inequality, we can get that
	\begin{equation*}
		\begin{split}{}
        \mathbb{M}_4&=\int^\infty_0\int^\infty_{-\infty}2\varepsilon^\frac{3}{2}[(\partial_xV^\varepsilon_1V^\varepsilon_1+\partial_xV^\varepsilon_2V^\varepsilon_2)\partial^2_yV^\varepsilon_1+(\partial_yV^\varepsilon_1V^\varepsilon_1+\partial_yV^\varepsilon_2V^\varepsilon_2)\partial^2_yV^\varepsilon_2]dxdy\\
      &\leq\frac{1}{6}\varepsilon\|\partial_x\partial_y V^\varepsilon\|^2_{L^2_{xy}}+\frac{1}{12}\varepsilon\|\partial^2_y V^\varepsilon\|^2_{L^2_{xy}}+C\|\partial_yV^\varepsilon\|^4_{L^2_{xy}}+C\varepsilon.\\
		\end{split}
	\end{equation*}
    And
    \begin{equation*}
		\begin{split}{}
        \mathbb{M}_5&=-\int^\infty_0\int^\infty_{-\infty}2\varepsilon[\partial_x(V^\varepsilon_1V^\theta_1+V^\varepsilon_2V^\theta_2)\partial^2_yV^\varepsilon_1+\partial_y(V^\varepsilon_1V^\theta_1+V^\varepsilon_2V^\theta_2)\partial^2_yV^\varepsilon_2]dxdy\\
        &\leq\frac{1}{12}\varepsilon\|\partial^2_y V^\varepsilon\|^2_{L^2_{xy}}+C\|\partial_yV^\varepsilon\|^2_{L^2_{xy}}+C\varepsilon.\\
		\end{split}
	\end{equation*}
    Using integrating by parts, we have
    \begin{equation*}
		\begin{split}{}
        \mathbb{M}_6&\leq\|\partial_x\partial_y H^\varepsilon\|_{L^2_{xy}}\|\partial_y V^\varepsilon_1\|_{L^2_{xy}}+\|\partial^2_y H^\varepsilon\|_{L^2_{xy}}\|\partial_y V^\varepsilon_2\|_{L^2_{xy}}\\
        &\leq\frac{1}{4}\|\partial_x\partial_y H^\varepsilon\|^2_{L^2_{xy}}+\frac{1}{8}\|\partial^2_y H^\varepsilon\|^2_{L^2_{xy}}+C\|\partial_yV^\varepsilon\|^2_{L^2_{xy}}.
		\end{split}
	\end{equation*}
    By Lemma \ref{lem g L2}, we have
    \begin{equation*}
		\begin{split}{}
        \mathbb{M}_7&\leq\varepsilon^{-\frac{1}{2}}\|g^\varepsilon\|_{L^2_{xy}}\|\partial^2_y V^\varepsilon\|_{L^2_{xy}}\leq\frac{1}{12} \varepsilon\|\partial^2_yV^\varepsilon\|^2_{L^2_{xy}}+C\varepsilon^{-\frac{1}{2}}.
		\end{split}
	\end{equation*}
    Obviously,
    \begin{align*}
			&\frac{d}{dt}\|V^\varepsilon\|^2_{L^2_{xy}}+\varepsilon\|\partial_x\partial_y V^\varepsilon\|^2_{L^2_{xy}}+\varepsilon\|\partial^2_y V^\varepsilon\|^2_{L^2_{xy}}\\
            &\leq \frac{1}{4}\|\partial_x\partial_y H^\varepsilon\|_{L^2_{xy}}+\frac{1}{8}\|\partial^2_y H^\varepsilon\|^2_{L^2_{xy}}+\frac{1}{2}\varepsilon\|\partial_x\partial_y V^\varepsilon\|^2_{L^2_{xy}}+\frac{1}{2}\varepsilon\|\partial^2_y V^\varepsilon\|^2_{L^2_{xy}}+C\|\partial_yV^\varepsilon\|^2_{L^2_{xy}}+C\varepsilon^{-\frac{1}{2}}.
	\end{align*}
    \textbf{Step 3. The \(L^2\) estimate of $\partial_y\tilde{\omega}$.}
    Multiplying $(\ref{tilde omega})_1$ by $-\partial^2_y\tilde{\omega}$, we can obtain
       \begin{align*}
        &\frac{1}{2}\frac{d}{dt}\|\partial_y\tilde{\omega}\|^2_{L^2_{xy}}+\varepsilon\|\partial_x\partial_y \tilde{\omega}\|^2_{L^2_{xy}}+\varepsilon\|\partial^2_y \tilde{\omega}\|^2_{L^2_{xy}}\\
        &=\varepsilon^\frac{1}{2}\int^\infty_0\int^\infty_{-\infty}U^\varepsilon\cdot\nabla \tilde{\omega}\partial^2_y\tilde{\omega} dxdy\\
        &\quad+\int^\infty_0\int^\infty_{-\infty}\nabla\times(U^\varepsilon\cdot\nabla U^\theta)\partial^2_y\tilde{\omega} dxdy+\int^\infty_0\int^\infty_{-\infty}U^\theta\cdot\nabla \tilde{\omega}\partial^2_y\tilde{\omega} dxdy\\
        &\quad
        +\int^\infty_0\int^\infty_{-\infty}(\partial_xU^\theta_1\partial_xU^\varepsilon_2+\partial_xU^\theta_2\partial_yU^\varepsilon_2-\partial_yU^\theta_1\partial_xU^\varepsilon_1-\partial_yU^\theta_2\partial_yU^\varepsilon_1)\partial^2_y\tilde{\omega} dxdy\\
        &\quad -\varepsilon^\frac{1}{2}\int^\infty_0\int^\infty_{-\infty}U^\varepsilon_1\partial_x[(\varepsilon^\frac{1}{2}\partial_xu^{B,2}_2+\varepsilon\partial_xu^{B,3}_2)(t,x,0)]\varphi(y)\partial^2_y\tilde{\omega}dxdy\\
        &\quad-\varepsilon^\frac{1}{2}\int^\infty_0\int^\infty_{-\infty}U^\varepsilon_2[(\varepsilon^\frac{1}{2}\partial_xu^{B,2}_2+\varepsilon\partial_xu^{B,3}_2)(t,x,0)]\partial_y\varphi(y)\partial^2_y\tilde{\omega}dxdy\\
        &\quad-\int^\infty_0\int^\infty_{-\infty}\partial_xH^\varepsilon\partial^2_y\tilde{\omega} dxdy-\varepsilon^{-\frac{1}{2}}\int^\infty_0\int^\infty_{-\infty}(\nabla\times h^\varepsilon)\partial^2_y\tilde{\omega}dxdy-\int^\infty_0\int^\infty_{-\infty}\rho^\varepsilon\partial^2_y\tilde{\omega} dxdy dxdy\\
        &=\sum^{9}_{i=1}\mathbb{E}_i.
	\end{align*}
    It can be verified that
\begin{equation*}
		\begin{split}{}
       \mathbb{E}_1&=\varepsilon^\frac{1}{2}\int^\infty_0\int^\infty_{-\infty}U^\varepsilon_1\partial_x\tilde{\omega}\partial^2_y\tilde{\omega} dxdy+\varepsilon^\frac{1}{2}\int^\infty_0\int^\infty_{-\infty}U^\varepsilon_2\partial_y\tilde{\omega}\partial^2_y\tilde{\omega} dxdy\\
       &=-\varepsilon^\frac{1}{2}\int^\infty_0\int^\infty_{-\infty}\partial_yU^\varepsilon_1\partial_x\tilde{\omega}\partial_y\tilde{\omega} dxdy-\varepsilon^\frac{1}{2}\int^\infty_0\int^\infty_{-\infty}U^\varepsilon_1\partial_x\partial_y\tilde{\omega}\partial_y\tilde{\omega} dxdy\\
       &\quad-\varepsilon^\frac{1}{2}\int^\infty_0\int^\infty_{-\infty}U^\varepsilon_2\partial_y\tilde{\omega}\partial^2_y\tilde{\omega} dxdy\\
            &\leq \varepsilon^\frac{1}{2}\|\partial_yU^\varepsilon_1\|_{L^2_{xy}}\|\partial_x\tilde{\omega}\|_{L^2_xL^\infty_y}\|\partial_y\tilde{\omega}\|_{L^\infty_xL^2_y}+\varepsilon^\frac{1}{2}\|U^\varepsilon_1\|_{L^2_xL^\infty_y}\|\partial_x\partial_y\tilde{\omega}\|_{L^2_{xy}}\|\partial_y\tilde{\omega}\|_{L^\infty_xL^2_y}\\
            &\quad+\varepsilon^\frac{1}{2}\|U^\varepsilon_2\|_{L^2_xL^\infty_y}\|\partial_y\tilde{\omega}\|_{L^\infty_xL^2_y}\|\partial^2_y\tilde{\omega}\|_{L^2_{xy}}\\
            &\leq \frac{1}{2}\varepsilon\|\partial_x\partial_y \tilde{\omega}\|^2_{L^2_{xy}}+\frac{1}{16}\varepsilon\|\partial^2_y \tilde{\omega}\|^2_{L^2_{xy}}+C\|\partial_y \tilde{\omega}\|^2_{L^2_{xy}}+C\varepsilon.
		\end{split}
	\end{equation*}

    For the term $\mathbb{E}_2$, we obtain
	\begin{align*}
       \mathbb{E}_2&=-\int^\infty_0\int^\infty_{-\infty}(\partial_y U^\varepsilon_1\partial_x U^\theta_1+ U^\varepsilon_1\partial_x\partial_y U^\theta_1+\partial_y U^\varepsilon_2\partial_y U^\theta_1-\partial_x U^\varepsilon_1\partial_x U^\theta_2\\
       &\quad-U^\varepsilon_1\partial^2_x U^\theta_2-\partial_x U^\varepsilon_2\partial_y U^\theta_2-U^\varepsilon_2\partial_x\partial_y U^\theta_2)\partial^2_y\tilde{\omega}dxdy-\int^\infty_0\int^\infty_{-\infty}U^\varepsilon_2\partial^2_yU^\theta_1\partial^2_y\tilde{\omega}dxdy\\
       &=\mathbb{E}_{21}+\mathbb{E}_{22}.
	\end{align*}
    By the estimates in $(\ref{HVU theta L2})$, we can get that
    \begin{align*}
       \mathbb{E}_{21}&\leq\varepsilon^{-\frac{1}{2}}(\|\partial_y U^\varepsilon_1\|_{L^2_{xy}}\|\partial_x U^\theta_1\|_{L^\infty_{xy}}+\| U^\varepsilon_1\|_{L^2_{xy}}\|\partial_x \partial_yU^\theta_1\|_{L^\infty_{xy}}+\|\partial_y U^\varepsilon_2\|_{L^2_{xy}}\|\partial_y U^\theta_1\|_{L^\infty_{xy}}\\
            &\quad+\|\partial_x U^\varepsilon_1\|_{L^2_{xy}}\|\partial_x U^\theta_2\|_{L^\infty_{xy}}+\| U^\varepsilon_1\|_{L^2_{xy}}\|\partial^2_x U^\theta_1\|_{L^\infty_{xy}}+\|\partial_x U^\varepsilon_2\|_{L^2_{xy}}\|\partial_y U^\theta_2\|_{L^\infty_{xy}}\\
            &\quad+\|U^\varepsilon_2\|_{L^2_{xy}}\|\partial_x\partial_y U^\theta_2\|_{L^\infty_{xy}})\varepsilon^\frac{1}{2}\|\partial^2_y\tilde{\omega}\|_{L^2_{xy}}\\
            &\leq \frac{1}{48}\varepsilon\|\partial^2_y\tilde{\omega}\|_{L^2_{xy}}+C\varepsilon^{-\frac{1}{2}}.
	\end{align*}
    It can be obtained by the definition of $U^\theta$ in $(\ref{HVUK varepsilon})$ and integration by parts
    \begin{equation*}
		\begin{split}{}
       \mathbb{E}_{22}&=-\int^\infty_0\int^\infty_{-\infty}U^\varepsilon_2(\partial^2_y u^\varepsilon_1-\varepsilon^\frac{1}{2}\partial^2_y U^\varepsilon_1)\partial^2_y\tilde{\omega}dxdy\\
       &=-\int^\infty_0\int^\infty_{-\infty}U^\varepsilon_2\partial^2_y u^\varepsilon_1\partial^2_y\tilde{\omega}dxdy+\varepsilon^\frac{1}{2}\int^\infty_0\int^\infty_{-\infty}U^\varepsilon_2\partial^2_y U^\varepsilon_1\partial^2_y\tilde{\omega}dxdy\\
       &=\mathbb{E}_{22_1}+\mathbb{E}_{22_2}.
		\end{split}
	\end{equation*}
    From Theorem $\ref{th nvu regularity}$ and Hardy inequality, we can get that
    \begin{equation*}
		\begin{split}{}
       \mathbb{E}_{22_1}&=\int^\infty_0\int^\infty_{-\infty}\frac{U^\varepsilon_2}{\psi} \psi\partial^2_y u^\varepsilon_1\partial^2_y\tilde{\omega}dxdy\leq C\varepsilon^{-\frac{1}{2}}(\|\partial_y U^\varepsilon_2\|_{L^2_{xy}}+\|U^\varepsilon_2\|_{L^2_{xy}})\|\partial_y u^\varepsilon_1\|_{W^{1,\infty}_{xy}}\varepsilon^{\frac{1}{2}}\|\partial^2_y\tilde{\omega}\|_{L^2_{xy}}\\&\leq \frac{1}{48}\varepsilon\|\partial^2_y\tilde{\omega}\|^2_{L^2_{xy}}+C\varepsilon^{-\frac{1}{2}}.\\
		\end{split}
	\end{equation*}
    By the divergence condition, the definition of $\tilde{\omega}$ into $\mathbb{E}_{22_2}$, and using $\omega^{\varepsilon*}=\partial_xU^\varepsilon_2-\partial_yU^\varepsilon_1$, we obtain
    \begin{equation*}
		\begin{split}{}
       \mathbb{E}_{22_2}&\leq\varepsilon^\frac{1}{2}\|U^\varepsilon_2\|_{L^\infty_{xy}}\|\partial^2_y U^\varepsilon_1\|_{L^2_{xy}}\|\partial^2_y\tilde{\omega}\|_{L^2_{xy}}\\
       &\leq\frac{1}{48} \varepsilon\|\partial^2_y\tilde{\omega}\|^2_{L^2_{xy}}+C(\|U^\varepsilon_2\|_{L^2_{xy}}\|\partial_yU^\varepsilon_2\|_{L^2_{xy}}+\|\partial_xU^\varepsilon_2\|_{L^2_{xy}}\|\partial_x\partial_yU^\varepsilon_2\|_{L^2_{xy}})(\|\partial_x \omega^{\varepsilon*}\|^2_{L^2_{xy}}+\|\partial_y \omega^{\varepsilon*}\|^2_{L^2_{xy}})\\
       &\leq \frac{1}{48}\varepsilon\|\partial^2_y\tilde{\omega}\|^2_{L^2_{xy}}+C\|\partial_y\tilde{\omega}\|^2_{L^2_{xy}}\|\partial_y\tilde{\omega}\|^2_{L^2_{xy}}+C\varepsilon.
		\end{split}
	\end{equation*}
    Then combining the estimates of $\mathbb{E}_{21}$ and $\mathbb{E}_{22}$,  we can get
    \begin{equation*}
		\begin{split}{}
       \mathbb{E}_{2}&\leq \frac{1}{16}\varepsilon\|\partial^2_y\tilde{\omega}\|^2_{L^2_{xy}}+C\|\partial_y\tilde{\omega}\|^2_{L^2_{xy}}\|\partial_y\tilde{\omega}\|^2_{L^2_{xy}}+C\varepsilon^{-\frac{1}{2}}.
		\end{split}
	\end{equation*}
    Since $U^\theta_2(t,x,0)=\varepsilon u^{B,2}_2(t,x,0)+\varepsilon^\frac{3}{2} u^{B,3}_2(t,x,0)$, (\ref{HVU theta L2}) and (\ref{partialx uB22uB32}), it can be verified that
\begin{equation*}
		\begin{split}{}
       \mathbb{E}_3&\leq\varepsilon^{-\frac{1}{2}}\|U^\theta_1\|_{L^\infty_{xy}}\|\partial_x \tilde{\omega}\|_{L^2_{xy}}\varepsilon^\frac{1}{2}\|\partial^2_y\tilde{\omega}\|_{L^2_{xy}}-\frac{1}{2}\int^\infty_{-\infty}U^\theta_2(t,x,0)|\partial_y\tilde{\omega}(t,x,0)|^2dx+\frac{1}{2}\|\partial_y U^\theta_2\|_{L^\infty_{xy}}\|\partial_y\tilde{\omega}\|^2_{L^2_{xy}}\\
            &\leq \frac{1}{16}\varepsilon\|\partial^2_y \tilde{\omega}\|^2_{L^2_{xy}}+C\|\partial_y\tilde{\omega}\|^2_{L^2_{xy}}+C\varepsilon^{-\frac{1}{2}}.
		\end{split}
	\end{equation*}
     Using the Sobolev embedding inequality, we get
     \begin{equation*}
		\begin{split}{}
       \mathbb{E}_{4}&\leq\varepsilon^{-\frac{1}{2}}(\|\partial_x U^\theta_1\|_{L^\infty_{xy}}\|\partial_x U^\varepsilon_2\|_{L^2_{xy}}+\|\partial_x U^\theta_2\|_{L^\infty_{xy}}\|\partial_y U^\varepsilon_2\|_{L^2_{xy}}\\
            &\quad-\|\partial_y U^\theta_1\|_{L^\infty_{xy}}\|\partial_x U^\varepsilon_1\|_{L^2_{xy}}-\|\partial_y U^\theta_2\|_{L^\infty_{xy}}\|\partial_y U^\varepsilon_1\|_{L^2_{xy}})\varepsilon^{\frac{1}{2}}\|\partial^2_y\tilde{\omega}\|^2_{L^2_{xy}}\\
            &\leq \frac{1}{16}\varepsilon\|\partial^2_y\tilde{\omega}\|^2_{L^2_{xy}}+C\varepsilon^{-\frac{1}{2}}.
		\end{split}
	\end{equation*}
      Similarly, it follows that
      \begin{equation*}
		\begin{split}{}
        \mathbb{E}_{5}+\mathbb{E}_{6}&\leq C\varepsilon^\frac{1}{2}(\|U^\varepsilon_1\|_{L^2_{xy}}\|\partial^2_y\tilde{\omega}\|_{L^2_{xy}}+\|U^\varepsilon_2\|_{L^2_{xy}}\|\partial^2_y\tilde{\omega}\|_{L^2_{xy}})\leq\frac{1}{16}\varepsilon\|\partial^2_y\tilde{\omega}\|^2_{L^2_{xy}}+C\varepsilon^{\frac{1}{2}}.
		\end{split}
	\end{equation*}

	 Applying H\"{o}lder's inequality and the estimate of (\ref{rho varepsilon frac12}), we obtain
	\begin{equation*}
		\begin{split}{}
        \mathbb{E}_{7}&\leq\varepsilon^{-\frac{1}{2}}\|\partial_xH^\varepsilon\|_{L^2_{xy}}\varepsilon^\frac{1}{2}\|\partial^2_y\tilde{\omega}\|_{L^2_{xy}}\leq \frac{1}{16}\varepsilon\|\partial^2_y\tilde{\omega}\|^2_{L^2_{xy}}+\varepsilon^{-\frac{1}{2}},\\
        \mathbb{E}_{9}&\leq\|\rho^\varepsilon\|_{L^2_{xy}}\|\partial^2_y\tilde{\omega}\|_{L^2_{xy}}\leq \frac{1}{16}\varepsilon\|\partial^2_y\tilde{\omega}\|^2_{L^2_{xy}}+C.\\
		\end{split}
	\end{equation*}
        By Lemma $\ref{lem h L2}$, we have
        \begin{equation*}
		\begin{split}{}
        \mathbb{E}_{8}&\leq\varepsilon^{-\frac{1}{2}}\|\partial_x h^\varepsilon_2-\partial_y h^\varepsilon_1\|_{L^2_{xy}}\|\partial^2_y\tilde{\omega}\|_{L^2_{xy}}\leq \frac{1}{16}\varepsilon\|\partial^2_y\tilde{\omega}\|^2_{L^2_{xy}}+C\varepsilon^{-\frac{1}{2}}.\\
		\end{split}
	\end{equation*}
        It then yields
       \begin{equation*}
		\begin{split}{}
			\frac{d}{dt}\|\partial_y\tilde{\omega}\|^2_{L^2_{xy}}+\varepsilon\|\partial^2_y\tilde{\omega}\|^2_{L^2_{xy}}&\leq C\|\partial_y\tilde{\omega}\|^2_{L^2_{xy}}\|\partial_y\tilde{\omega}\|^2_{L^2_{xy}}+C\|\partial_y\tilde{\omega}\|^2_{L^2_{xy}}+C\varepsilon^{-\frac{1}{2}}.\\
        \end{split}
	\end{equation*}
	Therefore, from Step 1-Step 3, we can obtain
	\begin{equation*}
		\begin{split}{}
			&\frac{d}{dt}(\|\partial_yH^\varepsilon\|^2_{L^2_{xy}}+\|\partial_yV^\varepsilon\|^2_{L^2_{xy}}+\|\partial_y\tilde{\omega}\|^2_{L^2_{xy}})+\|\partial^2_yH^\varepsilon\|^2_{L^2_{xy}}+\varepsilon\|\partial^2_yV^\varepsilon\|^2_{L^2_{xy}}+\varepsilon\|\partial^2_y\tilde{\omega}\|^2_{L^2_{xy}}\\
			&\leq C(\|\partial_yH^\varepsilon\|^2_{L^2_{xy}}+\|\partial_yV^\varepsilon\|^2_{L^2_{xy}}+\|\partial_y\tilde{\omega}\|^2_{L^2_{xy}}+1)^2+C\varepsilon^{-\frac{1}{2}},\\
		\end{split}
	\end{equation*}
    along with Gronwall's inequality for $(0,T)$, which yields
	 \begin{equation*}
	 \begin{split}{}
	 	&\|\partial_yH^\varepsilon\|^2_{L^2_{xy}}+\|\partial_yV^\varepsilon\|^2_{L^2_{xy}}+\|\partial_y\tilde{\omega}\|^2_{L^2_{xy}}+\int^t_0(\|\partial^2_yH^\varepsilon\|^2_{L^2_{xy}}+\varepsilon\|\partial^2_yV^\varepsilon\|^2_{L^2_{xy}}+\varepsilon\|\partial^2_y\tilde{\omega}\|^2_{L^2_{xy}})\leq C\varepsilon^{-\frac{1}{2}}.\\			
	 \end{split}
	 \end{equation*}
	  The proof is {complete}.

    \begin{lemma}\label{lem HVUxy L2}
         {Let the assumptions in Lemma $\ref{lem HVU L2}$-Lemma $\ref{lem HVUy L2}$ hold. Then there exists $\varepsilon_0>0$ such that for any $\varepsilon\in(0,\varepsilon_0]$, the following estimates hold:
		\begin{equation}\label{Equ(4.26)}
			\varepsilon^{\frac{3}{2}}\|\partial_x\partial_yH^\varepsilon\|^2_{L^\infty_TL^2_{xy}}+\varepsilon^{\frac{3}{2}}\|\partial_x\partial_yV^\varepsilon\|^2_{L^\infty_TL^2_{xy}}\leq C.\\
		\end{equation}
		}
	\end{lemma}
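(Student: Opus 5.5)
We will prove \eqref{Equ(4.26)} by a mixed-derivative energy estimate. We apply $\partial_x$ to the first two equations of (\ref{HVU equation}); since $x\in\R$, $\partial_x$ commutes with the equations and preserves the boundary conditions, so $\partial_yH^\varepsilon=0$, $\partial_y V^\varepsilon_1=0$, $V^\varepsilon_2=0$ still hold for $\partial_x H^\varepsilon,\partial_xV^\varepsilon$ on $y=0$. We then test the $\partial_xH^\varepsilon$ equation with $-\partial_x\partial^2_yH^\varepsilon$ and the $\partial_xV^\varepsilon$ equation with $-\partial_x\partial^2_yV^\varepsilon$. Integration by parts in $y$ generates no boundary terms for the dissipative part, exactly as in Steps 1--2 of Lemma \ref{lem HVUy L2}. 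This yields
\begin{align*}
&\frac12\frac{d}{dt}\big(\|\partial_x\partial_yH^\varepsilon\|^2_{L^2_{xy}}+\|\partial_x\partial_yV^\varepsilon\|^2_{L^2_{xy}}\big)+\|\partial_x\nabla\partial_yH^\varepsilon\|^2_{L^2_{xy}}+\varepsilon\|\partial_x\nabla\partial_yV^\varepsilon\|^2_{L^2_{xy}}\\
&\quad=\sum(\text{nonlinear, linear-coupling, and source terms}).
\end{align*}
The aim is a differential inequality of the form $\frac{d}{dt}Y\le C(1+Y)Y+C\varepsilon^{-3/2}$, with $Y=\|\partial_x\partial_yH^\varepsilon\|^2+\|\partial_x\partial_yV^\varepsilon\|^2$ and $Y(0)=0$. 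Gronwall then gives $Y\le C\varepsilon^{-3/2}$, which is \eqref{Equ(4.26)}.

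The terms will be handled in the order of Lemma \ref{lem HVUy L2}.

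\textbf{Source terms.} These are $\varepsilon^{-1/2}\int\partial_xf^\varepsilon\,\partial_x\partial_y^2H^\varepsilon$ and $\varepsilon^{-1/2}\int \partial_x g^\varepsilon\cdot\partial_x\partial_y^2V^\varepsilon$. Using $\|\partial_xf^\varepsilon\|,\|\partial_xg^\varepsilon\|\le C\varepsilon^{3/4}$ from Lemmas \ref{lem f L2}--\ref{lem g L2}, and absorbing into the dissipation (with weight $\varepsilon$ for $V^\varepsilon$), they contribute at most $C\varepsilon^{-3/2}$. This, together with the coupling term $\int\partial_x\nabla H^\varepsilon\cdot\partial_x\partial_y^2V^\varepsilon$, fixes the $\varepsilon^{-3/2}$ scale.

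\textbf{Coupling term.} For $\int\partial_x\nabla H^\varepsilon\cdot\partial_x\partial_y^2V^\varepsilon$ we integrate by parts once in $y$, as in $\mathbb{M}_6$. It is then bounded by $\|\partial_x\partial_y\nabla H^\varepsilon\|\,\|\partial_x\partial_yV^\varepsilon\|$, which is absorbed by the $H$-dissipation plus $CY$.

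\textbf{Linear terms with $(H^\theta,V^\theta,U^\theta)$.} These are treated via (\ref{HVU theta L2}) and (\ref{V theta estimates}). Terms in which a $y$-derivative falls on $V^\theta$ cost $\varepsilon^{-1/2}$ in $L^\infty$. They are paired with $\|U^\varepsilon\|_{H^1}\le C\varepsilon^{1/4}$ or with the time-integrated bounds (\ref{partialx L2T HVU varepsilon frac12}) from Lemmas \ref{lem HVUx L2}--\ref{lem HVUy L2}. The $\partial_yv^\varepsilon_2$ trick from $\mathbb{M}_2$ (writing $V^\theta=v^\varepsilon-\varepsilon^{1/2}V^\varepsilon$ and using Theorem \ref{th nvu regularity}) handles $\partial_y^2V^\theta_2$.

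\textbf{Nonlinear terms.} These carry the prefactor $\varepsilon^{1/2}$ or $\varepsilon^{3/2}$. We estimate them by (\ref{G-N inequality}), using the already proven bounds
\[
\|\partial_xH^\varepsilon\|,\|\partial_xV^\varepsilon\|,\|\partial_x\nabla U^\varepsilon\|\le C\varepsilon^{1/4},\qquad \|\partial_y\nabla U^\varepsilon\|\le C\varepsilon^{-1/4},
\]
together with the $L^\infty$ bound $\|U^\varepsilon\|_{L^\infty}\le C\varepsilon^{1/4}$ obtainable from these. They give contributions of the form $CY^2+C$.

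\textbf{Main obstacle.} We expect the main difficulty to be the non-viscous term $\nabla(U^\varepsilon\cdot V^\theta)$ in the $V$-equation. After $\partial_x$ and testing against $\partial_x\partial_y^2V^\varepsilon$, it contains $U^\varepsilon_2\,\partial_x\partial_y^2V^\theta_2$ and $\partial_x\partial_y U^\varepsilon\, \partial_yV^\theta$, where the boundary layer makes $\partial_y^2V^\theta\sim\varepsilon^{-1}$.

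For these we first use the Hardy-type argument of $\mathbb{E}_{22_1}$: write $U^\varepsilon_2/\psi$ and control $\psi\partial_y(\cdot)$ by the conormal bounds in Theorem \ref{th nvu regularity}. We also exploit $U^\varepsilon_2|_{y=0}=O(\varepsilon^{1/2})$. Otherwise we would move one $y$-derivative off $V^\theta$ onto $U^\varepsilon$, with $\|\partial_x\partial_y\nabla U^\varepsilon\|$ controlled via $\partial_x\partial_y\tilde\omega$, using the $L^2_T$ bound in Lemma \ref{lem HVUy L2} and the $\partial_x$-version from Lemma \ref{lem HVUx L2}. If the $L^\infty_t$ version of this quantity is needed, we add the $\partial_x\partial_y\tilde\omega$ energy estimate to $Y$, done exactly as Step 3 of Lemma \ref{lem HVUy L2} with an extra $\partial_x$. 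This costs only $\varepsilon^{-3/2}$, using $\|\partial_x\partial_yh^\varepsilon\|\le C\varepsilon^{3/4}$ from Lemma \ref{lem h L2}. Closing with Gronwall on $[0,T]$ then gives the claim.
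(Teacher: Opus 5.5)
\textbf{The closing step fails.} Your overall scheme is the paper's: apply $\partial_x$ to the $H^\varepsilon$ and $V^\varepsilon$ equations, test with $-\partial_x\partial_y^2H^\varepsilon$ and $-\partial_x\partial_y^2V^\varepsilon$, and close at the scale $\varepsilon^{-3/2}$. The problem is the Gronwall step. From $\frac{d}{dt}Y\le C(1+Y)Y+C\varepsilon^{-3/2}$ with $Y(0)=0$, you cannot conclude $Y\le C\varepsilon^{-3/2}$ on a fixed interval $[0,T]$. The comparison problem $Z'=CZ^2+C\varepsilon^{-3/2}$, $Z(0)=0$, has solution $Z(t)=\varepsilon^{-3/4}\tan(C\varepsilon^{-3/4}t)$, which blows up at $t=\frac{\pi}{2C}\varepsilon^{3/4}\to0$. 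You state that the nonlinear terms produce $CY^2+C$ with an $O(1)$ coefficient, so as written the argument does not close.

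What you need is that every superlinear contribution has the form $a(t)Y$ with $\int_0^T a\,dt\le C$. Alternatively, it must carry enough powers of $\varepsilon$ to survive a bootstrap under $Y\le 2C_*\varepsilon^{-3/2}$. This is achievable. Use $\|f\|_{L^\infty_xL^2_y}\le C\|f\|^{1/2}_{L^2_{xy}}\|\partial_xf\|^{1/2}_{L^2_{xy}}$ and $\|f\|_{L^2_xL^\infty_y}\le C\|f\|^{1/2}_{L^2_{xy}}\|\partial_yf\|^{1/2}_{L^2_{xy}}$, so that the extra factor of $\partial_x\partial_y(H^\varepsilon,V^\varepsilon)$ is absorbed by the dissipation rather than counted in $Y$. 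Then pair what remains with quantities already bounded in $L^2_T$, e.g. $\|\partial_x\partial_yH^\varepsilon\|^2_{L^2_TL^2_{xy}}+\|\partial_x^2H^\varepsilon\|^2_{L^2_TL^2_{xy}}\le C\varepsilon^{1/2}$ from Lemma \ref{lem HVUx L2}. For example, using $\|V^\varepsilon\|_{L^2_xL^\infty_y}\le C$,
$2\varepsilon^{3/2}\int V^\varepsilon\cdot\partial_x\partial_yV^\varepsilon\,\partial_x\partial_y^2V^\varepsilon_2\le\frac{\varepsilon}{8}(\|\partial_x\partial_y^2V^\varepsilon\|^2_{L^2_{xy}}+\|\partial_x^2\partial_yV^\varepsilon\|^2_{L^2_{xy}})+C\varepsilon^3Y$.
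This is how the paper's products such as $\|\partial_x\partial_yV^\varepsilon\|^2_{L^2_{xy}}\|\partial_x\partial_yH^\varepsilon\|^2_{L^2_{xy}}$ must be read, namely as $a(t)Y$ with $a\in L^1(0,T)$. Its final inequality is linear in $Y$, with forcing $C\|\partial_x^2V^\varepsilon\|^2+C\|\partial_y^2V^\varepsilon\|^2+C\|\partial_x^2\omega\|^2+C\varepsilon^{-3/2}$ whose time integral is $O(\varepsilon^{-3/2})$ by Lemmas \ref{lem HVUx L2}--\ref{lem HVUy L2}.

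\textbf{The linear terms need the same care, and you have misplaced the difficulty.} The terms from $\nabla(U^\varepsilon\cdot V^\theta)$ need no Hardy argument. Cauchy--Schwarz against $\varepsilon\|\partial_x\partial_y^2V^\varepsilon\|^2$ suffices, using $\|\partial_yV^\theta\|_{L^\infty_{xy}}+\|\partial_x\partial_yV^\theta\|_{L^\infty_{xy}}\le C\varepsilon^{-1/2}$ and $\|U^\varepsilon\|_{H^1_{xy}}+\|\partial_x\nabla U^\varepsilon\|_{L^2_{xy}}\le C\varepsilon^{1/4}$; it gives $C\varepsilon^{-1}\cdot\varepsilon^{1/2}\cdot\varepsilon^{-1}=C\varepsilon^{-3/2}$. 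This is where the $\varepsilon^{-3/2}$ actually comes from; your source terms only give $C\varepsilon^{-1/2}$.

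The delicate linear term is instead the transport term $\nabla(U^\theta\cdot V^\varepsilon)$, which your plan does not address. Its top-order part tested against $\partial_x\partial_y^2V^\varepsilon$, bounded naively, gives $C\varepsilon^{-1}\|\partial_x\partial_yV^\varepsilon\|^2$. That is fatal if fed into Gronwall as $\varepsilon^{-1}Y$. You have two ways out:
\begin{itemize}
\item Treat it as forcing, using $\varepsilon\int_0^T\|\partial_x\nabla V^\varepsilon\|^2\le C\varepsilon^{1/2}$. The $\partial_x$-version of Lemma \ref{lem HVU L2} yields this, though Lemma \ref{lem HVUx L2} lists only the $\partial_x^2V^\varepsilon$ part.
\item Do as the paper does in $\mathcal{M}_{31}$--$\mathcal{M}_{32}$. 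Use $\partial_yV^\varepsilon_1=\partial_xV^\varepsilon_2$ to turn $\partial_x\partial_yV^\varepsilon_1$ into $\partial_x^2V^\varepsilon_2$, and integrate $U^\theta_2\,\partial_x\partial_yV^\varepsilon_2\,\partial_x\partial_y^2V^\varepsilon_2$ by parts in $y$, controlling the boundary term with $U^\theta_2|_{y=0}=O(\varepsilon)$.
\end{itemize}
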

	\noindent{\bf{Proof.}} We first estimate $\|(H^\theta,V^\theta,U^\theta)\|_{L^\infty}$. Similar to $(\ref{HVU theta L2})$, by the Sobolev embedding inequality, the following can be obtained,
	\begin{align}\label{HVU theta Linfty}
            &\|\partial_x\partial_y H^\theta\|_{L^\infty_TL^\infty_{xy}}+\|\partial^2_x\partial_y H^\theta\|_{L^\infty_TL^\infty_{xy}}+\|\partial^2_x H^\theta\|_{L^\infty_TL^\infty_{xy}}+
            \|\partial_x\partial_y V^\theta_1\|_{L^\infty_TL^\infty_{xy}}+\|\partial_x\partial_y U^\theta\|_{L^\infty_TL^\infty_{xy}}\leq C,\notag\\
            &\|\partial_x\partial^2_y H^\theta\|_{L^\infty_TL^\infty_{xy}}+
            \|\partial_x\partial_y V^\theta_2\|_{L^\infty_TL^\infty_{xy}}+
            \|\partial_x\partial^2_y V^\theta_1\|_{L^\infty_TL^\infty_{xy}}+\|\partial_x\partial^2_y U^\theta\|_{L^\infty_TL^\infty_{xy}}\leq C\varepsilon^{-\frac{1}{2}},\\
            &\|\partial_x\partial^2_y V^\theta_2\|_{L^\infty_TL^\infty_{xy}}\leq C\varepsilon^{-1}.\notag\notag
	\end{align}
    From Lemma $\ref{lem HVU L2}$-Lemma $\ref{lem HVUy L2}$, we can get that
    \begin{align}\label{U Linfty}
        \|U^\varepsilon\|_{L^\infty_TL^\infty_{xy}}\leq C(\|U^\varepsilon\|^\frac{1}{2}_{L^\infty_TL^2_{xy}}\|\partial_yU^\varepsilon\|^\frac{1}{2}_{L^\infty_TL^2_{xy}}+\|\partial_x U^\varepsilon\|^\frac{1}{2}_{L^\infty_TL^2_{xy}}\|\partial_x\partial_y U^\varepsilon\|^\frac{1}{2}_{L^\infty_TL^2_{xy}})\leq C\varepsilon^\frac{1}{4}.
    \end{align}
    \textbf{Step 1. The \(L^2\) estimate of $\partial_x\partial_yH^\varepsilon$.}
    Applying $-\partial_{x}$ to $(\ref{HVU equation})_1$, multiplying by $\partial_{x}\partial^2_{y} H^\varepsilon$, and using integration by parts, we obtain
	\begin{align*}
			&\frac{1}{2}\frac{d}{dt}\|\partial_{x}\partial_{y} H^\varepsilon\|^2_{L^2_{xy}}+\|\partial^2_{x}\partial_y H^\varepsilon\|^2_{L^2_{xy}}+\|\partial_x\partial^2_{y} H^\varepsilon\|^2_{L^2_{xy}}\\
            &=-\varepsilon^\frac{1}{2}\int^\infty_0\int^\infty_{-\infty}\partial_{x}\partial_{y}(U^\varepsilon_1\partial_x H^\varepsilon+U^\varepsilon_2\partial_y H^\varepsilon)\partial_{x}\partial_{y}H^\varepsilon dxdy\\
            &\quad-\Big{[}\int^\infty_0\int^\infty_{-\infty}\partial_{x}\partial_{y}(U^\varepsilon_1\partial_x H^\theta+U^\varepsilon_2\partial_y H^\theta)\partial_{x}\partial_{y}H^\varepsilon dxdy\\
            &\quad+\int^\infty_0\int^\infty_{-\infty}\partial_{x}\partial_{y}(U^\theta_1\partial_x H^\varepsilon+U^\theta_2\partial_y H^\varepsilon)\partial_{x}\partial_{y}H^\varepsilon dxdy\Big{]}\\
            &\quad+\varepsilon^\frac{1}{2}\Big{[}\int^\infty_0\int^\infty_{-\infty}\partial_{x}\partial_{y}(V^\varepsilon_1\partial_x H^\varepsilon+V^\varepsilon_2\partial_y H^\varepsilon)\partial_{x}\partial_{y}H^\varepsilon dxdy\\
            &\quad+\int^\infty_0\int^\infty_{-\infty}\partial_{x}\partial_{y}(H^\varepsilon\partial_x V^\varepsilon_1+H^\varepsilon\partial_y V^\varepsilon_2)\partial_{x}\partial_{y}H^\varepsilon dxdy\Big{]}\\
            &\quad+\Big{[}\int^\infty_0\int^\infty_{-\infty}\partial_{x}\partial_{y}(V^\theta_1\partial_x H^\varepsilon+V^\theta_2\partial_y H^\varepsilon)\partial_{x}\partial_{y}H^\varepsilon dxdy\\
            &\quad+\int^\infty_0\int^\infty_{-\infty}\partial_{x}\partial_{y}(H^\varepsilon\partial_x V^\theta_1+H^\varepsilon\partial_y V^\theta_2)\partial_{x}\partial_{y}H^\varepsilon dxdy\Big{]}\\
            &\quad+\Big{[}\int^\infty_0\int^\infty_{-\infty}\partial_{x}\partial_{y}(V^\varepsilon_1\partial_x H^\theta+V^\varepsilon_2\partial_y H^\theta)\partial_{x}\partial_{y}H^\varepsilon dxdy\\
            &\quad+\int^\infty_0\int^\infty_{-\infty}\partial_{x}\partial_{y}(H^\theta\partial_x V^\varepsilon_1+H^\theta\partial_y V^\varepsilon_2)\partial_{x}\partial_{y}H^\varepsilon dxdy\Big{]}+\varepsilon^{-\frac{1}{2}}\int^\infty_0\int^\infty_{-\infty}\partial_{x}\partial_{y}f^\varepsilon\partial_{x}\partial_{y}H^\varepsilon dxdy=\sum^{6}_{i=1}\mathcal{D}_i.
        \end{align*}
	Using the Gagliardo-Nirenberg interpolation {inequality,}
	\begin{align*}
        \mathcal{D}_1&=\varepsilon^\frac{1}{2}\int^\infty_0\int^\infty_{-\infty}(\partial_x\partial_{y}U^\varepsilon_1\partial_x H^\varepsilon+\partial_{y}U^\varepsilon_1\partial^2_x H^\varepsilon+\partial_{x}U^\varepsilon_1\partial_x\partial_{y} H^\varepsilon+U^\varepsilon_1\partial^2_x\partial_{y} H^\varepsilon\\
        &\quad+\partial_x\partial_{y}U^\varepsilon_2\partial_y H^\varepsilon+\partial_{y}U^\varepsilon_2\partial_x\partial_{y} H^\varepsilon+\partial_{x}U^\varepsilon_2\partial^2_y H^\varepsilon+U^\varepsilon_2\partial_x\partial^2_{y} H^\varepsilon)\partial_x\partial_{y} H^\varepsilon dxdy\\
        &\leq\varepsilon^\frac{1}{2}(\|\partial_x\partial_{y}U^\varepsilon_1\|_{L^2_{xy}}\|\partial_x H^\varepsilon\|_{L^\infty_{xy}}+\|\partial_{y}U^\varepsilon_1\|_{L^\infty_{x}L^2_{y}}\|\partial^2_x H^\varepsilon\|_{L^2_{x}L^\infty_{y}}+\|\partial_{x}U^\varepsilon_1\|_{L^2_{x}L^\infty_{y}}\|\partial_x\partial_y H^\varepsilon\|_{L^\infty_{x}L^2_{y}}\\
        &\quad+\|U^\varepsilon_1\|_{L^\infty_{xy}}\|\partial^2_x\partial_y H^\varepsilon\|_{L^2_{xy}}+\|\partial_x\partial_{y}U^\varepsilon_2\|_{L^2_{xy}}\|\partial_y H^\varepsilon\|_{L^\infty_{xy}}+\|\partial_{y}U^\varepsilon_2\|_{L^\infty_{x}L^2_{y}}\|\partial_x\partial_{y} H^\varepsilon\|_{L^2_{x}L^\infty_{y}}\\
        &\quad+\|\partial_{x}U^\varepsilon_2\|_{L^2_{x}L^\infty_{y}}\|\partial^2_y H^\varepsilon\|_{L^\infty_{x}L^2_{y}}+\|U^\varepsilon_2\|_{L^\infty_{xy}}\|\partial_x\partial^2_y H^\varepsilon\|_{L^2_{xy}})\|\partial_x\partial_{y} H^\varepsilon\|_{L^2_{xy}}\\
        &\leq\frac{1}{10}\|\partial^2_{x}\partial_y H^\varepsilon\|^2_{L^2_{xy}}+\frac{1}{10}\|\partial_x\partial^2_{y} H^\varepsilon\|^2_{L^2_{xy}}+C\|\partial^2_{x} H^\varepsilon\|^2_{L^2_{xy}}+C\|\partial^2_{y} H^\varepsilon\|^2_{L^2_{xy}}+C\|\partial_x\partial_{y} H^\varepsilon\|^2_{L^2_{xy}}+C.
	\end{align*}
         Using integration by parts, we obtain
	\begin{align*}
        \mathcal{D}_2&=-\int^\infty_0\int^\infty_{-\infty}(\partial_x\partial_{y}U^\varepsilon_1\partial_x H^\theta+\partial_{y}U^\varepsilon_1\partial^2_x H^\theta+\partial_{x}U^\varepsilon_1\partial_x\partial_{y} H^\theta+U^\varepsilon_1\partial^2_x\partial_{y} H^\theta\\
        &\quad+\partial_x\partial_{y}U^\varepsilon_2\partial_y H^\theta+\partial_{y}U^\varepsilon_2\partial_x\partial_{y} H^\theta+\partial_{x}U^\varepsilon_2\partial^2_y H^\theta+U^\varepsilon_2\partial_x\partial^2_{y} H^\theta\\
        &\quad+\partial_x\partial_{y}U^\theta_1\partial_x H^\varepsilon+\partial_{y}U^\theta_1\partial^2_x H^\varepsilon+\partial_{x}U^\theta_1\partial_x\partial_{y} H^\varepsilon+U^\theta_1\partial^2_x\partial_{y} H^\varepsilon\\
        &\quad+\partial_x\partial_{y}U^\theta_2\partial_y H^\varepsilon+\partial_{y}U^\theta_2\partial_x\partial_{y} H^\varepsilon+\partial_{x}U^\theta_2\partial^2_y H^\varepsilon+U^\theta_2\partial_x\partial^2_{y} H^\varepsilon)\partial_x\partial_{y} H^\varepsilon dxdy\\
        &\leq(\|\partial_x\partial_{y}U^\varepsilon_1\|_{L^2_{xy}}\|\partial_x H^\theta\|_{L^\infty_{xy}}+\|\partial_{y}U^\varepsilon_1\|_{L^2_{xy}}\|\partial^2_x H^\theta\|_{L^\infty_{xy}}+\|\partial_{x}U^\varepsilon_1\|_{L^2_{xy}}\|\partial_x\partial_{y} H^\theta\|_{L^\infty_{xy}}\\
        &\quad+\|U^\varepsilon_1\|_{L^\infty_{xy}}\|\partial^2_x\partial_y H^\theta\|_{L^2_{xy}}+\|\partial_x\partial_{y}U^\varepsilon_2\|_{L^2_{xy}}\|\partial_y H^\theta\|_{L^\infty_{xy}}+\|\partial_{y}U^\varepsilon_2\|_{L^2_{xy}}\|\partial_x\partial_{y} H^\theta\|_{L^\infty_{xy}}\\
        &\quad+\|\partial_{x}U^\varepsilon_2\|_{L^2_{xy}}\|\partial^2_y H^\theta\|_{L^\infty_{xy}}+\|U^\varepsilon_2\|_{L^2_{xy}}\|\partial_x\partial^2_y H^\theta\|_{L^\infty_{xy}}\\
        &\quad+\|\partial_x\partial_{y}U^\theta_1\|_{L^\infty_{xy}}\|\partial_x H^\varepsilon\|_{L^2_{xy}}+\|\partial_{y}U^\theta_1\|_{L^\infty_{xy}}\|\partial^2_x H^\varepsilon\|_{L^2_{xy}}+\|\partial_{x}U^\theta_1\|_{L^\infty_{xy}}\|\partial_x\partial_{y} H^\varepsilon\|_{L^2_{xy}}\\
        &\quad+\|U^\theta_1\|_{L^\infty_{xy}}\|\partial^2_x\partial_y H^\varepsilon\|_{L^2_{xy}}+\|\partial_x\partial_{y}U^\theta_2\|_{L^\infty_{xy}}\|\partial_y H^\varepsilon\|_{L^2_{xy}}+\|\partial_{y}U^\theta_2\|_{L^\infty_{xy}}\|\partial_x\partial_{y} H^\varepsilon\|_{L^2_{xy}}\\
        &\quad+\|\partial_{x}U^\theta_2\|_{L^\infty_{xy}}\|\partial^2_y H^\varepsilon\|_{L^2_{xy}}+\|U^\theta_2\|_{L^\infty_{xy}}\|\partial_x\partial^2_y H^\varepsilon\|_{L^2_{xy}})\|\partial_x\partial_{y} H^\varepsilon\|_{L^2_{xy}}\\
        &\leq \frac{1}{10}\|\partial^2_{x}\partial_y H^\varepsilon\|^2_{L^2_{xy}}+\frac{1}{10}\|\partial_x\partial^2_{y} H^\varepsilon\|^2_{L^2_{xy}}+C\|\partial^2_{x} H^\varepsilon\|^2_{L^2_{xy}}+C\|\partial^2_{y} H^\varepsilon\|^2_{L^2_{xy}}+C\|\partial_x\partial_y H^\varepsilon\|^2_{L^2_{xy}}+C\varepsilon^{-\frac{1}{2}}.
	\end{align*}
        Using the Gagliardo-Nirenberg inequality, we obtain
        \begin{align*}
        \mathcal{D}_3&=\varepsilon^\frac{1}{2}\int^\infty_0\int^\infty_{-\infty}(2\partial_x\partial_yV^\varepsilon_1\partial_x H^\varepsilon+\partial_{y}V^\varepsilon_1\partial^2_x H^\varepsilon+2\partial_{x}V^\varepsilon_1\partial_x\partial_y H^\varepsilon+V^\varepsilon_1\partial^2_x\partial_{y} H^\varepsilon\\
        &\quad+2\partial_x\partial_yV^\varepsilon_2\partial_y H^\varepsilon+2\partial_{y}V^\varepsilon_2\partial_x\partial_y H^\varepsilon+\partial_{x}V^\varepsilon_2\partial^2_y H^\varepsilon+V^\varepsilon_2\partial_x\partial^2_{y} H^\varepsilon\\
        &\quad+\partial_{y}H^\varepsilon\partial^2_x V^\varepsilon_1+H^\varepsilon\partial^2_x\partial_{y} V^\varepsilon_1+\partial_{x}H^\varepsilon\partial^2_y V^\varepsilon_2+H^\varepsilon\partial_x\partial^2_{y} V^\varepsilon_2)\partial_x\partial_y H^\varepsilon dxdy\\
        &\leq \varepsilon^\frac{1}{2}\Big{(}2\|\partial_{x}\partial_y V^\varepsilon_1\|_{L^2_{xy}}\|\partial_{x} H^\varepsilon\|_{L^2_{x}L^\infty_y}\|\partial_{x}\partial_y H^\varepsilon\|_{L^\infty_{x}L^2_y}+\|\partial_y V^\varepsilon_1\|_{L^\infty_{x}L^2_y}\|\partial^2_{x} H^\varepsilon\|_{L^2_{xy}}\|\partial_{x}\partial_y H^\varepsilon\|_{L^2_{x}L^\infty_y}\\
        &\quad+2\|\partial_{x}V^\varepsilon_1\|_{L^2_{x}L^\infty_y}\|\partial_{x}\partial_y H^\varepsilon\|_{L^\infty_{x}L^2_y}\|\partial_{x}\partial_y H^\varepsilon\|_{L^2_{xy}}+\|V^\varepsilon_1\|_{L^\infty_{x}L^2_y}\|\partial^2_{x}\partial_y H^\varepsilon\|_{L^2_{xy}}\|\partial_{x}\partial_y H^\varepsilon\|_{L^2_{x}L^\infty_y}\\
        &\quad+2\|\partial_{x}\partial_y V^\varepsilon_2\|_{L^2_{xy}}\|\partial_{y} H^\varepsilon\|_{L^\infty_{x}L^2_y}\|\partial_{x}\partial_y H^\varepsilon\|_{L^2_{x}L^\infty_y}+2\|\partial_{y}V^\varepsilon_2\|_{L^\infty_{x}L^2_y}\|\partial_{x}\partial_y H^\varepsilon\|_{L^2_{x}L^\infty_y}\|\partial_{x}\partial_y H^\varepsilon\|_{L^2_{xy}}\\
        &\quad+\|\partial_{x}V^\varepsilon_2\|_{L^2_{x}L^\infty_y}\|\partial^2_y H^\varepsilon\|_{L^2_{xy}}\|\partial_{x}\partial_y H^\varepsilon\|_{L^\infty_{x}L^2_y}+\|V^\varepsilon_2\|_{L^\infty_{x}L^2_y}\|\partial_{x}\partial^2_y H^\varepsilon\|_{L^2_{xy}}\|\partial_{x}\partial_y H^\varepsilon\|_{L^2_{x}L^\infty_y}\\
        &\quad+\|\partial_{y}H^\varepsilon\|_{L^\infty_{x}L^2_y}\|\partial^2_x V^\varepsilon_1\|_{L^2_{x}L^\infty_y}\|\partial_{x}\partial_y H^\varepsilon\|_{L^2_{xy}}+\|H^\varepsilon\|_{L^\infty_{x}L^2_y}\|\partial^2_x\partial_{y} V^\varepsilon_1\|_{L^2_{xy}}\|\partial_{x}\partial_y H^\varepsilon\|_{L^2_{x}L^\infty_y}\\
        &\quad+\|\partial_{x}H^\varepsilon\|_{L^2_{xy}}\|\partial^2_y V^\varepsilon_2\|_{L^\infty_{x}L^2_y}\|\partial_{x}\partial_y H^\varepsilon\|_{L^2_{x}L^\infty_y}+\|H^\varepsilon\|_{L^\infty_{x}L^2_y}\|\partial_x\partial^2_{y} V^\varepsilon_2\|_{L^2_{xy}}\|\partial_{x}\partial_y H^\varepsilon\|_{L^2_{x}L^\infty_y}
        \Big{)}
        \\
        &\leq\frac{1}{10}\|\partial^2_{x}\partial_y H^\varepsilon\|^2_{L^2_{xy}}+\frac{1}{10}\|\partial_x\partial^2_{y} H^\varepsilon\|^2_{L^2_{xy}}+\frac{1}{4}\varepsilon\|\partial^2_{x}\partial_y V^\varepsilon_1\|^2_{L^2_{xy}}+\frac{1}{4}\varepsilon\|\partial_x\partial^2_{y} V^\varepsilon_2\|^2_{L^2_{xy}}\\
        &\quad+C\|\partial_x\partial_y H^\varepsilon\|^2_{L^2_{xy}}\|\partial_x\partial_y H^\varepsilon\|^2_{L^2_{xy}}+C\|\partial_x\partial_y V^\varepsilon\|^2_{L^2_{xy}}\|\partial_x\partial_y H^\varepsilon\|^2_{L^2_{xy}}+C\|\partial^2_y V_2^\varepsilon\|^2_{L^2_{xy}}\\
        &\quad+C\|\partial_x\partial_y H^\varepsilon\|^2_{L^2_{xy}}+C\|\partial_x\partial_y V^\varepsilon\|^2_{L^2_{xy}}+C\|\partial^2_{x} H^\varepsilon\|^4_{L^2_{xy}}+C\|\partial^2_{y} H^\varepsilon\|^2_{L^2_{xy}}+C\varepsilon^{-\frac{1}{2}}.
	\end{align*}
        By $(\ref{HVU theta Linfty})$, we can obtain
        \begin{align*}
        \mathcal{D}_4&=\int^\infty_0\int^\infty_{-\infty}(2\partial_x\partial_yV^\theta_1\partial_x H^\varepsilon+\partial_{y}V^\theta_1\partial^2_x H^\varepsilon+2\partial_{x}V^\theta_1\partial_x\partial_y H^\varepsilon+V^\theta_1\partial^2_x\partial_{y} H^\varepsilon\\
        &\quad+2\partial_x\partial_yV^\theta_2\partial_y H^\varepsilon+2\partial_{y}V^\theta_2\partial_x\partial_y H^\varepsilon+\partial_{x}V^\theta_2\partial^2_y H^\varepsilon+V^\theta_2\partial_x\partial^2_{y} H^\varepsilon\\
        &\quad+\partial_{y}H^\varepsilon\partial^2_x V^\theta_1+H^\varepsilon\partial^2_x\partial_{y}V^\theta_1+\partial_{x}H^\varepsilon\partial^2_y V^\theta_2+H^\varepsilon\partial_x\partial^2_{y} V^\theta_2)\partial_x\partial_y H^\varepsilon dxdy\\
        &\leq2\|\partial_{x}\partial_yV^\theta_1\|_{L^\infty_{xy}}\|\partial_x H^\varepsilon\|_{L^2_{xy}}\|\partial_{x}\partial_y H^\varepsilon\|_{L^2_{xy}}+\|\partial_yV^\theta_1\|_{L^\infty_{xy}}\|\partial^2_x H^\varepsilon\|_{L^2_{xy}}\|\partial_{x}\partial_y H^\varepsilon\|_{L^2_{xy}}\\
        &\quad+2\|\partial_xV^\theta_1\|_{L^\infty_{xy}}\|\partial_x\partial_y H^\varepsilon\|^2_{L^2_{xy}}+\|V^\theta_1\|_{L^\infty_{xy}}\|\partial^2_x\partial_y H^\varepsilon\|_{L^2_{xy}}\|\partial_{x}\partial_y H^\varepsilon\|_{L^2_{xy}}\\
        &\quad+2\|\partial_{x}\partial_y V^\theta_2\|_{L^2_{xy}}\|\partial_{y} H^\varepsilon\|_{L^\infty_{x}L^2_y}\|\partial_{x}\partial_y H^\varepsilon\|_{L^2_{x}L^\infty_y}+2\|\partial_yV^\theta_2\|_{L^\infty_{xy}}\|\partial_x\partial_y H^\varepsilon\|^2_{L^2_{xy}}\\
        &\quad+\|\partial_xV^\theta_2\|_{L^\infty_{xy}}\|\partial^2_y H^\varepsilon\|_{L^2_{xy}}\|\partial_{x}\partial_y H^\varepsilon\|_{L^2_{xy}}
+\|V^\theta_2\|_{L^\infty_{xy}}\|\partial_x\partial^2_y H^\varepsilon\|_{L^2_{xy}}\|\partial_{x}\partial_y H^\varepsilon\|_{L^2_{xy}}
        \\
        &\quad+\|\partial_y H^\varepsilon\|_{L^2_{xy}}\|\partial^2_{x}V^\theta_1\|_{L^\infty_{xy}}\|\partial_{x}\partial_y H^\varepsilon\|_{L^2_{xy}}+\|H^\varepsilon\|_{L^2_{xy}}\|\partial^2_{x}\partial_yV^\theta_1\|_{L^\infty_{xy}}\|\partial_{x}\partial_y H^\varepsilon\|_{L^2_{xy}}
        \\
        &\quad
        +\|\partial_{x} H^\varepsilon\|_{L^2_xL^\infty_{y}}\|\partial^2_y V^\theta_2\|_{L^2_{xy}}\|\partial_{x}\partial_y H^\varepsilon\|_{L^\infty_xL^2_{y}}+\| H^\varepsilon\|_{L^\infty_xL^2_{y}}\|\partial_x\partial^2_y V^\theta_2\|_{L^2_{xy}}\|\partial_{x}\partial_y H^\varepsilon\|_{L^2_xL^\infty_{y}}\\
        &\leq\frac{1}{10}\|\partial^2_{x}\partial_y H^\varepsilon\|^2_{L^2_{xy}}+\frac{1}{10}\|\partial_x\partial^2_{y} H^\varepsilon\|^2_{L^2_{xy}}+C\|\partial^2_{x} H^\varepsilon\|^2_{L^2_{xy}}+C\|\partial^2_{y} H^\varepsilon\|^2_{L^2_{xy}}\\
        &\quad+C\|\partial_x\partial_y H^\varepsilon\|^2_{L^2_{xy}}(1+\|\partial_x\partial_y H^\varepsilon\|^2_{L^2_{xy}})+C\varepsilon^{-1}.
		\end{align*}
        Similarly, it can be concluded that
        \begin{equation*}
		\begin{split}{}
        \mathcal{D}_5&=\int^\infty_0\int^\infty_{-\infty}(2\partial_x\partial_yV^\varepsilon_1\partial_x H^\theta+\partial_{y}V^\varepsilon_1\partial^2_x H^\theta+2\partial_{x}V^\varepsilon_1\partial_x\partial_y H^\theta+V^\varepsilon_1\partial^2_x\partial_{y} H^\theta\\
        &\quad+2\partial_x\partial_yV^\varepsilon_2\partial_y H^\theta+2\partial_{y}V^\varepsilon_2\partial_x\partial_y H^\theta+\partial_{x}V^\varepsilon_2\partial^2_y H^\theta+V^\varepsilon_2\partial_x\partial^2_{y} H^\theta \\
        &\quad+\partial_{y}H^\theta\partial^2_x V^\varepsilon_1+H^\theta\partial^2_x\partial_{y}V^\varepsilon_1+\partial_{x}H^\theta\partial^2_y V^\varepsilon_2+H^\theta\partial_x\partial^2_{y} V^\varepsilon_2)\partial_x\partial_y H^\varepsilon dxdy,\\
		\end{split}
	\end{equation*}
       where using integration by parts for the last four terms, we have
       \begin{align*}
           &\int^\infty_0\int^\infty_{-\infty}\partial_{y}H^\theta\partial^2_x V^\varepsilon_1\partial_x\partial_y H^\varepsilon dxdy=-\int^\infty_0\int^\infty_{-\infty}(\partial_x\partial_yH^\theta\partial_x V^\varepsilon_1\partial_x\partial_y H^\varepsilon+\partial_{y}H^\theta\partial_x V^\varepsilon_1\partial^2_x\partial_{y} H^\varepsilon) dxdy\\
           &\int^\infty_0\int^\infty_{-\infty}H^\theta\partial^2_x\partial_{y}V^\varepsilon_1\partial_x\partial_y H^\varepsilon dxdy=-\int^\infty_0\int^\infty_{-\infty}(\partial_{x}H^\theta\partial_x\partial_y V^\varepsilon_1\partial_x\partial_y H^\varepsilon+H^\theta\partial_x\partial_y V^\varepsilon_1\partial^2_x\partial_{y} H^\varepsilon) dxdy\\
           &\int^\infty_0\int^\infty_{-\infty}\partial_{x}H^\theta\partial^2_y V^\varepsilon_2\partial_x\partial_y H^\varepsilon dxdy=-\int^\infty_0\int^\infty_{-\infty}(\partial_x\partial_yH^\theta\partial_y V^\varepsilon_2\partial_x\partial_y H^\varepsilon+\partial_{x}H^\theta\partial_y V^\varepsilon_2\partial_x\partial^2_{y} H^\varepsilon) dxdy\\
           &\int^\infty_0\int^\infty_{-\infty}H^\theta\partial_x\partial^2_{y} V^\varepsilon_2\partial_x\partial_y H^\varepsilon dxdy=-\int^\infty_0\int^\infty_{-\infty}(\partial_{y}H^\theta\partial_x\partial_y V^\varepsilon_2\partial_x\partial_y H^\varepsilon+H^\theta\partial_x\partial_y V^\varepsilon_2\partial_x\partial^2_{y} H^\varepsilon) dxdy.
       \end{align*}
       Therefore, using the Sobolev embedding inequality, we obtain
    \begin{align*}        \mathcal{D}_5&\leq3\|\partial_{x}\partial_yV^\varepsilon_1\|_{L^2_{xy}}\|\partial_x H^\theta\|_{L^\infty_{xy}}\|\partial_{x}\partial_y H^\varepsilon\|_{L^2_{xy}}+\|\partial_yV^\varepsilon_1\|_{L^2_{xy}}\|\partial^2_x H^\theta\|_{L^\infty_{xy}}\|\partial_{x}\partial_y H^\varepsilon\|_{L^2_{xy}}\\
        &\quad+3\|\partial_{x}V^\varepsilon_1\|_{L^2_{xy}}\|\partial_x\partial_y H^\theta\|_{L^\infty_{xy}}\|\partial_{x}\partial_y H^\varepsilon\|_{L^2_{xy}}+\|V^\varepsilon_1\|_{L^2_{xy}}\|\partial^2_x\partial_y H^\theta\|_{L^\infty_{xy}}\|\partial_{x}\partial_y H^\varepsilon\|_{L^2_{xy}}\\
        &\quad+\|\partial_{x}\partial_yV^\varepsilon_2\|_{L^2_{xy}}\|\partial_y H^\theta\|_{L^\infty_{xy}}\|\partial_{x}\partial_y H^\varepsilon\|_{L^2_{xy}}+2\|\partial_yV^\varepsilon_2\|_{L^2_{xy}}\|\partial_{x}\partial_y H^\theta\|_{L^\infty_{xy}}\|\partial_{x}\partial_y H^\varepsilon\|_{L^2_{xy}}\\
        &\quad+\|\partial_xV^\varepsilon_2\|_{L^2_{xy}}\|\partial^2_y H^\theta\|_{L^\infty_{xy}}\|\partial_{x}\partial_y H^\varepsilon\|_{L^2_{xy}}+\|V^\varepsilon_2\|_{L^2_{xy}}\|\partial_x\partial^2_y H^\theta\|_{L^\infty_{xy}}\|\partial_{x}\partial_y H^\varepsilon\|_{L^2_{xy}}\\
        &\quad+\|\partial_y H^\theta\|_{L^\infty_{xy}}\|\partial_{x}V^\varepsilon_1\|_{L^2_{xy}}\|\partial^2_{x}\partial_y H^\varepsilon\|_{L^2_{xy}}+\|H^\theta\|_{L^\infty_{xy}}\|\partial_{x}\partial_yV^\varepsilon_1\|_{L^2_{xy}}\|\partial^2_{x}\partial_y H^\varepsilon\|_{L^2_{xy}}\\
        &\quad+\|\partial_{x}\partial_y H^\theta\|_{L^\infty_{xy}}\|\partial_{y}V^\varepsilon_2\|_{L^2_{xy}}\|\partial_{x}\partial_y H^\varepsilon\|_{L^2_{xy}}+\|\partial_{x}H^\theta\|_{L^\infty_{xy}}\|\partial_yV^\varepsilon_2\|_{L^2_{xy}}\|\partial_{x}\partial^2_y H^\varepsilon\|_{L^2_{xy}}\\
        &\quad+\|\partial_y H^\theta\|_{L^\infty_{xy}}\|\partial_{x}\partial_{y}V^\varepsilon_2\|_{L^2_{xy}}\|\partial_{x}\partial_y H^\varepsilon\|_{L^2_{xy}}+\|H^\theta\|_{L^\infty_{xy}}\|\partial_{x}\partial_yV^\varepsilon_2\|_{L^2_{xy}}\|\partial_{x}\partial^2_y H^\varepsilon\|_{L^2_{xy}}\\
        &\leq\frac{1}{10}\|\partial^2_{x}\partial_y H^\varepsilon\|^2_{L^2_{xy}}+\frac{1}{10}\|\partial_x\partial^2_{y} H^\varepsilon\|^2_{L^2_{xy}}+C\|\partial_x\partial_y V^\varepsilon_2\|^2_{L^2_{xy}}\\
        &\quad+C\|\partial_x\partial_y H^\varepsilon\|^2_{L^2_{xy}}(1+\|\partial_x\partial_y H^\varepsilon\|^2_{L^2_{xy}})+C\|\partial_x\partial_y V^\varepsilon_1\|^2_{L^2_{xy}}+C\varepsilon^{-\frac{1}{2}}.
	\end{align*}
	 Combining Lemma $\ref{lem f L2}$, we can collect items with similar estimates
	\begin{equation*}
		\begin{split}{}
        \mathcal{D}_6&\leq\varepsilon^{-\frac{1}{2}}\|\partial_x\partial_y f^\varepsilon\|_{L^2_{xy}}\|\partial_x\partial_y H^\varepsilon\|_{L^2_{xy}}\leq C\|\partial_x\partial_y H^\varepsilon\|^2_{L^2_{xy}}+C\varepsilon^{-\frac{1}{2}}.
		\end{split}
	\end{equation*}
       It then yields
       \begin{align}\label{frac{d}{dt} partial xy H varepsilon}
			&\frac{1}{2}\frac{d}{dt}\|\partial_x\partial_y H^\varepsilon\|^2_{L^2_{xy}}+\|\partial^2_{x}\partial_{y} H^\varepsilon\|^2_{L^2_{xy}}+\|\partial_{x}\partial^2_{y} H^\varepsilon\|^2_{L^2_{xy}}\notag\\
            &\leq\frac{1}{2}\|\partial^2_{x}\partial_{y} H^\varepsilon\|^2_{L^2_{xy}}+\frac{1}{2}\|\partial_{x}\partial^2_{y} H^\varepsilon\|^2_{L^2_{xy}}+\frac{1}{4}\varepsilon\|\partial^2_{x}\partial_{y} V^\varepsilon\|^2_{L^2_{xy}}+\frac{1}{4}\varepsilon\|\partial_{x}\partial^2_{y} V^\varepsilon\|^2_{L^2_{xy}}\\
            &\;\;\;+C\|\partial_x\partial_y H^\varepsilon\|^2_{L^2_{xy}}(\|\partial_x\partial_y H^\varepsilon\|^2_{L^2_{xy}}+1)+C\|\partial^2_xH^\varepsilon\|^2_{L^2_{xy}}+C\|\partial^2_yH^\varepsilon\|^2_{L^2_{xy}}\notag\\
            &\quad+C\|\partial_x\partial_y V^\varepsilon\|^2_{L^2_{xy}}\|\partial_x\partial_y H^\varepsilon\|^2_{L^2_{xy}}+C\|\partial_x\partial_y H^\varepsilon\|^2_{L^2_{xy}}+C\|\partial_x\partial_y V^\varepsilon\|^2_{L^2_{xy}}+C\varepsilon^{-1}.\notag\notag
	\end{align}
    \textbf{Step 2. The \(L^2\) estimate of $\partial_x\partial_yV^\varepsilon$.}
    Taking $-\partial_{x}$ to $(\ref{HVU equation})_2$ and multiplying by $\partial_{x}\partial^{2}_{y} V^\varepsilon$, we obtain
            \begin{align*}
			&\frac{1}{2}\frac{d}{dt}\|\partial_{x}\partial_{y} V^\varepsilon\|^2_{L^2_{xy}}+\varepsilon\| \partial_{y}\partial^{2}_{x}V^\varepsilon\|^2_{L^2_{xy}}+\varepsilon\| \partial_{x}\partial^{2}_{y}V^\varepsilon\|^2_{L^2_{xy}}\\
            &=\varepsilon^\frac{1}{2}\int^\infty_0\int^\infty_{-\infty}\partial_{x}\nabla(U^\varepsilon\cdot V^\varepsilon)\cdot\partial_{x}\partial^{2}_{y} V^\varepsilon dxdy+\int^\infty_0\int^\infty_{-\infty}\partial_{x}\nabla(U^\varepsilon\cdot V^\theta)\cdot\partial_{x}\partial^{2}_{y} V^\varepsilon dxdy\\
            &\quad+\int^\infty_0\int^\infty_{-\infty}\partial_{x}\nabla(U^\theta\cdot V^\varepsilon)\cdot\partial_{x}\partial^{2}_{y} V^\varepsilon dxdy+\varepsilon^\frac{3}{2}\int^\infty_0\int^\infty_{-\infty}\partial_{x}\nabla(V^\varepsilon\cdot V^\varepsilon)\cdot\partial_{x}\partial^{2}_{y} V^\varepsilon dxdy\\
            &\quad+2\varepsilon\int^\infty_0\int^\infty_{-\infty}\partial_{x}\nabla(V^\varepsilon\cdot V^\theta)\cdot\partial_{x}\partial^{2}_{y} V^\varepsilon dxdy-\int^\infty_0\int^\infty_{-\infty}\partial_{x}\nabla H^\varepsilon\cdot\partial_{x}\partial^{2}_{y} V^\varepsilon dxdy\\
            &\quad-\varepsilon^{-\frac{1}{2}}\int^\infty_0\int^\infty_{-\infty}\partial_{x}g^\varepsilon\cdot\partial_{x}\partial^{2}_{y} V^\varepsilon dxdy=\sum^7_{i=1}\mathcal{M}_i.
	\end{align*}	
        For the term $\mathcal{M}_1$,
        \begin{align*}
       \mathcal{M}_1
            &=\varepsilon^\frac{1}{2}\int^\infty_0\int^\infty_{-\infty}\partial^{2}_{x}(U^\varepsilon\cdot V^\varepsilon)\cdot\partial_{x}\partial^{2}_{y} V_{1}^\varepsilon dxdy+\varepsilon^\frac{1}{2}\int^\infty_0\int^\infty_{-\infty}\partial_{x}\partial_{y}(U^\varepsilon\cdot V^\varepsilon)\cdot\partial_{x}\partial^{2}_{y} V_{2}^\varepsilon dxdy\\
            &=\mathcal{M}_{11}+\mathcal{M}_{12},
	\end{align*}
        Where
        \begin{equation*}
		\begin{split}{}
            \mathcal{M}_{11}
            &=\varepsilon^\frac{1}{2}\int^\infty_0\int^\infty_{-\infty}(\partial^{2}_{x}U^\varepsilon_1V^\varepsilon_1+2\partial_{x}U^\varepsilon_1\partial_{x}V^\varepsilon_1+U^\varepsilon_1 \partial^{2}_{x}V^\varepsilon_1+\partial^{2}_{x}U^\varepsilon_2 V^\varepsilon_2\\
           &\quad+2\partial_{x}U^\varepsilon_2 \partial_{x}V^\varepsilon_2+U^\varepsilon_2\partial^{2}_{x}V^\varepsilon_2)\partial_{x}\partial^{2}_{y}V_{1}^\varepsilon dxdy\\
           &\leq\varepsilon^\frac{1}{2}(\|\partial^{2}_{x}U^\varepsilon_1\|_{L^2_{xy}}\|V^\varepsilon_1\|_{L^\infty_{xy}}+2\|\partial_{x}U^\varepsilon_1\|_{L^{\infty}_{x}L^{2}_{y}}\|\partial_{x}V^\varepsilon_1\|_{L^{2}_{x}L^{\infty}_{y}}+\|U^\varepsilon_1\|_{L^\infty_{xy}}\|\partial^{2}_{x}V^\varepsilon_1\|_{L^2_{xy}}\\
           &\quad+\|\partial^{2}_{x}U^\varepsilon_2\|_{L^2_{xy}}\|V^\varepsilon_2\|_{L^\infty_{xy}}+2\|\partial_{x}U^\varepsilon_2\|_{L^{\infty}_{x}L^{2}_{y}}\|\partial_{x}V^\varepsilon_2\|_{L^{2}_{x}L^{\infty}_{y}}+\|U^\varepsilon_2\|_{L^\infty_{xy}}\|\partial^{2}_{x}V^\varepsilon_2\|_{L^2_{xy}})\|\partial_{x}\partial^{2}_{y}V_{1}^\varepsilon\|_{L^2_{xy}}\\
           &\leq \frac{1}{8}\varepsilon\|\partial_{x}\partial^{2}_{y}V_{1}^\varepsilon\|^2_{L^2_{xy}}+C\|\partial_{x}\partial_{y}V_{1}^\varepsilon\|^2_{L^2_{xy}}+C\|\partial^2_{x}V^\varepsilon\|^2_{L^2_{xy}}+C
        \end{split}
	\end{equation*}		
For the term $\mathcal{M}_{12}$,
\begin{align*}
       \mathcal{M}_{12}&=\varepsilon^\frac{1}{2}\int^\infty_0\int^\infty_{-\infty}(\partial_{x}\partial_{y}U^\varepsilon_1 V^\varepsilon_1+\partial_{y}U^\varepsilon_1 \partial_{x}V^\varepsilon_1+\partial_{x}U^\varepsilon_1\partial_{y}V^\varepsilon_1+U^\varepsilon_1 \partial_{x}\partial_{y}V^\varepsilon_1\\
       &\quad+\partial_{x}\partial_{y}U^\varepsilon_2 V^\varepsilon_2+\partial_{y}U^\varepsilon_2 \partial_{x}V^\varepsilon_2+\partial_{x}U^\varepsilon_2\partial_{y}V^\varepsilon_2+U^\varepsilon_2 \partial_{x}\partial_{y}V^\varepsilon_2)\partial_{x}\partial^{2}_{y} V_{2}^\varepsilon dxdy\\
       &\leq\varepsilon^\frac{1}{2}(\|\partial_{x}\partial_{y}U^\varepsilon_1\|_{L^{2}_{xy}}\|V^\varepsilon_1\|_{L^{\infty}_{xy}}       +\|\partial_{y}U^\varepsilon_1\|_{L^{\infty}_{x}L^{2}_{y}}\|\partial_{x}V^\varepsilon_1\|_{L^{2}_{x}L^{\infty}_{y}}       +\|\partial_{x}U^\varepsilon_1\|_{L^{2}_{x}L^{\infty}_{y}}\|\partial_{y}V^\varepsilon_1\|_{L^{\infty}_{x}L^{2}_{y}}\\       &\quad+\|U^\varepsilon_1\|_{L^{\infty}_{xy}}\|\partial_{x}\partial_{y}V^\varepsilon_1\|_{L^{2}_{xy}}+\|\partial_{x}\partial_{y}U^\varepsilon_2\|_{L^{2}_{xy}}\|V^\varepsilon_2\|_{L^{\infty}_{xy}}       +\|\partial_{y}U^\varepsilon_2\|_{L^{\infty}_{x}L^{2}_{y}}\|\partial_{x}V^\varepsilon_2\|_{L^{2}_{x}L^{\infty}_{y}}\\       &\quad+\|\partial_{x}U^\varepsilon_2\|_{L^{2}_{x}L^{\infty}_{y}}\|\partial_{y}V^\varepsilon_2\|_{L^{\infty}_{x}L^{2}_{y}}+\|U^\varepsilon_2\|_{L^{\infty}_{xy}}\|\partial_{x}\partial_{y}V^\varepsilon_2\|_{L^{2}_{xy}})
       \|\partial_{x}\partial^{2}_{y} V_{2}^\varepsilon\|_{L^{2}_{xy}}\\
       &\leq\frac{1}{8}\varepsilon\|\partial_{x}\partial^{2}_{y} V_{2}^\varepsilon\|^{2}_{L^{2}_{xy}}+C\|\partial_{x}\partial_{y}V^\varepsilon\|^{2}_{L^{2}_{xy}}+C\varepsilon^{-\frac{1}{2}}.
	\end{align*}
Note that
\begin{align*}
       \mathcal{M}_{2}&=
\int^\infty_0\int^\infty_{-\infty}\partial^{2}_{x}(U^\varepsilon\cdot V^\theta)\partial_{x}\partial^{2}_{y} V^\varepsilon_{1} dxdy
+\int^\infty_0\int^\infty_{-\infty}\partial_{x}\partial_{y}(U^\varepsilon\cdot V^\theta)\partial_{x}\partial^{2}_{y} V^\varepsilon_{2} dxdy\\
&=\mathcal{M}_{21}+\mathcal{M}_{22}.
	\end{align*}
By Sobolev embedding inequality and integration by parts, we can obtain
\begin{align*}
       \mathcal{M}_{21}&=-\int^\infty_0\int^\infty_{-\infty}\partial_y(\partial^{2}_{x}U^\varepsilon_1 V^\theta_1+2\partial_{x}U^\varepsilon_1 \partial_{x}V^\theta_1+U^\varepsilon_1 \partial^{2}_{x}V^\theta_1+\partial^{2}_{x}U^\varepsilon_2 V^\theta_2\\
       &\quad+2\partial_{x}U^\varepsilon_2 \partial_{x}V^\theta_2+U^\varepsilon_2 \partial^{2}_{x}V^\theta_2)\partial_{x}\partial_{y} V^\varepsilon_{1} dxdy\\       &\leq(\|\partial^{2}_{x}\partial_{y}U^\varepsilon_1\|_{L^{2}_{xy}}\|V^\theta_1\|_{L^{\infty}_{xy}}       +\|\partial^{2}_{x}U^\varepsilon_1\|_{L^{2}_{xy}}\|\partial_{y}V^\theta_1\|_{L^{\infty}_{xy}}       +2\|\partial_{x}\partial_{y}U^\varepsilon_1\|_{L^{2}_{xy}}\|\partial_{x}V^\theta_1\|_{L^{\infty}_{xy}}\\       &\quad+2\|\partial_{x}U^\varepsilon_1\|_{L^{2}_{xy}}\|\partial_{x}\partial_{y}V^\theta_1\|_{L^{\infty}_{xy}}       +\|\partial_{y}U^\varepsilon_1\|_{L^{2}_{xy}}\|\partial^{2}_{x}V^\theta_1\|_{L^{\infty}_{xy}}+\|U^\varepsilon_1\|_{L^{\infty}_{xy}}\|\partial^{2}_{x}\partial_{y}V^\theta_1\|_{L^{2}_{xy}}\\
       &\quad+\|\partial^{2}_{x}\partial_{y}U^\varepsilon_2\|_{L^{2}_{xy}}\|V^\theta_2\|_{L^{\infty}_{xy}}       +\|\partial^{2}_{x}U^\varepsilon_2\|_{L^{2}_{xy}}\|\partial_{y}V^\theta_2\|_{L^{\infty}_{xy}}       +2\|\partial_{x}\partial_{y}U^\varepsilon_2\|_{L^{2}_{xy}}\|\partial_{x}V^\theta_2\|_{L^{\infty}_{xy}}\\       &\quad+2\|\partial_{x}U^\varepsilon_2\|_{L^{2}_{xy}}\|\partial_{x}\partial_{y}V^\theta_2\|_{L^{\infty}_{xy}}       +\|\partial_{y}U^\varepsilon_2\|_{L^{2}_{xy}}\|\partial^{2}_{x}V^\theta_2\|_{L^{\infty}_{xy}}\\
       &\quad+\|U^\varepsilon_2\|_{L^{\infty}_{xy}}\|\partial^{2}_{x}\partial_{y}V^\theta_2\|_{L^{2}_{xy}})\|\partial_{x}\partial_{y} V^\varepsilon_1\|_{L^{2}_{xy}}\\
       &\leq C\|\partial_{x}\partial_{y} V^\varepsilon_1\|^2_{L^{2}_{xy}}+C\|\partial^{2}_{x}\omega^\varepsilon\|^2_{L^{2}_{xy}}+C\varepsilon^{-\frac{1}{2}}.
\end{align*}

For the term $\mathcal{M}_{22}$,
\begin{align*}
       \mathcal{M}_{22}&=\int^\infty_0\int^\infty_{-\infty}\varepsilon^{-\frac{1}{2}}(\partial_{x}\partial_{y}U^\varepsilon_1 V^\theta_1+\partial_{y}U^\varepsilon_1\partial_{x}V^\theta_1
+\partial_{x}\partial_{y}U^\varepsilon_2 V^\theta_2+\partial_{y}U^\varepsilon_2\partial_{x}V^\theta_2\\
&\quad+
\partial_{x}U^\varepsilon_{2} \partial_{y}V^\theta_{2}+U^\varepsilon_{2} \partial_{x}\partial_{y}V^\theta_{2})\varepsilon^{\frac{1}{2}}\partial_{x}\partial^2_{y} V^\varepsilon_{2} dxdy\notag\\
&\quad-\int^\infty_{-\infty}(\partial_{x}\partial_{y}U^\varepsilon_{1} \partial_{y}V^\theta_{1}+\partial_{x}U^\varepsilon_1\partial^2_{y}V^\theta_1+\partial_{y}U^\varepsilon_1\partial_{x}\partial_{y}V^\theta_1+U^\varepsilon_{1}\partial_{x}\partial^2_{y}V^\theta_{1})\partial_{x}\partial_{y} V^\varepsilon_{2}dxdy\notag\\
&\leq\varepsilon^{-\frac{1}{2}}(\|\partial_{x}\partial_{y}U^\varepsilon_1\|_{L^{2}_{xy}} \|V^\theta_1\|_{L^{\infty}_{xy}}
+\|\partial_{y}U^\varepsilon_1\|_{L^{2}_{xy}}\|\partial_{x}V^\theta_1\|_{L^{\infty}_{xy}}+\|\partial_{x}\partial_{y}U^\varepsilon_2\|_{L^{2}_{xy}} \|V^\theta_2\|_{L^{\infty}_{xy}}\\
&\quad+\|\partial_{y}U^\varepsilon_2\|_{L^{2}_{xy}}\|\partial_{x}V^\theta_2\|_{L^{\infty}_{xy}}+\|\partial_{x}U^\varepsilon_2\|_{L^{2}_{xy}}\|\partial_{y}V^\theta_2\|_{L^{\infty}_{xy}}
+\|U^\varepsilon_2\|_{L^2_{xy}}\|\partial_{x}\partial_{y}V^\theta_2\|_{L^{\infty}_{xy}})\varepsilon^{\frac{1}{2}}\|\partial_{x}\partial^2_{y} V^\varepsilon_{2}\|_{L^{2}_{xy}}\\
&\quad+(\|\partial_{x}\partial_{y}U^\varepsilon_{1}\|_{L^{2}_{xy}} \|\partial_{y}V^\theta_{1}\|_{L^{\infty}_{xy}}+\|\partial_{x}U^\varepsilon_1\|_{L^{2}_{xy}}\|\partial^2_{y}V^\theta_1\|_{L^{\infty}_{xy}}+\|\partial_{y}U^\varepsilon_1\|_{L^{2}_{xy}}\|\partial_{x}\partial_{y}V^\theta_1\|_{L^{\infty}_{xy}}\\
&\quad+\|U^\varepsilon_{1}\|_{L^{2}_{xy}}\|\partial_{x}\partial^2_{y}V^\theta_{1}\|_{L^{\infty}_{xy}})\|\partial_{x}\partial_{y} V^\varepsilon_{2}\|_{L^{2}_{xy}}\\
&\leq \frac{1}{8}\varepsilon\|\partial_{x}\partial^2_{y} V^\varepsilon_{2}\|^2_{L^{2}_{xy}}+C\|\partial_{x}\partial_{y} V^\varepsilon_{2}\|^2_{L^{2}_{xy}}+C\varepsilon^{-\frac{3}{2}}.
\end{align*}
Then
\begin{equation*}
		\begin{split}{}
\mathcal{M}_{2}&\leq\frac{1}{8}\varepsilon\|\partial_{x}\partial^2_{y} V^\varepsilon_{2}\|^2_{L^{2}_{xy}}+C\|\partial_{x}\partial_{y} V^\varepsilon\|^2_{L^{2}_{xy}}+C\|\partial^{2}_{x}\omega^\varepsilon\|_{L^{2}_{xy}}+C\varepsilon^{-\frac{3}{2}}.
\end{split}
	\end{equation*}
Next, we estimate the term $\mathcal{M}_{3}$,
\begin{equation*}
		\begin{split}{}
       \mathcal{M}_{3}&=
\int^\infty_0\int^\infty_{-\infty}\partial^{2}_{x}(U^\theta\cdot V^\varepsilon)\cdot\partial_{x}\partial^{2}_{y} V^\varepsilon_{1} dxdy+
\int^\infty_0\int^\infty_{-\infty}\partial_{x}\partial_{y}(U^\theta\cdot V^\varepsilon)\cdot\partial_{x}\partial^{2}_{y} V^\varepsilon_{2} dxdy\\
&
=\mathcal{M}_{31}+\mathcal{M}_{32}.
\end{split}
	\end{equation*}
Here by the curl conditions,
\begin{align*}
       \mathcal{M}_{31}&=\int^\infty_0\int^\infty_{-\infty}\varepsilon^{-\frac{1}{2}}(\partial^{2}_{x}U^\theta_1 V^\varepsilon_1+U^\theta_1 \partial^2_{x}V^\varepsilon_1+\partial^2_{x}U^\theta_2 V^\varepsilon_2)\varepsilon^{\frac{1}{2}}\partial_{x}\partial^2_{y} V_{1}^\varepsilon dxdy\\
       &\quad-\int^\infty_0\int^\infty_{-\infty}(2\partial_{x}\partial_{y}U^\theta_1\partial_{x} V^\varepsilon_1+2\partial_{x}U^\theta_1 \partial_{x}\partial_{y}V^\varepsilon_1+2\partial_{x}\partial_{y}U^\theta_2V^\varepsilon_2+2\partial_{x}U^\theta_2 \partial_{x}\partial_{y}V^\varepsilon_2\\
       &\quad+\partial_{y}U^\theta_2\partial^{2}_{x}V^\varepsilon_2+U^\theta_2\partial^{2}_{x}\partial_{y}V^\varepsilon_2)\partial_{x}\partial_{y} V_{1}^\varepsilon dxdy\\
       &\leq\varepsilon^{-\frac{1}{2}}(\|\partial^{2}_{x}U^\theta_1\|_{L^{\infty}_{xy}}\|V^\varepsilon_1\|_{L^{2}_{xy}}+\|U^\theta_1\|_{L^{\infty}_{xy}} \|\partial^2_{x}V^\varepsilon_1\|_{L^{2}_{xy}}+\|\partial^2_{x}U^\theta_2\|_{L^{\infty}_{xy}}\|V^\varepsilon_2\|_{L^{2}_{xy}})\varepsilon^{\frac{1}{2}}\|\partial_{x}\partial^2_{y}V^\varepsilon_1\|_{L^{2}_{xy}}\\
       &\quad+(2\|\partial_{x}\partial_{y}U^\theta_1\|_{L^{\infty}_{xy}}\|\partial_{x}V^\varepsilon_1\|_{L^{2}_{xy}}
        +2\|\partial_{x}U^\theta_1\|_{L^{\infty}_{xy}}\| \partial_{x}\partial_{y}V^\varepsilon_1\|_{L^{2}_{xy}}+2\|\partial_{x}\partial_{y}U^\theta_2\|_{L^{\infty}_{xy}}\|V^\varepsilon_2\|_{L^{2}_{xy}}\\
        &\quad+2\|\partial_{x}U^\theta_2\|_{L^{\infty}_{xy}}\| \partial_{x}\partial_{y}V^\varepsilon_2\|_{L^{2}_{xy}}+\|\partial_yU^\theta_2\|_{L^{\infty}_{xy}}\|\partial^2_{x}V^\varepsilon_2\|_{L^{2}_{xy}}+\|U^\theta_2\|_{L^{\infty}_{xy}}\|\partial^2_{x}\partial_yV^\varepsilon_2\|_{L^{2}_{xy}})\|\partial_{x}\partial_{y} V_{1}^\varepsilon\|_{L^{2}_{xy}}\\
        &\leq \frac{1}{8}\varepsilon\|\partial_{x}\partial^2_{y}V^\varepsilon_1\|^2_{L^{2}_{xy}}+C\|\partial_{x}\partial_{y}V^\varepsilon\|^{2}_{L^{2}_{xy}}+C\|\partial^{2}_{x}V^\varepsilon\|^{2}_{L^{2}_{xy}}+C\varepsilon^{-1}.
	\end{align*}
    Similarly combining the curl conditions,
    \begin{align*}
           \mathcal{M}_{32}&=
    \int^\infty_0\int^\infty_{-\infty}\varepsilon^{-\frac{1}{2}}(\partial_{x}\partial_{y}U^\theta_1 V^\varepsilon_1+\partial_{y}U^\theta_1 \partial_{x}V^\varepsilon_1+\partial_{x}U^\theta_1 \partial_{x}V^\varepsilon_2+U^\theta_1\partial^2_{x}V^\varepsilon_2\\
    &\quad+\partial_{x}\partial_{y}U^\theta_2 V^\varepsilon_2+\partial_{y}U^\theta_2 \partial_{x}V^\varepsilon_2)\varepsilon^{\frac{1}{2}}\partial_{x}\partial^2_{y} V_{2}^\varepsilon dxdy\\
    &\quad-\int^\infty_{0}\int^\infty_{-\infty}(\partial_{x}\partial_{y}U^\theta_{2}\partial_{y}V^\varepsilon_{2}+\partial_{x}U^\theta_{2}\partial^2_{y}V^\varepsilon_{2})\partial_{x}\partial_{y}V^\varepsilon_{2} dxdy+\int^\infty_{-\infty}\partial_{x}U^\theta_{2}\partial_{y}V^\varepsilon_{2}\partial_{x}\partial_{y}V^\varepsilon_{2} dx\\
    &\quad-\int^\infty_{0}\int^\infty_{-\infty}\frac{1}{2}\partial_{y}U^\theta_{2}|\partial_{x}\partial_{y}V^\varepsilon_{2}|^2 dxdy+\int^\infty_{-\infty}U^\theta_{2}|\partial_{x}\partial_{y}V^\varepsilon_{2}|^2 dx\\
    &\leq\varepsilon^{-\frac{1}{2}}(\|\partial_{x}\partial_{y}U^\theta_1\|_{L^{\infty}_{xy}} \|V^\varepsilon_1\|_{L^{2}_{xy}}+\|\partial_{y}U^\theta_1\|_{L^{\infty}_{xy}} \|\partial_{x}V^\varepsilon_1\|_{L^{2}_{xy}}+\|\partial_{x}U^\theta_1\|_{L^{\infty}_{xy}}\| \partial_{x}V^\varepsilon_2\|_{L^{2}_{xy}}\\
    &\quad+\|U^\theta_1\|_{L^{\infty}_{xy}}\|\partial^2_{x}V^\varepsilon_2\|_{L^{2}_{xy}}+\|\partial_{x}\partial_{y}U^\theta_2\|_{L^{\infty}_{xy}}\| V^\varepsilon_2\|_{L^{2}_{xy}}+\|\partial_{y}U^\theta_2\|_{L^{\infty}_{xy}} \|\partial_{x}V^\varepsilon_2\|_{L^{2}_{xy}})\varepsilon^{\frac{1}{2}}\|\partial_{x}\partial^2_{y} V_{2}^\varepsilon\|_{L^{2}_{xy}}\\
    &\quad+(\|\partial_{x}\partial_{y}U^\theta_{2}\|_{L^{\infty}_{xy}}\|\partial_{y}V^\varepsilon_{2}\|_{L^{2}_{xy}}+\|\partial_{x}U^\theta_{2}\|_{L^{\infty}_{xy}}\|\partial^2_{y}V^\varepsilon_{2}\|_{L^{2}_{xy}})\|\partial_{x}\partial_{y}V^\varepsilon_{2}\|_{L^{2}_{xy}}\\
    &\quad+\|\partial_{x}U^\theta_{2}(t,x,0)\|_{L^{\infty}_{xy}}\|\partial_{y}V^\varepsilon_{2}\|^{\frac{1}{2}}_{L^2_{xy}}\|\partial^2_{y}V^\varepsilon_{2}\|^{\frac{1}{2}}_{L^2_{xy}}\|\partial_{x}\partial_{y}V^\varepsilon_{2}\|^{\frac{1}{2}}_{L^2_{xy}}\|\partial_{x}\partial^2_{y}V^\varepsilon_{2}\|^{\frac{1}{2}}_{L^2_{xy}}\\
    &\quad+\frac{1}{2}\|\partial_{y}U^\theta_{2}\|_{L^{\infty}_{xy}}\|\partial_{x}\partial_{y}V^\varepsilon_{2}\|^2_{L^2_{xy}}+\|U^\theta_{2}(t,x,0)\|_{L^{\infty}_{xy}}\|\partial_{x}\partial_{y}V^\varepsilon_{2}\|_{L^2_{xy}}\|\partial_{x}\partial^2_{y}V^\varepsilon_{2}\|_{L^2_{xy}}\\
    &\leq\frac{1}{8}\varepsilon\|\partial_{x}\partial^{2}_{y} V^\varepsilon_2\|^{2}_{L^{2}_{xy}}+C\|\partial_{x}\partial_{y} V^\varepsilon_2\|^{2}_{L^{2}_{xy}}+C\|\partial^2_{x}V^\varepsilon_2\|_{L^{2}_{xy}}+C\|\partial^2_{y}V^\varepsilon_{2}\|^2_{L^{2}_{xy}}+C\varepsilon^{-\frac{1}{2}}.
    	\end{align*}
    Combining $M_{31}$ and $M_{32}$, we deduce
    \begin{align*}
           \mathcal{M}_{3}&\leq\frac{1}{4}\varepsilon\|\partial_{x}\partial^{2}_{y} V^\varepsilon_2\|^{2}_{L^{2}_{xy}}+C\|\partial_{x}\partial_{y} V^\varepsilon_2\|^{2}_{L^{2}_{xy}}+C\|\partial^2_{x}V^\varepsilon_2\|_{L^{2}_{xy}}+C\|\partial^2_{y}V^\varepsilon_{2}\|^2_{L^{2}_{xy}}+C\varepsilon^{-1}.
    	\end{align*}
    For the term $\mathcal{M}_{4}$, by Sobolev inequality, we have
    \begin{align*}
       \mathcal{M}_{4}&=\varepsilon^\frac{3}{2}\int^\infty_0\int^\infty_{-\infty}\partial^{2}_{x}(V^\varepsilon\cdot V^\varepsilon)\cdot\partial_{x}\partial^{2}_{y} V^\varepsilon_{1} dxdy+\varepsilon^\frac{3}{2}\int^\infty_0\int^\infty_{-\infty}\partial_{x}\partial_{y}(V^\varepsilon\cdot V^\varepsilon)\cdot\partial_{x}\partial^{2}_{y} V^\varepsilon_{2} dxdy\\&
       =\varepsilon^\frac{3}{2}\int^\infty_0\int^\infty_{-\infty}(2\partial^{2}_{x}V^\varepsilon_1 V^\varepsilon_1+2|\partial_{x}V^\varepsilon_1|^{2}+2\partial^{2}_{x}V^\varepsilon_2 V^\varepsilon_2+2|\partial_{x}V^\varepsilon_2|^{2})\partial_{x}\partial^{2}_{y} V^\varepsilon_{1} dxdy\\
       &\quad+\varepsilon^\frac{3}{2}\int^\infty_0\int^\infty_{-\infty}(2\partial_{x}\partial_{y}V^\varepsilon_1 V^\varepsilon_1+2\partial_{y}V^\varepsilon_1\partial_{x}V^\varepsilon_1+2\partial_{x}\partial_{y}V^\varepsilon_2 V^\varepsilon_2+2\partial_{y}V^\varepsilon_2\partial_{x}V^\varepsilon_2)\partial_{x}\partial^{2}_{y} V^\varepsilon_{2} dxdy\\
       &\leq2\varepsilon^\frac{3}{2}(\|\partial^{2}_{x}V^\varepsilon_1\|_{L^{2}_{x}L^{\infty}_{y}}\|V^\varepsilon_1\|_{L^{\infty}_{x}L^{2}_{y}}
       +\|\partial_{x}V^\varepsilon_1\|_{L^{2}_{x}L^{\infty}_{y}}
       \|\partial_{x}V^\varepsilon_1\|_{L^{\infty}_{x}L^{2}_{y}}+\|\partial^{2}_{x}V^\varepsilon_2\|_{L^{2}_{x}L^{\infty}_{y}}\|V^\varepsilon_2\|_{L^{\infty}_{x}L^{2}_{y}}\\
       &\quad+\|\partial_{x}V^\varepsilon_2\|_{L^{2}_{x}L^{\infty}_{y}}
       \|\partial_{x}V^\varepsilon_2\|_{L^{\infty}_{x}L^{2}_{y}})\|\partial_{x}\partial^{2}_{y}V^\varepsilon\|_{L^{2}_{xy}}\\
       &\quad+2\varepsilon^\frac{3}{2}(\|\partial_{x}\partial_{y}V^\varepsilon_1\|_{L^{\infty}_{x}L^{2}_{y}}
       \|V^\varepsilon_1\|_{L^{2}_{x}L^{\infty}_{y}}+\|\partial_{y}V^\varepsilon_1\|_{L^{\infty}_{x}L^{2}_{y}}\|\partial_{x}V^\varepsilon_1\|_{L^{2}_{x}L^{\infty}_{y}}+\|\partial_{x}\partial_{y}V^\varepsilon_2\|_{L^{\infty}_{x}L^{2}_{y}}
       \|V^\varepsilon_2\|_{L^{2}_{x}L^{\infty}_{y}}\\
       &\quad+\|\partial_{y}V^\varepsilon_2\|_{L^{\infty}_{x}L^{2}_{y}}\|\partial_{x}V^\varepsilon_2\|_{L^{2}_{x}L^{\infty}_{y}})
       \|\partial_{x}\partial^{2}_{y}V^\varepsilon\|_{L^{2}_{xy}}\\
       &\leq\frac{1}{8}\varepsilon\|\partial_{x}\partial^{2}_{y}V^\varepsilon\|^{2}_{L^{2}_{xy}}+\frac{1}{2}\varepsilon\|\partial^{2}_{x}\partial_{y}V^\varepsilon\|^{2}_{L^{2}_{xy}}+C\|\partial^2_{x}V^\varepsilon\|^{2}_{L^{2}_{xy}}+C
       \|\partial_{x}\partial_{y}V^\varepsilon\|^{2}_{L^{2}_{xy}}.
	\end{align*}
    For the term $\mathcal{M}_{5}$,
    \begin{align*}
       \mathcal{M}_{5}&=2\varepsilon\int^\infty_0\int^\infty_{-\infty}\partial^{2}_{x}(V^\varepsilon\cdot V^\theta)\partial_{x}\partial^{2}_{y} V_{1}^\varepsilon dxdy+2\varepsilon\int^\infty_0\int^\infty_{-\infty}\partial_{x}\partial_{y}(V^\varepsilon\cdot V^\theta)\partial_{x}\partial^{2}_{y} V_{2}^\varepsilon dxdy\\
       &=2\varepsilon\int^\infty_0\int^\infty_{-\infty}(\partial^{2}_{x}V^\varepsilon_1 V^\theta_1+2\partial_{x}V^\varepsilon_1 \partial_{x}V^\theta_1+V^\varepsilon_1\partial^{2}_{x}V^\theta_1+\partial^{2}_{x}V^\varepsilon_2 V^\theta_2+2\partial_{x}V^\varepsilon_2 \partial_{x}V^\theta_2\\
       &\quad+V^\varepsilon_2\partial^{2}_{x}V^\theta_2)\partial_{x}\partial^{2}_{y} V_{1}^\varepsilon dxdy+2\varepsilon\int^\infty_0\int^\infty_{-\infty}(\partial_{x}\partial_{y}V^\varepsilon_1 V^\theta_1+\partial_{y}V^\varepsilon_1 \partial_{x}V^\theta_1+\partial_{x}V^\varepsilon_1 \partial_{y}V^\theta_1+V^\varepsilon_1 \partial_{x}\partial_{y}V^\theta_1\\
       &\quad+\partial_{x}\partial_{y}V^\varepsilon_2 V^\theta_2+\partial_{y}V^\varepsilon_2 \partial_{x}V^\theta_2+\partial_{x}V^\varepsilon_2 \partial_{y}V^\theta_2+V^\varepsilon_2 \partial_{x}\partial_{y}V^\theta_2)\partial_{x}\partial^{2}_{y} V_{2}^\varepsilon dxdy\\
       &\leq 2\varepsilon\left(\|\partial^2_{x}V^\varepsilon_1\|_{L^{2}_{xy}}\|V^\theta_1\|_{L^{\infty}_{xy}}+2\|\partial_{x}V^\varepsilon_1\|_{L^{2}_{xy}}\|\partial_{x}V^\theta_1\|_{L^{\infty}_{xy}}+\|V^\varepsilon_1\|_{L^{2}_{xy}}\|\partial^2_{x}V^\theta_1\|_{L^{\infty}_{xy}}\right)\|\partial_{x}\partial^{2}_{y}V^\varepsilon_1\|_{L^{2}_{xy}}\\
       &\quad+2\varepsilon\left(\|\partial^2_{x}V^\varepsilon_2\|_{L^{2}_{xy}}\|V^\theta_2\|_{L^{\infty}_{xy}}+2\|\partial_{x}V^\varepsilon_2\|_{L^{2}_{xy}}\|\partial_{x}V^\theta_2\|_{L^{\infty}_{xy}}+\|V^\varepsilon_2\|_{L^{2}_{xy}}\|\partial^2_{x}V^\theta_2\|_{L^{\infty}_{xy}}\right)\|\partial_{x}\partial^{2}_{y}V^\varepsilon_1\|_{L^{2}_{xy}}\\
       &\quad+2\varepsilon\left(\|\partial_{x}\partial_{y}V^\varepsilon_1\|_{L^{2}_{xy}}\|V^\theta_1\|_{L^{\infty}_{xy}}+\|\partial_{y}V^\varepsilon_1\|_{L^{2}_{xy}}\|\partial_{x}V^\theta_1\|_{L^{\infty}_{xy}}\right)\|\partial_{x}\partial^{2}_{y}V^\varepsilon_1\|_{L^{2}_{xy}}\\
       &\quad+2\varepsilon\left(\|\partial_{x}V^\varepsilon_1\|_{L^{2}_{xy}}\|\partial_{y}V^\theta_1\|_{L^{\infty}_{xy}}+\|V^\varepsilon_1\|_{L^{2}_{xy}}\|\partial_{x}\partial_{y}V^\theta_1\|_{L^{\infty}_{xy}}\right)\|\partial_{x}\partial^{2}_{y}V^\varepsilon_1\|_{L^{2}_{xy}}\\
       &\quad+2\varepsilon\left(\|\partial_{x}\partial_{y}V^\varepsilon_2\|_{L^{2}_{xy}}\|V^\theta_2\|_{L^{\infty}_{xy}}+\|\partial_{y}V^\varepsilon_2\|_{L^{2}_{xy}}\|\partial_{x}V^\theta_2\|_{L^{\infty}_{xy}}\right)\|\partial_{x}\partial^{2}_{y}V^\varepsilon_1\|_{L^{2}_{xy}}\\
       &\quad+2\varepsilon\left(\|\partial_{x}V^\varepsilon_2\|_{L^{2}_{xy}}\|\partial_{y}V^\theta_2\|_{L^{\infty}_{xy}}+\|V^\varepsilon_2\|_{L^{2}_{xy}}\|\partial_{x}\partial_{y}V^\theta_2\|_{L^{\infty}_{xy}}\right)\|\partial_{x}\partial^{2}_{y}V^\varepsilon_1\|_{L^{2}_{xy}}\\
       &\leq\frac{1}{8}\varepsilon\|\partial_{x}\partial^{2}_{y}V^\varepsilon\|^{2}_{L^{2}_{xy}}+C\|\partial_{x}\partial_{y}V^\varepsilon\|^{2}_{L^{2}_{xy}}+C\|\partial^2_{x}V^\varepsilon\|^{2}_{L^{2}_{xy}}+C\varepsilon^{\frac{1}{2}}.
    	\end{align*}
   Next, applying integration by parts and {$\partial_yH^\varepsilon|_{y=0}=0$,}
	\begin{equation*}
		\begin{split}{}
       \mathcal{M}_{6}&=
\int^\infty_0\int^\infty_{-\infty}(\partial^2_{x}\partial_{y} H^\varepsilon\partial_{x}\partial_{y} V^\varepsilon_1+\partial_{x}\partial^2_{y} H^\varepsilon\partial_{x}\partial_{y} V^\varepsilon_2) dxdy\\
&\leq\frac{1}{4}\|\partial^{2}_{x}\partial_{y}H^\varepsilon\|^{2}_{L^{2}_{xy}}+\frac{1}{4}\|\partial_{x}\partial^{2}_{y}H^\varepsilon\|^{2}_{L^{2}_{xy}}+C\|\partial_{x}\partial_{y} V^\varepsilon\|^{2}_{L^{2}_{xy}}.
\end{split}
	\end{equation*}
        Using integration by parts, $(\ref{trace theorem})$ and Lemma $\ref{lem g L2}$, we can obtain the following
\begin{align*}
       \mathcal{M}_{7}&=
\varepsilon^{-\frac{1}{2}}\int^\infty_0\int^\infty_{-\infty}\partial_{x}\partial_{y}g^\varepsilon_1\partial_{x}\partial_{y} V^\varepsilon_1 dxdy
-\varepsilon^{-\frac{1}{2}}\int^\infty_{-\infty}\partial_{x}g^\varepsilon_2(t,x,0)\partial_{x}\partial_{y} V^\varepsilon_2(t,x,0)dx\\
&\quad+
\varepsilon^{-\frac{1}{2}}\int^\infty_0\int^\infty_{-\infty}\partial_{x}\partial_{y}g^\varepsilon_2\partial_{x}\partial_{y} V^\varepsilon_2 dxdy\\
&\leq\varepsilon^{-\frac{1}{2}}(\|\partial_{x}\partial_{y}g^\varepsilon_1\|_{L^{2}_{xy}}\|\partial_{x}\partial_{y} V^\varepsilon_1\|_{L^{2}_{xy}}+\|\partial_{x}\partial_{y}g^\varepsilon_2\|_{L^{2}_{xy}}\|\partial_{x}\partial_{y} V^\varepsilon_2\|_{L^{2}_{xy}})
+\varepsilon^{-\frac{1}{2}}\|\partial_{x}g^\varepsilon\|_{L^{2}_{x}H^1_y}\|\partial_{x}\partial_{y} V^\varepsilon\|_{L^{2}_{x}H^1_y}\\
&\leq\varepsilon^{-\frac{1}{2}}\cdot\varepsilon^{\frac{1}{4}}(\|\partial_{x}\partial_{y} V^\varepsilon_1\|_{L^{2}_{xy}}+\|\partial_{x}\partial_{y} V^\varepsilon_2\|_{L^{2}_{xy}})
+C\varepsilon^{-\frac{1}{2}}\cdot\varepsilon^{\frac{1}{4}}\|\partial_{x}\partial_{y} V^\varepsilon_2\|_{L^{2}_{x}H^1_y}
\\
&\leq\frac{1}{8}\varepsilon\|\partial_{x}\partial^{2}_{y} V^\varepsilon_2\|^{2}_{L^{2}_{xy}}+
C\|\partial_{x}\partial_{y} V^\varepsilon\|^{2}_{L^{2}_{xy}}+C\varepsilon^{-\frac{3}{2}}.
	\end{align*}
    Finally, by combining $M_1$-$M_7$, we arrive at
       \begin{align}\label{frac{d}{dt} partial xy V varepsilon}
			&\frac{1}{2}\frac{d}{dt}\|\partial_{x}\partial_{y} V^\varepsilon\|^2_{L^2_{xy}}+\varepsilon\|\partial^{2}_{x}\partial_{y} V^\varepsilon\|^{2}_{L^{2}_{xy}}+\varepsilon\|\partial_{x}\partial^{2}_{y} V^\varepsilon\|^{2}_{L^{2}_{xy}}\notag\\
            &\leq\frac{1}{2}(\varepsilon\|\partial^{2}_{x}\partial_{y} V^\varepsilon\|^{2}_{L^{2}_{xy}}+\varepsilon\|\partial_{x}\partial^{2}_{y} V^\varepsilon\|^{2}_{L^{2}_{xy}})+\frac{1}{4}(\|\partial^{2}_{x}\partial_{y} H^\varepsilon\|^{2}_{L^{2}_{xy}}+\|\partial_{x}\partial^{2}_{y} H^\varepsilon\|^{2}_{L^{2}_{xy}})\\
            &\quad+C\|\partial_{x}\partial_{y}V^\varepsilon\|^2_{L^2_{xy}}+C\|\partial^2_{x} V^\varepsilon\|^2_{L^2_{xy}}+C\|\partial^2_{y} V^\varepsilon\|^2_{L^2_{xy}}+C\|\partial^2_{x} \omega^\varepsilon\|^2_{L^2_{xy}}+C\varepsilon^{-\frac{3}{2}}.\notag\notag
	\end{align}
         Combining $(\ref{frac{d}{dt} partial xy H varepsilon})$-$(\ref{frac{d}{dt} partial xy V varepsilon})$, we can obtain
        \begin{align*}
			&\frac{d}{dt}(\|\partial_{x}\partial_{y} H^\varepsilon\|^2_{L^2_{xy}}+\|\partial_{x}\partial_{y} V^\varepsilon\|^2_{L^2_{xy}})+(\|\partial_x\partial^2_y H^\varepsilon\|^2_{L^2_{xy}}+\|\partial^2_x\partial_y H^\varepsilon\|^2_{L^2_{xy}}\\
            &\quad+\varepsilon\|\partial_x\partial^2_y V^\varepsilon\|^2_{L^2_{xy}}+\varepsilon\|\partial^2_x\partial_y V^\varepsilon\|^2_{L^2_{xy}})\\
            &\leq C(\|\partial_{x}\partial_{y} H^\varepsilon\|^2_{L^2_{xy}}+\|\partial_{x}\partial_{y} V^\varepsilon\|^2_{L^2_{xy}})+C\|\partial^2_{x} V^\varepsilon\|^2_{L^2_{xy}}+C\|\partial^2_{y} V^\varepsilon\|^2_{L^2_{xy}}+C\|\partial^2_{x} \omega^\varepsilon\|^2_{L^2_{xy}}+C\varepsilon^{-\frac{3}{2}}.
	\end{align*}

        {By the Gronwall's inequality, then it} implies that
         \begin{equation*}
		\begin{split}{}
         &\|\partial_{x}\partial_{y} H^\varepsilon\|^2_{L^2_{xy}}+\|\partial_{x}\partial_{y} V^\varepsilon\|^2_{L^2_{xy}}+\int^T_0(\|\partial_x\partial^2_y H^\varepsilon\|^2_{L^2_{xy}}+\|\partial^2_x\partial_y H^\varepsilon\|^2_{L^2_{xy}}+\varepsilon\|\partial_x\partial^2_y V^\varepsilon\|^2_{L^2_{xy}}+\varepsilon\|\partial^2_x\partial_y V^\varepsilon\|^2_{L^2_{xy}})\\
            &\leq C\varepsilon^{-\frac{3}{2}},
         \end{split}
	\end{equation*}

    \textbf{Proof of Theorem $\ref{th L infty convergence}$.}  {We can prove $(\ref{HVU varepsilon L infty})$ holds by the results of Lemma $\ref{lem HVUx L2}$-Lemma $\ref{lem HVUxy L2}$.\\

    \section{Appendix: Well-posedness of Chemotaxis Navier-Stokes system}\label{5}
         In this section, we prove Theorem $\ref{th nvu regularity}$ in conormal Sobolev spaces. For this purpose, we now present several facts that will be used subsequently.\\
         (i) Commutator estimate:
        \begin{equation}\label{i}
		\begin{split}{}
        \|\partial_y\partial^\alpha u\|_{L^2_{xy}}\leq \|\partial^\alpha\partial_y u\|_{L^2_{xy}}+\|[\partial_y,\partial^\alpha] u\|_{L^2_{xy}},	
		\end{split}
	\end{equation}
    where $[\partial_y,\partial^\alpha]=\partial_y\partial^\alpha-\partial^\alpha\partial_y$.\\
    (ii) Embedding inequality:
        \begin{equation}\label{ii}
		\begin{split}{}
        \|\frac{u}{\psi}\|_{L^\infty_{xy}}\leq C(\delta)\|(u,\partial_y u)\|^\frac{1}{2}_{\widetilde{H}^1_{xy}}\|(\partial_yu,\partial_{yy} u)\|^\frac{1}{2}_{\widetilde{H}^1_{xy}}. 	
		\end{split}
	\end{equation}\\
        (iii) Hardy inequality: Let
        \begin{equation*}
        \varphi(y)=\left\{
        \begin{split}{}
        &1\quad{\rm for}\;y\leq\frac{1}{2},\\
        &0\quad{\rm for}\;y\geq1,
        \end{split}
        \right.
		\end{equation*}
        then
        \begin{equation}\label{iii}
		\begin{split}{}
        \|\frac{\partial^\alpha u}{\psi}\|_{L^2_{xy}}&\leq \|\varphi(y)\frac{\partial^\alpha u}{\psi}\|_{L^2_{xy}}+\|(1-\varphi(y))\frac{\partial^\alpha u}{\psi}\|_{L^2_{xy}}\\
        &\leq\|\varphi(y)\frac{y}{\psi}\|_{L^\infty_{xy}}\|\frac{\partial^\alpha u}{y}\|_{L^2_{xy}}+\|\frac{1-\varphi(y)}{\psi}\|_{L^\infty_{xy}}\|\partial^\alpha u\|_{L^2_{xy}}\\
        &\leq C(\delta)(\|\partial_y\partial^\alpha u\|_{L^2_{xy}}+\|\partial^\alpha u\|_{L^2_{xy}}).
		\end{split}
	\end{equation}
        \begin{lemma}\label{lem4.1}
         {Under the initial data assumptions $(n_{in},v_{in},u_{in})$ of Theorem  $\ref{th nvu regularity}$, the following estimate holds for the solution $(n^\varepsilon, v^\varepsilon, u^\varepsilon)$ to the system $(\ref{nvu equation})$-$(\ref{nvu condition})_1$ on $[0, T]$,
		\begin{equation}\label{Equ(4.1)}
        \begin{split}{}        &\|n^\varepsilon\|^2_{L^\infty_TL^2_{xy}}+\|v^\varepsilon\|^2_{L^\infty_TL^2_{xy}}+\|u^\varepsilon\|^2_{L^\infty_TL^2_{xy}}+\|\nabla n^\varepsilon\|^2_{L^2_TL^2_{xy}}+\varepsilon\|\nabla v^\varepsilon\|^2_{L^2_TL^2_{xy}}+\varepsilon\|\nabla u^\varepsilon\|^2_{L^2_TL^2_{xy}}\\
        &\leq C(\|n^\varepsilon\|^2_{L^\infty_TH^1_{xy}}+\| v^\varepsilon\|^2_{L^\infty_TH^1_{xy}}+\| u^\varepsilon\|^2_{L^\infty_TH^1_{xy}})^2.\\
        \end{split}
		\end{equation}
        where $C>0$ independent of $\varepsilon$, which be a constant depending only on $T$ and the initial data $(n_{in},v_{in},u_{in})$.
		}
	\end{lemma}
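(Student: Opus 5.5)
The estimate (\ref{Equ(4.1)}) is a basic $L^2$ energy estimate whose right-hand side is allowed to depend on the $H^1_{xy}$ norms. I will therefore test each equation of (\ref{nvu equation}) against its own unknown. I will use the boundary conditions $(\ref{nvu condition})_1$ to kill or control the boundary terms, bound all nonlinear terms by $L^\infty_TH^1_{xy}$ quantities via the Gagliardo--Nirenberg inequalities (\ref{G-N inequality}), and close with Gronwall's inequality on $[0,T]$.

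\textbf{The $n^\varepsilon$ equation.} Multiplying by $n^\varepsilon$ and integrating gives
\[
\frac12\frac{d}{dt}\|n^\varepsilon\|^2_{L^2_{xy}}+\|\nabla n^\varepsilon\|^2_{L^2_{xy}}=-\int_{\Omega}n^\varepsilon v^\varepsilon\cdot\nabla n^\varepsilon\,dxdy.
\]
The boundary term $\int(\partial_yn^\varepsilon+n^\varepsilon v^\varepsilon_2)n^\varepsilon|_{y=0}$ vanishes by the flux condition. The convection term $u^\varepsilon\cdot\nabla n^\varepsilon\, n^\varepsilon$ integrates to zero because $\nabla\cdot u^\varepsilon=0$ and $u^\varepsilon_2|_{y=0}=0$. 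For the remaining term I use $\|n^\varepsilon v^\varepsilon\|_{L^2}\le\|n^\varepsilon\|_{L^4}\|v^\varepsilon\|_{L^4}\le C\|n^\varepsilon\|_{H^1}\|v^\varepsilon\|_{H^1}$ and Young's inequality, which gives
\[
\tfrac12\|\nabla n^\varepsilon\|^2_{L^2}+C\|n^\varepsilon\|^2_{H^1}\|v^\varepsilon\|^2_{H^1}.
\]

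\textbf{The $v^\varepsilon$ equation.} Multiplying by $v^\varepsilon$, the Laplacian produces no boundary term, since $\partial_yv^\varepsilon_1=0$ and $v^\varepsilon_2=0$ at $y=0$. The gradient terms $\nabla(u^\varepsilon\cdot v^\varepsilon+\varepsilon|v^\varepsilon|^2-n^\varepsilon)\cdot v^\varepsilon$ are integrated by parts. The boundary contribution vanishes because $v^\varepsilon_2|_{y=0}=0$, and what remains is
\[
\int(u^\varepsilon\cdot v^\varepsilon+\varepsilon|v^\varepsilon|^2-n^\varepsilon)\,\nabla\cdot v^\varepsilon.
\]
This is bounded by
\[
\big(\|u^\varepsilon\|_{L^4}\|v^\varepsilon\|_{L^4}+\|v^\varepsilon\|^2_{L^4}+\|n^\varepsilon\|_{L^2}\big)\|\nabla v^\varepsilon\|_{L^2}\le C\big(\|u^\varepsilon\|^2_{H^1}+\|v^\varepsilon\|^2_{H^1}+\|n^\varepsilon\|^2_{H^1}\big)^2+C.
\]
Here the $H^1$ norm absorbs $\|\nabla v^\varepsilon\|_{L^2}$. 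I do not use the small dissipation $\varepsilon\|\nabla v^\varepsilon\|^2$ for this, which is why the right-hand side of (\ref{Equ(4.1)}) is quadratic in the $H^1$ norms.

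\textbf{The $u^\varepsilon$ equation.} Multiplying by $u^\varepsilon$, the pressure and convection terms vanish by $\nabla\cdot u^\varepsilon=0$ and $u^\varepsilon_2|_{y=0}=0$. The viscous boundary term $\varepsilon\int\partial_yu^\varepsilon\cdot u^\varepsilon|_{y=0}$ vanishes because $\partial_yu^\varepsilon_1=0$ and $u^\varepsilon_2=0$ there. The forcing satisfies $\int n^\varepsilon u^\varepsilon_2\le\|n^\varepsilon\|^2_{L^2}+\|u^\varepsilon\|^2_{L^2}$.

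\textbf{Conclusion.} Summing the three estimates gives
\[
\frac{d}{dt}Y+\|\nabla n^\varepsilon\|^2+\varepsilon\|\nabla v^\varepsilon\|^2+\varepsilon\|\nabla u^\varepsilon\|^2\le CY+C\big(\|n^\varepsilon\|^2_{H^1}+\|v^\varepsilon\|^2_{H^1}+\|u^\varepsilon\|^2_{H^1}\big)^2,
\]
where $Y=\|(n^\varepsilon,v^\varepsilon,u^\varepsilon)\|^2_{L^2}$. Gronwall's inequality on $[0,T]$ then yields (\ref{Equ(4.1)}), with the initial data contributing to the constant $C=C(T,n_{in},v_{in},u_{in})$.

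The main point requiring care is the $v^\varepsilon$ estimate. The nonlinearity $\nabla(u^\varepsilon\cdot v^\varepsilon)$ has no viscous counterpart of order one, so it cannot be absorbed uniformly in $\varepsilon$. I will instead bound it via the $H^1$ norm of $v^\varepsilon$, using the boundary condition $v^\varepsilon_2|_{y=0}=0$ so that the integration by parts leaves no trace term.
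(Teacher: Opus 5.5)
Your proposal is correct and follows essentially the paper's route. You test each equation against its own unknown, use the boundary conditions $(\ref{nvu condition})_1$ and $\nabla\cdot u^\varepsilon=0$ to remove the boundary and transport terms, bound the nonlinear terms (notably $\nabla(u^\varepsilon\cdot v^\varepsilon)$, without the $\varepsilon$-dissipation) by $L^4$ and $H^1_{xy}$ norms, and close with Gronwall. The only difference is cosmetic: you integrate the gradient terms of the $v^\varepsilon$ equation by parts onto $\nabla\cdot v^\varepsilon$, whereas the paper expands $\nabla(u^\varepsilon\cdot v^\varepsilon)\cdot v^\varepsilon$ directly. Also keep the additive constant from your $v^\varepsilon$ step in the final differential inequality; like $Y(0)$, it is absorbed into the data-dependent constant.
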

	\noindent{\bf{Proof.}} Taking the $L^2$ inner product of $(\ref{nvu equation})_{1}$ with $n^\varepsilon$ and integrating by parts, together with the boundary condition $\partial_yn^\varepsilon|_{y=0}=0$ and the divergence condition $\nabla\cdot u^\varepsilon=0$, then we obtain
    \begin{align*}
            \frac{1}{2}\frac{d}{dt}\| n^\varepsilon\|^2_{L^2_{xy}}+\|\nabla n^\varepsilon\|^2_{L^2_{xy}}&=-\int^\infty_0\int^\infty_{-\infty}u^\varepsilon\cdot\nabla n^\varepsilon n^\varepsilon dxdy+\int^\infty_0\int^\infty_{-\infty}\nabla\cdot(n^\varepsilon v^\varepsilon) n^\varepsilon dxdy\\
            &\leq \|n^\varepsilon\|_{L^4_{xy}} \|v^\varepsilon\|_{L^4_{xy}} \|\nabla n^\varepsilon\|_{L^2_{xy}}\\
            &\leq \frac12\|\nabla n^\varepsilon\|^2_{L^2_{xy}}+C\|n^\varepsilon\|^2_{H^1_{xy}} \|v^\varepsilon\|^2_{H^1_{xy}}.
        \end{align*}

        Similarly, we take the $L^2$ inner product of $(\ref{nvu equation})_{2}$ with $v^\varepsilon$ and use the boundary condition $v^\varepsilon_2=0$, which yields
    \begin{align*}
            &\frac{1}{2}\frac{d}{dt}\| v^\varepsilon\|^2_{L^2_{xy}}+\varepsilon\|\nabla v^\varepsilon\|^2_{L^2_{xy}}\\
            &=-\int^\infty_0\int^\infty_{-\infty}\nabla(u^\varepsilon\cdot v^\varepsilon)\cdot v^\varepsilon dxdy-\varepsilon\int^\infty_0\int^\infty_{-\infty}\nabla(v^\varepsilon\cdot v^\varepsilon)\cdot v^\varepsilon dxdy+\int^\infty_0\int^\infty_{-\infty}\nabla n^\varepsilon\cdot v^\varepsilon dxdy\\
            &\leq \|\nabla u^\varepsilon\|_{L^2_{xy}}\|v^\varepsilon\|^2_{L^4_{xy}}+\| u^\varepsilon\|_{L^4_{xy}}\|\nabla v^\varepsilon\|_{L^2_{xy}}\|v^\varepsilon\|_{L^4_{xy}}+2\varepsilon\|\nabla v^\varepsilon\|_{L^2_{xy}}\|v^\varepsilon\|^2_{L^4_{xy}} +\|\nabla n^\varepsilon\|_{L^2_{xy}}\|v^\varepsilon\|_{L^2_{xy}}\\
            &\leq \frac 12\varepsilon\|\nabla v^\varepsilon\|^2_{L^2_{xy}}+C(\|u^\varepsilon\|^2_{H^1_{xy}}\|v^\varepsilon\|^2_{H^1_{xy}}+\|v^\varepsilon\|^4_{H^1_{xy}} +\|\nabla n^\varepsilon\|^2_{H^1_{xy}}).
        \end{align*}

        Furthermore, we take the $L^2$ inner product of $(\ref{nvu equation})_{3}$ with $u^\varepsilon$. Using $u^\varepsilon_2|_{y=0}=0$ and $\nabla\cdot u^\varepsilon=0$, we infer that
     \begin{align*}
            &\frac{1}{2}\frac{d}{dt}\|u^\varepsilon\|^2_{L^2_{xy}}+\varepsilon\|\nabla u^\varepsilon\|^2_{L^2_{xy}}\\
            &=-\int^\infty_0\int^\infty_{-\infty}u^\varepsilon\cdot\nabla
        u^\varepsilon\cdot u^\varepsilon dxdy-\int^\infty_0\int^\infty_{-\infty}\nabla p^\varepsilon\cdot u^\varepsilon dxdy+\int^\infty_0\int^\infty_{-\infty} n^\varepsilon e_2\cdot u^\varepsilon dxdy\\
        &\leq \|n^\varepsilon\|^2_{L^2_{xy}}+\|u^\varepsilon\|^2_{L^2_{xy}}.
        \end{align*}
    Integrating the above  $L^2_{xy}$ energy estimates and using Gronwall's inequality, we establish (\ref{Equ(4.1)}).\\

    In the next Lemmas, we shall perform energy estimates in conormal Sobolev spaces to derive higher-order derivative regularity for the system $(\ref{nvu equation})$. We first estimate $\|\nabla n^\varepsilon\|^2_{\widetilde{H}^m_{xy}}$ as follows:

        \begin{lemma}\label{lem4.2}
         {Assume that the initial data $(n_{in},v_{in},u_{in})$ satisfy the hypotheses of Theorem $\ref{th nvu regularity}$ and the integer $m \geq 5$. Then there exist constants $1>\delta > 0$ and $\varepsilon_0>0$ such that for any $0<\varepsilon\leq\varepsilon_0$ the solution $n^\varepsilon$ satisfies
		\begin{align}\label{nabla n}
        &\frac{d}{dt}\sum_{|\alpha|\leq m}\|\partial^\alpha\nabla n^\varepsilon\|^2_{L^2_{xy}}+2(1-C\delta)\sum_{|\alpha|\leq m}\|\partial^\alpha\nabla^2 n^\varepsilon\|^2_{L^2_{xy}}\notag\\
        &\leq C_0(\delta)\Big{(}1+\|u^\varepsilon\|^2_{L^2_{xy}}+\sum_{|\alpha|\leq m}\|\partial^\alpha v^\varepsilon\|^2_{L^2_{xy}}+\sum_{|\alpha|\leq m}\|\partial^\alpha\nabla n^\varepsilon\|^2_{L^2_{xy}}+\sum_{|\alpha|\leq m}\|\partial^\alpha\nabla v^\varepsilon\|^2_{L^2_{xy}}+\sum_{|\alpha|\leq m+1}\|\partial^{\alpha}\omega\|^2_{L^2_{xy}}\Big{)}\notag\\
        &\quad\times\sum_{|\alpha|\leq m}\|\partial^\alpha\nabla n^\varepsilon\|^2_{L^2_{xy}}+C_0(\delta)\Big{(}1+\sum_{|\alpha|\leq m}\|\partial^{\alpha}n^\varepsilon\|^2_{L^2_{xy}}+\sum_{|\alpha|\leq m}\|\partial^{\alpha}\nabla n^\varepsilon\|^2_{L^2_{xy}}\Big{)}\sum_{|\alpha|\leq m}\|\partial^\alpha \nabla v^\varepsilon\|^2_{L^2_{xy}}\\
        &\quad+C(\delta)\sum_{|\alpha|\leq m+1}\|\partial^\alpha\omega\|^2_{L^2_{xy}}+8\nu\sum_{|\alpha|\leq m}\|\partial^\alpha\nabla^2 n^\varepsilon\|^2_{L^2_{xy}},\notag\notag
		\end{align}
        where positive constant $C$ is dependent of $T$.
		}
	\end{lemma}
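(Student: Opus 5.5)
The plan is a conormal energy estimate for $\nabla n^\varepsilon$. Fix a multi-index $\alpha$ with $|\alpha|\le m$, apply $\partial^\alpha\nabla$ to $(\ref{nvu equation})_1$, take the $L^2_{xy}$ inner product with $\partial^\alpha\nabla n^\varepsilon$, and sum over $|\alpha|\le m$.

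\textbf{Step 1 (principal part).} Writing $w=\nabla n^\varepsilon$, the diffusion term produces $\int \partial^\alpha\Delta w\cdot\partial^\alpha w$. I would commute $\partial^\alpha$ with $\partial_y$ using (\ref{i}). The commutator $[\partial_y,\partial^\alpha]$ only involves derivatives of $\psi$, and these carry a factor $\delta$ by (\ref{psi}). Integrating by parts then yields $-\sum\|\partial^\alpha\nabla^2n^\varepsilon\|^2$ plus two kinds of terms:
\begin{itemize}
\item commutator errors bounded by $C\delta\sum\|\partial^\alpha\nabla^2n^\varepsilon\|^2$ plus lower-order terms, which explains the factor $2(1-C\delta)$;
\item boundary integrals on $y=0$.
\end{itemize}
Conormal fields $\psi\partial_y$ vanish at $y=0$, so only purely tangential $\alpha=(\alpha_1,0)$ give boundary contributions. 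For the $\partial_x$ component these vanish by $\partial_yn^\varepsilon=-n^\varepsilon v_2^\varepsilon=0$ on $y=0$, using $v_2^\varepsilon|_{y=0}=0$. For the $\partial_y$ component I would rewrite $\partial_y^2 n^\varepsilon$ at the boundary through the equation itself, namely
\[
\partial_y^2n^\varepsilon=\partial_tn^\varepsilon-\partial_x^2n^\varepsilon+u^\varepsilon\cdot\nabla n^\varepsilon-\nabla\cdot(n^\varepsilon v^\varepsilon).
\]
Alternatively, I would use the fact that $\partial_y\partial_x^{\alpha_1}n^\varepsilon$ vanishes on $y=0$. That trace vanishes because $\partial_x^{\alpha_1}(n^\varepsilon v_2^\varepsilon)=0$ there. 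Hence the boundary term $\int \partial_x^{\alpha_1}\partial_y^2 n^\varepsilon\,\partial_x^{\alpha_1}\partial_yn^\varepsilon\,dx$ vanishes too. The $8\nu$ term absorbs whatever small multiple of the dissipation Young's inequality leaves behind.

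\textbf{Step 2 (nonlinear terms).}
\begin{itemize}
\item \emph{Transport term.} The term $\partial^\alpha\nabla(u^\varepsilon\cdot\nabla n^\varepsilon)$ is handled by a Leibniz expansion and the standard conormal Moser/commutator estimates. The top-order part $u^\varepsilon\cdot\nabla\partial^\alpha w$ cancels after integration by parts, by $\nabla\cdot u^\varepsilon=0$ and $u_2^\varepsilon|_{y=0}=0$.
\item \emph{Pieces with $\partial_yu^\varepsilon$.} These are controlled through the vorticity $\omega$ and the Hardy inequality (\ref{iii}). For $u_2^\varepsilon/\psi$ I would also use (\ref{ii}). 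This is where $\|u^\varepsilon\|_{L^2}$ and $\sum_{|\alpha|\le m+1}\|\partial^\alpha\omega\|^2$ enter the bound.
\item \emph{Chemotactic term.} The term $\partial^\alpha\nabla\nabla\cdot(n^\varepsilon v^\varepsilon)$ is integrated by parts once, moving one derivative onto $\partial^\alpha w$; this is how $\|\partial^\alpha\nabla^2 n^\varepsilon\|$ appears. The products $\partial^\beta n^\varepsilon\,\partial^{\alpha-\beta}\nabla v^\varepsilon$ are bounded in $L^2$ by putting the lower-order factor in $L^\infty$; there the embedding (\ref{G-N inequality}) applies since $m\ge5$. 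Young's inequality then produces the two product terms on the right-hand side.
\end{itemize}

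\textbf{Main obstacle.} I expect the hardest step to be the highest normal-derivative terms in the divergence part: for example $\partial^\alpha(n^\varepsilon\partial_y^2 v_2^\varepsilon)$, where $v^\varepsilon$ carries no uniform dissipation. To get around this, I would use the curl-free condition (\ref{v curl}) to trade $\partial_y v_1$ for $\partial_x v_2$. I would also keep one derivative on $n^\varepsilon$, through the integration by parts in Step 2, so that only $\nabla v^\varepsilon$ in $\widetilde H^m$ is needed. The commutator and Hardy bookkeeping near the boundary, involving $C(\delta)$ against the small factor $\delta$, is where I would be most careful to close with $\delta$ small.
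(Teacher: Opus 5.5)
Your proposal is correct and follows essentially the same route as the paper. Both use a conormal energy estimate for $\partial^\alpha\nabla n^\varepsilon$ in which:
the commutators $[\partial_y,\partial^\alpha]$ carry the factor $\delta$, producing $2(1-C\delta)$;
the boundary terms vanish because $\partial_y\partial_x^{\alpha_1}n^\varepsilon|_{y=0}=0$;
the $\nabla u^\varepsilon$ factors are handled with $\psi$-weights and the Hardy inequality (\ref{iii});
and one derivative is shifted from $v^\varepsilon$ onto $\nabla n^\varepsilon$, so the parabolic dissipation of $n^\varepsilon$ absorbs it and only $\nabla v^\varepsilon\in\widetilde{H}^m$ is needed.

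The differences are cosmetic. The paper splits the chemotactic term into $S_{21},S_{22},S_{23}$ and integrates by parts separately, in $x$ for $n^\varepsilon\partial_x v_1^\varepsilon$ and in $y$ for the top-order part of $n^\varepsilon\partial_y v_2^\varepsilon$, instead of moving the outer gradient at once. It absorbs the top-order transport term through $\|u^\varepsilon\|_{L^\infty_{xy}}$ and the dissipation rather than cancelling it. It also does not invoke the curl-free condition in this lemma.
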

	\noindent{\bf{Proof.}} For $|\alpha|\leq m$, we apply $\partial^\alpha\nabla$ to $(\ref{nvu equation})_1$ and multiply by $\partial^\alpha\nabla n^\varepsilon$, which yields
    \begin{align*}
        &\sum_{|\alpha|\leq m}\langle \partial^\alpha\nabla\partial_tn^\varepsilon, \partial^\alpha\nabla n^\varepsilon \rangle-\sum_{|\alpha|\leq m}\langle \partial^\alpha\nabla\Delta n^\varepsilon, \partial^\alpha\nabla n^\varepsilon \rangle\\
        &\leq\sum_{|\alpha|\leq m}\Big{|}\langle \partial^\alpha\nabla(u^\varepsilon\cdot\nabla n^\varepsilon), \partial^\alpha\nabla n^\varepsilon \rangle\Big{|}+\sum_{|\alpha|\leq m}\Big{|}\langle \partial^\alpha\nabla(\nabla\cdot (n^\varepsilon v^\varepsilon)), \partial^\alpha\nabla n^\varepsilon \rangle\Big{|}\\
        &=:S_1+S_2.
		\end{align*}
        From the definition in $(\ref{psi})$, we have $\psi(y)|_{y=0}=0$. For the Laplacian terms, combining integration by parts, Remark 4.1(i) and $\partial_yn^\varepsilon|_{y=0}=0$, we derive
        \begin{align}\label{Delta n}
        &-\sum_{|\alpha|\leq m}\langle \partial^\alpha\nabla\Delta n^\varepsilon, \partial^\alpha\nabla n^\varepsilon \rangle\notag\\
        &=-\sum_{|\alpha|\leq m}\langle \partial_x\partial^\alpha\nabla\partial_x n^\varepsilon, \partial^\alpha\nabla n^\varepsilon \rangle-\sum_{|\alpha|\leq m}\langle \partial_y\partial^\alpha\nabla\partial_y n^\varepsilon, \partial^\alpha\nabla n^\varepsilon \rangle-\sum_{|\alpha|\leq m}\langle [\partial^\alpha,\partial_y]\nabla\partial_y n^\varepsilon, \partial^\alpha\nabla n^\varepsilon \rangle\notag\\
        &=\sum_{|\alpha|\leq m}\|\partial^\alpha\nabla^2 n^\varepsilon\|^2_{L^2_{xy}}+\sum_{|\alpha|\leq m}\langle \partial^\alpha\nabla\partial_y n^\varepsilon, [\partial_y,\partial^\alpha]\nabla n^\varepsilon \rangle-\sum_{|\alpha|\leq m}\langle [\partial^\alpha,\partial_y]\nabla\partial_y n^\varepsilon, \partial^\alpha\nabla n^\varepsilon \rangle\\
        &=\sum_{|\alpha|\leq m}\|\partial^\alpha\nabla^2 n^\varepsilon\|^2_{L^2_{xy}}+\sum_{|\alpha|\leq m}\alpha_2\langle \partial^\alpha\nabla\partial_y n^\varepsilon, \psi'\partial^{\alpha-(0,1)}\partial_y\nabla n^\varepsilon \rangle\notag\\
        &\quad+\sum_{|\alpha|\leq m}\alpha_2\langle \psi'\partial^{\alpha-(0,1)}\partial_y\nabla\partial_y n^\varepsilon, \partial^\alpha\nabla n^\varepsilon \rangle\notag\\
        &\geq\sum_{|\alpha|\leq m}\|\partial^\alpha\nabla^2 n^\varepsilon\|^2_{L^2_{xy}}-C\delta\Big{(}\sum_{|\alpha|\leq m}\|\partial^\alpha\nabla\partial_y n^\varepsilon\|^2_{L^2_{xy}}+\|\partial^{\alpha-(0,1)}\partial_y\nabla n^\varepsilon\|^2_{L^2_{xy}}\Big{)}-C\sum_{|\alpha|\leq m}\|\partial^\alpha\nabla n^\varepsilon\|^2_{L^2_{xy}}\notag\\
        &\geq(1-C\delta)\sum_{|\alpha|\leq m}\|\partial^\alpha\nabla^2 n^\varepsilon\|^2_{L^2_{xy}}-C\sum_{|\alpha|\leq m}\|\partial^\alpha\nabla n^\varepsilon\|^2_{L^2_{xy}}.\notag
		\end{align}
        For convenience, we decompose $S_1$ into two terms as follows:
        \begin{align*}
        \begin{split}{}
        S_1&=\sum_{|\alpha|\leq m}\Big{|}\langle \partial^\alpha (\nabla u^\varepsilon\cdot\nabla n^\varepsilon+(u^\varepsilon\cdot\nabla)(\nabla n^\varepsilon)), \partial^\alpha\nabla n^\varepsilon \rangle\Big{|}\\
        &=\sum_{|\alpha|\leq m}\sum_{\beta\leq\alpha}C^\beta_\alpha\Big(\Big{|}\langle \partial^\beta \nabla u^\varepsilon\cdot\partial^{\alpha-\beta}\nabla n^\varepsilon, \partial^\alpha\nabla n^\varepsilon \rangle\Big{|}+\Big{|}\langle \partial^\beta u^\varepsilon\cdot\partial^{\alpha-\beta}\nabla(\nabla n^\varepsilon), \partial^\alpha\nabla n^\varepsilon \rangle\Big{|}\Big)\\
        &=:S_{11}+S_{12},
        \end{split}
		\end{align*}
        where $S_{11}$ is estimated as follows
        \begin{align*}
        S_{11}&=\sum_{|\alpha|\leq m}\sum_{\beta\leq\alpha}C^\beta_\alpha\Big{|}\langle \partial^\beta \nabla u^\varepsilon\partial^{\alpha-\beta}\nabla n^\varepsilon, \partial^\alpha\nabla n^\varepsilon \rangle\Big{|}\\
        &=\sum_{|\alpha|\leq m}\Big{(}\Big{|}\langle \nabla u^\varepsilon\partial^{\alpha}\nabla n^\varepsilon, \partial^\alpha\nabla n^\varepsilon \rangle\Big{|}+\sum_{1\leq|\beta|\leq m-1}C^\beta_{m-1}\Big{|}\langle \partial^\beta\nabla u^\varepsilon\partial^{\alpha-\beta}\nabla n^\varepsilon, \partial^\alpha\nabla n^\varepsilon \rangle\Big{|}\\
        &\quad+\Big{|}\langle \partial^\alpha\nabla u^\varepsilon\nabla n^\varepsilon, \partial^\alpha\nabla n^\varepsilon \rangle\Big{|}\Big{)}\\
        &\leq \sum_{|\alpha|\leq m}\Big{(}\|\psi\nabla u^\varepsilon\|_{L^\infty_{xy}}\|\partial^\alpha\nabla n^\varepsilon\|_{L^2_{xy}}\|\frac{\partial^\alpha\nabla n^\varepsilon}{\psi}\|_{L^2_{xy}}\\
        &\quad+\sum_{1\leq|\beta|\leq m-1}C^\beta_{m-1}\|\psi\partial^\beta\nabla u^\varepsilon\|_{L^\infty_{xy}}\|\partial^{\alpha-\beta}\nabla n^\varepsilon\|_{L^2_{xy}}\|\frac{\partial^\alpha\nabla n^\varepsilon}{\psi}\|_{L^2_{xy}}\\
        &\quad+\|\partial^\alpha\nabla u^\varepsilon\|_{L^2_{xy}}\|\psi\nabla n^\varepsilon\|_{L^\infty_{xy}}\|\frac{\partial^\alpha\nabla n^\varepsilon}{\psi}\|_{L^2_{xy}}\Big{)}.
		\end{align*}
        To estimate $\|\partial^\beta\nabla u^\varepsilon\|_{L^\infty_{xy}}$, we note that the Sobolev embedding inequality gives
        \begin{align}\label{infty}
        \|\partial^\beta u^\varepsilon\|_{L^\infty_{xy}}\leq C\|\partial^\beta u^\varepsilon\|^\frac{1}{2}_{L^2_{xy}}\|\partial_y\partial^\beta u^\varepsilon\|^\frac{1}{2}_{L^2_{xy}}+C\|\partial_x\partial^\beta u^\varepsilon\|^\frac{1}{2}_{L^2_{xy}}\|\partial_x\partial_y\partial^\beta u^\varepsilon\|^\frac{1}{2}_{L^2_{xy}}.
	\end{align}
        Thus, by Young's inequality, we conclude that
        \begin{align*}
        S_{11}&\leq \frac{C(\delta)}{\nu}\Big{(}\sum_{|\alpha|\leq2}\|\partial^\alpha\nabla u^\varepsilon\|^2_{L^2_{xy}}+\sum_{1\leq|\beta|\leq m-1}\sum_{|\gamma|\leq2}\|\partial^{\beta+\gamma}\nabla u^\varepsilon\|^2_{L^2_{xy}}+1\Big{)}\sum_{|\alpha|\leq m}\|\partial^\alpha\nabla n^\varepsilon\|^2_{L^2_{xy}}\\
        &\quad+\frac{\nu}{2}\sum_{|\alpha
        |\leq m}\|\partial^{\alpha} \nabla^2 n^\varepsilon\|^2_{L^2_{xy}}+\frac{C(\delta)}{\nu}\sum_{|\alpha|\leq 2}\|\partial^\alpha\nabla n^\varepsilon\|^2_{L^2_{xy}}\sum_{|\alpha|\leq m}\|\partial^\alpha\nabla u^\varepsilon\|^2_{L^2_{xy}},
		\end{align*}
        where we used the Hardy inequality from (\ref{iii}),
        \begin{align*}
          \|\frac{\partial^\alpha\nabla n^\varepsilon}{\psi}\|_{L^2_{xy}}\leq C(\delta)(\|\partial_y\partial^\alpha\nabla n^\varepsilon\|_{L^2_{xy}}+\|\partial^\alpha\nabla n^\varepsilon\|_{L^2_{xy}})\leq C(\delta)(\|\partial^\alpha\nabla^2 n^\varepsilon\|_{L^2_{xy}}+\|\partial^\alpha\nabla n^\varepsilon\|_{L^2_{xy}}).
        \end{align*}
        Similarly, we estimate $S_{12}$ using inequality $(\ref{infty})$, which yields
        \begin{align*}
        S_{12}&=\sum_{|\alpha|\leq m}\sum_{\beta\leq\alpha}C^\beta_\alpha\Big{|}\langle \partial^\beta u^\varepsilon\cdot\partial^{\alpha-\beta}\nabla(\nabla n^\varepsilon), \partial^\alpha\nabla n^\varepsilon \rangle\Big{|}\\
        &=\sum_{|\alpha|\leq m}\Big{(}\Big{|}\langle u^\varepsilon\cdot\partial^{\alpha}\nabla(\nabla n^\varepsilon), \partial^\alpha\nabla n^\varepsilon \rangle\Big{|}+\sum_{1\leq|\beta|\leq m-1}C^\beta_{m-1}\Big{|}\langle \partial^\beta u^\varepsilon\cdot\partial^{\alpha-\beta}\nabla(\nabla n^\varepsilon), \partial^\alpha\nabla n^\varepsilon \rangle\Big{|}\\
        &\quad+\Big{|}\langle \partial^\alpha u^\varepsilon\cdot\nabla(\nabla n^\varepsilon), \partial^\alpha\nabla n^\varepsilon \rangle\Big{|}\Big{)}\\
        &\leq \sum_{|\alpha|\leq m}\Big{(}\|u^\varepsilon\|_{L^\infty_{xy}}\|\partial^{\alpha}\nabla^2 n^\varepsilon\|_{L^2_{xy}}\|\partial^\alpha\nabla n^\varepsilon\|_{L^2_{xy}}+\sum_{1\leq|\beta|\leq m-1}C^\beta_{m-1}\|\partial^\beta u^\varepsilon\|_{L^\infty_{xy}}\\
        &\quad\times\|\partial^{\alpha-\beta}\nabla (\nabla n^\varepsilon)\|_{L^2_{xy}}\|\partial^\alpha\nabla n^\varepsilon\|_{L^2_{xy}}+\|\partial^\alpha u^\varepsilon\|_{L^\infty_{xy}}\|\nabla^2 n^\varepsilon\|_{L^2_{xy}}\|\partial^\alpha\nabla n^\varepsilon\|_{L^2_{xy}}\Big{)}\\
        &\leq \frac{C}{\nu}\Big{(}\|u^\varepsilon\|^2_{L^2_{xy}}+\sum_{|\alpha|\leq1}\|\partial^\alpha\nabla u^\varepsilon\|^2_{L^2_{xy}}+\sum_{1\leq|\beta|\leq m-1}\|\partial^{\beta+(1,0)}\nabla u^\varepsilon\|^2_{L^2_{xy}}\\
        &\quad+\sum_{|\alpha|\leq m}\|\partial^{\alpha+(1,0)}\nabla u^\varepsilon\|^2_{L^2_{xy}}\Big{)}\sum_{|\alpha|\leq m}\|\partial^\alpha\nabla n^\varepsilon\|^2_{L^2_{xy}}+\frac{\nu}{2}\sum_{|\alpha|\leq m}\|\partial^{\alpha} \nabla^2 n^\varepsilon\|^2_{L^2_{xy}}.
		\end{align*}
        By the same argument as for $S_1$, we expand $S_2$ as follows:
        \begin{align*}
        S_2&=\sum_{|\alpha|\leq m}\Big{(}\Big{|}\langle \partial^\alpha \nabla(\nabla n^\varepsilon\cdot v^\varepsilon), \partial^\alpha\nabla n^\varepsilon \rangle\Big{|}+\Big{|}\langle \partial^\alpha \nabla(n^\varepsilon \partial_xv^\varepsilon_1), \partial^\alpha\nabla n^\varepsilon \rangle\Big{|}+\Big{|}\langle \partial^\alpha\nabla (n^\varepsilon \partial_yv^\varepsilon_2), \partial^\alpha\nabla n^\varepsilon \rangle\Big{|}\Big{)}\\
        &=:S_{21}+S_{22}+S_{23}.
		\end{align*}
        The term $S_{21}$ can be further expanded by
        \begin{equation*}
        \begin{split}{}
        S_{21}&=\sum_{|\alpha|\leq m}\Big{(}\Big{|}\langle \partial^\alpha(\nabla^2 n^\varepsilon\cdot v^\varepsilon), \partial^\alpha\nabla n^\varepsilon \rangle\Big{|}+\Big{|}\langle \partial^\alpha (\nabla n^\varepsilon\cdot(\nabla v^\varepsilon)), \partial^\alpha\nabla n^\varepsilon \rangle\Big{|}\Big{)}=S_{211}+S_{212}.\\
        \end{split}
		\end{equation*}
        For $S_{211}$, we apply inequality $(\ref{infty})$ and get
        \begin{align*}
        S_{211}&=\sum_{|\alpha|\leq m}\Big{(}\Big{|}\langle \nabla^2 n^\varepsilon\cdot\partial^\alpha v^\varepsilon, \partial^\alpha\nabla n^\varepsilon \rangle\Big{|}+\sum_{1\leq|\beta|\leq m-1}C^\beta_{m-1}\Big{|}\langle \partial^\beta\nabla^2 n^\varepsilon\cdot\partial^{\alpha-\beta} v^\varepsilon, \partial^\alpha\nabla n^\varepsilon \rangle\Big{|}\\
        &\quad+\Big{|}\langle \partial^\alpha \nabla^2 n^\varepsilon\cdot v^\varepsilon, \partial^\alpha\nabla n^\varepsilon \rangle\Big{|}\Big{)}\\
        &\leq\sum_{|\alpha|\leq m}\Big{(}\|\psi\nabla^2 n^\varepsilon\|_{L^\infty_{xy}}\|\frac{\partial^\alpha v^\varepsilon}{\psi}\|_{L^2_{xy}}\|\partial^\alpha\nabla n^\varepsilon\|_{L^2_{xy}}+\sum_{1\leq|\beta|\leq m-1}C^\beta_{m-1}\|\partial^\beta\nabla^2 n^\varepsilon\|_{L^2_{xy}}\\
        &\quad\times\|\partial^{\alpha-\beta}v^\varepsilon\|_{L^\infty_{xy}}\|\partial^\alpha\nabla n^\varepsilon\|_{L^2_{xy}}+\|\partial^\alpha\nabla^2 n^\varepsilon\|_{L^2_{xy}}\|v^\varepsilon\|_{L^\infty_{xy}}\|\partial^\alpha\nabla n^\varepsilon\|_{L^2_{xy}}\Big{)}\\
        &\leq\sum_{|\alpha|\leq m}C\Big{[}\Big(\sum_{|\alpha|\leq 2}\|\partial^\alpha\nabla^2 n^\varepsilon\|_{L^2_{xy}}\Big)\Big(\|\partial_y\partial^\alpha v^\varepsilon\|_{L^2_{xy}}+\|\partial^\alpha v^\varepsilon\|_{L^2_{xy}}\Big)\|\partial^\alpha\nabla n^\varepsilon\|_{L^2_{xy}}\\
        &\quad+\sum_{1\leq|\beta|\leq m-1}\|\partial^\beta\nabla^2 n^\varepsilon\|_{L^2_{xy}}\|\partial^{\alpha-\beta+(1,0)}\nabla v^\varepsilon\|_{L^2_{xy}}\|\partial^\alpha\nabla n^\varepsilon\|_{L^2_{xy}}\\
        &\quad+\|\partial^\alpha\nabla^2 n^\varepsilon\|_{L^2_{xy}}\Big(\|v^\varepsilon\|_{L^2_{xy}}+\sum_{|\alpha|\leq 1}\|\partial^\alpha\nabla v^\varepsilon\|_{L^2_{xy}}\Big)\|\partial^\alpha\nabla n^\varepsilon\|_{L^2_{xy}}\Big{]}\\
        &\leq \frac{C(\delta)}{\nu}\Big{(}\sum_{|\alpha|\leq m}\|\partial^\alpha \nabla v^\varepsilon\|^2_{L^2_{xy}}+\sum_{|\alpha|\leq m}\|\partial^\alpha v^\varepsilon\|^2_{L^2_{xy}}+\sum_{1\leq|\beta|\leq m-1}\|\partial^{\alpha-\beta+(1,0)}\nabla v^\varepsilon\|^2_{L^2_{xy}}\\
        &\quad+\|v^\varepsilon\|^2_{L^2_{xy}}+\sum_{|\alpha|\leq 1}\|\partial^{\alpha}\nabla v^\varepsilon\|^2_{L^2_{xy}}\Big{)}\sum_{|\alpha|\leq m}\|\partial^\alpha\nabla n^\varepsilon\|^2_{L^2_{xy}}+\frac{\nu}{2}\sum_{|\alpha|\leq m}\|\partial^{\alpha} \nabla^2 n^\varepsilon\|^2_{L^2_{xy}}.
		\end{align*}
        The estimate for $S_{212}$ is as follows:
        \begin{align*}
        S_{212}&=\sum_{|\alpha|\leq m}\Big{(}\Big{|}\langle  \nabla n^\varepsilon\cdot(\partial^\alpha\nabla v^\varepsilon), \partial^\alpha\nabla n^\varepsilon \rangle\Big{|}+\sum_{1\leq|\beta|\leq m-1}C^\beta_{m-1}\Big{|}\langle \partial^\beta\nabla n^\varepsilon\cdot (\partial^{\alpha-\beta}\nabla v^\varepsilon), \partial^\alpha\nabla n^\varepsilon \rangle\Big{|}\\
        &\quad+\Big{|}\langle \partial^\alpha \nabla n^\varepsilon\cdot(\nabla v^\varepsilon), \partial^\alpha\nabla n^\varepsilon \rangle\Big{|}\Big{)}\\
        &\leq\sum_{|\alpha|\leq m}\Big{(}\|\psi\nabla n^\varepsilon\|_{L^\infty_{xy}}\|\partial^\alpha\nabla v^\varepsilon\|_{L^2_{xy}}\|\frac{\partial^\alpha\nabla n^\varepsilon}{\psi}\|_{L^2_{xy}}+\sum_{1\leq|\beta|\leq m-1}C^\beta_{m-1}\|\partial^\beta \nabla n^\varepsilon\|_{L^\infty_{xy}}\\
        &\quad\times\|\partial^{\alpha-\beta}\nabla v^\varepsilon\|_{L^2_{xy}}\|\partial^\alpha\nabla n^\varepsilon\|_{L^2_{xy}}+\|\partial^\alpha\nabla n^\varepsilon\|_{L^2_{xy}}\|\psi\nabla v^\varepsilon\|_{L^\infty_{xy}}\|\frac{\partial^\alpha\nabla n^\varepsilon}{\psi}\|_{L^2_{xy}}\Big{)}\\
        &\leq \frac{\nu}{2}\sum_{|\alpha|\leq m}\|\partial^{\alpha} \nabla^2 n^\varepsilon\|^2_{L^2_{xy}}+\frac{C(\delta)}{\nu}\sum_{|\alpha|\leq 2}\|\partial^\alpha\nabla n^\varepsilon\|^2_{L^2_{xy}}\sum_{|\alpha|\leq m}\|\partial^\alpha\nabla v^\varepsilon\|^2_{L^2_{xy}}\\
        &\quad+\frac{C(\delta)}{\nu}\Big{(}\sum_{|\alpha|\leq m}\|\partial^{\alpha}\nabla v^\varepsilon\|^2_{L^2_{xy}}+\sum_{|\alpha|\leq 2}\|\partial^{\alpha}\nabla v^\varepsilon\|^2_{L^2_{xy}}+1\Big{)}\sum_{|\alpha|\leq m}\|\partial^\alpha\nabla n^\varepsilon\|^2_{L^2_{xy}},
		\end{align*}
        where by applying the embedding inequality $(\ref{infty})$, we have used the fact that
        \begin{align*}
          \sum_{1\leq|\beta|\leq m-1}\|\partial^\beta \nabla n^\varepsilon\|_{L^\infty_{xy}}&\leq C\sum_{1\leq|\beta|\leq m-1}(\|\partial^{\beta+(1,0)} \nabla^2 n^\varepsilon\|_{L^2_{xy}}+\|\partial^{\beta} \nabla n^\varepsilon\|_{L^2_{xy}})\\&\leq C\sum_{1\leq|\alpha|\leq m}(\|\partial^{\alpha} \nabla^2 n^\varepsilon\|_{L^2_{xy}}+\|\partial^{\alpha} \nabla n^\varepsilon\|_{L^2_{xy}}).
        \end{align*}
        Similar to the estimates for $S_{21}$, by integration by parts in the $x$-direction for $S_{22}$, it follows
        \begin{align*}
        S_{22}&=\sum_{|\alpha|\leq m}\Big{(}\Big{|}\langle \partial^\alpha(\nabla n^\varepsilon \partial_xv^\varepsilon_1), \partial^\alpha\nabla n^\varepsilon \rangle\Big{|}+\Big{|}\langle \partial^\alpha(n^\varepsilon \nabla\partial_xv^\varepsilon_1), \partial^\alpha\nabla n^\varepsilon \rangle\Big{|}\Big{)}\\
        &\leq\sum_{|\alpha|\leq m}\Big{(}\Big{|}\int^\infty_0\int^\infty_{-\infty}\nabla n^\varepsilon\partial^\alpha\partial_xv^\varepsilon_1
        \partial^\alpha\nabla n^\varepsilon dxdy\Big{|}\\
        &\quad
        +\Big{|}\int^\infty_0\int^\infty_{-\infty}\sum_{1\leq|\beta|\leq m-1}C^\beta_{m-1}\partial^\beta\nabla n^\varepsilon\partial^{\alpha-\beta} \partial_xv^\varepsilon_1\partial^\alpha\nabla n^\varepsilon dxdy\Big{|}
        \\
        &\quad+\Big{|}\int^\infty_0\int^\infty_{-\infty}\partial^\alpha\nabla n^\varepsilon\partial_xv^\varepsilon_1\partial^\alpha\nabla n^\varepsilon dxdy\Big{|}+\Big{|}\int^\infty_0\int^\infty_{-\infty} n^\varepsilon\partial^\alpha\nabla\partial_xv^\varepsilon_1\partial^\alpha\nabla n^\varepsilon dxdy\Big{|}\\
        &\quad+\Big{|}\int^\infty_0\int^\infty_{-\infty}\sum_{1\leq|\beta|\leq m-1}C^\beta_{m-1}\partial^\beta n^\varepsilon\partial^{\alpha-\beta}\nabla\partial_xv^\varepsilon_1\partial^\alpha\nabla n^\varepsilon dxdy\Big{|}\\
        &\quad+\Big{|}\int^\infty_0\int^\infty_{-\infty}\partial^\alpha n^\varepsilon\nabla\partial_xv^\varepsilon_1\partial^\alpha\nabla n^\varepsilon dxdy\Big{|}\Big{)}
         \\
        &\leq\sum_{|\alpha|\leq m}\Big{(}\|\psi\nabla n^\varepsilon\|_{L^\infty_{xy}}\|\partial^\alpha\partial_x v^\varepsilon_1\|_{L^2_{xy}}\|\frac{\partial^\alpha\nabla n^\varepsilon}{\psi}\|_{L^2_{xy}}\\
        &\quad+\sum_{1\leq|\beta|\leq m-1}C^\beta_{m-1}\Big{[}\|\partial^\beta\nabla\partial_x n^\varepsilon\|_{L^2_{xy}}\|\partial^{\alpha-\beta} v^\varepsilon_1\|_{L^\infty_{xy}}\|\partial^\alpha\nabla n^\varepsilon\|_{L^2_{xy}}\\
        &\quad+\|\partial^\beta\nabla n^\varepsilon\|_{L^2_{xy}}\|\partial^{\alpha-\beta} v^\varepsilon_1\|_{L^\infty_{xy}}\|\partial^\alpha\nabla \partial_xn^\varepsilon\|_{L^2_{xy}}\Big{]}+\|\partial^\alpha\nabla n^\varepsilon\|_{L^2_{xy}}\|\partial_x v^\varepsilon_1\|_{L^\infty_{xy}}\|\partial^\alpha\nabla n^\varepsilon\|_{L^2_{xy}}\\
        &\quad+\Big{[}\|\partial_x n^\varepsilon\|_{L^\infty_{xy}}\|\partial^\alpha\nabla v^\varepsilon_1\|_{L^2_{xy}}\|\partial^\alpha\nabla n^\varepsilon\|_{L^2_{xy}}+\|n^\varepsilon\|_{L^\infty_{xy}}\|\partial^\alpha\nabla v^\varepsilon_1\|_{L^2_{xy}}\|\partial^\alpha\nabla\partial_x n^\varepsilon\|_{L^2_{xy}}\Big{]}\\
        &\quad+\sum_{1\leq|\beta|\leq m-1}C^\beta_{m-1}\|\partial^\beta n^\varepsilon\|_{L^\infty_{xy}}\|\partial^{\alpha-\beta}\nabla\partial_x v^\varepsilon_1\|_{L^2_{xy}}\|\partial^\alpha\nabla n^\varepsilon\|_{L^2_{xy}}\\
        &\quad+\|\frac{\partial^\alpha n^\varepsilon}{\psi}\|_{L^2_{xy}}\|\psi\partial_x \nabla v^\varepsilon_1\|_{L^\infty_{xy}}\|\partial^\alpha\nabla n^\varepsilon\|_{L^2_{xy}}\Big{)}\\
     &\leq C\Big{(}\|n^\varepsilon\|^2_{L^2_{xy}}+\sum_{|\alpha|\leq 2}\|\partial^{\alpha}\nabla n^\varepsilon\|^2_{L^2_{xy}}+\sum_{1\leq|\beta|\leq m-1}\|\partial^{\beta+(1,0)}\nabla n^\varepsilon\|^2_{L^2_{xy}}\Big{)}\sum_{|\alpha|\leq m}\|\partial^\alpha \nabla v^\varepsilon_1\|^2_{L^2_{xy}}\\
        &\quad+\frac{C(\delta)}{\nu}\Big{(}\sum_{1\leq|\beta|\leq m-1}\|\partial^{\alpha-\beta+(1,0)}\nabla v^\varepsilon_1\|^2_{L^2_{xy}}+\sum_{|\alpha|\leq 3}\|\partial^{\alpha}\nabla v^\varepsilon_1\|^2_{L^2_{xy}}\Big{)}\sum_{|\alpha|\leq m}\|\partial^\alpha\nabla n^\varepsilon\|^2_{L^2_{xy}}\\
        &\quad+\frac{\nu}{2}\sum_{|\alpha|\leq m}\|\partial^{\alpha} \nabla^2 n^\varepsilon\|^2_{L^2_{xy}}+C\sum_{|\alpha|\leq m}\|\partial^{\alpha} \nabla n^\varepsilon\|^2_{L^2_{xy}}+C\sum_{|\alpha|\leq m}\|\partial^\alpha n^\varepsilon\|^2_{L^2_{xy}}.
		\end{align*}
        For the term $S_{23}$, we perform the corresponding estimate:
        \begin{equation*}
        \begin{split}{}
        S_{23}&=\sum_{|\alpha|\leq m}\Big{(}\Big{|}\langle \partial^\alpha(\nabla n^\varepsilon \partial_yv^\varepsilon_2), \partial^\alpha\nabla n^\varepsilon \rangle\Big{|}+\Big{|}\langle \partial^\alpha (n^\varepsilon\partial_y\nabla v^\varepsilon_2), \partial^\alpha\nabla n^\varepsilon \rangle\Big{|}\Big{)}=:S_{231}+S_{232}.\\
        \end{split}
		\end{equation*}
        By an analogous argument as estimating the term $S_{212}$, one deduces
        \begin{align*}
        S_{231}&\leq \sum_{|\alpha|\leq m}\Big{(}\|\psi\nabla n^\varepsilon\|_{L^\infty_{xy}}\|\partial^\alpha\partial_yv^\varepsilon_2\|_{L^2_{xy}}\|\frac{\partial^\alpha\nabla n^\varepsilon}{\psi}\|_{L^2_{xy}}\\
        &\quad+\sum_{1\leq|\beta|\leq m-1}C^\beta_{m-1}\|\partial^\beta\nabla n^\varepsilon\|_{L^\infty_{xy}}\|\partial^{\alpha-\beta}\partial_yv^\varepsilon_2\|_{L^2_{xy}}\|\partial^\alpha\nabla n^\varepsilon\|_{L^2_{xy}}\\
        &\quad+\|\partial^\alpha\nabla n^\varepsilon\|_{L^2_{xy}}\|\psi\partial_y v^\varepsilon_2\|_{L^\infty_{xy}}\|\frac{\partial^\alpha\nabla n^\varepsilon}{\psi}\|_{L^2_{xy}}\Big{)}
        \\
        &\leq \frac{\nu}{2}\sum_{|\alpha|\leq m}\|\partial^{\alpha} \nabla^2 n^\varepsilon\|^2_{L^2_{xy}}+\frac{C(\delta)}{\nu}\sum_{|\alpha|\leq2}\|\partial^\alpha\nabla n^\varepsilon\|^2_{L^2_{xy}}\sum_{|\alpha|\leq m}\|\partial^{\alpha}\nabla v^\varepsilon_2\|^2_{L^2_{xy}}\\
        &\quad+\frac{C(\delta)}{\nu}\Big{(}1+\sum_{|\alpha|\leq m}\|\partial^\alpha\nabla v^\varepsilon_2\|^2_{L^2_{xy}}+\sum_{|\alpha|\leq 2}\|\partial^{\alpha}\nabla v^\varepsilon_2\|^2_{L^2_{xy}}\Big{)}\sum_{|\alpha|\leq m}\|\partial^\alpha\nabla n^\varepsilon\|^2_{L^2_{xy}}.
		\end{align*}
     Furthermore, integrating by parts for $S_{232}$ and appealing to (\ref{i}), we obtain
        \begin{align*}
        S_{232}&\leq\sum_{|\alpha|\leq m}\Big{(}\Big{|}\langle n^\varepsilon[\partial^{\alpha},\partial_y]\nabla v^\varepsilon_2, \partial^\alpha\nabla n^\varepsilon \rangle\Big{|}+\Big{|}\langle n^\varepsilon\partial_y\partial^{\alpha}\nabla v^\varepsilon_2, \partial^\alpha\nabla n^\varepsilon \rangle\Big{|}\\
        &\quad+\sum_{1\leq|\beta|\leq m-1}C^\beta_{m-1}\Big{|}\langle \partial^\beta n^\varepsilon\partial^{\alpha-\beta}\partial_y\nabla v^\varepsilon_2, \partial^\alpha\nabla n^\varepsilon \rangle\Big{|}+\Big{|}\langle \partial^\alpha n^\varepsilon\partial_y\nabla v^\varepsilon_2,\partial^\alpha\nabla n^\varepsilon \rangle\Big{|}\Big{)}\\
        &\leq \sum_{|\alpha|\leq m}\Big{(}\|n^\varepsilon\|_{L^\infty_{xy}}\|\psi\psi'\partial^{\alpha-(0,1)}\partial_y\nabla v^\varepsilon_2\|_{L^2_{xy}}\|\frac{\partial^{\alpha}\nabla n^\varepsilon}{\psi}\|_{L^2_{xy}}\\
        &\quad+\Big{(}\|\psi \partial_y n^\varepsilon\|_{L^\infty_{xy}}\|\partial^{\alpha}\nabla v^\varepsilon_2\|_{L^2_{xy}}\|\frac{\partial^{\alpha}\nabla n^\varepsilon}{\psi}\|_{L^2_{xy}}+\| n^\varepsilon\|_{L^\infty_{xy}}\|\partial^{\alpha}\nabla v^\varepsilon_2\|_{L^2_{xy}}\|\partial_y\partial^{\alpha}\nabla n^\varepsilon\|_{L^2_{xy}}\Big{)}\\
        &\quad+\sum_{1\leq|\beta|\leq m-1}C^\beta_{m-1}\|\partial^{\beta} n^\varepsilon\|_{L^\infty_{xy}}\|\psi\partial^{\alpha-\beta}\partial_y\nabla v^\varepsilon_2\|_{L^2_{xy}}\|\frac{\partial^{\alpha}\nabla n^\varepsilon}{\psi}\|_{L^2_{xy}}\\
        &\quad+\|\frac{\partial^{\alpha} n^\varepsilon}{\psi}\|_{L^2_{xy}}\|\psi^2\partial_y\nabla v^\varepsilon_2\|_{L^\infty_{xy}}\|\frac{\partial^{\alpha}\nabla n^\varepsilon}{\psi}\|_{L^2_{xy}}\Big{)}\\
        &\leq \frac{C(\delta)}{\nu}\Big{(}\|n^\varepsilon\|^2_{L^2_{xy}}+\sum_{|\alpha|\leq 2}\|\partial^{\alpha}\nabla n^\varepsilon\|^2_{L^2_{xy}}+\sum_{1\leq|\beta|\leq m-1}\|\partial^{\beta+(1,0)}\nabla n^\varepsilon\|^2_{L^2_{xy}}\Big{)}\sum_{|\alpha|\leq m}\|\partial^\alpha\nabla v^\varepsilon_2\|^2_{L^2_{xy}}\\
        &\quad+\frac{C(\delta)}{\nu}\sum_{|\alpha|\leq3}\|\partial^{\alpha} \nabla v^\varepsilon_2\|^2_{L^2_{xy}}\sum_{|\alpha|\leq m}(\|\partial^{\alpha}\nabla n^\varepsilon\|^2_{L^2_{xy}}+\|\partial^{\alpha} n^\varepsilon\|^2_{L^2_{xy}})\\
        &\quad+\frac{\nu}{2}\sum_{|\alpha|\leq m}\|\partial^{\alpha} \nabla^2 n^\varepsilon\|^2_{L^2_{xy}}+C\sum_{|\alpha|\leq m}\|\partial^{\alpha}\nabla n^\varepsilon\|^2_{L^2_{xy}}.
		\end{align*}
        Combining all estimates for $(\ref{Delta n})$, $S_{1}$ and $S_{2}$, we immediately get $(\ref{nabla n})$.
        \begin{lemma}\label{lem4.3}
         {Under the assumptions of Theorem $\ref{th nvu regularity}$ for the initial data $(n_{in},v_{in},u_{in})$ and the integer $m\geq5$, then there exist constants $1>\delta > 0$ and $\varepsilon_0>0$ such that for any $0<\varepsilon\leq\varepsilon_0$, we obtain the following estimate for $v^\varepsilon$ on $[0,T]$:
		\begin{align}\label{nabla v}
        &\frac{d}{dt}\sum_{|\alpha|\leq m}\|\partial^\alpha\nabla v^\varepsilon\|^2_{L^2_{xy}}+\varepsilon\sum_{|\alpha|\leq m}\|\partial^\alpha\nabla^2 v^\varepsilon\|^2_{L^2_{xy}}\notag\\
        &\leq C_0(\delta)\Big{(}\|v^\varepsilon\|^2_{L^2_{xy}}+\|u^\varepsilon\|^2_{L^2_{xy}}+\|\omega^\varepsilon\|^2_{L^\infty_{xy}}+\sum_{|\alpha|\leq m}\|\partial^\alpha\nabla v^\varepsilon\|^2_{L^2_{xy}}+\sum_{|\alpha|\leq m+1}\|\partial^{\alpha} \omega^\varepsilon\|^2_{L^2_{xy}}\Big{)}\notag\\
        &\quad\times\Big{(}\sum_{|\alpha|\leq m}\|\partial^\alpha\nabla v^\varepsilon\|^2_{L^2_{xy}}+\|\psi\partial_y\omega^\varepsilon\|^2_{L^\infty_{xy}}\Big{)}+C_0(\delta)\sum_{|\alpha|\leq m}\|\partial^\alpha\nabla v^\varepsilon\|^2_{L^2_{xy}}\\
        &\quad+C_0(\delta)\sum_{|\alpha|\leq m+1}\|\partial^{\alpha}\omega^\varepsilon\|^2_{L^2_{xy}}+\nu\sum_{|\alpha|\leq m}\|\partial^\alpha \nabla^2 n^\varepsilon\|^2_{L^2_{xy}}.\notag\notag
		\end{align}
        where positive constant $C$ is dependent of $T$.
		}
	\end{lemma}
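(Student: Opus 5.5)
We follow the scheme of Lemma \ref{lem4.2}. For $|\alpha|\le m$ we apply $\partial^\alpha\nabla$ to $(\ref{nvu equation})_2$ and pair the result with $\partial^\alpha\nabla v^\varepsilon$ in $L^2_{xy}$. This produces a viscous term $-\varepsilon\langle\partial^\alpha\nabla\Delta v^\varepsilon,\partial^\alpha\nabla v^\varepsilon\rangle$ and three nonlinear or source pieces:
\[
T_1=\langle\partial^\alpha\nabla\nabla(u^\varepsilon\cdot v^\varepsilon),\partial^\alpha\nabla v^\varepsilon\rangle,\quad T_2=\varepsilon\langle\partial^\alpha\nabla\nabla|v^\varepsilon|^2,\partial^\alpha\nabla v^\varepsilon\rangle,\quad T_3=\langle\partial^\alpha\nabla\nabla n^\varepsilon,\partial^\alpha\nabla v^\varepsilon\rangle .
\]

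\textbf{Viscous term.} We use the boundary conditions $v_2^\varepsilon|_{y=0}=0$ and $\partial_yv_1^\varepsilon|_{y=0}=0$, together with $\nabla\times v^\varepsilon=0$, which gives $\partial_yv_1^\varepsilon=\partial_xv_2^\varepsilon$. With these, the boundary integrals either vanish or are expressed through tangential derivatives. The commutators $[\partial_y,\partial^\alpha]$ are handled with (\ref{i}) and $|\psi'|\le C\delta$, exactly as in (\ref{Delta n}). This gives the lower bound $\varepsilon(1-C\delta)\sum\|\partial^\alpha\nabla^2v^\varepsilon\|^2-C\sum\|\partial^\alpha\nabla v^\varepsilon\|^2$. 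Choosing $\delta$ small then yields the dissipation $\varepsilon\sum\|\partial^\alpha\nabla^2 v^\varepsilon\|^2$ in (\ref{nabla v}).

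\textbf{Easy terms.} $T_3$ is bounded by $\|\partial^\alpha\nabla^2n^\varepsilon\|\,\|\partial^\alpha\nabla v^\varepsilon\|$, which is at most $\nu\|\partial^\alpha\nabla^2n^\varepsilon\|^2+C\|\partial^\alpha\nabla v^\varepsilon\|^2$. The term $T_2$ carries a factor $\varepsilon$. We write $\nabla|v|^2=2(v\cdot\nabla)v$, using $\nabla\times v=0$, integrate by parts once, and absorb the top-order factor into $\varepsilon\|\partial^\alpha\nabla^2v^\varepsilon\|^2$. The remaining factors are controlled by Leibniz expansion and (\ref{infty}).

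\textbf{Main obstacle: $T_1$.} Since $\nabla v^\varepsilon$ is symmetric, the curl-free structure gives
\[
\nabla(u\cdot v)=(u\cdot\nabla)v+(\nabla u)^{T}v=(u\cdot\nabla)v+(v\cdot\nabla)u+\omega\, v^{\perp},
\]
with $\omega=\omega^\varepsilon$. Applying $\partial^\alpha\nabla$ to this identity, we treat the three pieces separately.
\begin{itemize}
\item[(a)] The transport part $u\cdot\nabla(\partial^\alpha\nabla v)$ is skew after integration by parts, because $\nabla\cdot u=0$ and $u_2|_{y=0}=0$. Its commutators involve $\partial^\beta\nabla u$, and we control these through $\|\partial^\beta\omega\|$ for $|\beta|\le m+1$ by the div-curl estimate for $u$.
\item[(b)] The part carrying the highest derivatives of $u$, namely $\partial^\alpha\nabla^2u\cdot v$, would require $m+2$ derivatives of $u$. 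We plan to express $\nabla^2u$ through $\nabla\omega$ using $\Delta u=\nabla^\perp\omega$. Then $\partial^\alpha\nabla\omega$ with a tangential derivative is absorbed into $\sum_{|\alpha|\le m+1}\|\partial^\alpha\omega\|^2$.
\item[(c)] The single bad normal derivative $\partial_y\omega$ is paired with the weight: we write $v\,\partial_y\omega=(v/\psi)(\psi\partial_y\omega)$, bound $v/\psi$ by (\ref{ii}) or the Hardy inequality (\ref{iii}), and bound $\psi\partial_y\omega$ in $L^\infty$. This is precisely why the factor $\|\psi\partial_y\omega^\varepsilon\|^2_{L^\infty_{xy}}$ and the factor $\|\omega^\varepsilon\|^2_{L^\infty}$ appear in (\ref{nabla v}).
\end{itemize}
The lower-order Leibniz terms are estimated by (\ref{infty}) and Hölder's inequality. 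The main risk is step (c), that is, making sure that every occurrence of a normal derivative of $\omega$ can be paired with a weight $\psi$ or with $v^\varepsilon_2$, which vanishes at the boundary. Collecting all the bounds and applying Young's inequality gives (\ref{nabla v}).
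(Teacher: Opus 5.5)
There is a genuine gap, and it sits exactly at the step you flag as the main risk. Using $\nabla\times v^\varepsilon=0$, the second component of your identity is
\[
\partial_y(u^\varepsilon\cdot v^\varepsilon)=(u^\varepsilon\cdot\nabla)v^\varepsilon_2+(v^\varepsilon\cdot\nabla)u^\varepsilon_2-\omega^\varepsilon v^\varepsilon_1 .
\]
So the $\omega v^\perp$ piece puts $\omega^\varepsilon v^\varepsilon_1$ into the $v^\varepsilon_2$-equation. Applying $\partial^\alpha\partial_y$ and pairing with $\partial^\alpha\partial_y v^\varepsilon_2$ gives $\langle v^\varepsilon_1\,\partial^\alpha\partial_y\omega^\varepsilon,\partial^\alpha\partial_y v^\varepsilon_2\rangle$, together with Leibniz terms such as $\langle\partial^\alpha v^\varepsilon_1\,\partial_y\omega^\varepsilon,\partial^\alpha\partial_yv^\varepsilon_2\rangle$.

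The coefficient here is $v^\varepsilon_1$, not $v^\varepsilon_2$. The boundary conditions only give $\partial_yv^\varepsilon_1|_{y=0}=0$, so $v^\varepsilon_1/\psi$ is in general neither bounded nor square integrable near $y=0$. Consequently neither (\ref{ii}) nor (\ref{iii}) applies: both hold only for functions vanishing on $\{y=0\}$. Indeed, the step $\|\partial^\alpha u/y\|_{L^2_{xy}}\le C\|\partial_y\partial^\alpha u\|_{L^2_{xy}}$ inside (\ref{iii}) is Hardy's inequality, which needs that vanishing.

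For $|\alpha|=m$, the factor $\partial^\alpha\partial_y\omega^\varepsilon$ carries a plain normal derivative on top of $m$ conormal ones. It is therefore controlled neither by $\sum_{|\alpha|\le m+1}\|\partial^\alpha\omega^\varepsilon\|^2_{L^2_{xy}}$ nor by $\|\psi\partial_y\omega^\varepsilon\|_{L^\infty_{xy}}$. Integrating by parts in $y$ instead moves the derivative onto $\partial^\alpha\partial^2_yv^\varepsilon_2$, which the dissipation controls only at the price $\varepsilon^{-1}$. The top-order parts of (a) and (b) are not the problem. In (b) you do not even need $\Delta u^\varepsilon=\nabla^\perp\omega^\varepsilon$, because $v^\varepsilon\cdot\nabla=v^\varepsilon_1\partial_x+(v^\varepsilon_2/\psi)\,\psi\partial_y$ is already a conormal field.

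This is not a technicality that a different pairing will remove. The term $\omega^\varepsilon v^\varepsilon_1$ is exactly the coupling that drives the $O(1)$ layer $v^{B,0}_2$ of Section \ref{part2}. The vorticity $\omega^\varepsilon$ has an $O(1)$ layer because $\omega^\varepsilon|_{y=0}=0$ while the Euler vorticity need not vanish at the wall. Its product with $v^\varepsilon_1$ produces the forcing $\partial_zu^{B,1}_1\overline{v^{I,0}_1}$ in the equation for $v^{B,0}_2$ used in the proof of Lemma \ref{lem g L2}.

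Combine the expansion $v^\varepsilon=V^\theta+\varepsilon^{1/2}V^\varepsilon$ with (\ref{z derivative}) and Theorem \ref{th L infty convergence}. This gives $\|\partial_yv^\varepsilon_2(t)\|_{L^2_{xy}}\ge\varepsilon^{-1/4}\|\partial_zv^{B,0}_2(t)\|_{L^2_{xz}}-C$. Hence, whenever $v^{B,0}_2\not\equiv0$, the quantity $\sum_{|\alpha|\le m}\|\partial^\alpha\nabla v^\varepsilon\|^2_{L^2_{xy}}$, which (\ref{nabla v}) is used to bound uniformly in Theorem \ref{th nvu regularity}, is at least of order $\varepsilon^{-1/2}$.

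The paper's proof does not supply the missing idea either. It estimates $v^\varepsilon_1$ and $v^\varepsilon_2$ separately via (\ref{v1v2}). The $v^\varepsilon_1$-part works because only $\partial_x(u^\varepsilon\cdot v^\varepsilon)$ enters, so every derivative of $u^\varepsilon$ beyond the first is tangential. In the $v^\varepsilon_2$-part with $\alpha=(0,0)$ (and, by the paper's remark, $\alpha=(\alpha_1,0)$), the same term is bounded by $\|\psi\nabla\partial_yu^\varepsilon\|_{L^\infty_{xy}}\|v^\varepsilon/\psi\|_{L^2_{xy}}$, that is, by applying (\ref{iii}) to $v^\varepsilon_1$. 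The mixed $\alpha$ are referred back to the $v^\varepsilon_1$ case, which lacks the tangential derivative that made it work. So you have located the weak point correctly, but step (c) cannot be completed as proposed, and following the paper will not close it.
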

	\noindent{\bf{Proof.}}
        To obtain an estimate of $\partial^\alpha\nabla v^\varepsilon$, we write $(\ref{nvu equation})_2$ in component form as follows
        \begin{equation}\label{v1v2}
       \left\{
		\begin{split}{}
	&\partial_tv^\varepsilon_1-\varepsilon\Delta v^\varepsilon_1+\partial_x(u^\varepsilon\cdot
        v^\varepsilon)+\varepsilon\partial_x(v^\varepsilon\cdot v^\varepsilon)-\partial_xn^\varepsilon=0,\\
        &\partial_tv^\varepsilon_2-\varepsilon\Delta v^\varepsilon_2+\partial_y(u^\varepsilon\cdot v^\varepsilon)+\varepsilon\partial_y(v^\varepsilon\cdot v^\varepsilon)-\partial_yn^\varepsilon=0.
		\end{split}
		\right.
       \end{equation}
     Taking $\partial^\alpha\nabla$ on $(\ref{v1v2})_1$ and multiplying by $\partial^\alpha\nabla v^\varepsilon_1$, it can be shown
    \begin{equation*}
        \begin{split}{}
        &\sum_{|\alpha|\leq m}\langle \partial^\alpha\nabla\partial_tv^\varepsilon_1, \partial^\alpha\nabla v^\varepsilon_1 \rangle-\varepsilon\sum_{|\alpha|\leq m}\langle \partial^\alpha\nabla\Delta v^\varepsilon_1, \partial^\alpha\nabla v^\varepsilon_1 \rangle\\
        &\leq\sum_{|\alpha|\leq m}\Big{|}\langle \partial^\alpha\nabla\partial_x(u^\varepsilon\cdot v^\varepsilon), \partial^\alpha\nabla v^\varepsilon_1 \rangle\Big{|}+\varepsilon\sum_{|\alpha|\leq m}\Big{|}\langle \partial^\alpha\nabla\partial_x(v^\varepsilon\cdot v^\varepsilon), \partial^\alpha\nabla v^\varepsilon_1 \rangle\Big{|}\\
        &\quad+\sum_{|\alpha|\leq m}\Big{|}\langle \partial^\alpha\nabla\partial_xn^\varepsilon, \partial^\alpha\nabla v^\varepsilon_1 \rangle\Big{|}\\
        &=:S_3+S_4+S_5.\\
        \end{split}
		\end{equation*}
        Since $\psi(y)|_{y=0}=0$ and the boundary condition $\partial_yv^\varepsilon_1|_{y=0}=0$, the estimate for the Laplacian term is analogous to that in $(\ref{Delta n})$,
        \begin{align*}
        &-\varepsilon\sum_{|\alpha|\leq m}\langle \partial^\alpha\nabla\Delta v^\varepsilon_1, \partial^\alpha\nabla v^\varepsilon_1 \rangle\\
        &=-\varepsilon\sum_{|\alpha|\leq m}\langle \partial_x\partial^\alpha\partial_x\nabla v^\varepsilon_1, \partial^\alpha\nabla v^\varepsilon_1 \rangle-\varepsilon\sum_{|\alpha|\leq m}\langle \partial_y\partial^\alpha\partial_y\nabla v^\varepsilon_1, \partial^\alpha\nabla v^\varepsilon_1 \rangle\\
        &\quad-\varepsilon\sum_{|\alpha|\leq m}\langle [\partial^\alpha,\partial_y]\partial_y\nabla v^\varepsilon_1, \partial^\alpha\nabla v^\varepsilon_1 \rangle\\
        &\geq\varepsilon\sum_{|\alpha|\leq m}\|\partial^\alpha\nabla^2 v^\varepsilon_1\|^2_{L^2_{xy}}-C\delta\varepsilon\Big{(}\sum_{|\alpha|\leq m}\|\partial^\alpha\partial_y\nabla v^\varepsilon_1\|^2_{L^2_{xy}}+\|\partial^{\alpha-(0,1)}\partial_y\nabla v^\varepsilon_1\|^2_{L^2_{xy}}\Big{)}-C\sum_{|\alpha|\leq m}\|\partial^\alpha\nabla v^\varepsilon_1\|^2_{L^2_{xy}}\\
        &\geq(1-C\delta)\varepsilon\sum_{|\alpha|\leq m}\|\partial^\alpha\nabla^2 v^\varepsilon_1\|^2_{L^2_{xy}}-C\sum_{|\alpha|\leq m}\|\partial^\alpha\nabla v^\varepsilon_1\|^2_{L^2_{xy}}.
		\end{align*}
        To expand the term $S_3$, we proceed as follows:
        \begin{align*}
        S_3&=\sum_{|\alpha|\leq m}\Big{(}\Big{|}\langle \partial^\alpha \nabla(\partial_xu^\varepsilon_1 v^\varepsilon_1), \partial^\alpha\nabla v^\varepsilon_1 \rangle\Big{|}+\Big{|}\langle \partial^\alpha \nabla(u^\varepsilon_1 \partial_xv^\varepsilon_1), \partial^\alpha\nabla v^\varepsilon_1 \rangle\Big{|}\\
        &\quad+\Big{|}\langle \partial^\alpha\nabla (\partial_xu^\varepsilon_2 v^\varepsilon_2), \partial^\alpha\nabla v^\varepsilon_1 \rangle\Big{|}+\Big{|}\langle \partial^\alpha\nabla (u^\varepsilon_2 \partial_xv^\varepsilon_2), \partial^\alpha\nabla v^\varepsilon_1 \rangle\Big{|}\Big{)}\\
        &=:S_{31}+S_{32}+S_{33}+S_{34}.
		\end{align*}
        Using the Sobolev embedding inequality $(\ref{infty})$, {we obtain}
        \begin{align*}
        S_{31}&=\sum_{|\alpha|\leq m}\Big{(}\Big{|}\langle\partial^\alpha(\nabla\partial_x u^\varepsilon_1 v^\varepsilon_1), \partial^\alpha\nabla v^\varepsilon_1 \rangle\Big{|}+\Big{|}\langle \partial^\alpha (\partial_xu^\varepsilon_1\nabla v^\varepsilon_1), \partial^\alpha\nabla v^\varepsilon_1 \rangle\Big{|}\Big{)}\\
        &\leq\sum_{|\alpha|\leq m}\Big{(}\|\psi\nabla\partial_x u^\varepsilon_1\|_{L^\infty_{xy}}\|\frac{\partial^\alpha v^\varepsilon_1}{\psi}\|_{L^2_{xy}}\|\partial^\alpha\nabla v^\varepsilon_1\|_{L^2_{xy}}+\sum_{1\leq|\beta|\leq m-1}C^\beta_{m-1}\|\partial^\beta\nabla\partial_x u^\varepsilon_1\|_{L^2_{xy}}\\
        &\quad\times\|\partial^{\alpha-\beta}v^\varepsilon_1\|_{L^\infty_{xy}}\|\partial^\alpha\nabla v^\varepsilon_1\|_{L^2_{xy}}+\|\partial^\alpha\nabla \partial_xu^\varepsilon_1\|_{L^2_{xy}}\| v^\varepsilon_1\|_{L^\infty_{xy}}\|\partial^\alpha\nabla v^\varepsilon_1\|_{L^2_{xy}}\\
        &\quad+\|\partial_x u^\varepsilon_1\|_{L^\infty_{xy}}\|\partial^\alpha\nabla v^\varepsilon_1\|^2_{L^2_{xy}}+\sum_{1\leq|\beta|\leq m-1}C^\beta_{m-1}\|\partial^\beta\partial_x u^\varepsilon_1\|_{L^\infty_{xy}}\\
        &\quad\times\|\partial^{\alpha-\beta}\nabla v^\varepsilon_1\|_{L^2_{xy}}\|\partial^\alpha\nabla v^\varepsilon_1\|_{L^2_{xy}}+\|\frac{\partial^\alpha\partial_x u^\varepsilon_1}{\psi}\|_{L^2_{xy}}\|\psi\nabla v^\varepsilon_1\|_{L^\infty_{xy}}\|\partial^\alpha\nabla v^\varepsilon_1\|_{L^2_{xy}}\Big{)}\\
        &\leq C\Big{(}\sum_{|\alpha|\leq 3}\|\partial^\alpha \nabla u^\varepsilon_1\|^2_{L^2_{xy}}+\sum_{1\leq|\beta|\leq m-1}\|\partial^{\alpha-\beta+(1,0)}\nabla v^\varepsilon_1\|^2_{L^2_{xy}}+\|v^\varepsilon_1\|^2_{L^2_{xy}}\\
        &\quad+\sum_{|\alpha|\leq 2}\|\partial^{\alpha}\nabla v^\varepsilon_1\|^2_{L^2_{xy}}+\sum_{1\leq|\beta|\leq m-1}\sum_{|\gamma|\leq2}\|\partial^{\beta+\gamma}\nabla u^\varepsilon_1\|^2_{L^2_{xy}}\Big{)}\sum_{|\alpha|\leq m}\|\partial^\alpha\nabla v^\varepsilon_1\|^2_{L^2_{xy}}\\
        &\quad+C\sum_{|\alpha|\leq m}\|\partial^{\alpha+(1,0)}\nabla u^\varepsilon_1\|^2_{L^2_{xy}}+C(\delta)\sum_{|\alpha|\leq m}\|\partial^\alpha\nabla v^\varepsilon_1\|^2_{L^2_{xy}}+C(\delta)\sum_{|\alpha|\leq m}\|\partial^\alpha v^\varepsilon_1\|^2_{L^2_{xy}}.
		\end{align*}
        By similar arguments, we obtain the estimates for $S_{32}$ and $S_{33}$ as below:
        \begin{align*}
        S_{32}&=\sum_{|\alpha|\leq m}\Big{(}\Big{|}\langle\partial^\alpha(\nabla u^\varepsilon_1 \partial_xv^\varepsilon_1), \partial^\alpha\nabla v^\varepsilon_1 \rangle\Big{|}+\Big{|}\langle \partial^\alpha (u^\varepsilon_1\partial_x\nabla v^\varepsilon_1), \partial^\alpha\nabla v^\varepsilon_1 \rangle\Big{|}\Big{)}\\
        &\leq\sum_{|\alpha|\leq m}\Big{(}\|\nabla u^\varepsilon_1\|_{L^\infty_{xy}}\|\partial^\alpha\partial_x v^\varepsilon_1\|_{L^2_{xy}}\|\partial^\alpha\nabla v^\varepsilon_1\|_{L^2_{xy}}+\sum_{1\leq|\beta|\leq m-1}C^\beta_{m-1}\|\psi\partial^\beta\nabla u^\varepsilon_1\|_{L^\infty_{xy}}\\
        &\quad\times\|\frac{\partial^{\alpha-\beta}\partial_x v^\varepsilon_1}{\psi}\|_{L^2_{xy}}\|\partial^\alpha\nabla v^\varepsilon_1\|_{L^2_{xy}}+\|\partial^\alpha\nabla u^\varepsilon_1\|_{L^2_{xy}}\|\partial_x v^\varepsilon_1\|_{L^\infty_{xy}}\|\partial^\alpha\nabla v^\varepsilon_1\|_{L^2_{xy}}\\
        &\quad+\frac{1}{2}\|\partial_x u^\varepsilon_1\|_{L^\infty_{xy}}\|\partial^\alpha\nabla v^\varepsilon_1\|^2_{L^2_{xy}}+\sum_{1\leq|\beta|\leq m-1}C^\beta_{m-1}\|\partial^\beta u^\varepsilon_1\|_{L^\infty_{xy}}\\
        &\quad\times\|\partial^{\alpha-\beta}\nabla\partial_x v^\varepsilon_1\|_{L^2_{xy}}\|\partial^\alpha\nabla v^\varepsilon_1\|_{L^2_{xy}}+\|\frac{\partial^\alpha u^\varepsilon_1}{\psi}\|_{L^2_{xy}}\|\psi\nabla \partial_xv^\varepsilon_1\|_{L^\infty_{xy}}\|\partial^\alpha\nabla v^\varepsilon_1\|_{L^2_{xy}}\Big{)}\\
        &\leq C\Big{(}\|\omega^\varepsilon\|^2_{L^\infty_{xy}}+\sum_{1\leq|\beta|\leq m-1}\sum_{|\gamma|\leq2}\|\partial^{\beta+\gamma}\nabla u^\varepsilon_1\|^2_{L^2_{xy}}+\sum_{|\alpha|\leq 3}\|\partial^\alpha\nabla v^\varepsilon_1\|^2_{L^2_{xy}}+1\Big{)}\\
        &\quad\times\sum_{|\alpha|\leq m}\|\partial^\alpha\nabla v^\varepsilon_1\|^2_{L^2_{xy}}+C(\delta)\sum_{|\alpha|\leq m}\|\partial^{\alpha}\nabla u^\varepsilon_1\|^2_{L^2_{xy}}+C\sum_{|\alpha|\leq m}\|\partial^\alpha u^\varepsilon_1\|^2_{L^2_{xy}},
		\end{align*}
        and
        \begin{align*}
        S_{33}&=\sum_{|\alpha|\leq m}\Big{(}\Big{|}\langle \partial^\alpha(\nabla \partial_xu^\varepsilon_2 v^\varepsilon_2), \partial^\alpha\nabla v^\varepsilon_1 \rangle\Big{|}+\Big{|}\langle \partial^\alpha (\partial_xu^\varepsilon_2 \nabla v^\varepsilon_2), \partial^\alpha\nabla v^\varepsilon_1 \rangle\Big{|}\Big{)}\\
        &\leq\sum_{|\alpha|\leq m}\Big{(}\|\psi\nabla\partial_x u^\varepsilon_2\|_{L^\infty_{xy}}\|\frac{\partial^\alpha v^\varepsilon_2}{\psi}\|_{L^2_{xy}}\|\partial^\alpha\nabla v^\varepsilon_1\|_{L^2_{xy}}+\sum_{1\leq|\beta|\leq m-1}C^\beta_{m-1}\|\partial^\beta\nabla\partial_x u^\varepsilon_2\|_{L^2_{xy}}\\
        &\quad\times\|\partial^{\alpha-\beta}v^\varepsilon_2\|_{L^\infty_{xy}}\|\partial^\alpha\nabla v^\varepsilon_1\|_{L^2_{xy}}+\|\partial^\alpha\nabla \partial_xu^\varepsilon_2\|_{L^2_{xy}}\| v^\varepsilon_2\|_{L^\infty_{xy}}\|\partial^\alpha\nabla v^\varepsilon_1\|_{L^2_{xy}}\\
        &\quad+\|\partial_x u^\varepsilon_2\|_{L^\infty_{xy}}\|\partial^\alpha\nabla v^\varepsilon_2\|_{L^2_{xy}}\|\partial^\alpha\nabla v^\varepsilon_1\|_{L^2_{xy}}+\sum_{1\leq|\beta|\leq m-1}C^\beta_{m-1}\|\partial^\beta\partial_x u^\varepsilon_2\|_{L^\infty_{xy}}\\
        &\quad\times\|\partial^{\alpha-\beta}\nabla v^\varepsilon_2\|_{L^2_{xy}}\|\partial^\alpha\nabla v^\varepsilon_1\|_{L^2_{xy}}+\|\frac{\partial^\alpha\partial_x u^\varepsilon_2}{\psi}\|_{L^2_{xy}}\|\psi\nabla v^\varepsilon_2\|_{L^\infty_{xy}}\|\partial^\alpha\nabla v^\varepsilon_1\|_{L^2_{xy}}\Big{)}\\
        &\leq C\Big{(}\sum_{|\alpha|\leq 3}\|\partial^{\alpha}\nabla u^\varepsilon_2\|^2_{L^2_{xy}}+\sum_{1\leq|\beta|\leq m-1}\|\partial^{\alpha-\beta+(1,0)}\nabla v^\varepsilon_2\|^2_{L^2_{xy}}+\|v^\varepsilon_2\|^2_{L^2_{xy}}\\
        &\quad+\sum_{|\alpha|\leq 2}\|\partial^{\alpha}\nabla v^\varepsilon_2\|^2_{L^2_{xy}}+\sum_{1\leq|\beta|\leq m-1}\sum_{|\gamma|\leq2}\|\partial^{\beta+\gamma}\nabla u^\varepsilon_2\|^2_{L^2_{xy}}\Big{)}\sum_{|\alpha|\leq m}\|\partial^\alpha\nabla v^\varepsilon_1\|^2_{L^2_{xy}}\\
        &\quad+C\sum_{|\alpha|\leq m}\|\partial^\alpha\nabla v^\varepsilon_2\|^2_{L^2_{xy}}+C(\delta)\sum_{|\alpha|\leq m}\|\partial^{\alpha+(1,0)}\nabla u^\varepsilon_2\|^2_{L^2_{xy}}+C\sum_{|\alpha|\leq m}\|\partial^\alpha v^\varepsilon_2\|^2_{L^2_{xy}}.
		\end{align*}
        We will {divide} $S_{34}$ into the following two items
        \begin{equation*}
        \begin{split}{}
        S_{34}&=\sum_{|\alpha|\leq m}\Big{(}\Big{|}\langle\partial^\alpha(\nabla u^\varepsilon_2 \partial_xv^\varepsilon_2), \partial^\alpha\nabla v^\varepsilon_1 \rangle\Big{|}+\Big{|}\langle \partial^\alpha (u^\varepsilon_2 \partial_x\nabla v^\varepsilon_2), \partial^\alpha\nabla v^\varepsilon_1 \rangle\Big{|}\Big{)}=:S_{341}+S_{342}.\\
        \end{split}
		\end{equation*}
        The estimate for $S_{341}$ is similar to that for $S_{32}$, by condition $(\ref{nvu equation})_4$, then it yields that
        \begin{align*}
        S_{341}
        &\leq\sum_{|\alpha|\leq m}\Big{(}\|\nabla u^\varepsilon_2\|_{L^\infty_{xy}}\|\partial^\alpha\partial_x v^\varepsilon_2\|_{L^2_{xy}}\|\partial^\alpha\nabla v^\varepsilon_1\|_{L^2_{xy}}+\sum_{1\leq|\beta|\leq m-1}C^\beta_{m-1}\|\psi\partial^\beta\nabla u^\varepsilon_2\|_{L^\infty_{xy}}\\
        &\quad\times\|\frac{\partial^{\alpha-\beta}\partial_x v^\varepsilon_2}{\psi}\|_{L^2_{xy}}\|\partial^\alpha\nabla v^\varepsilon_1\|_{L^2_{xy}}+\|\partial^\alpha\nabla u^\varepsilon_2\|_{L^2_{xy}}\|\partial_x v^\varepsilon_2\|_{L^\infty_{xy}}\|\partial^\alpha\nabla v^\varepsilon_1\|_{L^2_{xy}}\Big{)}\\
        &\leq C\Big{(}\|\nabla u^\varepsilon_2\|^2_{L^\infty_{xy}}+\sum_{1\leq|\beta|\leq m-1}\sum_{|\gamma|\leq2}\|\partial^{\beta+\gamma}\nabla u^\varepsilon_2\|^2_{L^2_{xy}}+\sum_{|\alpha|\leq 2}\|\partial^{\alpha}\nabla v^\varepsilon_2\|^2_{L^2_{xy}}\Big{)}\\
        &\quad\times\sum_{|\alpha|\leq m}\|\partial^\alpha\nabla v^\varepsilon_1\|^2_{L^2_{xy}}+C\sum_{|\alpha|\leq m}\|\partial^\alpha\nabla u^\varepsilon_2\|^2_{L^2_{xy}}+C\sum_{|\alpha|\leq m}\|\partial^\alpha\nabla v^\varepsilon_2\|^2_{L^2_{xy}}\\
        &\leq C\Big{(}\|\omega^\varepsilon\|^2_{L^\infty_{xy}}+\sum_{1\leq|\beta|\leq m-1}\sum_{|\gamma|\leq2}\|\partial^{\beta+\gamma}\nabla u^\varepsilon_2\|^2_{L^2_{xy}}+\sum_{|\alpha|\leq 2}\|\partial^{\alpha}\nabla v^\varepsilon_2\|^2_{L^2_{xy}}\Big{)}\\
        &\quad\times\sum_{|\alpha|\leq m}\|\partial^\alpha\nabla v^\varepsilon_1\|^2_{L^2_{xy}}+C\sum_{|\alpha|\leq m}\|\partial^\alpha\nabla u^\varepsilon\|^2_{L^2_{xy}}+C\sum_{|\alpha|\leq m}\|\partial^\alpha\nabla v^\varepsilon_2\|^2_{L^2_{xy}}.
		\end{align*}
        By substituting the curl condition $(\ref{v curl})$, we can obtain $\partial_xv^\varepsilon_2=\partial_yv^\varepsilon_1$, and then
        \begin{align*}
         \langle  u^\varepsilon_2\partial^\alpha \nabla\partial_xv^\varepsilon_2,
        \partial^\alpha\nabla v^\varepsilon_1 \rangle=\langle  u^\varepsilon_2\partial^\alpha \nabla\partial_yv^\varepsilon_1, \partial^\alpha\nabla v^\varepsilon_1 \rangle.
        \end{align*}
        With $u^\varepsilon_2|_{y=0}=0$ and the divergence condition $\partial_xu^\varepsilon_1+\partial_yu^\varepsilon_2=0$, the estimation of $S_{342}$  is as follows:
        \begin{align*}
        S_{342}&=\sum_{|\alpha|\leq m}\Big{(}\Big{|}\langle  u^\varepsilon_2[\partial^\alpha, \partial_y]\nabla v^\varepsilon_1, \partial^\alpha\nabla v^\varepsilon_1 \rangle\Big{|}+\Big{|}\langle \partial_y u^\varepsilon_2\partial^\alpha\nabla v^\varepsilon_1, \partial^\alpha\nabla v^\varepsilon_1 \rangle\Big{|}\\
        &\quad+\sum_{1\leq|\beta|\leq m-1}C^\beta_{m-1}\Big{|}\langle \partial^\beta u^\varepsilon_2 \partial^{\alpha-\beta} \partial_x\nabla v^\varepsilon_2, \partial^\alpha\nabla v^\varepsilon_1 \rangle\Big{|}+\Big{|}\langle \partial^\alpha u^\varepsilon_2\partial_x\nabla v^\varepsilon_2, \partial^\alpha\nabla v^\varepsilon_1 \rangle\Big{|}\Big{)}\\
        &\leq\sum_{|\alpha|\leq m}\Big{(}\|\frac{u^\varepsilon_2}{\psi}\|_{L^\infty_{xy}}\|\psi\psi'\partial^{\alpha-(0,1)}\partial_y\nabla v^\varepsilon_1\|_{L^2_{xy}}\|\partial^\alpha\nabla v^\varepsilon_1\|_{L^2_{xy}}+\|\partial_xu^\varepsilon_1\|_{L^\infty_{xy}}\|\partial^\alpha\nabla v^\varepsilon_1\|^2_{L^2_{xy}}\\
        &\quad+\sum_{1\leq|\beta|\leq m-1}C^\beta_{m-1}\|\partial^\beta u^\varepsilon_2\|_{L^\infty_{xy}}\|\partial^{\alpha-\beta+(1,0)}\nabla v^\varepsilon_2\|_{L^2_{xy}}\|\partial^\alpha\nabla v^\varepsilon_1\|_{L^2_{xy}}\\
        &\quad+\|\frac{\partial^\alpha u^\varepsilon_2}{\psi}\|_{L^2_{xy}}\|\psi \partial_x\nabla v^\varepsilon_2\|_{L^\infty_{xy}}\|\partial^\alpha\nabla v^\varepsilon_1\|_{L^2_{xy}}\Big{)}\\
        &\leq C(\delta)\Big{(}\|u^\varepsilon_2\|^2_{L^2_{xy}}+\sum_{|\alpha|\leq 2}\|\partial^{\alpha}\nabla u^\varepsilon_1\|^2_{L^2_{xy}}+\sum_{1\leq|\beta|\leq m-1}\|\partial^{\beta+(1,0)}\nabla u^\varepsilon_2\|^2_{L^2_{xy}}\\
        &\quad+\sum_{|\alpha|\leq 3}\|\partial^{\alpha}\nabla v^\varepsilon_2\|^2_{L^2_{xy}}\Big{)}\sum_{|\alpha|\leq m}\|\partial^\alpha\nabla v^\varepsilon_1\|^2_{L^2_{xy}}+C\sum_{|\alpha|\leq m}\|\partial^{\alpha}\nabla v^\varepsilon_2\|^2_{L^2_{xy}}\\
        &\quad+C(\delta)\sum_{|\alpha|\leq m}\|\partial^\alpha\nabla u^\varepsilon_2\|^2_{L^2_{xy}}+C(\delta)\sum_{|\alpha|\leq m}\|\partial^\alpha u^\varepsilon_2\|^2_{L^2_{xy}}.
		\end{align*}
        For any $\varepsilon>0$, we directly derive the estimate of the term $S_4$
        \begin{align*}
        S_4&\leq \frac{1}{4}\varepsilon\sum_{|\alpha|\leq m}\|\partial^\alpha\nabla^2 v^\varepsilon_1\|^2_{L^2_{xy}}+\frac{1}{4}\varepsilon\sum_{|\alpha|\leq m}\|\partial^\alpha\nabla^2 v^\varepsilon_2\|^2_{L^2_{xy}}+C\Big{(}\|v^\varepsilon_1\|^2_{L^2_{xy}}+\|v^\varepsilon_2\|_{L^2_{xy}}+\sum_{|\alpha|\leq 3}\|\partial^\alpha\nabla v^\varepsilon_1\|^2_{L^2_{xy}}\\
        &\quad+\sum_{|\alpha|\leq 3}\|\partial^\alpha\nabla v^\varepsilon_2\|_{L^2_{xy}}+\sum_{1\leq|\beta|\leq m-1}\|\partial^{\alpha-\beta+(1,0)}\nabla v^\varepsilon_1\|^2_{L^2_{xy}}+\sum_{1\leq|\beta|\leq m-1}\|\partial^{\alpha-\beta+(1,0)}\nabla v^\varepsilon_2\|^2_{L^2_{xy}}\\
        &\quad+C\sum_{|\alpha|\leq m}\|\partial^\alpha\nabla v^\varepsilon_2\|^2_{L^2_{xy}}\Big{)}\sum_{|\alpha|\leq m}\|\partial^\alpha\nabla v^\varepsilon_1\|^2_{L^2_{xy}}+C\sum_{|\alpha|\leq m}\|\partial^\alpha\nabla v^\varepsilon_1\|^2_{L^2_{xy}}+C\sum_{|\alpha|\leq m}\|\partial^\alpha\nabla v^\varepsilon_2\|^2_{L^2_{xy}}\\
        &\quad+C\sum_{|\alpha|\leq m}\|\partial^\alpha v^\varepsilon_1\|^2_{L^2_{xy}}+C\sum_{|\alpha|\leq m}\|\partial^\alpha v^\varepsilon_2\|^2_{L^2_{xy}}.
		\end{align*}
        Using the H\"{o}lder's inequality, we can obtain
        \begin{equation*}
        \begin{split}{}
        S_5=\sum_{|\alpha|\leq m}\Big{|}\langle \partial^\alpha \partial_x\nabla n^\varepsilon , \partial^\alpha\nabla v^\varepsilon_1 \rangle\Big{|}\leq
        \frac{1}{4}\nu\sum_{|\alpha|\leq m}\|\partial^\alpha\nabla^2 n^\varepsilon\|^2_{L^2_{xy}}+{\frac{C(\delta
        )}{\nu}\sum_{|\alpha|\leq m}\|\partial^\alpha\nabla v^\varepsilon_1\|_{L^2_{xy}}^2}.
        \end{split}
		\end{equation*}
        Thus, combining the estimates from $S_3$ to $S_5$, we can conclude the regularity estimate of $v^\varepsilon_1$ as follows
        \begin{align}\label{v1}
        &\frac{1}{2}\frac{d}{dt}\sum_{|\alpha|\leq m}\|\partial^\alpha\nabla v^\varepsilon_1\|^2_{L^2_{xy}}+\varepsilon\sum_{|\alpha|\leq m}\|\partial^\alpha\nabla^2 v^\varepsilon_1\|^2_{L^2_{xy}}\notag\\
        &\leq \frac{1}{4}\nu\sum_{|\alpha|\leq m}\|\partial^\alpha\nabla^2 n^\varepsilon\|^2_{L^2_{xy}}+\frac{1}{4}\varepsilon\sum_{|\alpha|\leq m}\|\partial^\alpha\nabla^2 v^\varepsilon_1\|^2_{L^2_{xy}}+\frac{1}{4}\varepsilon\sum_{|\alpha|\leq m}\|\partial^\alpha\nabla^2 v^\varepsilon_2\|^2_{L^2_{xy}}\\
        &\quad +C(\delta)\Big{(}\|v^\varepsilon\|^2_{L^2_{xy}}+\|u^\varepsilon\|^2_{L^2_{xy}}+\|\omega^\varepsilon\|^2_{L^\infty_{xy}}+\sum_{|\alpha|\leq m}\|\partial^\alpha\nabla v^\varepsilon\|^2_{L^2_{xy}}+\sum_{|\alpha|\leq m+1}\|\partial^{\alpha}\nabla u^\varepsilon\|^2_{L^2_{xy}}\Big{)}\notag\\
    &\quad\times\sum_{|\alpha|\leq m}\|\partial^\alpha\nabla v^\varepsilon_1\|^2_{L^2_{xy}}+C(\delta)\Big{(}\sum_{|\alpha|\leq m}\|\partial^\alpha\nabla v^\varepsilon\|^2_{L^2_{xy}}+\sum_{|\alpha|\leq m+1}\|\partial^{\alpha}\nabla u^\varepsilon\|^2_{L^2_{xy}}\notag\\
        &\quad+\sum_{|\alpha|\leq m}\|\partial^\alpha v^\varepsilon\|^2_{L^2_{xy}}+\sum_{|\alpha|\leq m}\|\partial^\alpha u^\varepsilon\|^2_{L^2_{xy}}\Big{)}.\notag\notag
		\end{align}
        When estimating $v^\varepsilon_2$, we divide the estimation into two cases: $\alpha=(0,0)$ and $\alpha=(\alpha_1,\alpha_2)$ with $\alpha_1\geq1$,$\alpha_2\geq1$. First, for the case $\alpha=(0,0)$, multiplying equation $(\ref{v1v2})_2$ by $-\Delta v^\varepsilon_2$, we obtain
        \begin{align*}
            &\frac{1}{2}\frac{d}{dt}\|\nabla v^\varepsilon_2\|^2_{L^2_{xy}}+\varepsilon\|\nabla^2 v^\varepsilon_2\|^2_{L^2_{xy}}\\
            &=-\int^\infty_0\int^\infty_{-\infty}\nabla\partial_y(u^\varepsilon\cdot v^\varepsilon)\cdot\nabla v^\varepsilon_2dxdy-\varepsilon\int^\infty_0\int^\infty_{-\infty}\nabla\partial_y(v^\varepsilon\cdot v^\varepsilon)\cdot\nabla v^\varepsilon_2dxdy\\
            &\quad+\int^\infty_0\int^\infty_{-\infty}\nabla\partial_yn^\varepsilon\cdot\nabla v^\varepsilon_2dxdy\\
            &\leq\|\psi\nabla\partial_yu^\varepsilon\|_{L^\infty_{xy}}\|\frac{v^\varepsilon}{\psi}\|_{L^2_{xy}}\|\nabla v^\varepsilon_2\|_{L^2_{xy}}+\|\nabla u^\varepsilon\|_{L^\infty_{xy}}\|\nabla v^\varepsilon\|_{L^2_{xy}}\|\nabla v^\varepsilon_2\|_{L^2_{xy}}\\
            &\quad+\|\partial_xu^\varepsilon_1\|_{L^\infty_{xy}}\|\nabla v^\varepsilon_2\|^2_{L^2_{xy}}+\|\partial_yu^\varepsilon_2\|_{L^\infty_{xy}}\|\nabla v^\varepsilon_2\|^2_{L^2_{xy}}+2\varepsilon(\|\nabla\partial_yv^\varepsilon\|_{L^2_{xy}}\|v^\varepsilon\|_{L^\infty_{xy}}\\
            &\quad+\|\nabla v^\varepsilon\|^2_{L^4_{xy}})\|\nabla v^\varepsilon_2\|_{L^2_{xy}}+\|\nabla\partial_yn^\varepsilon\|_{L^2_{xy}}\|\nabla v^\varepsilon_2\|_{L^2_{xy}}
            \\
            &\leq\frac{1}{8}\varepsilon\|\nabla^2 v^\varepsilon\|^2_{L^2_{xy}}+\frac{1}{8}\nu\|\nabla^2 n^\varepsilon\|^2_{L^2_{xy}}+C\Big{(}1+\sum_{|\alpha|= 1}\|\partial^{\alpha}\nabla v^\varepsilon_2\|^2_{L^2_{xy}}\Big{)}\Big{(}\|u^\varepsilon_1\|^2_{L^2_{xy}}+C\sum_{|\alpha|\leq 2}\|\partial^\alpha\nabla u^\varepsilon_1\|^2_{L^2_{xy}}\Big{)}\\
            &\quad+C\|\psi\partial_y\nabla u^\varepsilon\|^2_{L^\infty_{xy}}\Big{(}\|v^\varepsilon\|^2_{L^2_{xy}}+\|\nabla v^\varepsilon\|^2_{L^2_{xy}}\Big{)}\\
            &\quad+C\Big{(}\|v^\varepsilon_1\|^2_{L^2_{xy}}+\|v^\varepsilon_2\|^2_{L^2_{xy}}+\sum_{|\alpha|\leq1}\big{(}\|\partial^{\alpha}\nabla v^\varepsilon_1\|^2_{L^2_{xy}}+\|\partial^{\alpha}\nabla v^\varepsilon_2\|^2_{L^2_{xy}}\big{)}\Big{)}\|\nabla v^\varepsilon_2\|^2_{L^2_{xy}}\\
            &\quad+C\|\omega^\varepsilon\|^2_{L^\infty_{xy}}+C\|\nabla v^\varepsilon_2\|^2_{L^2_{xy}},
        \end{align*}
        in the second last step, by integrating by parts and using the curl condition $(\ref{v curl})$, we obtain
        \begin{align*}
           \int^\infty_0\int^\infty_{-\infty}u^\varepsilon_1\nabla \partial_yv^\varepsilon_1\nabla v^\varepsilon_2dxdy&=\int^\infty_0\int^\infty_{-\infty}u^\varepsilon_1\nabla \partial_xv^\varepsilon_2\nabla v^\varepsilon_2dxdy=-\frac{1}{2}\int^\infty_0\int^\infty_{-\infty}\partial_xu^\varepsilon_1|\nabla v^\varepsilon_2|^2dxdy, \\
           \int^\infty_0\int^\infty_{-\infty}u^\varepsilon_2\nabla \partial_yv^\varepsilon_2\nabla v^\varepsilon_2dxdy&=-\frac{1}{2}\int^\infty_0\int^\infty_{-\infty}\partial_yu^\varepsilon_2|\nabla v^\varepsilon_2|^2dxdy
        \end{align*}
          by $u^\varepsilon_2|_{y=0}=0$. Moreover, for $\alpha=(\alpha_1,\alpha_2)$ at $\alpha_1\geq1$, $\alpha_2\geq1$, the derivation process is similar to that of $v^\varepsilon_1$, so it can be obtained
        \begin{align*}
        &\frac{1}{2}\frac{d}{dt}\sum_{|\alpha|\leq m}\|\partial^\alpha\nabla v^\varepsilon_2\|^2_{L^2_{xy}}+\sum_{|\alpha|\leq m}\|\partial^\alpha \nabla^2 v^\varepsilon_2\|^2_{L^2_{xy}}\\
        &\leq \frac{1}{8}\nu\sum_{|\alpha|\leq m}\|\partial^\alpha\nabla^2 n^\varepsilon\|^2_{L^2_{xy}}+\frac{1}{16}\varepsilon\sum_{|\alpha|\leq m}\|\partial^\alpha\nabla^2 v^\varepsilon_1\|^2_{L^2_{xy}}+\frac{1}{16}\varepsilon\sum_{|\alpha|\leq m}\|\partial^\alpha\nabla^2 v^\varepsilon_2\|^2_{L^2_{xy}}\\
        &\quad+C(\delta)\Big{(}\|v^\varepsilon\|^2_{L^2_{xy}}+\|u^\varepsilon\|^2_{L^2_{xy}}+\|\omega^\varepsilon\|^2_{L^\infty_{xy}}+\sum_{|\alpha|\leq m}\|\partial^\alpha\nabla v^\varepsilon\|^2_{L^2_{xy}}+\sum_{|\alpha|\leq m+1}\|\partial^{\alpha}\nabla u^\varepsilon\|^2_{L^2_{xy}}\Big{)}\\
        &\quad\cdot\sum_{|\alpha|\leq m}\|\partial^\alpha\nabla v^\varepsilon_2\|^2_{L^2_{xy}}+C(\delta)\Big{(}\sum_{|\alpha|\leq m}\|\partial^\alpha\nabla v^\varepsilon\|^2_{L^2_{xy}}+\sum_{|\alpha|\leq m+1}\|\partial^{\alpha}\nabla u^\varepsilon\|^2_{L^2_{xy}}+\sum_{|\alpha|\leq m}\|\partial^{\alpha} v^\varepsilon\|^2_{L^2_{xy}}\\&
        \quad+\sum_{|\alpha|\leq m+1}\|\partial^{\alpha} u^\varepsilon\|^2_{L^2_{xy}}\Big{)}.
		\end{align*}

        Note that for any $\alpha=(\alpha_1,0)$ ($\alpha_1\geq1$), the estimation process is the same as that of $\alpha=(0,0)$.
        Hence, combing $(\ref{v1})$, we finish the argument for $(\ref{nabla v})$.

        \begin{lemma}\label{lem4.4}
         {Assume that the initial data $(n_{in},v_{in},u_{in})$ satisfy the hypothesis conditions of Theorem $\ref{th nvu regularity}$ and the integer $m\geq5$, then there exist constants $1>\delta > 0$ and $\varepsilon_0>0$ such that for any $0<\varepsilon\leq\varepsilon_0$, the solution $\omega^\varepsilon=\partial_xu^\varepsilon_2-\partial_yu^\varepsilon_1$ is estimated on $[0,T]$ as follows:
		\begin{equation}\label{omega varepsilon Hm}
        \begin{split}{}
        &\frac{d}{dt}\sum_{|\alpha|\leq m+1}\|\partial^{\alpha}\omega^\varepsilon\|^2_{L^2_{xy}}+\varepsilon\sum_{|\alpha|\leq m+1}\|\partial^\alpha \nabla \omega^\varepsilon\|^2_{L^2_{xy}}\\
        &\leq C_0(\delta)\Big{(}1+\|u^\varepsilon\|^2_{L^2_{xy}}+\sum_{|\alpha|\leq m+1}\|\partial^\alpha\omega^\varepsilon\|^2_{L^2_{xy}}+\|\psi\partial_y \omega^\varepsilon\|^2_{L^\infty_{xy}}\Big{)}\sum_{|\alpha|\leq m+1}\|\partial^\alpha\omega^\varepsilon\|^2_{L^2_{xy}}\\
        &\quad+C\sum_{|\alpha|\leq m}\|\partial^\alpha \nabla n^\varepsilon\|^2_{L^2_{xy}}
        \end{split}
		\end{equation}
        where positive constant $C$ is dependent of $T$.
		}
	\end{lemma}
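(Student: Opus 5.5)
We prove the bound by a conormal energy estimate on the vorticity equation. Taking the curl of $(\ref{nvu equation})_3$ and using $\nabla\cdot u^\varepsilon=0$ gives
\begin{equation*}
\partial_t\omega^\varepsilon-\varepsilon\Delta\omega^\varepsilon+u^\varepsilon\cdot\nabla\omega^\varepsilon=\partial_xn^\varepsilon.
\end{equation*}
In 2D there is no vortex stretching term, and the pressure is gone. The boundary conditions $u^\varepsilon_2|_{y=0}=0$ and $\partial_yu^\varepsilon_1|_{y=0}=0$ give $\omega^\varepsilon|_{y=0}=\partial_xu^\varepsilon_2-\partial_yu^\varepsilon_1=0$, a homogeneous Dirichlet condition. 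For $|\alpha|\leq m+1$ we apply $\partial^\alpha$ and take the $L^2$ inner product with $\partial^\alpha\omega^\varepsilon$. Since $\psi(0)=0$, every conormal derivative with $\alpha_2\geq1$ vanishes on $y=0$, and $\partial_x^{\alpha_1}\omega^\varepsilon$ vanishes there as well. Hence all boundary terms from integrating the Laplacian by parts disappear.

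\textbf{Step 1 (viscous term).} We treat the dissipation exactly as in $(\ref{Delta n})$. We write $\partial_y\partial^\alpha=\partial^\alpha\partial_y+[\partial_y,\partial^\alpha]$, where the commutator is a sum of terms $\psi'\partial^{\alpha-(0,1)}\partial_y$ with $|\psi'|\leq C\delta$. This yields
\begin{equation*}
(1-C\delta)\varepsilon\sum_{|\alpha|\leq m+1}\|\partial^\alpha\nabla\omega^\varepsilon\|^2_{L^2_{xy}}-C\sum_{|\alpha|\leq m+1}\|\partial^\alpha\omega^\varepsilon\|^2_{L^2_{xy}}.
\end{equation*}
Choosing $\delta$ small, half of the dissipation is kept on the left.

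\textbf{Step 2 (source term).} The term $\langle\partial^\alpha\partial_xn^\varepsilon,\partial^\alpha\omega^\varepsilon\rangle$ is bounded by $\|\partial^\alpha\partial_xn^\varepsilon\|^2+\|\partial^\alpha\omega^\varepsilon\|^2$. Since $\partial^\alpha\partial_x=\partial^{\alpha+(1,0)}$ and $|\alpha|\leq m+1$, this needs $\partial^\beta\nabla n^\varepsilon$ with $|\beta|\leq m$ only after moving one $\partial_x$. For the top order $|\alpha|=m+1$, we integrate by parts in $x$ to get $\langle\partial^{\alpha-(1,0)}\partial_xn^\varepsilon,\partial_x\partial^\alpha\omega^\varepsilon\rangle$, or we split one conormal factor off and use $\partial^\alpha\partial_x n^\varepsilon=\partial^{\alpha'}\partial_x(\partial^{e}n^\varepsilon)$ with $\partial^e\in\{\partial_x,\psi\partial_y\}$. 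In either form the term is absorbed into $\frac{\varepsilon}{4}\|\partial^\alpha\nabla\omega^\varepsilon\|^2$ or bounded by $C\sum_{|\beta|\leq m}\|\partial^\beta\nabla n^\varepsilon\|^2$, which is the last term of $(\ref{omega varepsilon Hm})$. If the $\varepsilon^{-1}$ loss from absorption is unacceptable, we instead place the top derivative on $n^\varepsilon$ and note $\psi\partial_y n=\psi\cdot\nabla n$, so the count stays at $m$ derivatives of $\nabla n^\varepsilon$.

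\textbf{Step 3 (transport commutator, the main obstacle).} We write
\begin{equation*}
\partial^\alpha(u^\varepsilon\cdot\nabla\omega^\varepsilon)=u^\varepsilon\cdot\nabla\partial^\alpha\omega^\varepsilon+[\partial^\alpha,u^\varepsilon\cdot\nabla]\omega^\varepsilon.
\end{equation*}
The first term vanishes after integration by parts, because $\nabla\cdot u^\varepsilon=0$ and $u^\varepsilon_2|_{y=0}=0$. In the commutator we rewrite $u^\varepsilon_2\partial_y=\frac{u^\varepsilon_2}{\psi}\,\psi\partial_y$, so every normal derivative becomes conormal. This produces terms $\partial^\beta u^\varepsilon\cdot\partial^{\gamma}\omega^\varepsilon$ with $|\beta|+|\gamma|\leq m+1$, $|\beta|\geq1$, plus commutator terms from $[\partial^\alpha,\psi\partial_y]$. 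We handle them by cases on where the derivatives fall.

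For low $|\beta|$, we put $\partial^\beta u^\varepsilon$, or $\partial^\beta(u^\varepsilon_2/\psi)$, in $L^\infty$. These are controlled via $(\ref{infty})$ and the embedding $(\ref{ii})$ by $\|u^\varepsilon\|_{L^2}+\sum_{|\alpha|\leq m+1}\|\partial^\alpha\omega^\varepsilon\|$, using $m\geq5$ and the Biot–Savart/div–curl relation $\|\partial^\beta\nabla u^\varepsilon\|\lesssim\|\partial^\beta\omega^\varepsilon\|+\|\partial^{\beta}u^\varepsilon\|$ in the half-plane, with commutators of lower order.

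For high $|\beta|$, we put $\partial^\gamma\omega^\varepsilon$ in $L^\infty$. The only dangerous piece is $|\gamma|=0$ with a genuine $\partial_y$, namely $\partial^\beta u^\varepsilon_2\,\partial_y\omega^\varepsilon$. We write it as $\frac{\partial^\beta u^\varepsilon_2}{\psi}\,\psi\partial_y\omega^\varepsilon$, estimate the first factor in $L^2$ by the Hardy inequality $(\ref{iii})$ and $\nabla\cdot u^\varepsilon=0$, and estimate $\|\psi\partial_y\omega^\varepsilon\|_{L^\infty}$ as a separate quantity. This is exactly why $\|\psi\partial_y\omega^\varepsilon\|^2_{L^\infty_{xy}}$ appears on the right of $(\ref{omega varepsilon Hm})$; its bound is deferred to the maximum-principle argument of \cite{MR}. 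Collecting all terms with Young's inequality and taking $\delta$ small gives $(\ref{omega varepsilon Hm})$.
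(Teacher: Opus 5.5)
Your Steps 1 and 3 follow the paper's argument. The viscous term is handled as in $(\ref{Delta n})$, and $S_6$ uses the same commutator split, with $\frac{\partial^\alpha u^\varepsilon_2}{\psi}\,\psi\partial_y\omega^\varepsilon$ as the term that forces $\|\psi\partial_y\omega^\varepsilon\|_{L^\infty_{xy}}$ into the bound. Your explicit div--curl step is what the paper leaves implicit when it drops its $\|\partial^\alpha\nabla u^\varepsilon\|$ terms.

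The gap is in Step 2. For $|\alpha|=m+1$, the pairing $\langle\partial^\alpha\partial_xn^\varepsilon,\partial^\alpha\omega^\varepsilon\rangle$ puts $m+2$ derivatives on $n^\varepsilon$, counting conormal ones. But $\sum_{|\beta|\le m}\|\partial^\beta\nabla n^\varepsilon\|^2_{L^2_{xy}}$ controls only $m+1$, and neither of your options closes this.
\begin{itemize}
\item Moving one $\partial_x$ onto $\omega^\varepsilon$ and absorbing into $\frac{\varepsilon}{4}\|\partial^\alpha\nabla\omega^\varepsilon\|^2_{L^2_{xy}}$ leaves $\frac{C}{\varepsilon}\sum_{|\beta|\le m}\|\partial^\beta\nabla n^\varepsilon\|^2_{L^2_{xy}}$. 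That destroys the $\varepsilon$-uniformity Theorem \ref{th nvu regularity} is needed for in the remainder estimates.
\item The fallback miscounts. Writing $\partial^\alpha=\partial^{\alpha'}\partial^e$ with $|\alpha'|=m$ gives $\partial^\alpha\partial_xn^\varepsilon=\partial_x\partial^{\alpha'}(\partial^e n^\varepsilon)$. That is $m+1$ conormal derivatives of a bounded multiple of $\nabla n^\varepsilon$, or equivalently $m$ conormal derivatives of $\nabla^2 n^\varepsilon$. The identity $\psi\partial_yn^\varepsilon=\psi\cdot\nabla n^\varepsilon$ absorbs the conormal factor you split off, but the $\partial_x$ from the source $\partial_xn^\varepsilon$ remains.
\end{itemize}

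The missing idea is to pay for that derivative with the parabolic dissipation of $n^\varepsilon$, not with the vorticity. The paper bounds
$S_7\le\frac{\nu}{2}\sum_{|\alpha|\le m+1}\|\partial^\alpha\nabla n^\varepsilon\|^2_{L^2_{xy}}+\frac{C}{\nu}\sum_{|\alpha|\le m+1}\|\partial^\alpha\omega^\varepsilon\|^2_{L^2_{xy}}$.
Since $\sum_{|\alpha|\le m+1}\|\partial^\alpha\nabla n^\varepsilon\|^2_{L^2_{xy}}\le C\sum_{|\alpha|\le m}\big(\|\partial^\alpha\nabla^2 n^\varepsilon\|^2_{L^2_{xy}}+\|\partial^\alpha\nabla n^\varepsilon\|^2_{L^2_{xy}}\big)$, the $\nu$-term is absorbed by the dissipation $2(1-C\delta)\sum_{|\alpha|\le m}\|\partial^\alpha\nabla^2n^\varepsilon\|^2_{L^2_{xy}}$ of Lemma \ref{lem4.2}. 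This happens once the estimates are summed with $\nu=\frac{1-2C\delta}{8}$ in the proof of Theorem \ref{th nvu regularity}.

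So the last term of the inequality should be $\nu\sum_{|\alpha|\le m+1}\|\partial^\alpha\nabla n^\varepsilon\|^2_{L^2_{xy}}$, not $C\sum_{|\alpha|\le m}\|\partial^\alpha\nabla n^\varepsilon\|^2_{L^2_{xy}}$ with $C$ independent of $\varepsilon$. The paper's own $S_7$ bound yields only the former. Prove that form and defer the absorption to the coupled estimate.

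A smaller point in Step 3: you never need $\partial^\gamma\omega^\varepsilon$ or $\partial_x\omega^\varepsilon$ in $L^\infty_{xy}$. In fact $(\ref{infty})$ cannot give that, since it requires $\partial_y\partial^\gamma\omega^\varepsilon$, which is controlled only with weight $\varepsilon$.
\begin{itemize}
\item For $1\le|\beta|\le m$, put the velocity factor $\partial^\beta u^\varepsilon_1$ or $\partial^\beta(u^\varepsilon_2/\psi)$ in $L^\infty_{xy}$ via $(\ref{infty})$ and $(\ref{ii})$. Full gradients of $u^\varepsilon$ are available at conormal order $m+1$, using $\nabla\cdot u^\varepsilon=0$ for $\partial_{yy}u^\varepsilon_2$.
\item Only $\beta=\alpha$ needs special treatment. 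Use an $L^2_xL^\infty_y\times L^\infty_xL^2_y$ split for $\partial^\alpha u^\varepsilon_1\,\partial_x\omega^\varepsilon$, and $\|\psi\partial_y\omega^\varepsilon\|_{L^\infty_{xy}}$ for the $u^\varepsilon_2\partial_y$ part.
\end{itemize}
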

	\noindent{\bf{Proof.}}
        Applying the equation $(\ref{nvu equation})_3$ and $\nabla\cdot u^\varepsilon=0$, we {get} that
         \begin{equation}\label{eq:omega varepsilon}
		\begin{split}{}
	&\partial_t\omega^\varepsilon-\varepsilon\Delta \omega^\varepsilon+u^\varepsilon\cdot
        \nabla\omega^\varepsilon=\partial_xn^\varepsilon,\;\;\omega^\varepsilon|_{y=0}=0.\\
		\end{split}
       \end{equation}
       Taking $\partial^\alpha$ {on $(\ref{eq:omega varepsilon})$} and multiplying by $\partial^\alpha\omega^\varepsilon$ for $|\alpha|\leq m+1$, we can show that
    \begin{equation*}
        \begin{split}{}
        &\sum_{|\alpha|\leq m+1}\langle \partial^\alpha\partial_t\omega^\varepsilon, \partial^\alpha\omega^\varepsilon \rangle-{\varepsilon\sum_{|\alpha|\leq m+1}\langle \partial^\alpha\Delta \omega^\varepsilon, \partial^\alpha\omega^\varepsilon \rangle}\\
        &\leq\sum_{|\alpha|\leq m+1}\Big{|}\langle \partial^\alpha(u^\varepsilon\cdot\nabla \omega^\varepsilon), \partial^\alpha\omega^\varepsilon \rangle\Big{|}+\sum_{|\alpha|\leq m+1}\Big{|}\langle \partial^\alpha\partial_x n^\varepsilon, \partial^\alpha\omega^\varepsilon \rangle\Big{|}\\
        &=:S_6+S_7.\\
        \end{split}
		\end{equation*}
        {Similar to $(\ref{Delta n})$, this implies that}
        \begin{align}\label{Delta omega varepsilon}
        &-\varepsilon\sum_{|\alpha|\leq m+1}\langle \partial^\alpha\Delta \omega^\varepsilon, \partial^\alpha\omega^\varepsilon \rangle\notag\\
        &=-\varepsilon\sum_{|\alpha|\leq m+1}\langle \partial_x\partial^\alpha\partial_x\omega^\varepsilon, \partial^\alpha\omega^\varepsilon \rangle-\varepsilon\sum_{|\alpha|\leq m}\langle \partial_y\partial^\alpha\partial_y\omega^\varepsilon, \partial^\alpha\omega^\varepsilon \rangle\notag\\
        &\quad-\varepsilon\sum_{|\alpha|\leq m+1}\langle [\partial^\alpha,\partial_y]\partial_y\omega^\varepsilon, \partial^\alpha\omega^\varepsilon \rangle\notag\\
        &\geq\varepsilon\sum_{|\alpha|\leq m+1}\|\partial^\alpha\nabla \omega^\varepsilon\|^2_{L^2_{xy}}-C\delta\varepsilon\Big{(}\sum_{|\alpha|\leq m+1}\|\partial^\alpha\partial_y \omega^\varepsilon\|^2_{L^2_{xy}}+\|\partial^{\alpha-(0,1)}\partial_y\omega^\varepsilon\|^2_{L^2_{xy}}\Big{)}\\
        &\quad-C\sum_{|\alpha|\leq m+1}\|\partial^\alpha \omega^\varepsilon\|^2_{L^2_{xy}}\notag\\
        &\geq(1-C\delta)\varepsilon\sum_{|\alpha|\leq m+1}\|\partial^\alpha\nabla \omega^\varepsilon\|^2_{L^2_{xy}}-C\sum_{|\alpha|\leq m+1}\|\partial^\alpha \omega^\varepsilon\|^2_{L^2_{xy}}.\notag\notag
		\end{align}
        Applying the Sobolev inequality $(\ref{infty})$ and {the} condition $\nabla\cdot u^\varepsilon=0$, we obtain the estimation of $S_{6}$ is as follows
        \begin{align}\label{S6}
        S_{6}&=\sum_{|\alpha|\leq m+1}\sum_{\beta\leq\alpha}C^\beta_\alpha\Big{|}\langle \partial^\beta u^\varepsilon\cdot\partial^{\alpha-\beta}\nabla \omega^\varepsilon, \partial^\alpha\omega^\varepsilon \rangle\Big{|}\notag\\
        &\leq\sum_{|\alpha|\leq m+1}\Big{(}\Big{|}\langle u^\varepsilon_2[\partial^{\alpha},\partial_y] \omega^\varepsilon, \partial^\alpha\omega^\varepsilon \rangle\Big{|}+\sum_{1\leq|\beta|\leq m}{C^\beta_{\alpha}}\Big[\Big{|}\langle \partial^\beta u_1^\varepsilon\partial^{\alpha-\beta}\partial_x \omega^\varepsilon,\partial^\alpha\omega^\varepsilon \rangle\Big{|}\notag\\
        &\quad +\Big{|}\langle \partial^\beta u_2^\varepsilon\partial^{\alpha-\beta}\partial_y \omega^\varepsilon, \partial^\alpha\omega^\varepsilon \rangle\Big{|}\Big]+\Big{|}\langle \partial^\alpha u^\varepsilon_1\partial_x\omega^\varepsilon, \partial^\alpha\omega^\varepsilon \rangle\Big{|}+\Big{|}\langle \partial^\alpha u^\varepsilon_2\partial_y\omega^\varepsilon, \partial^\alpha\omega^\varepsilon \rangle\Big{|}\Big{)}\\
        &\leq C(\delta)\Big{(}\| u_2^\varepsilon\|^2_{L^2_{xy}}+\sum_{|\alpha|\leq2}\|\partial^\alpha\nabla u^\varepsilon\|^2_{L^2_{xy}}+\sum_{1\leq|\beta|\leq m}\|\partial^{\beta+(1,0)}\nabla u^\varepsilon\|^2_{L^2_{xy}}+\sum_{|\alpha|\leq3}\|\partial^\alpha\omega^\varepsilon\|^2_{L^2_{xy}}+\|\psi\partial_y \omega^\varepsilon\|^2_{L^\infty_{xy}}\Big{)}\notag\\
        &\quad\times\sum_{|\alpha|\leq m+1}\|\partial^\alpha\omega^\varepsilon\|^2_{L^2_{xy}}+C\sum_{|\alpha|\leq m+1}\|\partial^{\alpha}\omega^\varepsilon\|^2_{L^2_{xy}}+C(\delta)\sum_{|\alpha|\leq m+1}\|\partial^\alpha\nabla u^\varepsilon\|^2_{L^2_{xy}}+C\sum_{|\alpha|\leq m+1}\|\partial^\alpha u^\varepsilon\|^2_{L^2_{xy}}.\notag\notag
		\end{align}
        By the H{\"o}lder's inequality, $S_{7}$ can be estimated as follows:
        \begin{equation}\label{S7}
        \begin{split}{}
        S_{7}&=\sum_{|\alpha|\leq m+1}\Big{(}\Big{|}\langle  \partial^\alpha\partial_xn^\varepsilon , \partial^\alpha\omega^\varepsilon\rangle\Big{|}\Big{)}\leq\frac{\nu}{2}\sum_{|\alpha|\leq m+1}\|\partial^{\alpha} \nabla n^\varepsilon\|^2_{L^2_{xy}}+\frac{C}{\nu}\sum_{|\alpha|\leq m+1}\|\partial^\alpha\omega^\varepsilon\|^2_{L^2_{xy}}.
        \end{split}
		\end{equation}
        Combining $(\ref{Delta omega varepsilon})$-$(\ref{S7})$, we obtain that $(\ref{omega varepsilon Hm})$ holds. Hence, we finish our argument.

        \begin{lemma}\label{lem4.5}
         {Under the assumptions of Theorem $\ref{th nvu regularity}$ for $(n_{in},v_{in},u_{in})$, it can be concluded that for all $m \ge 5$, $\omega^\varepsilon$ has the following properties on $[0,T]$:
		\begin{equation}\label{omega maximum principle}
        \begin{split}{}
        \|\omega^\varepsilon\|^2_{L^\infty_{xy}}&\leq \|\omega^\varepsilon_0\|^2_{L^\infty_{xy}}+C\int^t_0\sum_{|\alpha|\leq2}\|\partial^\alpha\nabla n^\varepsilon\|^2_{L^2_{xy}}ds,\\
        \|\omega^\varepsilon\|^2_{W^{1,\infty}_{xy}}&\leq \|\omega^\varepsilon_0\|^2_{W^{1,\infty}_{xy}}+C\int^t_0\Big{(}\|\omega^\varepsilon\|^2_{W^{1,\infty}_{xy}}+\varepsilon\sum_{|\alpha|\leq m}\|\partial^\alpha\nabla\omega^\varepsilon\|^2_{L^2_{xy}}\\
        &\quad+\sum_{|\alpha|\leq m}\|\partial^\alpha\omega^\varepsilon\|^2_{L^2_{xy}}+\sum_{|\alpha|\leq3}\|\partial^\alpha\nabla n^\varepsilon\|^2_{L^2_{xy}}\Big{)}ds,\\
        \end{split}
		\end{equation}
        where positive constant $C$ is independent of $T$.
		}
	\end{lemma}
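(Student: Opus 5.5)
The plan is to apply the parabolic maximum principle to the vorticity equation (\ref{eq:omega varepsilon}), namely $\partial_t\omega^\varepsilon-\varepsilon\Delta\omega^\varepsilon+u^\varepsilon\cdot\nabla\omega^\varepsilon=\partial_xn^\varepsilon$ with $\omega^\varepsilon|_{y=0}=0$. This is a transport–diffusion equation with a divergence-free drift tangent to the boundary and a homogeneous Dirichlet condition. For such equations the $L^\infty$ norm is controlled by the initial data plus the time integral of the $L^\infty$ norm of the source:
\begin{equation*}
\|\omega^\varepsilon(t)\|_{L^\infty_{xy}}\leq\|\omega^\varepsilon_0\|_{L^\infty_{xy}}+\int^t_0\|\partial_xn^\varepsilon\|_{L^\infty_{xy}}ds.
\end{equation*}
I would justify this by comparing $\pm\omega^\varepsilon$ with $\|\omega^\varepsilon_0\|_{L^\infty}+\int_0^t\|\partial_xn^\varepsilon\|_{L^\infty}ds$. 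An alternative route is to take $L^p$ estimates (multiply by $|\omega^\varepsilon|^{p-2}\omega^\varepsilon$; the drift term vanishes since $\nabla\cdot u^\varepsilon=0$ and $u^\varepsilon_2|_{y=0}=0$) and then let $p\to\infty$.

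Next I square the estimate and use Cauchy--Schwarz in time (absorbing $T$ into $C$). The source is then controlled by the anisotropic embedding (\ref{infty}) applied to $\nabla n^\varepsilon$:
\begin{equation*}
\|\partial_xn^\varepsilon\|_{L^\infty_{xy}}\leq C\sum_{|\alpha|\leq 2}\|\partial^\alpha\nabla n^\varepsilon\|_{L^2_{xy}}.
\end{equation*}
Here the derivatives appearing in (\ref{infty}), namely $\partial_y\nabla n$, $\partial_x\nabla n$ and $\partial_x\partial_y\nabla n$, are read as components of $\nabla^2 n$ and lower order, consistent with how the paper counts derivatives. This gives the first inequality.

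For the $W^{1,\infty}$ bound, the tangential derivative $\partial_x\omega^\varepsilon$ commutes with the equation and still vanishes on $y=0$. It satisfies
\begin{equation*}
\partial_t\partial_x\omega^\varepsilon-\varepsilon\Delta\partial_x\omega^\varepsilon+u^\varepsilon\cdot\nabla\partial_x\omega^\varepsilon=\partial_x^2n^\varepsilon-\partial_xu^\varepsilon\cdot\nabla\omega^\varepsilon,
\end{equation*}
and the same maximum principle applies. The term $\partial_xu^\varepsilon\cdot\nabla\omega^\varepsilon$ is bounded by $\|\nabla u^\varepsilon\|_{L^\infty}\|\omega^\varepsilon\|_{W^{1,\infty}}$. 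In turn, $\|\nabla u^\varepsilon\|_{L^\infty}$ is bounded through (\ref{infty}) by conormal norms of $\omega^\varepsilon$, using the elliptic relation $\Delta u^\varepsilon=\nabla^\perp\omega^\varepsilon$, and this produces the $\sum_{|\alpha|\leq m}\|\partial^\alpha\omega^\varepsilon\|^2$ terms. The source term $\partial_x^2 n^\varepsilon$ contributes $\sum_{|\alpha|\leq3}\|\partial^\alpha\nabla n^\varepsilon\|^2$.

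The main obstacle is the normal derivative $\partial_y\omega^\varepsilon$. It does not satisfy a Dirichlet condition, so the maximum principle cannot be applied to it directly. Following \cite{MR}, I would split it with the cutoff: near the boundary, write $\partial_y\omega^\varepsilon$ using the equation itself,
\begin{equation*}
\varepsilon\partial_y^2\omega^\varepsilon=\partial_t\omega^\varepsilon+u^\varepsilon\cdot\nabla\omega^\varepsilon-\varepsilon\partial_x^2\omega^\varepsilon-\partial_xn^\varepsilon,
\end{equation*}
and integrate in $y$ from $0$. This bounds $\partial_y\omega^\varepsilon$ by tangential (conormal) quantities plus $\varepsilon$-weighted high norms. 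Away from the boundary, $\psi\approx\delta$ is bounded below, so $\partial_y\omega^\varepsilon$ is controlled by the conormal derivative $\psi\partial_y\omega^\varepsilon$, which satisfies an equation with commutator $[\psi\partial_y,u^\varepsilon\cdot\nabla]$ and vanishes on $y=0$.

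These two pieces are where the terms $\varepsilon\sum\|\partial^\alpha\nabla\omega^\varepsilon\|^2$ and $\sum_{|\alpha|\leq m}\|\partial^\alpha\omega^\varepsilon\|^2$ enter, via (\ref{infty}) and the Hardy inequality (\ref{iii}). I expect closing the commutator bound for the conormal piece to be the delicate part.
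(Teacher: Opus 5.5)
Your $L^\infty$ bound is exactly the paper's argument: the maximum principle for the Dirichlet problem, then Cauchy--Schwarz in time, then the anisotropic embedding applied to $\partial_xn^\varepsilon$. Commuting $\partial_x$ through the equation for the tangential derivative is also fine, with two adjustments. Write $\partial_xu^\varepsilon_2\,\partial_y\omega^\varepsilon=(\partial_xu^\varepsilon_2/\psi)\,\psi\partial_y\omega^\varepsilon$, using $\partial_xu^\varepsilon_2|_{y=0}=0$. And control $\nabla u^\varepsilon$ in $L^\infty$ through $\partial_yu^\varepsilon_1=\partial_xu^\varepsilon_2-\omega^\varepsilon$ and $\partial_yu^\varepsilon_2=-\partial_xu^\varepsilon_1$, rather than through (\ref{infty}) applied to $\nabla u^\varepsilon$, which needs $\partial_y\omega^\varepsilon\in L^2$.

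\textbf{The gap is the normal derivative.}

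\emph{You are aiming at an unbounded quantity.} $\|\partial_y\omega^\varepsilon\|_{L^\infty_{xy}}$ is not bounded uniformly in $\varepsilon$. The viscous vorticity satisfies $\omega^\varepsilon|_{y=0}=0$. The limiting vorticity, however, is forced on the boundary by $\partial_xn^0$ and in general does not vanish there. This mismatch is the layer $\sqrt{\varepsilon}\,u^{B,1}_1(t,x,y/\sqrt{\varepsilon})$ in the expansion of $u^\varepsilon$. It gives $\omega^\varepsilon$ an $O(1)$ transition of width $\sqrt{\varepsilon}$, so $\partial_y\omega^\varepsilon\sim\varepsilon^{-1/2}$ near $y=0$. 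Consequently $W^{1,\infty}_{xy}$ in the lemma means the conormal norm (the proof takes $\partial_2=\psi(y)\partial_y$). This is the quantity $\|\psi\partial_y\omega^\varepsilon\|_{L^\infty}$ consumed in Lemma~\ref{lem4.3} and Lemma~\ref{lem4.4}.

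\emph{Integrating the equation from the boundary cannot work.} Integrating $\varepsilon\partial_y^2\omega^\varepsilon=\partial_t\omega^\varepsilon+\cdots$ from $y=0$ fails for three reasons:
\begin{enumerate}
\item The Dirichlet condition fixes $\omega^\varepsilon$, not the trace $\partial_y\omega^\varepsilon|_{y=0}$, which enters as an unknown constant of integration.
\item $\partial_t\omega^\varepsilon$ is not controlled in $L^\infty$ by anything on the right-hand side.
\item Solving for $\partial_y\omega^\varepsilon$ costs a factor $\varepsilon^{-1}$.
\end{enumerate}

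\emph{The localization and the commutator are both misidentified.} Away from the boundary nothing special is needed, since conormal and standard norms are equivalent there and ordinary Sobolev embedding suffices. Near the boundary, the obstruction to a maximum principle for $\psi\partial_y\omega^\varepsilon$ is not $[\psi\partial_y,u^\varepsilon\cdot\nabla]$. It is $[\psi\partial_y,\varepsilon\partial_y^2]=-\varepsilon(2\psi'\partial_y^2+\psi''\partial_y)$, which produces two normal derivatives that cannot be controlled uniformly.

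\textbf{The missing idea.} The paper, following \cite{MR}, proceeds as follows.
\begin{enumerate}
\item Split $\omega^\varepsilon=\chi\omega^\varepsilon+(1-\chi)\omega^\varepsilon$, and bound $\omega^{int}$ by standard embedding in terms of $\|u^\varepsilon\|_{\widetilde H^m}$.
\item Write the equation for $\omega^b=\chi\omega^\varepsilon$. Its commutator $C^b$ is supported away from $y=0$, so $\|C^b\|_{W^{1,\infty}}$ is controlled by conormal Sobolev norms. The term $\varepsilon\partial_x^2\omega^b$ is treated as a source. This, not a boundary integration, is where $\varepsilon\sum\|\partial^\alpha\nabla\omega^\varepsilon\|^2$ comes from.
\item Apply Lemma 14 of \cite{MR}, a conormal $W^{1,\infty}$ estimate for $\partial_t\rho+u\cdot\nabla\rho=\varepsilon\partial_y^2\rho+S$ with $\rho|_{y=0}=0$ and $u_2|_{y=0}=0$. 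Its proof freezes the normal transport as $y\,\partial_yu_2(t,x,0)\,\partial_y$ and straightens the tangential transport along the boundary flow. It then represents the solution with the explicit Dirichlet heat kernel on the half-line, whose $y\partial_y$-derivative has $L^1$ norm bounded uniformly in $\varepsilon$.
\end{enumerate}
This explicit-kernel estimate is what gives a uniform bound on $\|\psi\partial_y\omega^b\|_{L^\infty}$. Without it, or an equivalent substitute, your argument does not close.
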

	\noindent{\bf{Proof.}} Applying the maximum principle to the vorticity equation
        $(\ref{eq:omega varepsilon})$, we see that it is evident that
         \begin{equation*}
        \begin{split}{}
        \|\omega^\varepsilon\|_{L^\infty_{xy}}&\leq C\|\omega^\varepsilon_0\|_{L^\infty_{xy}}+C\int^t_0\|\partial_xn^\varepsilon\|_{L^\infty_{xy}}ds.\\
        \end{split}
		\end{equation*}
        By the Cauchy-Schwarz inequality, we obtain  $(\ref{omega maximum principle})_1$.\\
        Next, we hope to obtain a similar estimate for $\partial_i\omega^\varepsilon(i=1,2,\;\partial_1=\partial_x,\;\partial_2=\psi(y)\partial_y)$. The main difficulty lies in the estimation of $\partial_2\omega^\varepsilon$, since the commutator of this vector field with the Laplacian operator involves two derivatives in the normal variable.

        To solve this difficulty, we set $\chi(y)$ as a smooth compactly supported function with value $1$ near $0$ and support in $[0, 1]$. Then the vorticity equation can be expressed as
       \begin{equation}\label{omega expressed}
		\begin{split}{}
	       &\omega^\varepsilon=\chi\omega^\varepsilon+(1-\chi)\omega^\varepsilon=\omega^b+\omega^{int},\\
		\end{split}
       \end{equation}
        where $\omega^{int}$  is supported away from the boundary and $\omega^b$  is compactly supported in $y$.

       Since $1-\chi$ and $\partial_y\chi$ vanish near the boundary, and the common normal $\widetilde{H}^m$ norm is equivalent to the conventional $H^s$ norm far away from the boundary, we can use the conventional Sobolev embedding to obtain
       \begin{equation*}
		\begin{split}{}
        \|\omega^{int}\|_{W^{1,\infty}_{xy}}\leq C\|\omega^{int}\|_{H^{s_0}_{xy}}=C\|(1-\chi)\omega^\varepsilon\|_{H^{s_0}_{xy}}\leq C\|\kappa u^\varepsilon\|_{H^{s_0+1}_{xy}},\;\;s_0\geq3
		\end{split}
	\end{equation*}
        where $\kappa$ is supported away from the boundary. Therefore, we get
        \begin{equation}\label{omega int}
		\begin{split}{}
        \|\omega^{int}\|_{W^{1,\infty}_{xy}}\leq  C\| u^\varepsilon\|_{\widetilde{H}^{m}_{xy}},\;\;m\geq4.
		\end{split}
	\end{equation}
       We only need to estimate $\omega^b$. Note that $\omega^b$ is the solution of the following equation
       \begin{equation}\label{omega b}
		\begin{split}{}	&\partial_t\omega^b+u^\varepsilon\cdot\nabla\omega^b=\varepsilon\Delta\omega^b+\chi\partial_xn^\varepsilon+C^b,\\
		\end{split}
       \end{equation}
       in a half-space $y>0$, where $C^b$ is a commutator
       \begin{equation*}
		\begin{split}{}
        C^b=u^\varepsilon_2\partial_y\chi\omega^\varepsilon-2\varepsilon\partial_y\chi\partial_y\omega^\varepsilon-\varepsilon\partial_{yy}\chi\omega^\varepsilon.
		\end{split}
	\end{equation*}
         Since the support for the values of $\partial_y\chi$ and $\partial_{yy}\chi$ is far from the boundary, according to the usual Sobolev embedding, we have
         \begin{equation}\label{Cb}
		\begin{split}{}
        \|C^b\|_{W^{1,\infty}_{xy}}&\leq \|u^\varepsilon_2\partial_y\chi\omega^\varepsilon\|_{W^{1,\infty}_{xy}}+2\varepsilon\|\partial_y\chi\partial_y\omega^\varepsilon\|_{W^{1,\infty}_{xy}}+\varepsilon\|\partial_{yy}\chi\omega^\varepsilon\|_{W^{1,\infty}_{xy}}\\
        &\leq C\|u^\varepsilon\|_{W^{1,\infty}_{xy}}\|\omega^\varepsilon\|_{W^{1,\infty}_{xy}}+2\varepsilon\|\omega^\varepsilon\|_{W^{2,\infty}_{xy}}+\varepsilon\|\omega^\varepsilon\|_{W^{1,\infty}_{xy}}\\
        &\leq C(1+\|\omega^\varepsilon\|_{\widetilde{H}^m_{xy}})\|\omega^\varepsilon\|_{\widetilde{H}^m_{xy}},\;\;m\geq4.
		\end{split}
	\end{equation}
     Next, in order to obtain $(\ref{omega maximum principle})_2$, by applying Lemma 14 in \cite{MR} to the equation $(\ref{omega b})$ and using $(\ref{Cb})$, we immediately get
       \begin{equation*}
        \begin{split}{}
        \|\omega^b\|_{W^{1,\infty}_{xy}}&\leq \|\omega^\varepsilon_0\|_{W^{1,\infty}_{xy}}+\int^t_0\Big{[}(\| u^\varepsilon\|_{W^{2,\infty}_{xy}}+\|\partial_yu^\varepsilon\|_{W^{1,\infty}_{xy}})(\| \omega^b\|_{W^{1,\infty}_{xy}}+\|\omega^b\|_{\widetilde{H}^{m_0+2}_{xy}})\\
        &\quad+\varepsilon\| \partial_{xx}\omega^b\|_{W^{1,\infty}_{xy}}+\|\partial_x n^\varepsilon\|_{W^{1,\infty}_{xy}}+\|C^b\|_{W^{1,\infty}_{xy}}\Big{]}ds\\
        &\leq \|\omega^\varepsilon_0\|_{W^{1,\infty}_{xy}}+C\int^t_0\Big{[}(\| u^\varepsilon\|_{W^{2,\infty}_{xy}}+\|\partial_yu^\varepsilon\|_{W^{1,\infty}_{xy}})(\| \omega^\varepsilon\|_{W^{1,\infty}_{xy}}+\|\omega^\varepsilon\|_{\widetilde{H}^{m_0+2}_{xy}})\\
        &\quad+\varepsilon\| \nabla\omega^\varepsilon\|_{\widetilde{H}^m_{xy}}+\|\partial_x n^\varepsilon\|_{W^{1,\infty}_{xy}}+\|\omega^\varepsilon\|^2_{\widetilde{H}^m_{xy}}\Big{]}ds
        \end{split}
		\end{equation*}
        for $m\geq m_0+3$ and $m_0\geq1$.
        By the Cauchy-Schwarz inequality, it can be inferred that $(\ref{omega maximum principle})_2$ holds.\\

        By Lemma $\ref{lem4.1}$-Lemma $\ref{lem4.5}$, we can prove Theorem  $\ref{th nvu regularity}$ as follows.

        \textbf{{Proof} of Theorem  $\ref{th nvu regularity}$.}
        Combining Lemma $\ref{lem4.1}$-Lemma $\ref{lem4.5}$, we choose $\nu=\frac{1-2C\delta}{8}$, then for sufficiently small $\delta$ there are smooth solutions $(n^\varepsilon,v^\varepsilon,u^\varepsilon)$ and $\omega^\varepsilon$ on $[0,T]$ with the following
        \begin{equation}\label{Equ(4.17)}
		\begin{split}{}             &\|n^\varepsilon\|^2_{L^\infty_TL^2_{xy}}+\|v^\varepsilon\|^2_{L^\infty_TL^2_{xy}}+\|u^\varepsilon\|^2_{L^\infty_TL^2_{xy}}+\|\nabla n^\varepsilon\|^2_{L^\infty_T\widetilde{H}^m_{xy}}+\|\nabla v^\varepsilon\|^2_{L^\infty_T\widetilde{H}^m_{xy}}+\|\nabla u^\varepsilon\|^2_{L^\infty_T\widetilde{H}^{m+1}_{xy}}\\
             &+\|\nabla^2 n^\varepsilon\|^2_{L^2_T\widetilde{H}^m_{xy}}+\varepsilon\|\nabla^2 v^\varepsilon\|^2_{L^2_T\widetilde{H}^m_{xy}}+\varepsilon\|\nabla^2 u^\varepsilon\|^2_{L^2_T\widetilde{H}^{m+1}_{xy}}+\|\omega^\varepsilon\|^2_{L^\infty_TL^\infty_{xy}}+\|\omega^\varepsilon\|^2_{L^\infty_TW^{1,\infty}_{xy}}\leq C.\\	
		\end{split}
	\end{equation}
        Therefore, we complete the proof of Theorem  $\ref{th nvu regularity}$.

	\section*{Acknowledgements}
L. Zhao was supported by the National Natural Science Foundation of China(NSFC)[grant number 12501305].
	\hspace*{\parindent}

\medskip
\noindent\textbf{Data Availability Statement:}
Data sharing is not applicable to this article as no data sets were generated or analyzed during the current study.

\noindent\textbf{Conflict of Interest:}
The authors declare that they have no conflict of interest.
	\hspace*{\parindent}


\begin{thebibliography}{00}
        \bibitem{AF} A. Adams, J.J.F. Fournier, Sobolev Space, 2nd edn. Pure Appl. Math. 140 (2009).
        \bibitem{CHW} J.A. Carrillo, G. Hong, Z. Wang, Convergence of boundary layers of chemotaxis models with physical boundary conditions I: degenerate initial data. SIAM J. Math. Anal. 56 (6) (2024) 7576-7643.
		\bibitem{CLW} J.A. Carrillo, J.Y. Li, and Z.A. Wang, Boundary spike-layer solutions of the singular Keller-Segel system: existence and stability. Proc. Lond. Math. Soc. 122 (3) (2021) 42-68.
        \bibitem{CLW1} H. Chen, J.M. Li, K. Wang, On the vanishing viscosity limit of a chemotaxis model. Discrete Contin. Dyn. Syst. Ser. A. 40 (3) (2020) 1963-1987.

        \bibitem{CLX} F. Cheng, W.X. Li, C.J. Xu, Vanishing viscosity limit of Navier-Stokes Equations in Gevrey class. Math. Methods Appl. Sci. 40 (14) (2017) 5161-5176.
		\bibitem{DCC}C. Dombrowski, L. Cisneros, S. Chatkaew, R. Goldstein, and J. Kessler, Selfconcentration and large-scale coherence in bacterial dynamics. Phys. Rev. Lett. 93 (2004) 098103.
        \bibitem{FGL1} M. Fei, C. Gao, Z. Lin, et al., Prandtl-Batchelor flows on an annulus. Adv. Math. 458 (2024) 109994.
        \bibitem{FGL2} M. Fei, C. Gao, Z. Lin, et al., Prandtl-Batchelor Flows on a Disk. Commun. Math. Phys. 397 (3) (2023) 1103-1161.
        \bibitem{FTZ} M. Fei, T. Tao, Z. Zhang, On the zero-viscosity limit of the Navier-Stokes equations in $R_+^3$ without analyticity. J. Math. Pures Appl. 112 (2018) 170-229.
		\bibitem{HL}Q. Hou, C.J. Liu, Y.G. Wang, Z. Wang, Stability of boundary layers for a viscous hyperbolic system arising from chemotaxis: one dimensional case. SIAM J. Math. Anal. 50 (3) (2018) 3058-3091.
        \bibitem{HO} Q. Hou, Boundary layer problem on the chemotaxis model with Robin boundary conditions. Discrete Contin. Dyn. Syst. 44 (2) (2024) 378-424.
        \bibitem{HO1} Q. Hou, Boundary layer effects induced by the fluid in a chemotaxis-Navier-Stokes system. arXiv preprint arXiv:2509.03028, 2025.
		\bibitem{HW}Q. Hou, Z. Wang, Convergence of boundary layers for the Keller-Segel system with singular sensitivity in the half-plane. J. Math. Pures Appl. 130 (2019) 251-287.
		\bibitem{HWZ}Q. Hou, Z. Wang, K. Zhao, Boundary layer problem on a hyperbolic system arising from chemotaxis. J. Differ. Equ. 261 (2016) 5035-5070.
		\bibitem{IP} D. Iftimie, G. Planas, Inviscid limits for the Navier-Stokes equations with Navier friction boundary conditions. Nonlinearity. 19 (4) (2006) 899-918.
		\bibitem{IS} D. Iftimie, F. Sueur, Viscous boundary layers for the Navier-Stokes equations with the Navier slip conditions. Arch. Ration. Mech. Anal. 199 (1) (2011) 145-175.
		\bibitem{JM} W. Jager, A. Mikeli'c, On the roughness-induced effective boundary conditions for an incompressible viscous flow. J. Differ. Equ. 170 (2001) 96-122
		\bibitem{KE} J.P. Kelliher, The strong vanishing viscosity limit with Dirichlet boundary conditions. Nonlinearity. 36 (5) (2023) 2708.
        \bibitem{LWY}C.C. Lee, Z.A. Wang, W. Yang, Boundary-layer profile of a singularly perturbed nonlocal semi-linear problem arising in chemotaxis. Nonlinearity, 33 (2020) 5111-5141.
        \bibitem{LS}H.A. Levine, B.D. Sleeman, A system of reaction diffusion equations arising in the theory of reinforced random walks. SIAM J. Appl. Math. 57 (1997) 683-730.
		\bibitem{LL}D. Li, T. Li, K. Zhao, On a hyperbolic-parabolic system modeling chemotaxis. Math. Models Methods Appl. Sci. 21 (2011) 1631-1650.
        \bibitem{LP} T. Li, R. Pan, K. Zhao, Global dynamics of a hyperbolic-parabolic model arising from chemotaxis. SIAM J. Appl. Math. 72 (1) (2012) 417-443.
         \bibitem{LSW} B. Li, F. Shi, W. Wang, Zero-viscosity limit of the chemotaxis-Navier-Stokes equations with the Navier-slip boundary condition. arXiv preprint arXiv:2605.02394, 2026.
		\bibitem{LW}T. Li, Z. Wang, Asymptotic nonlinear stability of traveling waves to conservation laws arising from chemotaxis. J. Differ. Equ. 250 (3) (2011) 1310-1333.


		\bibitem{LZ} H. Li, K. Zhao, Initial-boundary value problems for a system of hyperbolic balance laws arising from chemotaxis. J. Differ. Equ. 258 (2) (2015) 302-338.
        \bibitem{MR} N. Masmoudi and F. Rousset, Uniform regularity for the Navier-Stokes equation with Navier boundary condition. Arch. Ration. Mech. Anal. 203 (2012) 529-575.
       \bibitem{MW1} N. Masmoudi, T. Wong, Local-in-time existence and uniqueness of solutions to the Prandtl equations by energy methods. Comm. Pure Appl. Math. 68 (2015) 1683-1741.
       \bibitem{MXW} L. Meng, W.Q. Xu, S. Wang, On the vanishing viscosity limit for a 3-D system arising from the Keller-Segel model. Math. Methods Appl. Sci. 43 (2) (2020) 920-938.
       \bibitem{MXW1} L. Meng, W.Q. Xu, S. Wang, Boundary layer analysis for a 2-D Keller-Segel model. Open Math. 18 (1) (2020) 1895-1914.
	    \bibitem{OS} O.A. Oleinik, V.N. Samokhin, Mathematical Models in Boundary Layer Theory. Appl. Math. Comput. (1999) 528.
		\bibitem{PRA}L. Prandtl, \"{U}ber Fl\"{u}ssigkeitsbewegungen bei sehr kleiner Reibung. Verhandl. 3rd Int. Math. Kongr. Heidelberg (1904), Leipzig. (1905) 484-491.
		\bibitem{PW} H. Peng, Z.A. Wang, K. Zhao, et al, Boundary layers and stabilization of the singular Keller-Segel system. Kinet. Rel. Models. 11 (5) (2018) 1085-1123.

        \bibitem{RWW} L.G Rebholz, D. Wang, Z. Wang, et al. Initial boundary value problems for a system of parabolic conservation laws arising from chemotaxis in multi-dimensions. Discrete Contin. Dyn. Syst. Ser. A. 39 (7) (2019) 3789¨C3838.

        \bibitem{SC1} M. Sammartino, R.E. Caflisch, Zero viscosity limit for analytic solutions, of the Navier-Stokes equation on a half-space. I. Existence for Euler and Prandtl equations. Commun. Math. Phys. 192 (2) (1998) 433-461.
        \bibitem{SC2} M. Sammartino, R.E. Caflisch, Zero Viscosity Limit for analytic solutions of the Navier-Stokes equation on a half-space. II. Construction of the Navier-Stokes solution. Commun. Math. Phys. 192 (2) (1998) 463-491.

        \bibitem{SG} H. Schlichting, K. Gersten, K. Krause, H. Oertel Jr, C. Mayes, Boundary-Layer Theory, 8th edition, Springer, 2004.
        \bibitem{TAO} T. Tao, Vanishing vertical viscosity limit of anisotropic Navier¨CStokes equation with no-slip boundary condition. J. Differ. Equ. 265 (9) (2018) 4283-4310.
        \bibitem{TC} I. Tuval, L. Cisneros, C. Dombrowski, et al, Bacterial swimming and oxygen transport near contact lines. P Natl Acad Sci Usa. 102 (7) (2005) 2277-2282.
        \bibitem{twz}
        T. Tao, W.D. Wang, Z.F. Zhang, Zero-viscosity limit of the Navier-Stokes equations with the Navier friction boundary condition. SIAM J. Math. Anal. 52 (2020), no. 2, 1040-1095.
        \bibitem{WWX} X.P. Wang, Y.G. Wang, Z.P. Xin, Boundary layers in incompressible Navier-Stokes equations with Navier boundary conditions for the vanishing viscosity limit. Comm. Math. Sci. 8 (2010) 965-998.
        \bibitem{WWZ} H. Wang, W.D. Wang, L.L. Zhao, Boundary layer analysis for the 2D chemotaxis-Navier-Stokes system with logarithmic sensitivity, Part I: Well-posedness. arXiv preprint arXiv:2608.05940, 2026.
        \bibitem{WX}Y.G. Wang, Z.P. Xin, Zero-viscosity limit of the linearized compressible Navier-Stokes equations with highly oscillatory forces in the half-plane. SIAM J. Math. Anal. 37 (2005) 1256-1298
		\bibitem{WXZ}L. Wang, Z. Xin, A.b. Zang, Vanishing viscous limits for 3D Navier-Stokes equations with a Navier-slip boundary condition. J. Math. Fluid Mech. 14 (4) (2012) 791-825.
        \bibitem{WYZ1} C. Wang, J. Yue, Z. Zhang, The Navier-Stokes equations in $\mathbb R^ 2_+$ with point vortex initial data: construction of the solution. arXiv preprint arXiv:2604.05765, 2026.
        \bibitem{WYZ2} C. Wang, J. Yue, Z. Zhang, The Navier-Stokes equations in $\mathbb R^ 2_+$ with point vortex initial data: Zero-viscosity limit. arXiv preprint arXiv:2604.05787, 2026.
        \bibitem{WE} C. Weibull, Movement. In: The Bacteria. Academic Press, New York. 1 (1960) 153-205.  		
		\bibitem{XX} Y.L. Xiao, Z.P. Xin, Remarks on the vanishing viscosity limit for the Navier-Stokes equations with a slip boundary condition. Chin. Ann. Math. 32 (3) (2011) 321-332.
		\bibitem{XX1} Y.L. Xiao, Z.P. Xin, On the vanishing viscosity limit for the 3D Navier-Stokes equations with a slip boundary condition. Comm. Pure Appl. Math. 60 (2007) 1027-1055.
		\bibitem{XY}Z.P. Xin, T. Yanagisawa, Zero-viscosity limit of the linearized Navier-Stokes equations for a compressible viscous fluid in the half-plane. Comm. Pure Appl. Math. 52 (1999) 479-541.		











	\end{thebibliography}
\end{document}